\theoremstyle{plain}
\newtheorem{theorem}{Theorem}
\newtheorem{proposition}[theorem]{Proposition}
\newtheorem{corollary}[theorem]{Corollary}
\newtheorem{lemma}[theorem]{Lemma}
\newtheorem{claim}[theorem]{Claim}
\newtheorem{definition}[theorem]{Definition}
\newtheorem{remark}[theorem]{Remark}
\newtheorem{example}[theorem]{Example}
\newtheorem{theo}{Theorem}[section]
\numberwithin{equation}{section}
\def\R{\mathbb{R}}
\def\A{\mathbb{A}}
\def\C{\mathbb{C}}
\def\Mat{\text{Mat}}
\newcommand{\lam}{\lambda}
\DeclareMathOperator{\tr}{tr}
\DeclareMathOperator{\Hom}{Hom}
\DeclareMathOperator{\Bil}{Bil}
\DeclareMathOperator{\ind}{ind}
\DeclareMathOperator{\Sym}{Sym}
\DeclareMathOperator{\diag}{diag}
\DeclareMathOperator{\Ind}{Ind}
\DeclareMathOperator{\Orth}{O}
\DeclareMathOperator{\GL}{GL}
\DeclareMathOperator{\SL}{SL}
\DeclareMathOperator{\SO}{SO}
\DeclareMathOperator{\Sp}{Sp}
\DeclareMathOperator{\Real}{Re}
\begin{document}

\title[Doubling Constructions: the Linear Case]{Doubling Constructions and Tensor Product $L$-Functions: the linear case}
\author{Yuanqing Cai}
\author{Solomon Friedberg}
\author{David Ginzburg}
\author{Eyal Kaplan}
\address{Friedberg:  Department of Mathematics, Boston College, Chestnut Hill, MA 02467-3806, USA}
\email{solomon.friedberg@bc.edu}

\address{Cai:  Department of Mathematics, Boston College, Chestnut Hill, MA 02467-3806, USA}
\address{Current address: Department of Mathematics, Weizmann Institute of Science, Rehovot 7610001, Israel}
\email{yuanqing.cai@weizmann.ac.il}

\address{Ginzburg: School of Mathematical Sciences, Tel Aviv University, Ramat Aviv, Tel Aviv 6997801,
Israel}
\email{ginzburg@post.tau.ac.il}
\address{Kaplan: Department of Mathematics, Bar Ilan University, Ramat Gan 5290002, Israel}
\email{kaplaney@gmail.com}

\thanks{This research was supported by the ERC, StG grant number 637912 (Cai), by the BSF, grant number 2012019 (Friedberg and Ginzburg), by the NSF, grant numbers DMS-1500977 and DMS-1801497 (Friedberg), and by the Israel Science Foundation, grant number 421/17 (Kaplan).}
\subjclass[2010]{Primary 11F70; Secondary 11F55, 11F66, 22E50, 22E55}
\keywords{Doubling method, Eisenstein series, Whittaker--Speh--Shalika representation,
Rankin--Selberg $L$-function, non-generic automorphic representation, unipotent orbit, metaplectic cover}
\begin{abstract}
We present an integral representation for the tensor product $L$-function of
a pair of automorphic cuspidal representations, one of a classical group, the other of a general linear group.
Our construction is uniform over all classical groups, and is applicable to {\it all} cuspidal representations; it does not require genericity. The main new ideas of the construction are the use of generalized Speh representations as inducing data for the Eisenstein series and the introduction of a new (global and local) model, which generalizes the Whittaker model. Here
we consider linear groups, but our construction also extends to arbitrary degree metaplectic coverings; this will be the topic of an upcoming work.
\end{abstract}

\maketitle

\section{Introduction}\label{intro}
One of the pillars of the Langlands program is the study of global
automorphic $L$-functions as mediating agents in the framework of functoriality.
The analytic properties of $L$-functions for representations of classical groups twisted by representations of general linear groups
played a central role in the proof of functoriality for classical groups by Cogdell \textit{et al.}\ \cite{CKPS}.
That proof relied on the Converse Theorem of Cogdell and Piatetski-Shapiro \cite{CPS3,CPS1999}: strong analytic properties of the twisted $L$-functions imply automorphicity.
Cogdell \textit{et al.}\ only considered globally generic representations -- those affording a Whittaker--Fourier coefficient -- because constructions of the $L$-functions,
either using the Langlands--Shahidi method or the Rankin--Selberg method, were limited to such representations.

On the other hand, Piatetski-Shapiro and Rallis \cite{PSR} introduced a different type of global integral which
represents the standard $L$-function for any classical group. Their construction is advantageous in two important aspects.
First, it presents a unified approach to integral representations of these $L$-functions, comparable to the uniformity of the Langlands--Shahidi method.
Second, it is applicable to {\sl any} cuspidal automorphic representation on the classical group. Previously known integrals unfolded to
a special model, afforded by some but not all cuspidal automorphic
representations, most notably the Whittaker model. In contrast, the construction of \cite{PSR}, now known as the doubling method,
unfolded to an integral involving a global matrix coefficient on the classical group, which is always nontrivial for some choice of data, and for decomposable data can be expressed as the (infinite) product of local matrix coefficients. On the downside, these constructions were limited to the standard representation, or its twists by characters.  Thus they did not provide enough information to be used in concert with the Converse Theorem to establish functoriality for non-generic automorphic representations.

Here we describe a new construction, which extends the doubling method, to provide integral representations for arbitrary automorphic cuspidal representations of classical groups twisted by
automorphic cuspidal representations of arbitrary rank general linear groups. Our integrals inherit the benefits of the doubling method in that the construction is uniform across all classical groups and applies to all cuspidal automorphic representations (as opposed to only globally generic ones), but in sharp contrast with the doubling method, we are not limited to rank-one twists.

This paper removes a fundamental obstruction to extending the functoriality results of
Cogdell \textit{et al.} \cite{CKPS} to any automorphic cuspidal representation.
Although such liftings also follow from the work of Arthur, an independent proof is of high interest.
In a forthcoming work we further develop the global and local theory, analyze the local integrals over both non-archimedean and archimedean fields and define $\gamma$-, $L$- and $\epsilon$-factors, along the lines of the work of Lapid and Rallis \cite{LR} on the original doubling method (see also \cite{Gan,Yamana}). We use these results to construct a functorial lift of $\pi$ to $\GL_N(\A)$ using the Converse Theorem (see \cite{CFK}).

Let $F$ be a number field with a ring of adeles $\A$, and $G$ be a split classical group.
Let $\pi$ and $\tau$ be irreducible cuspidal automorphic representations of $G({\A})$ and
$\GL_k({\A})$, respectively. We construct an Eisenstein series $E(h;f,s)$ on $H({\A})$,
where $H$ is an auxiliary classical group defined depending on $G$ and $k$. The inducing data of the Eisenstein series is a generalized Speh representation $\mathcal{E}_\tau$ attached to $\tau$. We choose a unipotent subgroup $U$ of $H$ and an automorphic character $\psi_U$ of $U$, such that $G\times G$ is embedded in the normalizer of $U$ and stabilizer of $\psi_U$. We consider the integral
\begin{align}\label{double-double}
Z(s,\varphi_1,\varphi_2,f)=\int\limits_{G(F)\times G(F)\backslash G({\A})\times G({\A})}
\varphi_1(g_1)\,\overline{\varphi_2({}^{\iota}g_2)}\,
E^{U,\psi_U}((g_1,g_2);f,s)\,dg_1\,dg_2.
\end{align}
Here $\varphi_1$ and $\varphi_2$ are cusp forms in the space of $\pi$, ${}^{\iota}$ is an involution of $G$ and $E^{U,\psi_U}$ denotes the Fourier coefficient of the series with respect to $U$ and $\psi_U$. This is a ``doubling construction" in the sense that
the integral is over two copies of $G$ and for $k=1$, reproduces the doubling integral of \cite{PSR}
(the original doubling method was motivated by doubling in the context of quadratic forms).

The main result of this paper is the following.
\begin{theo}\label{theo:main}
The global integral \eqref{double-double} represents the global partial $L$-function $L^S(s,\pi\times\tau)$.
\end{theo}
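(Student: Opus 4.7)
The approach is the classical unfolding of Rankin--Selberg integrals, adapted to the doubling setup and the generalized Whittaker model $(U,\psi_U)$. I would first expand $E(h;f,s)$ as a sum over $P(F)\backslash H(F)$, where $P$ is the parabolic of $H$ from which $\mathcal{E}_\tau$ is induced, bring the summation outside the integrals over $[G]\times[G]$, and then fold in the integration along $U$ that defines the Fourier coefficient. This converts $Z(s,\varphi_1,\varphi_2,f)$ into a sum of orbit integrals indexed by the double cosets $P(F)\backslash H(F)/\bigl((G\times G)(F)\cdot U(F)\bigr)$, after a preliminary parametrization of these cosets and of the corresponding stabilizers in $G\times G$.

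Two mechanisms should annihilate all but one of these orbit integrals. First, for cosets whose stabilizer in $G\times G$ contains a nontrivial unipotent radical $V$ of a proper parabolic, a change of variables realizes the inner integration as a constant term $\int_{V(F)\backslash V(\A)}\varphi_i(vg_i)\,dv$, and cuspidality of $\pi$ forces the contribution to vanish. Second, for cosets on which the restriction of $\psi_U$ to the induced unipotent piece produces a Fourier coefficient attached to a unipotent orbit larger than (or incompatible with) the one supporting $\mathcal{E}_\tau$, the contribution vanishes by the known unipotent-orbit structure of generalized Speh representations. What should remain is a single open orbit, whose stabilizer is essentially the graph of $g\mapsto {}^{\iota}g^{-1}$ embedded diagonally in $G\times G$, giving an integral over $G(F)\backslash G(\A)$ of a matrix coefficient of $\pi$ paired against a $(U,\psi_U)$-model vector of $\mathcal{E}_\tau$.

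Once this open-orbit expression is in place, the integrand is factorizable and $Z$ decomposes as an Euler product $\prod_v Z_v(s)$. At each place $v\notin S$ the local integral $Z_v(s)$ is a pairing of spherical matrix coefficients of $\pi_v$ against a spherical vector in the new model of the unramified Speh representation; evaluating it through an Iwasawa decomposition together with a Casselman--Shalika-type formula for the generalized Whittaker--Speh--Shalika model should recover $L(s,\pi_v\times\tau_v)$ up to the normalizing factor $d_v(s)$ of the Eisenstein series, as in \cite{PSR,LR}. The main obstacles will be, first, the orbit analysis and the vanishing of the ``degenerate'' Fourier coefficients on $\mathcal{E}_\tau$ -- which relies on a precise description of the unipotent orbit attached to $\mathcal{E}_\tau$ and on the interaction of $(U,\psi_U)$ with the parabolic $P$ -- and, second, the unramified local computation, which depends on establishing the right Casselman--Shalika identity for the new model and carefully tracking the resulting normalizing factors.
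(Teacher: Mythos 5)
Your global strategy is the same as the paper's: unfold $E(h;f,s)$ over $P(F)\backslash H(F)$, fold in the $(U,\psi_U)$-integration, parametrize $P(F)\backslash H(F)/L(F)$ with $L=(G\times G)U$, and kill all orbits except the open one using (i) the character $\psi_U$, (ii) cuspidality of $\pi_i$, and (iii) the fact that $\mathcal{E}_\tau$ is attached to the orbit $(k^c)$, so any induced Fourier coefficient attached to a greater or non-comparable orbit vanishes. Two points in your plan, however, do not survive contact with the details. First, you assert that after the open-orbit identification the integral ``decomposes as an Euler product $\prod_v Z_v(s)$.'' This is stronger than what is true: uniqueness of the $(k,c)$ functional is only established at the unramified places (this is exactly condition (2) in the definition of a $(k,c)$ representation, proved for $\sigma_{k,c}$ via the Jacquet-module computation), so the $(k,c)$ functional $\Lambda$ is only ``almost decomposable'' and one obtains an almost Euler product: a single factor $Z_S$ over the bad places times $\prod_{\nu\notin S}Z_\nu$. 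Since the statement concerns the partial $L$-function this suffices, but the distinction must be made. Relatedly, to collapse the inner integration over $G(F)\backslash G(\A)$ into the matrix coefficient $\langle\varphi_1,\pi(g)\varphi_2\rangle$ you need the extra invariance of $\Lambda$ under the diagonally embedded $\SL_c(\A)$ (again a consequence of the smallness of the orbit attached to $\mathcal{E}_\tau$); this step is absent from your outline and is not automatic.

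Second, and more seriously, your plan for the unramified places rests on ``a Casselman--Shalika-type formula for the generalized Whittaker--Speh--Shalika model.'' No such formula is available, and the paper does not prove one; proposing it as the engine of the local computation leaves the hardest part of the theorem unaddressed. The paper instead proceeds by a chain of reductions: a decomposition of the local $(k,c)$ functional into $(k,a)\otimes(k,b)$ functionals (via the Jacquet-integral realization and the element $l_{a,b}$) is used to reduce the $G\times\GL_k$ integral to a $\GL_n\times\GL_k$ integral, at the cost of an explicit proportionality factor $d_\tau(s)$ computed from intertwining operators by the Gindikin--Karpelevich formula (this is where the $L(\cdot,\tau,\Sym^2)$, $L(\cdot,\tau,\wedge^2)$ and $L(\cdot,\tau)$ factors in the denominator arise); the $\GL_n\times\GL_k$ integral is then reduced inductively to $n=1$; and the $\GL_1\times\GL_k$ case is computed not by a spherical-function formula for the new model but by a uniqueness argument (one-dimensionality of the relevant $\Hom$ space, Bernstein's continuation principle) comparing the integral with the classical Rankin--Selberg integrals for $\GL_1\times\GL_k$ and $\GL_1\times\GL_{2k}$ of Jacquet--Piatetski-Shapiro--Shalika, through a local functional equation. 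If you want to keep your route, you would have to actually establish the Casselman--Shalika identity for the $(k,c)$ model, which is a substantial unproved input; otherwise the reduction-plus-comparison argument is the path that closes the local computation.
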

\noindent
The function $L^S(s,\pi\times\tau)$ here is the product of local $L$-functions over all finite places of $F$
for which the local data are unramified. Theorem~\ref{theo:main} follows by combining Theorem~\ref{theorem:main theorem classical groups},
identity~\eqref{eq:almost Euler}, Theorem~\ref{theorem:local integrals} and Theorem~\ref{theorem:unramified computation for Sp(2n),SO(2n)} below. Here we treat two cases in detail: $G=\Sp_{2n}$ and $\SO_{2n}$.

In a subsequent paper we elaborate on the details of this construction also for $\SO_{2n+1}$ and split connected general spin groups of arbitrary rank (see \cite{CFK}). In this paper we do not treat these groups as they require additional work of a technical nature.

The novel ingredients of \eqref{double-double} compared to the doubling method of \cite{PSR}, are the usage of the specialized inducing data, namely the representation $\mathcal{E}_\tau$, and the replacement of the Eisenstein series there with its Fourier coefficient. Critically, it turns out that the representation $\mathcal{E}_\tau$ is supported on a sufficiently small unipotent orbit.
The unfolding process leads us to introduce a new (global and local) model, which we call a Whittaker--Speh--Shalika model, since it generalizes the Whittaker and Shalika models for generalized Speh representations
(see Definition~\ref{def1} below). The nonvanishing of the appropriate Fourier coefficient of $\mathcal{E}_{\tau}$, as well as the vanishing properties of $\mathcal{E}_{\tau}$ that we use, were proved by Ginzburg \cite{G4}; see also Jiang and Liu \cite{JL2013} for a detailed study of these representations in a global context.
Then to deduce that the integral is Eulerian (Theorem~\ref{theorem:main theorem classical groups}) we establish multiplicity one results, at least over the unramified places.

One immediate consequence
of Theorem~\ref{theo:main} is that $L^S(s,\pi\times\tau)$ admits meromorphic continuation to the plane, see
Theorem~\ref{theorem:mero}. This is of course a well-known result of Langlands (e.g., \cite{La2,La5}), who established it by analyzing the constant term of the Eisenstein series. However, the constant term approach is not sufficient to handle local factors at the remaining places, nor to get the full analytic behavior necessary to apply the Converse Theorem. Local factors for irreducible generic representations are usually defined via Shahidi's celebrated method of local coefficients (e.g., \cite{Sh3}). This method is not applicable in general to non-generic representations, hence the aforementioned functoriality results \cite{CKPS} were limited to generic ones. By contrast, the local version of our integrals may be used to define and study local $L$- and $\epsilon$-factors at all places. In fact,
the definition of local factors using integral representations may well be the only available analytic method
for the general case. For further reference see, e.g., \cite{PR2,PS,Ik,HKS,Ik2,LR,me4,Yamana}.
Note that historically, for general linear groups this was the original definition, see \cite{GJ,JPSS}.

Our ideas and construction apply also to non-linear coverings. Starting with genuine representations $\pi$ and $\tau$ of certain covering groups of $G(\A)$ and $\GL_k(\A)$, we construct a similar global integral, the main difference being the rank of $H$, which now also depends on the parameters of the covering. We will describe this construction in a subsequent work. See also Gao \cite{Gao2018} for the extension of the constant term approach to covering groups.

For linear groups, the descent method was used to construct an explicit realization of an inverse to the functorial lift from globally generic representations of classical groups to $\GL_N$; see Ginzburg \textit{et al.} \cite{GRS5,GRS7,RGS} and also Soudry \cite{Soudry6}. We expect to use the integrals developed here to extend the descent method to functorial lifts of arbitrary automorphic cuspidal representations, and also to obtain new descent constructions for covering groups.

The doubling method has had numerous important applications. We list several of these. Its strong relation to the theta correspondence, via the Siegel-Weil formula, has been studied in \cite{KudlaRallis1994,HKS,WGS,Yamana}; B\"ocherer and Schmidt \cite{BochererSchmidt2000} used the doubling method to construct standard $p$-adic $L$-functions for Siegel modular forms; recently, Eischen \textit{et. al.} \cite{EischenHarrisLiSkinner} used this method to construct $p$-adic $L$-functions for unitary groups, completing the results of Harris \textit{et. al.} \cite{HarrisLiSkinner2006} (see also \cite{HarrisLiSkinner2005}), which are part of a long-term project by these authors; and Garrett \cite{Garrett} developed the doubling method in a classical framework. Among other works on the doubling method we mention \cite{Takano1997,JahwanKim2000}.

The original doubling method was developed for classical groups of symplectic, orthogonal or unitary type \cite{PSR,LR},
and later extended to several more cases including the double cover of the symplectic group by Gan \cite{Gan}, and
unitary groups of hermitian or skew-hermitian forms over division algebras in the work of Yamana \cite{Yamana}. We expect similar extensions to be applicable here. Interestingly, the odd orthogonal case was excluded from \cite{PSR} and was first treated only in \cite{LR} (for technical reasons). Also note that we develop the theory for connected groups, i.e., $\SO_{2n}$ instead of $\Orth_{2n}$. This is compatible with the theories of Langlands and Shahidi, which were formulated for connected groups, and with several other works on Rankin--Selberg integrals.

Earlier works on integral representations include \cite{JS1,GPS,BF,G,JS4,BG,Soudry,Tk}. Recent works \cite{GPSR,GJRS,JZ} developed $L$-functions for tensor products of automorphic cuspidal representations of classical groups and general linear groups. In these works the Whittaker model was replaced by a pairing with a suitable auxiliary cuspidal representation. In particular, Jiang and Zhang \cite{JZ} extended the construction from cuspidal representations of $\GL_k$ to isobaric representations. The isobaric sum was previously considered in \cite{RGS}, albeit in a less general context. We also mention two recent works by Soudry \cite{Soudry7,Soudry8}, who reduced local computations with non-generic data to the known generic case, in the context of the integrals of \cite{GPSR,GJRS,JZ}. By contrast, our approach to the (local) study of non-generic representations is to use the uniqueness of the pairing of an irreducible representation with its contragredient, which is true without any additional assumption (even over covering groups).

The rest of this work is organized as follows. In \S~\ref{global} we present the global construction, starting with the integral
(\S~\ref{global classical}), then discuss the generalized Speh representation and its properties (\S~\ref{speh}) and carry out the unfolding process (\S~\ref{global symplectic}). The computation of the local integrals with unramified data is described in \S~\ref{Computation of the local factors with unramified data}. The local integrals are presented in \S~\ref{Local factors for classical} and their computation is reduced to a similar computation on general linear groups, which is further reduced to a rank-$1$ case (\S~\ref{Local factors for GLn}). The latter integral is computed in \S~\ref{final reduction n = 1 linear groups} by, surprisingly enough, reducing it to the familiar Rankin--Selberg integrals of $\GL_1\times\GL_k$ and $\GL_1\times\GL_{2k}$ from \cite{JS1,JPSS}.

\subsection*{Acknowledgments}
Part of this work was done while the fourth named author was a Zassenhaus Assistant Professor
at The Ohio State University, under the supervision of Jim Cogdell. Eyal\footnote{Eyal dedicates his part of the work to his beloved Sophie Kaplan who passed away unexpectedly a few weeks before the submission of the first version of this work.} wishes to express his gratitude to Jim for his kind encouragement and support.

\section{The global construction}\label{global}
\subsection{The global integral}\label{global classical}
We introduce the general global integral. Let $n$ and $k$ denote two positive integers,
$F$ be a number field with a ring of adeles $\A$, and
$G$ be a split connected classical group of rank $n$.
Let $\pi_1$ and $\pi_2$ denote two irreducible cuspidal automorphic representations of
$G(\A)$, and $\tau$ denote an irreducible cuspidal automorphic representation of $\GL_{k}(\A)$.

Let $c=c(n)$ be the rank of the natural general linear group containing $G$, i.e., $c=2n$ for $G=\Sp_{2n}$ and $\SO_{2n}$, and $c=2n+1$ for $G=\SO_{2n+1}$. Depending on $G$, we introduce another classical group $H$ of rank $kc$, on which we shall construct an Eisenstein series. For example if $G=\Sp_{2n}$, $H=\Sp_{4kn}$.
Fix a Borel subgroup $B_H$ in $H$ and let $P=M_P\ltimes U_P$ denote a maximal
standard parabolic subgroup of $H$ with a Levi part $M_P\cong\GL_{kc}$, i.e., a so-called Siegel parabolic subgroup. The precise definitions of $H$ and $P$ will be given near the end of this section.

The key building block in our construction is a residue representation $\mathcal{E}_{\tau}$ of $\GL_{kc}(\A)$,
which we call a Whittaker--Speh--Shalika representation of type $(k,c)$. In this work it is the generalized
Speh representation corresponding to $c$ copies of $\tau$. The definition and construction are detailed in \S~\ref{speh} below.
Its fundamental properties are that it is supported on a sufficiently small unipotent orbit (in the sense of \cite{G2}), and on this orbit it supports a $(k,c)$ functional $\Lambda$. This functional is ``almost decomposable" (see Claim~\ref{claim:decomp Lambda}). These properties are crucial for the unfolding argument and proof that the global integral is ``almost" an Euler product
(see \eqref{eq:almost Euler}). Explicitly, if $\varphi$ belongs to the space of $\mathcal{E}_{\tau}$,
\begin{align*}
\Lambda(\varphi)=\int\limits_{V_{(c^k)}(F)\backslash V_{(c^k)}({\A})}
\varphi(v)\,\psi^{-1}(\mathrm{tr}(\sum_{i=1}^{k-1}v_{i,i+1}))\,dv,
\end{align*}
where $V_{(c^k)}$ is a subgroup of upper triangular matrices, which is the unipotent radical of the parabolic subgroup of $\GL_{kc}$ corresponding to the partition $(c^k)$, $v_{1,2},\ldots,v_{k-1,k}$ are the $c\times c$ blocks above the main diagonal of $v$ (see \S~\ref{speh}) and $\mathrm{tr}$ is the trace map. For example when $c=1$, this is the usual Whittaker--Fourier coefficient and $\mathcal{E}_{\tau}$ is simply $\tau$, which is known to be globally generic, i.e., supports a $(k,1)$ functional. Note that for $c>1$ we do not claim or expect arbitrary cuspidal automorphic representations of $\GL_{kc}(\A)$ to be of type $(k,c)$, this is a special property enjoyed by the representations $\mathcal{E}_{\tau}$; see also Remark~\ref{remark:uniqueness comment} for a local discussion.

Let $K_H$ be a maximal compact subgroup of $H$ which is in a ``good position" with respect to the maximal torus of $B_H$ (see e.g., \cite[\S~I.1.4]{MW2}). Form the Eisenstein series $E(h;f,s)$ on $H({\A})$, attached to the induced representation
\begin{align*}
\Ind_{P({\A})}^{H({\A})}({\mathcal E}_{\tau}\delta_P^s).
\end{align*}
Here $\delta_P$ is the modulus character of $P$ (throughout, induction is normalized). By definition, for $\Real(s)\gg0$,
\begin{align*}
E(h;f,s)=\sum\limits_{\gamma\in P(F)\backslash H(F)}f(\gamma h,s),\qquad h\in H({\A}),
\end{align*}
where $f(h,s)$ is a standard section, i.e., a section whose restriction to $K_H$ is independent of $s$. We will use a certain Fourier coefficient of this series.

To describe this coefficient, let $Q=M\ltimes U$ be a standard parabolic subgroup of $H$, whose Levi part $M$ is isomorphic to $k-1$ copies of $\GL_c$ multiplied by a split classical group of rank $c$. The subgroup $Q$ is uniquely defined given $k$ and the type of $H$. E.g., for $G=\Sp_{2n}$, $M=\GL_{2n}\times\ldots\times\GL_{2n}\times\Sp_{4n}$. Recall that unipotent orbits for classical groups are indexed by (certain) partitions (see e.g., \cite{Spaltenstein1982,Cr,CM}). Consider the unipotent orbit
\begin{align*}
((2k-1)^c1^c)
\end{align*}
associated with the group $H$. It follows from Collingwood
and McGovern \cite{CM} that this is a well-defined orbit for every group $H$ (for $\Sp_{2n}$, odd numbers occur with even multiplicity, in the orthogonal cases this is clear since there are no even parts),
and that the stabilizer of this orbit over an algebraically closed field contains the group $G\times G$.
From \cite{G2} we deduce that a Fourier coefficient associated with
this orbit can be constructed along $U$, and an automorphic character $\psi_U$ of $U(\A)$ can be defined
such that its stabilizer inside $M(\A)$ contains $G(\A)\times G(\A)$. For an example of $U$, $\psi_U$ and the embedding $(g_1,g_2):G(\A)\times G(\A)\rightarrow M(\A)<H(\A)$ in the cases of $\Sp_{2n}$ and $\SO_{2n}$
see \S~\ref{global symplectic}. For brevity, we denote the identity element of $G$ by $1$ in the embedding, e.g., write $(1,g)$.

The global integral we consider is
\begin{align*}
Z(s,\varphi_1,\varphi_2,f)=\int\limits_{G(F)\times G(F)\backslash G({\A})\times G({\A})}\,
\int\limits_{U(F)\backslash U({\A})}\varphi_1(g_1)\,\overline{\varphi_2({}^{\iota}g_2)}\,
E(u(g_1,g_2);f,s)\,\psi_U(u)\,du\,dg_1\,dg_2.
\end{align*}
Here $\varphi_i$ is a cusp form in the space of $\pi_i$, $(g_1,g_2)$ is the embedding and $\iota$ is a certain involution of $G$ (see below). The integral converges absolutely for $\Real(s)\gg0$ and admits meromorphic continuation to the whole complex plane; this follows from the rapid decay of cusp forms, moderate growth of the Eisenstein series and the meromorphic continuation of the Eisenstein series.

Let $L=(G\times G)U$. It is a subgroup of $Q$. The action of $L(F)$ on the right on the homogeneous space $P(F)\backslash H(F)$ has a unique open orbit. Let $\delta\in H(F)$ be a representative for this orbit. The involution $\iota$ is chosen such that $\delta(g,{}^{\iota}g)\delta^{-1}\in M_P(\A)$ for all $g\in G(\A)$. Denote $U_0=U\cap U_{P}$. Also let
\begin{align*}
\langle\varphi_1,\varphi_2\rangle=\int\limits_{G(F)\backslash G({\A})}\varphi_1(g)\overline{\varphi_2(g)}\,dg
\end{align*}
be the standard inner product on $G(\A)$. Refer to \S~\ref{global symplectic} for the concrete choices of $\iota$ and $\delta$
(for $\Sp_{2n}$ see \eqref{eq:iota} and \eqref{open2}).

In the following theorem we state the basic properties of the integral.
\begin{theorem}\label{theorem:main theorem classical groups}
The integral $Z(s,\varphi_1,\varphi_2,f)$ is absolutely convergent for $\Real(s)\gg0$ and admits meromorphic continuation to the plane. It is not identically zero only if
$\pi_1=\pi_2=\pi$. In this case, for $\Real(s)\gg0$ it is equal to
\begin{align}\label{global2}
\int\limits_{G({\A})}\int\limits_{U_0({\A})}
\langle\varphi_1,\pi(g)\varphi_2\rangle f_{W({\mathcal E}_{\tau})}(\delta u_0(1,{}^{\iota}g),s)
\,\psi_U(u_0)\,du_0\,dg.
\end{align}
Here $f_{W({\mathcal E}_{\tau})}$ is the composition of the section and the unique functional $\Lambda$ attached to
${\mathcal E}_{\tau}$: for any $s\in\C$ and $h\in H(\A)$,
\begin{align*}
f_{W(\mathcal{E}_{\tau})}(h,s)=\int\limits_{V_{(c^k)}(F)\backslash V_{(c^k)}({\A})}
f(vh,s)\,\psi^{-1}(\mathrm{tr}(\sum_{i=1}^{k-1}v_{i,i+1}))\,dv.
\end{align*}
\end{theorem}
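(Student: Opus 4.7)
The plan is to execute the standard Rankin--Selberg unfolding, adapted to this doubling setup with a non-generic inducing representation. Convergence for $\Real(s)\gg0$ and meromorphic continuation are immediate from the rapid decay of cusp forms combined with the moderate growth and meromorphic continuation of $E(h;f,s)$; the bulk of the work is in the unfolding.

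First, I would substitute $E(u(g_1,g_2);f,s)=\sum_{\gamma\in P(F)\bs H(F)}f(\gamma u(g_1,g_2),s)$, interchange summation and integration in the region of absolute convergence, and decompose the quotient $P(F)\bs H(F)$ into orbits under the right action of $L(F)=(G\times G)(F)\ltimes U(F)$. By hypothesis there is a unique open orbit with representative $\delta$, and finitely (or countably, modulo a standard reduction) many lower-dimensional orbits. The inner integral over $U(F)\bs U(\A)$ twisted by $\psi_U$ must also be exchanged past the sum; for this one needs to verify that each term is well-defined as an iterated integral, which follows from the structure of the Fourier coefficient of an Eisenstein series (this is by now a routine technical point).

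Second, I would show that every non-open orbit contributes zero. For each representative $\gamma\neq\delta$, a careful analysis of the stabilizer $\gamma^{-1}P(F)\gamma\cap L(F)$ shows that after collapsing, the resulting integrand acquires one of two features: either an inner unipotent integration against $\varphi_1$ or $\overline{\varphi_2}$, which vanishes by cuspidality, or an inner unipotent integration against the section $f(\cdot,s)$ that corresponds to a Fourier coefficient of $\mathcal{E}_\tau$ attached to a unipotent orbit strictly larger (in the dominance order on partitions) than $((2k-1)^c1^c)$-compatible data, which vanishes by the main support result for $\mathcal{E}_\tau$ recalled from \cite{G2,G4}. The vanishing on non-open orbits is the main obstacle: it requires a concrete enumeration (or stratification) of the $L(F)$-orbits on $P(F)\bs H(F)$ combined with a matching between the stabilizer-induced inner integrations and known vanishing phenomena. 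This is where the specialized choice of inducing data $\mathcal{E}_\tau$, supported on a small unipotent orbit, is indispensable.

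Third, on the open orbit, the stabilizer of $\delta$ in $(G\times G)$ is the twisted diagonal $\{(g,{}^{\iota}g)\}$ because $\delta(g,{}^{\iota}g)\delta^{-1}\in M_P(\A)$. Unfolding yields an integral over $\bigl(\{(g,{}^{\iota}g)\}\ltimes(U\cap\delta^{-1}P\delta)\bigr)\bs (G\times G)(\A)\times U(\A)$, which after a change of variables $g_2\mapsto {}^{\iota}g\, g_2^{-1}\cdot (\text{something})$ and folding one copy of $G(F)\bs G(\A)$ into an inner product produces
\begin{align*}
\int_{G(\A)}\langle\varphi_1,\pi_1(g)\varphi_2\rangle\int_{U_0(\A)}f(\delta u_0(1,{}^{\iota}g),s)\,\psi_U(u_0)\,du_0\,dg,
\end{align*}
after absorbing the part of $U$ that lands in $M_P$ into the modular behavior of the section. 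The matrix coefficient $\langle\varphi_1,\pi_1(g)\varphi_2\rangle$ vanishes identically unless $\pi_1\simeq\pi_2$, giving the dichotomy in the statement. Finally, to recognize $f_{W(\mathcal{E}_\tau)}$, I would observe that the unipotent part of $U$ lying in $M_P\cap\delta U\delta^{-1}$ produces exactly the integration against $\psi^{-1}(\mathrm{tr}(\sum v_{i,i+1}))$ over $V_{(c^k)}(F)\bs V_{(c^k)}(\A)$ that defines the $(k,c)$-functional $\Lambda$, by the almost-decomposability of $\Lambda$ (Claim~\ref{claim:decomp Lambda}) applied to the section. This yields exactly \eqref{global2}.
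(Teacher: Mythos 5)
Your overall strategy is the paper's: unfold the series over $P(F)\backslash H(F)/L(F)$, kill the non-open orbits by cuspidality and by the smallness of $\mathcal{E}_\tau$, and identify the open-orbit contribution. But two steps as written have genuine gaps. First, at the open orbit the collapse of the $g_1$-integration into $\langle\varphi_1,\pi(g)\varphi_2\rangle$ is not automatic: after factoring through $L_\delta$, the section is evaluated at $\delta u_0(g_1,{}^{\iota}g_1)(1,{}^{\iota}g)$ and ${}^{\delta}(g_1,{}^{\iota}g_1)=\diag(g_1,\ldots,g_1)$, so the integrand still depends on $g_1$ through $f_{W(\mathcal{E}_\tau)}$. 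What makes the folding legitimate is the extra invariance of the $(k,c)$ functional under $\SL_c^{\Delta}(\A)$ (Claim~\ref{claim:extra invariance}), which itself rests on the smallness of $\mathcal{E}_\tau$; the claim you invoke instead, Claim~\ref{claim:decomp Lambda}, is about local-global factorization of $\Lambda$ and enters only at \eqref{eq:almost Euler}, not in this theorem. The recognition of $f_{W(\mathcal{E}_\tau)}$ itself is just the conjugation of the unipotent part $V$ of $L_\delta$ across $\delta$ into $V_{(c^k)}$ and needs no decomposability.

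Second, your dichotomy for the non-open orbits is both misstated and incomplete. The relevant support condition is on the inducing data: Fourier coefficients of $\mathcal{E}_\tau$ attached to unipotent orbits of $\GL_{kc}$ that are greater than \emph{or non-comparable with} $(k^c)$ vanish; it is not a comparison with the $H$-orbit $((2k-1)^c1^c)$, and the non-comparable case is genuinely used (the paper's Claim~\ref{claim: a>0} produces the coefficient attached to $((k+1)1^{2kn-k-1})$, which is non-comparable with $(k^{2n})$, not larger). Moreover, the workhorse mechanism in the reduction of representatives $\gamma=wu$ is a third one you omit: if $\psi_U$ is nontrivial on a one-parameter subgroup of $U$ conjugated by $\gamma$ into $U_P$, then $\int_{F\backslash\A}\psi$ forces $\mathrm{I}(\gamma)=0$ (the paper's Lemma~\ref{Lemma A}); most of the elimination of Weyl-cell representatives, and the reduction of the unipotent parameters $u^1$, $u^2[T]$ (whose $F$-points make the double coset space infinite, so a further reduction by the $\GL_n\times\GL_n$-action on $T$ is needed, contrary to your finiteness assertion), is carried out with it rather than with cuspidality or smallness. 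Without these ingredients the orbit analysis and the open-orbit identification do not close.
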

We prove the main identity \eqref{global2} for $\Sp_{2n}$ in \S~\ref{global symplectic}. The proof for $\SO_{2n}$ is similar, and the changes
that are needed for this group are described in Remark~\ref{remark:global parameters for SO} below. Also note that, while we do not provide details for other groups in this work, Theorem~\ref{theorem:main theorem classical groups} is also valid for $\SO_{2n+1}$, and (with minor changes) for split connected general spin groups of even or odd rank.

As explained in the introduction, here we describe in detail the cases of the split groups $\Sp_{2n}$ and $\SO_{2n}$.
For concreteness, $\Sp_{2n}$ is defined as the subgroup of matrices $g\in \SL_{2n}$ such that
${}^tg\left(\begin{smallmatrix}&J_{n}\\-J_{n}\end{smallmatrix}\right)g=\left(\begin{smallmatrix}&J_{n}\\-J_{n}\end{smallmatrix}\right)$, where ${}^tg$ is the transpose of $g$ and
$J_{n}$ is the $n\times n$ permutation matrix having $1$ on its anti-diagonal. Also define $\SO_{2n}=\{g\in\SL_{2n}:{}^tgJ_{2n}g=J_{2n}\}$.

Put $c=2n$. For $G=\Sp_{2n}$ let $H=\Sp_{2kc}$, if $G=\SO_{2n}$ take $H=\SO_{2kc}$.
Regarding $H$ as a subgroup of $\GL_{2kc}$, choose the Borel subgroup $B_H=B_{\GL_{2kc}}\cap H$, where
$B_{\GL_{2kc}}<\GL_{2kc}$ is the subgroup of upper triangular invertible matrices,
and similarly $B_G$ for $G<\GL_c$. This already fixes $P$ unless $H=\SO_{2kc}$, then we choose $P$ with $M_P=\{\diag(g,J_{kc}{}^tg^{-1}J_{kc}):g\in\GL_{kc}\}$.

The doubling construction can also be described for general linear groups extending the case $k=1$ of \cite[\S~4.2]{PSR}. One must divide by the center and handle convergence (as in \cite[\S~4.2]{PSR}). We omit the details, since we will only be using these integrals locally, for the purpose of computing the integrals for classical groups with unramified data.

\subsection{Whittaker--Speh--Shalika Representations}\label{speh}
We present the family of representations $\mathcal{E}_{\tau}$ used in \S~\ref{global classical} to define the Eisenstein series.

In the group $\GL_l$, write $B_{\GL_l}=T_{\GL_l}\ltimes N_{\GL_l}$ where $T_{\GL_l}$ is the diagonal torus. For a composition $(l_1,\ldots,l_r)$ of $l$, $P_{(l_1,\ldots,l_r)}=M_{(l_1,\ldots,l_r)}\ltimes V_{(l_1,\ldots,l_r)}$ denotes the standard parabolic subgroup of $\GL_l$ whose Levi part $M_{(l_1,\ldots,l_r)}$ is isomorphic to $\GL_{l_1}\times\ldots\times\GL_{l_r}$.
(We recall that a composition of a positive integer $l$ is an ordered sequence of positive integers summing to $l$.)
Also let $C_l$ be the center of $\GL_l$, denote the additive group of $l\times l'$ matrices by $\Mat_{l\times l'}$, and set $\Mat_{l}=\Mat_{l\times l}$.

Recall that the unipotent orbits of $\GL_l$ are in bijection with the partitions of $l$, and for such a partition there is a corresponding unipotent subgroup and a set of generic characters (see \cite[\S~2]{G2} for these definitions).

Let $k$ and $c$ be positive integers. The unipotent subgroup corresponding to the orbit $(k^c)$ is $V_{(c^k)}$. Fix a nontrivial character $\psi$ of $F\backslash\A$. Denote a matrix $v\in V_{(c^k)}$ by
$v=(v_{i,j})_{1\leq i,j\leq k}$, where $v_{i,j} \in\Mat_{c}$.
For an automorphic function $\varphi$ on $\GL_{kc}(F)\backslash\GL_{kc}({\A})$, consider the integral
\begin{align}\label{whspeh1}
\Lambda(\varphi)=\int\limits_{V_{(c^k)}(F)\backslash V_{(c^k)}({\A})}
\varphi(v)\,\psi^{-1}(v)\,dv,
\end{align}
where $\psi$ is the character of $V_{(c^k)}$ defined by
\begin{align}\label{eq:wss character}
\psi(v)=\psi(\mathrm{tr}(\sum_{i=1}^{k-1}v_{i,i+1})).
\end{align}
This is a Fourier coefficient corresponding to the orbit $(k^c)$, and we call it a Whittaker--Speh--Shalika coefficient.

\begin{example}
In particular when $c=1$,
\begin{align*}
\Lambda(\varphi)=\int\limits_{N_{\GL_{k}}(F)\backslash N_{\GL_{k}}({\A})}
\varphi(v)\,\psi^{-1}(\sum_{i=1}^{k-1}v_{i,i+1})\,dv
\end{align*}
is the well-known Whittaker--Fourier coefficient.
An automorphic representation $\rho$ of $\GL_{k}(\A)$ is globally generic when this functional is not identically zero on the elements $\varphi$ in the space of $\rho$. As we will see below, the representation $\mathcal{E}_{\tau}$ is defined for $c=1$ to be $\tau$ itself. Since $\tau$ is cuspidal, by \cite{JL,Sh,PiatetskiShapiro1975} it is globally generic.
\end{example}

\begin{definition}\label{def1}
An irreducible automorphic representation $\rho$ of $\GL_{kc}({\A})$ is a Whittaker--Speh--Shalika representation of type $(k,c)$, or briefly a $(k,c)$ representation, if the following holds.
\begin{enumerate}[leftmargin=*]
\item\label{def:Whittaker--Speh--Shalika 1}
The Fourier coefficient $\Lambda(\varphi)$ does not vanish identically on the space of $\rho$, and
moreover, for all unipotent orbits greater than or non-comparable with $(k^c)$, all corresponding Fourier coefficients are zero for all choices of data.
\item\label{def:Whittaker--Speh--Shalika 2}
Let $\rho_\nu$ denote the irreducible constituent of $\rho$ at a finite place $\nu$, and assume $\rho_\nu$ is unramified. Then for all unipotent orbits greater than or non-comparable with $(k^c)$,
the corresponding twisted Jacquet module of $\rho_\nu$ vanishes (i.e., the local analogue of \eqref{def:Whittaker--Speh--Shalika 1} holds). Moreover,
${\Hom}_{V_{(c^k)}(F_\nu)}(\rho_\nu,\psi_\nu)$ is one-dimensional, where $\psi_\nu$ is given by \eqref{eq:wss character}.
\end{enumerate}
\end{definition}
In the notation of \cite{G2}, condition~\eqref{def:Whittaker--Speh--Shalika 1} may be written as ${\mathcal O}_{\GL_{kc}}(\rho)=(k^c)$. The local vanishing properties of
$\rho_{\nu}$ in the definition imply the global vanishing by a local-global principle (see e.g., \cite[Proposition~1]{JR}). In the opposite direction, the nonvanishing of the global functional \eqref{whspeh1} implies
${\Hom}_{V_{(c^k)}(F_\nu)}(\rho_\nu,\psi_\nu)\ne0$ for all $\nu$ (not only the unramified places), because (in general) the global functional gives rise to nonzero local functionals at all places.

For a unitary continuous character $\eta:F^*\backslash\A^*\rightarrow\C$, let $L^2(\GL_{kc}(F)\backslash \GL_{kc}(\A),\eta)$ be the space of measurable $L^2$-functions $\varphi:\GL_{kc}(F)\backslash \GL_{kc}(\A)\rightarrow\C$ such that $\varphi(zg)=\eta(z)\varphi(g)$ for all $z\in C_{kc}(\A)$. The group $\GL_{kc}(\A)$ acts on $L^2(\GL_{kc}(F)\backslash \GL_{kc}(\A),\eta)$ by right-translation and we denote the action by $g\cdot\varphi$, where $g\in \GL_{kc}(\A)$.

Let $\rho_0$ be an irreducible subrepresentation of $L^2(\GL_{kc}(F)\backslash \GL_{kc}(\A),\eta)$ for some $\eta$
and $\rho=|\det|^r\rho_0$ for some $r\in\R$. Assume $\rho$ is a $(k,c)$ representation.
The space $W(\rho)$ of functions
\begin{align*}
g\mapsto\Lambda(g\cdot\varphi),
\end{align*}
where $\varphi$ varies in the space of $\rho$, is called a global $(k,c)$ model of $\rho$.

Write $\rho=\otimes_{\nu}'\rho_{\nu}$
as a restricted tensor product, with respect to a system $\{\xi_{\nu}^0\}_{\nu\notin S}$ of spherical vectors,
where $S$ is a finite set of places of $F$ depending on $\rho$. For all $\nu\notin S$, $\rho_{\nu}$ is unramified and then the space $\Hom_{V_{(c^k)}(F_\nu)}(\rho_\nu,\psi_\nu)$ is one-dimensional. We fix $\Lambda_{\nu}^0\in{\Hom}_{V_{(c^k)}(F_\nu)}(\rho_\nu,\psi_\nu)$ at these places by requiring $\Lambda_{\nu}^0(\xi_{\nu}^0)=1$. We can further define
\begin{align*}
\Lambda_S\in{\Hom}_{V_{(c^k)}(F_S)}(\rho_S,\psi_S),
\end{align*}
where the subscript $S$ denotes the finite product over the places of $S$ (e.g., $\rho_S=\otimes_{\nu\in S}\rho_{\nu}$), by
\begin{align}\label{eq:partial Lambda S}
\Lambda_S(\xi_S)=\Lambda(\xi_S\otimes'_{\nu\notin S}\xi_{\nu}^0).
\end{align}
Then we have the following decomposition result.
\begin{claim}\label{claim:decomp Lambda}
Let $\varphi$ be a decomposable vector in the space of $\rho$, which we identify with the element
$\xi_S\otimes'_{\nu\notin S}\xi_{\nu}$ in $\otimes_{\nu}'\rho_{\nu}$. Then
for all $g\in\GL_{kc}(\A)$,
\begin{align*}
\Lambda(\rho(g)\varphi)=\Lambda_S(\rho_S(g_S)\xi_S)\prod_{\nu\notin S}\Lambda_{\nu}(\rho_{\nu}(g_{\nu})\xi_{\nu}),
\end{align*}
where $\Lambda_{\nu}$ is a scalar multiple of $\Lambda_{\nu}^0$ for all $\nu\notin S$.
\end{claim}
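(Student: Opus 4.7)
The plan is to reduce the global factorization to the local one-dimensionality $\dim\Hom_{V_{(c^k)}(F_\nu)}(\rho_\nu,\psi_\nu) = 1$ at unramified places guaranteed by Definition~\ref{def1}(2), via a standard one-component-at-a-time extraction.

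First, because $\rho(g)\varphi$ is itself decomposable as $\rho_S(g_S)\xi_S \otimes'_{\nu\notin S}\rho_\nu(g_\nu)\xi_\nu$, and because $g_\nu$ stabilizes $\xi_\nu^0$ for almost all $\nu$, it suffices to prove that for every decomposable $\varphi = \xi_S \otimes'_{\nu\notin S}\xi_\nu$ one has
\begin{equation*}
\Lambda(\varphi) \;=\; \Lambda_S(\xi_S)\prod_{\nu \notin S}\Lambda_\nu^0(\xi_\nu),
\end{equation*}
the product being finite since $\xi_\nu = \xi_\nu^0$ and $\Lambda_\nu^0(\xi_\nu^0)=1$ for almost all $\nu$; applied to $\rho(g)\varphi$ this then delivers the claim with $\Lambda_\nu = \Lambda_\nu^0$. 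To prove this identity I would fix a finite set $T\supseteq S$ with $\xi_\nu=\xi_\nu^0$ for $\nu\notin T$ and induct on $|T\setminus S|$. The base case $T=S$ is just the definition \eqref{eq:partial Lambda S} of $\Lambda_S$. For the inductive step, pick $\nu_0 \in T\setminus S$ and consider
\begin{equation*}
F:\rho_{\nu_0}\to\C,\qquad \xi_{\nu_0}\;\longmapsto\;\Lambda\bigl(\xi_S\otimes\xi_{\nu_0}\otimes'_{\nu\in T\setminus(S\cup\{\nu_0\})}\xi_\nu\otimes'_{\nu\notin T}\xi_\nu^0\bigr).
\end{equation*}
Since $\Lambda$ is $V_{(c^k)}(\A)$-equivariant with character $\psi=\prod_\nu \psi_\nu$, restricting to the $\nu_0$-coordinate shows $F\in\Hom_{V_{(c^k)}(F_{\nu_0})}(\rho_{\nu_0},\psi_{\nu_0})$; by one-dimensionality, $F = c\,\Lambda_{\nu_0}^0$ for some $c\in\C$. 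Evaluating at $\xi_{\nu_0}=\xi_{\nu_0}^0$ and invoking the inductive hypothesis for $T\setminus\{\nu_0\}$ identifies $c = \Lambda_S(\xi_S)\prod_{\nu\in T\setminus(S\cup\{\nu_0\})}\Lambda_\nu^0(\xi_\nu)$, closing the induction.

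The main (and essentially only) obstacle is the local multiplicity-one statement at unramified places, which is built into Definition~\ref{def1}(2); once this is available, the remainder of the argument is a formal manipulation of restricted tensor products. The scalar ambiguity in the statement merely reflects the freedom to rescale $\Lambda_\nu^0$, and in fact one may always take $\Lambda_\nu = \Lambda_\nu^0$.
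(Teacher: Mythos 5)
Your argument is correct and is essentially the paper's approach: the paper's proof is just a citation to \cite[Proposition~3.14]{Tk}, which adapts the classical tensor-product decomposition of Shalika and Bump, and your one-place-at-a-time extraction using the one-dimensionality of $\Hom_{V_{(c^k)}(F_{\nu})}(\rho_{\nu},\psi_{\nu})$ at $\nu\notin S$ (with the places of $S$ lumped into the single factor $\Lambda_S$) is precisely that adaptation. Your sharper observation that one may take $\Lambda_{\nu}=\Lambda_{\nu}^0$ is consistent with the claim, given the normalization $\Lambda_{\nu}^0(\xi_{\nu}^0)=1$ fixed in the setup and the definition \eqref{eq:partial Lambda S} of $\Lambda_S$.
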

\begin{proof}
Similar to \cite[Proposition~3.14]{Tk}, which is an adaptation of the decomposition result when uniqueness holds everywhere
(see \cite[\S~4]{Sh}, \cite[Theorem~3.5.2]{Bump1997}).
\end{proof}

Let $F'$ be a local field of characteristic $0$.
\begin{definition}
Let $\sigma$ be a smooth admissible finite length (complex) representation of $\GL_{kc}(F')$.
We say that $\sigma$ is a $(k,c)$ representation if the following holds:
\begin{enumerate}[leftmargin=*]
\item\label{it:local def k,c 1} For all unipotent orbits $\beta$ greater than or non-comparable with $(k^c)$,
$\Hom_{V(\beta)(F')}(\sigma,\psi'_{\beta})=0$, where $V(\beta)$ is the unipotent subgroup corresponding to $\beta$ and
$\psi'_{\beta}$ is any generic character of $V(\beta)$.
\item\label{it:local def k,c 2} The space $\Hom_{V_{(c^k)}(F')}(\sigma,\psi)$ (continuous morphisms over archimedean fields) is
one-dimensional, where $\psi$ is defined by \eqref{eq:wss character}.
\end{enumerate}
\end{definition}

Any nonzero $\lambda\in\Hom_{V_{(c^k)}(F')}(\sigma,\psi)$ is called a $(k,c)$ functional on $\sigma$, and if we fix one such $\lambda$, the
$(k,c)$ model $W(\sigma)$ is the space of functions $g\mapsto\lambda(\sigma(g)\xi')$ where $\xi'$ varies in the space of $\sigma$ and $g\in\GL_{kc}(F')$. We mention that even if $\Hom_{V_{(c^k)}(F')}(\sigma,\psi)$ is not one-dimensional, we can still consider spaces of such functions, defined for each choice of $(k,c)$ functional, but they will typically depend on the choice of the functional, i.e., the model is not unique.

\begin{example}
A $(k,1)$ representation is a representation of $\GL_k$ affording a unique Whittaker model.
\end{example}
\begin{remark}
If we do have local uniqueness everywhere, then we can decompose $W(\rho)=\otimes'_{\nu}W(\rho_{\nu})$ as a restricted tensor product (see the argument in \cite[\S~4]{Sh}). This is the case, for example, when $c=1$ and the representation is globally generic.
\end{remark}

Let $\varphi$ belong to the space of $\rho$.
The Fourier coefficient $\Lambda(\varphi)$ enjoys an extra invariance property. Let $\GL_c^{\Delta}$ denote the image of $\GL_c$ inside $\GL_{kc}$ under the diagonal embedding $h\mapsto h^{\Delta}=\diag(h,h,\ldots,h)$.
\begin{claim}\label{claim:extra invariance}
For all $h\in \SL_c({\A})$, $\Lambda(h^{\Delta}\cdot\varphi)=\Lambda(\varphi)$.
\end{claim}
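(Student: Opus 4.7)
The approach has three parts, with a uniqueness argument as its core.

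First, I would check that conjugation by $h^{\Delta}$ preserves both $V_{(c^k)}$ and the character $\psi$. Writing $v = (v_{i,j})$ in block form, $h^{\Delta} v (h^{\Delta})^{-1} = (h v_{i,j} h^{-1})$ remains in $V_{(c^k)}$, and cyclic invariance of the trace gives
\begin{align*}
\psi\bigl(h^{\Delta} v (h^{\Delta})^{-1}\bigr) = \psi\Bigl(\tr\Bigl(\sum_{i=1}^{k-1} h v_{i,i+1} h^{-1}\Bigr)\Bigr) = \psi(v).
\end{align*}

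Next, define $\Phi_h(\varphi) = \Lambda(h^{\Delta} \cdot \varphi)$. For $v' \in V_{(c^k)}(\A)$, setting $u = h^{\Delta} v' (h^{\Delta})^{-1} \in V_{(c^k)}(\A)$, a standard change of variables on the compact quotient $V_{(c^k)}(F) \backslash V_{(c^k)}(\A)$ gives $\Lambda(u \cdot (h^{\Delta} \cdot \varphi)) = \psi(u) \Lambda(h^{\Delta} \cdot \varphi)$, and combined with the character preservation above this yields $\Phi_h(v' \cdot \varphi) = \psi(v') \Phi_h(\varphi)$. Hence $\Phi_h$ lies in $\Hom_{V_{(c^k)}(\A)}(\mathcal{E}_{\tau}, \psi)$, the same space containing $\Lambda$. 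Since $\mathcal{E}_{\tau}$ admits a unique global $(k,c)$-functional up to scalar, $\Phi_h = \chi(h) \Lambda$ for a scalar $\chi(h)$, and multiplicativity of $h \mapsto h^{\Delta}$ forces $\chi: \SL_c(\A) \to \C^{\times}$ to be a continuous quasi-character.

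To conclude, I would show $\chi$ is trivial. For $h \in \SL_c(F)$, $h^{\Delta} \in \GL_{kc}(F)$, so automorphy of $\varphi$ together with the change of variables $v \mapsto h^{\Delta} v (h^{\Delta})^{-1}$ (now well-defined on the quotient because $h^{\Delta} \in \SL_c^{\Delta}(F)$ normalizes $V_{(c^k)}(F)$) directly yields $\Phi_h(\varphi) = \Lambda(\varphi)$, i.e., $\chi(h) = 1$. Thus $\chi$ descends to a continuous character of $\SL_c(F)\backslash\SL_c(\A)$; since $\SL_c$ is semisimple, connected, and simply connected (and the claim is vacuous when $c = 1$), its adelic group admits no nontrivial continuous characters, so $\chi \equiv 1$.

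The main obstacle is securing global one-dimensionality of $\Hom_{V_{(c^k)}(\A)}(\mathcal{E}_{\tau}, \psi)$: Definition~\ref{def1}(2) only asserts local uniqueness at unramified places, while the argument above wants local uniqueness at every place (so that the global Hom factors as an Euler product of one-dimensional local Homs). For $\mathcal{E}_{\tau}$, local uniqueness at ramified and archimedean places is known via its explicit construction as a residue of an Eisenstein series (cf.\ \cite{G4,JL2013}). An alternative route would avoid uniqueness entirely by Fourier-expanding $\Lambda(h^{\Delta} \cdot \varphi)$ along one-parameter root subgroups of $\SL_c^{\Delta}$ and applying the orbit-vanishing property in Definition~\ref{def1}(1) to kill all non-constant terms in the expansion, though this requires identifying each enlarged unipotent and verifying its associated orbit is greater than or incomparable with $(k^c)$.
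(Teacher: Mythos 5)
Your preliminary observations (that $h^{\Delta}$ normalizes $V_{(c^k)}$ and fixes $\psi$, and that $\SL_c(\A)$ has no nontrivial continuous characters) are correct, but the engine of your argument has a genuine gap in the setting of this paper. You reduce the claim to the statement that $\Hom_{V_{(c^k)}(\A)}(\rho,\psi)$ is one-dimensional, i.e.\ to uniqueness of the global $(k,c)$ functional. The claim, however, is stated for an arbitrary $(k,c)$ representation $\rho$ in the sense of Definition~\ref{def1}, and that definition supplies local uniqueness only at the unramified places; the paper is deliberately structured so as never to assume uniqueness at the ramified and archimedean places (this is exactly why Claim~\ref{claim:decomp Lambda} yields only an ``almost'' decomposition, following Takeda \cite{Tk}, and why \eqref{eq:almost Euler} is only an almost Euler product). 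Your appeal to \cite{G4,JL2013} does not close this: those references establish the global nonvanishing of the coefficient and the orbit attachment $\mathcal{O}(\mathcal{E}_\tau)=(k^c)$, not multiplicity one of the local $(k,c)$ functional at ramified or archimedean places, so the global one-dimensionality you need is neither a hypothesis nor a citable input here. Without it, the identity $\Phi_h=\chi(h)\Lambda$ has no justification, and the rest of the argument collapses.

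The route you mention only as an afterthought is in fact the paper's proof, and it is the one that works under the stated hypotheses: the function $h\mapsto\Lambda(h^{\Delta}\cdot\varphi)$ is left $\SL_c(F)$-invariant (automorphy plus your conjugation computation), and expanding it along (abelian) unipotent subgroups of $\SL_c^{\Delta}(\A)$, every nontrivial term of the expansion is a Fourier coefficient attached to a unipotent orbit greater than or non-comparable with $(k^c)$, hence vanishes by part~\eqref{def:Whittaker--Speh--Shalika 1} of Definition~\ref{def1}; only the constant term survives, giving invariance under all unipotent subgroups and therefore under $\SL_c(\A)$, which these generate (see \cite[Proposition~3]{FG2} for the orbit identification). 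Since you do not carry out the identification of the enlarged unipotent groups and their orbits --- the only nontrivial step in this approach --- the proposal as written does not constitute a proof; the uniqueness-based shortcut would only become legitimate with an everywhere-local multiplicity one statement, which lies outside what this paper establishes.
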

\begin{proof}
The group $\GL_c^{\Delta}$ is the stabilizer of the character
$\psi$ inside $M_{(c^k)}$. If we expand along any unipotent subgroup of $\SL_c({\A})$, the nontrivial contribution to the expansion vanishes, because the nontrivial term of the expansion is
associated with a unipotent orbit which is greater than or non-comparable with $(k^c)$, while the unipotent orbit attached to $\rho$ is $(k^c)$. See \cite[Proposition~3]{FG2} for details.
\end{proof}

We proceed to show that the generalized Speh representations are $(k,c)$ representations.
Let $\tau$ denote an irreducible unitary cuspidal automorphic representation of $\GL_k({\A})$,
$\underline{s}=(s_1,\ldots,s_c)\in\C^c$, and $E(g;\xi,\underline{s})$ denote the Eisenstein series associated with the induced representation
\begin{align*}
\text{Ind}_{P_{(k^c)}({\A})}^{\GL_{kc}({\A})}
(|\det|^{s_1}\tau\otimes\ldots\otimes |\det|^{s_c}\tau),
\end{align*}
where $\xi$ is a standard section.
Let $\underline{s}_0\in\C^c$ be the point defined by
\begin{align*}
s_1+\ldots+s_c=0;\qquad s_i-s_{i+1}=1; \qquad 1\leq i\leq c-1.
\end{align*}
The series has a simple multi-residue at $\underline{s}_0$,
\begin{align*}
E_{\underline{s}_0}(g;\xi)=\lim\limits_{\underline{s}\to \underline{s}_0}\prod_{i=1}^{c-1}(s_i-s_{i+1}-1)M(w_0,\underline{s})\xi(g,\underline{s}),
\end{align*}
where $M(w_0,\underline{s})$ is the standard intertwining operator defined by (the meromorphic continuation of)
\begin{align*}
M(w_0,\underline{s})\xi(g,\underline{s})=\int\limits_{V_{(k^c)}(\A)}\xi(w_0ug,\underline{s})\,du,
\qquad w_0=\left(\begin{smallmatrix}&&&I_{c}\\&&I_{c}\\&\udots\\I_{c}\end{smallmatrix}\right).
\end{align*}
The automorphic representation
${\mathcal E}_{\tau}$ of $\GL_{kc}({\A})$ generated
by all the residue functions $E_{\underline{s}_0}(\cdot;\xi)$ lies in the discrete spectrum of the space $L^2(\GL_{kc}(F)\backslash \GL_{kc}({\A}),\eta_{\tau}^c)$, where $\eta_{\tau}$ is the central character of $\tau$ (\cite{La5,MW2}), and is irreducible (\cite{MW4}).

Furthermore, write $\tau=\otimes'_\nu\tau_\nu$. At all places, $\tau_{\nu}$ is irreducible unitary and generic, and
at almost all places $\tau_{\nu}$ is unramified and can be written in the form
\begin{align}\label{rep:tau}
\Ind_{B_{\GL_k}(F_\nu)}^{\GL_k(F_\nu)}(\chi_1\otimes\ldots\otimes\chi_k),
\end{align}
where $\chi_1,\ldots,\chi_k$ are unramified quasi-characters of $F_{\nu}^*$. In this case we also denote
\begin{align}\label{sigma k c first def}
\sigma_{k,c}=\Ind_{P_{(c^k)}(F_\nu)}^{\GL_{kc}(F_\nu)}(\chi_1\otimes \ldots \otimes \chi_k),
\end{align}
where each $\chi_i$ is pulled back to a character of $\GL_c(F_{\nu})$ using $\det$.
Observe that since $\tau_{\nu}$ is unitary,
by \cite[Corollary~2.5]{JS} (applied to $\tau_{\nu}$ and $\tau_{\nu}^{\vee}$), $q_{\nu}^{-1/2}<|\chi_i|<q_{\nu}^{1/2}$ for all $i$, where
$q_{\nu}$ is the residue cardinality of $F_{\nu}$. Thus the segments corresponding to
$\chi_i\circ\det$ and $\chi_j\circ\det$, for all $i\ne j$, are not linked, using the terminology of Zelevinsky \cite[\S~3, \S~4]{Z3}, and then $\sigma_{k,c}$ is irreducible \cite[Theorem~4.2]{Z3}.

To extend the applicability of some of our local arguments, we define $\sigma_{k,c}$ in the same way for arbitrary unramified quasi-characters $\chi_i$, i.e., not necessarily the inducing data of an irreducible unitary generic representation of $\GL_k(F_{\nu})$. Then $\sigma_{k,c}$ may be reducible.

\begin{claim}\label{claim:E tau sub and quotient}
Assume $\tau_{\nu}$ is given by \eqref{rep:tau} and let $\sigma_{k,c}$ be given by \eqref{sigma k c first def}.
Then $(\mathcal{E}_{\tau})_\nu=\sigma_{k,c}$.
\end{claim}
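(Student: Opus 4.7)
The plan is to identify both $(\mathcal E_\tau)_\nu$ and $\sigma_{k,c}$ as the unique irreducible unramified constituent of a common unramified principal series on $\GL_{kc}(F_\nu)$, and then invoke the Satake classification. First, since $\mathcal E_\tau$ is irreducible by \cite{MW4}, its local factor $(\mathcal E_\tau)_\nu$ is irreducible. Moreover, $(\mathcal E_\tau)_\nu$ is realized as the space of values at $\underline{s}_0$ of the meromorphically continued intertwining operator applied to unramified sections in $\Ind_{P_{(k^c)}(F_\nu)}^{\GL_{kc}(F_\nu)}\bigl(|\det|^{s_1}\tau_\nu\otimes\cdots\otimes|\det|^{s_c}\tau_\nu\bigr)$; in particular, it is an irreducible unramified subquotient of the specialization of this induced representation at $\underline{s}=\underline{s}_0$. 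On the other side, $\sigma_{k,c}$ is irreducible by the non-linked segments argument already given in the text.

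The core computation is to unfold both induced representations down to the Borel subgroup $B_{\GL_{kc}}$ and match the resulting unramified characters on the torus. For $(\mathcal E_\tau)_\nu$, induction in stages together with \eqref{rep:tau} places it as an irreducible subquotient of the principal series of $\GL_{kc}(F_\nu)$ induced from the torus character carrying, on the block indexed by $j \in \{1,\ldots,c\}$, the entries $\chi_1|\cdot|^{s_j},\ldots,\chi_k|\cdot|^{s_j}$. At $\underline{s}_0$ one has $s_j=(c+1-2j)/2$, so the collection of unramified quasi-characters appearing is
\begin{align*}
\{\,\chi_i|\cdot|^{a} : 1\le i\le k,\ a\in\{-(c-1)/2,-(c-3)/2,\ldots,(c-1)/2\}\,\}.
\end{align*}
For $\sigma_{k,c}$, induction in stages from $P_{(c^k)}$ to the Borel turns each $\chi_i\circ\det$ on $\GL_c(F_\nu)$ (with normalized induction) into the torus characters $\chi_i|\cdot|^{-(c-1)/2}, \chi_i|\cdot|^{-(c-3)/2},\ldots, \chi_i|\cdot|^{(c-1)/2}$, because the trivial-type one-dimensional representation $\chi_i\circ\det$ has exactly this arithmetic progression as its Satake parameters. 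Taking the union over $i$ yields the same multiset as above.

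Thus both $(\mathcal E_\tau)_\nu$ and $\sigma_{k,c}$ are irreducible unramified subquotients of the same unramified principal series of $\GL_{kc}(F_\nu)$, hence share the same Satake parameters. By the Satake classification, any two such subquotients are isomorphic, which gives $(\mathcal E_\tau)_\nu=\sigma_{k,c}$.

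The main obstacle is purely bookkeeping: one must verify that the exponents coming from the Speh point $\underline{s}_0$ (an arithmetic progression centered at $0$ with step $1$ of length $c$) match precisely the exponents produced by normalized induction from $\GL_c$ to its Borel applied to $\chi_i\circ\det$. This is the arithmetic reflection of the duality between the partitions $(k^c)$ and $(c^k)$ that underlies the whole Speh/Zelevinsky picture, and once checked the proof reduces to the Satake classification.
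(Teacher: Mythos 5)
Your proposal is correct and follows essentially the same route as the paper: both arguments reduce to the Borel by induction in stages, observe that $(\mathcal{E}_{\tau})_\nu$ and $\sigma_{k,c}$ are irreducible unramified subquotients of unramified principal series with the same multiset of inducing characters, and conclude by uniqueness of the unramified constituent, using the irreducibility of $\sigma_{k,c}$. The paper phrases the matching step via Bernstein--Zelevinsky (constituents of the two induced representations coincide) and the fact that $\sigma_{k,c}$ is an unramified quotient of the principal series, while you do the explicit exponent bookkeeping at $\underline{s}_0$ and invoke the Satake classification, but the content is the same.
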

\begin{proof}
Since $(\mathcal{E}_{\tau})_\nu$ is irreducible (\cite{MW4}), by construction
it is the unique irreducible unramified quotient of
\begin{align}\label{rep:unr 1}
\Ind_{P_{(k^c)}(F_\nu)}^{\GL_{kc}(F_\nu)}((\tau_\nu\otimes \ldots \otimes \tau_\nu)\delta_{P_{(k^c)}}^{1/(2k)}).
\end{align}
Permuting the inducing characters of $\tau_\nu$ in the full induced representation \eqref{rep:unr 1}, we
reach
\begin{align}\label{rep:unr 2}
\Ind_{B_{\GL_{kc}}(F_\nu)}^{\GL_{kc}(F_\nu)}(\chi_1\delta_{B_{\GL_c}}^{1/2}\otimes \ldots \otimes \chi_k\delta_{B_{\GL_c}}^{1/2}).
\end{align}
Here $\chi_i\delta_{B_{\GL_c}}^{1/2}$ is regarded as a representation of $T_{\GL_c}$.
By Bernstein and Zelevinsky \cite[Theorem~2.9]{BZ2}, the constituents of \eqref{rep:unr 1} and \eqref{rep:unr 2} are isomorphic (including multiplicities). Therefore the unique irreducible unramified quotient of \eqref{rep:unr 1}, which is $(\mathcal{E}_{\tau})_\nu$,
is the unique irreducible unramified constituent of \eqref{rep:unr 2}.

Since the trivial representation is the unique irreducible unramified quotient of
$\Ind_{B_{\GL_c}}^{\GL_c}(\delta_{B_{\GL_c}}^{1/2})$, $\sigma_{k,c}$ is an unramified quotient of \eqref{rep:unr 2}. Therefore $(\mathcal{E}_{\tau})_\nu$ is already a constituent of $\sigma_{k,c}$, which is irreducible because $\tau_{\nu}$ is unitary, as explained above.
\end{proof}

\begin{claim}\label{claim:sigma k c is always at most k c}
The representation $\sigma_{k,c}$ with arbitrary unramified quasi-characters $\chi_i$ is $(k,c)$.
\end{claim}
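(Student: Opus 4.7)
The plan is to verify the two conditions of a local $(k,c)$ representation directly, by applying the Bernstein--Zelevinsky geometric lemma to $\sigma_{k,c} = \Ind_{P_{(c^k)}(F_\nu)}^{\GL_{kc}(F_\nu)}(\chi_1 \otimes \cdots \otimes \chi_k)$ and exploiting the crucial fact that each inducing character on $\GL_c(F_\nu)$ factors through the determinant, so that $\chi_1 \circ \det \otimes \cdots \otimes \chi_k \circ \det$ is trivial on every root subgroup of $M_{(c^k)}$.

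For condition \eqref{it:local def k,c 2}, I would analyze $\Hom_{V_{(c^k)}(F_\nu)}(\sigma_{k,c}, \psi)$ via the double-coset decomposition $P_{(c^k)} \backslash \GL_{kc} / P_{(c^k)}$, whose representatives are parametrized by $k \times k$ matrices of nonnegative integers with all row and column sums equal to $c$. For each representative $w$, the geometric lemma attaches a twisted Jacquet module computation requiring (i) compatibility of the inducing character on $M_{(c^k)} \cap w M_{(c^k)} w^{-1}$, and (ii) triviality of $\psi$ on the pieces of $V_{(c^k)}$ that get conjugated into the Levi by $w$. Because the inducing data is trivial on every root subgroup of $M_{(c^k)}$, condition (i) imposes no genuine constraint for any $w$, so the analysis reduces to (ii). The character $\psi$ records the trace of the superdiagonal blocks $v_{i,i+1}$; for any non-identity matrix $w$, the associated block permutation activates a superdiagonal root subgroup of $V_{(c^k)}$ on which $\psi$ is nontrivial, annihilating the contribution. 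Only the identity coset survives, and it produces a one-dimensional space of functionals.

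For condition \eqref{it:local def k,c 1}, the same strategy applies to $\Hom_{V(\beta)(F_\nu)}(\sigma_{k,c}, \psi_\beta')$ whenever $\beta$ is strictly larger than or non-comparable with $(k^c)$. The genericity of $\psi_\beta'$ means it is nontrivial on root subgroups strictly larger than those visible to $\psi$ on $V_{(c^k)}$; tracking these through the geometric lemma, one sees that they persist (with nontrivial character) inside every double-coset contribution, since the determinantal inducing data offers no compensating character on the Levi side. Consequently every contribution vanishes and $\Hom_{V(\beta)}(\sigma_{k,c}, \psi_\beta') = 0$.

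The main obstacle is the combinatorial bookkeeping: enumerating the $P_{(c^k)}$-double cosets, computing the resulting root-subgroup intersections, and verifying case by case that $\psi$ (respectively $\psi_\beta'$) is incompatible with the inducing data away from the unique identity coset in the $(k^c)$ case, and at every coset for strictly larger or non-comparable $\beta$. The factorization of the inducing characters through determinants is essential to the entire argument: it eliminates the usual Whittaker-type compatibility on the Levi side, which is precisely why the conclusion extends to \emph{arbitrary} unramified quasi-characters $\chi_i$, not just those arising from irreducible unitary generic $\tau_\nu$ as in Claim~\ref{claim:E tau sub and quotient}.
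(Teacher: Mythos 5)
Your Mackey-theoretic strategy (stratifying $\Ind_{P_{(c^k)}}^{\GL_{kc}}(\chi_1\otimes\cdots\otimes\chi_k)$ by $P_{(c^k)}\backslash\GL_{kc}/P_{(c^k)}$ and using that the inducing data factors through determinants) is viable in principle, but the key assertion in your treatment of the one-dimensionality is wrong, and the kill-mechanism you state would prove too much. The identity (closed) coset, i.e.\ the integer matrix $c\,I_k$ in your parametrization, is precisely the stratum that \emph{dies}: on it the inducing data is trivial on $V_{(c^k)}$ while $\psi$ is not, so its twisted Jacquet module vanishes. The one-dimensional space of $(k,c)$ functionals is carried by the \emph{open} coset, represented by the block anti-diagonal permutation $w_{k,c}$; this is exactly why the functional is later realized as a Jacquet-type integral over the open cell $P_{(c^k)}w_{k,c}V_{(c^k)}$ in \S~\ref{decomposition of functionals}. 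Your criterion ``any non-identity block permutation activates a superdiagonal root subgroup on which $\psi$ is nontrivial'' cannot be correct as stated, since applied to $w_{k,c}$ it would force $\Hom_{V_{(c^k)}}(\sigma_{k,c},\psi)=0$; for $w_{k,c}$ no part of $V_{(c^k)}$ is conjugated into $P_{(c^k)}$ at all, which is why that stratum survives. The true criterion for killing an intermediate coset is that some root subgroup of $V_{(c^k)}$ on which $\psi$ is nontrivial is conjugated into $P_{(c^k)}$ (into the unipotent radical, or into the Levi, where the determinantal data acts trivially), and verifying this for every intermediate coset, together with the bound of one from the open stratum, is the actual combinatorial content that your sketch does not carry out.

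For the vanishing condition the gap is larger. For orbits $\beta$ greater than or non-comparable with $(k^c)$, the pairs $(V(\beta),\psi'_\beta)$ are not of the simple trace-of-superdiagonal shape (generic characters of $V(\beta)$ come in families, and the relevant double cosets are now with respect to a different parabolic), so ``tracking these through the geometric lemma, one sees that they persist in every coset'' is an assertion rather than an argument; you would at least need the observation that such $\beta$ are exactly the partitions with some part exceeding $k$, and then a uniform argument over all such $\beta$ and all generic characters. The paper proceeds quite differently and more economically: it first invokes \cite[Proposition~5.5]{Cai2} (cf.\ \cite{MW3,GGS}) to replace the unipotent-orbit Fourier coefficients by semi-Whittaker coefficients $J_{N_{\GL_{kc}},\psi_\lambda}$, and then proves both the vanishing for compositions with a part $>k$ and the one-dimensionality at $\lambda=(k^c)$ by induction on $k$, using $\sigma_{k,c}=\sigma_{k-1,c}\times\sigma_{1,c}$ and the Bernstein--Zelevinsky filtration \cite[4.14]{BZ2}. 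Your closing observation that the determinantal inducing data is what makes the claim hold for arbitrary unramified quasi-characters is correct (it is what makes the base case $\sigma_{1,c}$ trivial), but without fixing the open/closed coset issue, supplying the intermediate-coset vanishing, and either using Cai's reduction or providing a substitute for it, the proposal does not establish the claim.
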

\begin{proof}
We proceed with local notation and omit references to the field.
For any composition $\lambda$ of $kc$, let $\psi_{\lambda}$ be the character of $N_{\GL_{kc}}$ which restricts to $\psi$ on the simple root subgroups of $M_{\lambda}$ and acts trivially otherwise. Extend the partial order on partitions to compositions by comparing their underlying partitions. According to \cite[Proposition~5.5]{Cai2} (see also \cite{MW3,GGS}), to deduce the vanishing property it is enough to prove that for any $\lambda$ which is greater than or non-comparable with $(k^c)$, the twisted Jacquet module $J_{N_{\GL_{kc}},\psi_\lam}(\sigma_{k,c})$ vanishes.

Assuming $\lam_i>k$ for some $i$, we prove $J_{N_{\GL_{kc}},\psi_\lam}(\sigma_{k,c})=0$. We argue by induction on $k$. When $k=1$ this is trivial because $\sigma_{1,c}$ is a character of $\GL_c$.
Since $\sigma_{k,c}=\sigma_{k-1,c}\times\sigma_{1,c}$, where $\times$ is the parabolic induction functor (see \cite{BZ2}), by \cite[4.14]{BZ2} $J_{N_{\GL_{kc}}\psi_{\lambda}}(\sigma_{k,c})$ is glued from the representations
\begin{align*}
J_{N_{\GL_{(k-1)c}},\psi_{\lam'}}(\sigma_{k-1,c})\times J_{N_{\GL_c},\psi_{\lam''}}(\sigma_{1,c}),
\end{align*}
where $\lam'$ and $\lam''$ vary over the compositions of $(k-1)c$ and $c$ (resp.) such that
$\lambda_i=\lambda_i'+\lambda_i''$ for all $i$.
If $\lam_i>k$ for some $i$, then either $\lambda_i'>(k-1)$ or $\lambda_i''>1$, whence by the induction hypothesis all the representations vanish. Thus $J_{N_{\GL_{kc}},\psi_{\lam}}(\sigma_{k,c})=0$.

It remains to show $\dim J_{V_{(c^k)},\psi}(\sigma_{k,c})=1$. By \cite[Proposition~5.5]{Cai2}, $\dim J_{V_{(c^k)},\psi}(\sigma_{k,c})=\dim J_{N_{\GL_{kc}}\psi_{(k^c)}}(\sigma_{k,c})$, so that we can prove $\dim J_{N_{\GL_{kc}},\psi_{(k^c)}}(\sigma_{k,c})=1$, using induction on $k$. This is clear for $k=1$. Now looking at the filtration above with $\lam=(k^c)$, the contribution is nontrivial if and only if $\lam'=((k-1)^c)$ and $\lam''=(1^c)$. Applying the induction hypothesis, $\dim J_{N_{\GL_{(k-1)c}},\psi_{\lambda'}}(\sigma_{k-1,c})=1$, thus $J_{N_{\GL_{kc}}\psi_{\lambda}}(\sigma_{k,c})$ is one-dimensional.
\end{proof}
\begin{remark}\label{remark:k,c uniqueness}
Note that this result holds without any assumption on $\sigma_{k,c}$.
\end{remark}
\begin{remark}
Fourier coefficients corresponding to $N_{\GL_{kc}}$ and $\psi_{\lambda}$ are called semi--Whittaker coefficients. They are intimately related to Fourier coefficients associated with unipotent orbits, both locally and globally, see \cite{AGS2015,Cai2,GGS,GGS2,MW3}
(an archimedean analog of Claim~\ref{claim:sigma k c is always at most k c} appeared in \cite{AGS2015}).
\end{remark}

\begin{theorem}\label{exthspeh1}
Let $\tau$ be an irreducible cuspidal automorphic representation of $\GL_k({\A})$.
The representation ${\mathcal E}_{\tau}$ is a $(k,c)$ representation.
\end{theorem}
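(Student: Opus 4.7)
The plan is to verify the two defining conditions of a $(k,c)$ representation separately, drawing on the global results on generalized Speh representations cited earlier (from Ginzburg \cite{G4} and Jiang--Liu \cite{JL2013}) for the global condition, and on the two local claims proved immediately above (Claim~\ref{claim:E tau sub and quotient} and Claim~\ref{claim:sigma k c is always at most k c}) for the condition at unramified places.

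For the global part, I would first invoke the global nonvanishing result of \cite{G4}: the Whittaker--Speh--Shalika coefficient $\Lambda(\varphi)$, attached to the unipotent orbit $(k^c)$ via the character \eqref{eq:wss character}, is nonzero on the space of $\mathcal{E}_\tau$. Second, for any unipotent orbit $\beta$ of $\GL_{kc}$ that is greater than or non-comparable with $(k^c)$, I would cite the corresponding vanishing statement in \cite{G4} (also discussed in \cite{JL2013}) showing that all Fourier coefficients attached to $\beta$ vanish identically on $\mathcal{E}_\tau$. Together these give $\mathcal{O}_{\GL_{kc}}(\mathcal{E}_\tau)=(k^c)$, which is precisely condition~\eqref{def:Whittaker--Speh--Shalika 1} of Definition~\ref{def1}.

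For the local condition~\eqref{def:Whittaker--Speh--Shalika 2} at unramified places, let $\nu\notin S$ so that $\tau_\nu$ is the unramified representation \eqref{rep:tau}. By Claim~\ref{claim:E tau sub and quotient} we have $(\mathcal{E}_{\tau})_\nu=\sigma_{k,c}$, where $\sigma_{k,c}$ is defined in \eqref{sigma k c first def}. By Claim~\ref{claim:sigma k c is always at most k c}, which is valid for arbitrary unramified quasi-characters and in particular for the unitary ones coming from $\tau_\nu$, the representation $\sigma_{k,c}$ is a $(k,c)$ representation in the local sense. This provides simultaneously the vanishing of the twisted Jacquet modules of $(\mathcal{E}_\tau)_\nu$ on all orbits greater than or non-comparable with $(k^c)$, and the one-dimensionality of $\Hom_{V_{(c^k)}(F_\nu)}((\mathcal{E}_\tau)_\nu,\psi_\nu)$.

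The main obstacle in this write-up is not a new calculation but the proper bookkeeping: the global vanishing and nonvanishing properties are genuinely deep results that are imported from \cite{G4}, so the proof reduces to checking that the properties established there (phrased in terms of Fourier coefficients attached to unipotent orbits) match the two clauses of Definition~\ref{def1} exactly, and that the local input coming from Claim~\ref{claim:sigma k c is always at most k c} is applied at every place $\nu\notin S$. Once these are aligned, the proof is a one-paragraph assembly combining the global input with the two preceding claims.
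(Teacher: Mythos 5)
Your proposal is correct and follows essentially the same route as the paper: the global condition of Definition~\ref{def1} is imported from \cite[Proposition~5.3]{G4} (see also \cite{JL2013}), and the local condition at unramified places is obtained by combining Claim~\ref{claim:E tau sub and quotient} with Claim~\ref{claim:sigma k c is always at most k c}. The only step the paper adds, which you should include since Theorem~\ref{exthspeh1} allows an arbitrary cuspidal $\tau$ while $\mathcal{E}_\tau$ and the unitarity used in Claim~\ref{claim:E tau sub and quotient} are set up for unitary $\tau$, is the initial reduction $\tau=|\det|^d\tau_0$ with $\tau_0$ unitary.
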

\begin{proof}
Since $\tau=|\det|^d\tau_0$ for some $d\in\R$ and a similar representation $\tau_0$ which is also unitary, we can already assume $\tau$ is unitary.
The global condition of Definition~\ref{def1} was proved in \cite[Proposition~5.3]{G4} (see also \cite{JL2013}). The local condition
now follows immediately from Claims~\ref{claim:E tau sub and quotient} and \ref{claim:sigma k c is always at most k c}.
\end{proof}

\begin{remark}\label{remark:uniqueness comment}
It is important to note that the representations $\sigma_{k,c}$ are special, in the sense that they admit a unique $(k,c)$ functional.
We do not expect an arbitrary irreducible representation $\sigma$ of $\GL_{kc}$ to enjoy this property. In fact for $c>1$, the dimension of $J_{V_{(c^k)},\psi}(\sigma)$ can be infinite. We also mention that for $k=2$, the character \eqref{eq:wss character} is the Shalika character, and again $J_{V_{(c^2)},\psi}(\sigma)$ can be infinite dimensional: to obtain uniqueness results we require the additional invariance property with respect to the reductive part of the stabilizer - the diagonal embedding of $\GL_c$ in $\GL_{2c}$. Nonetheless, for specific representations (similar to $\sigma_{2,c}$) invariance with respect to the reductive part is automatic (see e.g., \cite{BeD}).
\end{remark}

\subsection{Unfolding of the global integral for symplectic groups}\label{global symplectic}
In this section we complete the proof of Theorem~\ref{theorem:main theorem classical groups} for the symplectic group. Let
$G=\Sp_{2n}$ and recall that $c=2n$, $H=\Sp_{4kn}$ and $Q=M\ltimes U$ where
\begin{align*}M=\GL_{2n}\times\ldots\times\GL_{2n}\times\Sp_{4n}.
\end{align*}
Here $\GL_{2n}$ appears $k-1$ times. Identify the quotient $U/[U,U]$ with
\begin{align*}
\Mat_{2n}\oplus\ldots\oplus \Mat_{2n} \oplus \Mat_{2n\times 4n},
\end{align*}
where $\text{Mat}_{2n}$ appears $k-2$ times. For $Y\in \Mat_{2n\times 4n}$ write
\begin{align*}
Y=\begin{pmatrix} Y_1&Z_1&Y_2\\ Y_3&Z_2&Y_4 \end{pmatrix},\qquad Y_i\in \Mat_{n},Z_j\in\Mat_{n\times 2n}.
\end{align*}
Let $\psi_U$ be the pullback to $U$ of the character of $U/[U,U]$ given by
\begin{align}\label{def psiU for Sp}
(X_1,\ldots,X_{k-2},Y)\mapsto\psi(\text{tr}(X_1+\cdots +X_{k-2}+Y_1+Y_4)).
\end{align}
The corresponding Fourier coefficient given by $U$ and $\psi_U$ is associated with the unipotent
orbit $((2k-1)^{2n}1^{2n})$.
The embedding of $G\times G$ in $H$ is given by
\begin{align*}
(g_1,g_2)\mapsto\diag(g_1,\ldots,g_1,\begin{pmatrix} g_{1,1}&&g_{1,2}\\ &g_2&\\ g_{1,3}&&g_{1,4}\end{pmatrix},g_1^*,\ldots,g_1^*),
\end{align*}
where $g_1=\left(\begin{smallmatrix}g_{1,1}&g_{1,2}\\g_{1,3}&g_{1,4}\end{smallmatrix}\right)$, $g_{1,i}\in\Mat_{n}$ and $g_1^*=J_{2n}{}^tg_1^{-1}J_{2n}$ appears $k-1$ times. Note that the middle $4n\times 4n$ block is the standard embedding of $G\times G$ in the middle $\Sp_{4n}$ block of $M$. The involution $\iota$ is defined by ${}^{\iota}g=\iota g\iota^{-1}$ with
\begin{align}\label{eq:iota}
\iota=\left(\begin{smallmatrix}&I_n\\I_n\end{smallmatrix}\right).
\end{align}

We have to show that for $\Real(s)\gg0$,
\begin{align*}
Z(s,\varphi_1,\varphi_2,f)&=
\int\limits_{G(F)\times G(F)\backslash G({\A})\times G({\A})}\,
\int\limits_{U(F)\backslash U({\A})}\varphi_1(g_1)\,\overline{\varphi_2({}^{\iota}g_2)}\,
E(u(g_1,g_2);f,s)\,\psi_U(u)\,du\,dg_1\,dg_2\\
&=\int\limits_{G({\A})}\int\limits_{U_0({\A})}
\langle\varphi_1,\pi(g)\varphi_2\rangle f_{W({\mathcal E}_{\tau})}(\delta u_0(1,{}^{\iota}g),s)
\,\psi_U(u_0)\,du_0\,dg.
\end{align*}
(The right hand side is \eqref{global2}.)
The element $\delta$ is given in \eqref{open2}.

Recall that $P=M_P\ltimes U_P$ is the standard maximal parabolic subgroup of $H$ with $M_P\cong\GL_{2kn}$, and let
$L=(G\times G)U$ denote the subgroup of $Q$ embedded in $H$ as described above. In general for $h,h'\in H$ and $H'<H$, put
\begin{align}\label{eq:conjugations notation}
{}^hh'=hh'h^{-1},\qquad {}^hH'=\{{}^hh':h'\in H'\}.
\end{align}
Unfolding the Eisenstein series in $Z(s,\varphi_1,\varphi_2,f)$, the integral
becomes
\begin{align}\label{sum global unfolding}
&\sum\limits_{\gamma\in P(F)\backslash H(F)/L(F)}\mathrm{I}(\gamma),
\end{align}
where
\begin{align*}
&\mathrm{I}(\gamma)=\int\limits_{L_{\gamma}(F)\backslash L({\A})}
\varphi_1(g_1)\,\overline{\varphi_2({}^{\iota}g_2)}\,
f(\gamma u(g_1,g_2),s)\,\psi_U(u)\,du\,dg_1\,dg_2.
\end{align*}
Here $L_{\gamma}={}^{\gamma^{-1}}\!P\cap L$.
We show that there is a unique representative $\gamma$ such that $\mathrm{I}(\gamma)$ is equal to integral \eqref{global2}, and that for all other representatives $\mathrm{I}(\gamma)=0$. The representative contributing to the sum corresponds to the open orbit.

In general, there are three ways to show $\mathrm{I}(\gamma)=0$ (and we use all three). The first is using the character $\psi_U$. Specifically, if
there is a unipotent subgroup $U'$ of $U$ on which $\psi_U$ is nontrivial and ${}^{\gamma}U'<U_P$, the integral $\mathrm{I}(\gamma)$ vanishes because the integral of $\psi_U$ on $U'(F)\backslash U'(\A)$ is zero.
The second option is to use the cuspidality of $\pi_i$:  if $L_{\gamma}$ contains a unipotent radical $V$ of
a parabolic subgroup of one of the copies of $G$, and the suitable integral over $f$ is invariant under $V(\A)$, then
$\mathrm{I}(\gamma)$ vanishes because $\pi_i$ is cuspidal. The third alternative is to use the smallness of $\mathcal{E}_{\tau}$, which is the $(k,2n)$ representation appearing in the inducing data of the Eisenstein series, and is attached to the unipotent orbit $(k^{2n})$ of $\GL_{2kn}$.
Thus, if we obtain as an inner integration a Fourier coefficient attached to a unipotent orbit which is greater than or non-comparable with $(k^{2n})$, then we get zero contribution from this representative.

We begin with a parametrization of the representatives $\gamma$ of $P(F)\backslash H(F)/L(F)$. Let $N_H$ be the unipotent radical of $B_H$. By the Bruhat decomposition the
double cosets $P\backslash H/B_H=P\backslash H/N_H$ can be represented using Weyl elements, and since $N_H<Q=M U$, every representative $\gamma$ can be written in the form
\begin{align*}
\gamma=wu,
\end{align*}
for a Weyl element $w$ of $H$ and $u\in M\cap N_H$. In the following, we will gradually reduce the number of possible
representatives contributing to \eqref{sum global unfolding}, until we remain with only one, which we will denote by $\delta$.
Hence $Z(s,\varphi_1,\varphi_2,f)$ is equal to $\mathrm{I}(\delta)$, which will then be slightly modified to produce integral \eqref{global2}.

Our main tool for reducing the number of representatives is the following claim. Its proof, along with the proofs of several subsequent statements,
is deferred until later in this section.
\begin{lemma}\label{Lemma A}
If $\gamma=wu$ and there is a one-parameter subgroup $U'$ of $U$ such that $\psi_U|_{U'}\ne1$ and ${}^{w}U'<U_P$, then
$\mathrm{I}(\gamma)=0$.
\end{lemma}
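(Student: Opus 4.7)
The plan is to use the hypothesis ${}^w U'<U_P$ to produce a vanishing inner $\psi_U$-integration on $U'(F)\backslash U'(\A)$, exploiting the left $U_P(\A)$-invariance of the section $f(\cdot,s)$.

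First I would establish that $\gamma U'\gamma^{-1}\subset U_P(\A)$. Since $\gamma=wu$ with $u\in M\cap N_H$, and $u\in M$ normalizes $U$, one has $\gamma U'\gamma^{-1}=w(uU'u^{-1})w^{-1}\subset wUw^{-1}$. A root-system analysis should propagate ${}^w U'<U_P$ through the conjugation by $u$: since $u\in M\cap N_H$ is upper triangular, $uU'u^{-1}$ sits in root subgroups of $U$ whose $w$-images remain in $U_P$ by positivity of the Bruhat decomposition relative to $P$.

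Given $\gamma U'\gamma^{-1}\subset U_P(\A)$, I would average $\mathrm{I}(\gamma)$ over $u'\in U'(F)\backslash U'(\A)$. The change of variable $v\mapsto u'v$ in the $U$-integration preserves Haar measure, so the $U$-integral, and hence $\mathrm{I}(\gamma)$, is unchanged by this translation. Averaging and swapping the order of integration, the innermost piece becomes
\begin{equation*}
\int_{U'(F)\backslash U'(\A)} f(\gamma u'v(g_1,g_2),s)\,\psi_U(u')\,du',
\end{equation*}
which, by left $U_P(\A)$-invariance of $f(\cdot,s)$ together with $\gamma U'\gamma^{-1}\subset U_P(\A)$, equals $f(\gamma v(g_1,g_2),s)\int_{U'(F)\backslash U'(\A)}\psi_U(u')\,du'=0$. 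The last integral vanishes because $\psi_U|_{U'}$ is a nontrivial additive character of $U'(F)\backslash U'(\A)\cong F\backslash\A$. Hence $\mathrm{I}(\gamma)=0$.

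The main obstacle will be the first step: showing that ${}^w U'<U_P$ suffices to force $\gamma U'\gamma^{-1}\subset U_P$. This requires a case-by-case root-system analysis verifying that conjugation by $u\in M\cap N_H$ sends $U'$ into root subgroups of $U$ whose $w$-images still lie in $U_P$; the analysis depends on the explicit structure of $U$, $U_P$, and the character \eqref{def psiU for Sp} in the symplectic case described in \S\ref{global symplectic}.
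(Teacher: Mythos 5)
Your second half (the vanishing mechanism) is sound and is exactly the mechanism the paper uses: once one knows that conjugation by the \emph{full} representative $\gamma$ --- not just by $w$ --- carries a one-parameter subgroup on which $\psi_U$ is nontrivial into $U_P$, that subgroup lies in ${}^{\gamma^{-1}}P\cap U<L_{\gamma}$, so $\mathrm{I}(\gamma)$ factors through an inner integral over $U'(F)\backslash U'(\A)\cong F\backslash \A$, the left $U_P(\A)$-invariance of $f(\cdot,s)$ pulls the conjugated element out, and $\int_{F\backslash\A}\psi$ vanishes. (Phrasing it via $U'<L_{\gamma}$ and factoring the quotient integral is also cleaner than your ``average and swap'' step, which is delicate because the $u$-integration in $\mathrm{I}(\gamma)$ is over $({}^{\gamma^{-1}}P\cap U)(F)\backslash U(\A)$, not over $U(F)\backslash U(\A)$.)

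The genuine gap is the step you yourself flag as the main obstacle, and it cannot be repaired along the lines you suggest. From ${}^{w}U'<U_P$ you want to conclude ${}^{\gamma}U'={}^{w}({}^{u}U')<U_P$. But conjugation by $u\in M\cap N_H$ does not keep $U'$ inside the root subgroup it started in: ${}^{u}U'$ acquires commutator contributions lying in other root subgroups of $U$, and the hypothesis ${}^{w}U'<U_P$ says nothing about the $w$-images of those; ``positivity of the Bruhat decomposition'' gives no control here, and in general ${}^{\gamma}U'\not\subset U_P$. The paper sidesteps this entirely by conjugating in the opposite direction: replace $U'$ by $U''={}^{u^{-1}}U'$. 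Then ${}^{\gamma}U''={}^{w}U'<U_P$ holds on the nose, $U''<U$ because $u\in M$ normalizes $U$, and $\psi_U|_{U''}\ne1$ because $u$ is unipotent --- concretely, $\psi_U$ is nontrivial precisely on the root subgroups $x_i$ of \eqref{def of xi}, and $\psi_U({}^{u^{-1}}x_i(a))=\psi_U(x_i(a))$. With $U''$ in place of $U'$ your vanishing argument then applies verbatim, and no case-by-case root-system analysis is needed.
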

Using the action of $\GL_{2kn}$ on the left and $(I_{2n},G)$ on the right, we may assume
\begin{align}\label{dd1}
w=\begin{pmatrix} \mu_1&&&&\mu_2\\ &\epsilon_1&&\epsilon_2&\\
&&I_{2n}&&\\ &\epsilon_3&&\epsilon_4&\\ \mu_3&&&&\mu_4,
\end{pmatrix},
\end{align}
where $\mu_i\in\Mat_{(k-1)2n}$; $\epsilon_i\in\Mat_{n}$; $\mu_1, \epsilon_1, \epsilon_4$ and $\mu_4$ are
diagonal matrices whose entries are zeros and ones; $\mu_2, \epsilon_2, \epsilon_3$ and $\mu_3$ are matrices
whose nonzero entries are on the anti-diagonal; the nonzero entries of $\mu_2$ and $\epsilon_2$ are ones;
the nonzero entries of $\epsilon_3$ and $\mu_3$ are $-1$.
Since $w\in H$ (and is a Weyl element), it is completely determined by $\mu_1$ and $\epsilon_1$. Further
write $\mu_1=\text{diag}(\mu_{1,1},\mu_{1,2},\ldots,\mu_{1,k-1})$
where $\mu_{1,i}\in\Mat_{2n}$. We shall denote the $(l,l)$-th entry of
$\mu_{1,i}$ by $\mu_{1,i}(l)$. Similarly, $\epsilon_1(l)$ is the $(l,l)$-th coordinate of $\epsilon_1$.

Set $u=u^1u^2$, with an upper triangular matrix $u^2\in \Sp_{4n}$ ($\Sp_{4n}<M$). Using $(G,I_{2n})$ we may assume
\begin{align}\label{uni1}
u^2=\begin{pmatrix} I_{(k-1)2n}&&&&\\ &I_n&T&&\\ &&I_{2n}&T'&\\ &&&I_n&\\ &&&&I_{(k-1)2n}\end{pmatrix},\qquad T=\begin{pmatrix} T_1&0\end{pmatrix},
\end{align}
where $T'$ is defined uniquely by $T$ and the definition of $H$, and $T_1\in\Mat_n$. Put $u^1=(u_{1}, u_{2},\ldots,u_{k-1})$, for upper triangular matrices $u_{i}\in \GL_{2n}$, and regard
$u^1$ as an element in the product of $k-1$ factors of $\GL_{2n}$ in $M$.

For any $h,h'\in H$, write $h\sim h'$ if $PhL=Ph'L$.
\begin{claim}\label{claim:u1 nontrivial}
If $u^1$ is nontrivial, either $\mathrm{I}(\gamma)=0$ or $\gamma\sim wu^2$.
\end{claim}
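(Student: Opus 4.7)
The plan is to dichotomize by the action of $w$ on the root subgroups comprising $u^1$: either all roots of $u^1$ are ``good,'' meaning $w$ sends them into the Lie algebra of $P$, in which case $wu^1w^{-1}\in P$ can be absorbed into the $P$-factor on the left to give $\gamma\sim wu^2$; or some root is ``bad,'' meaning $w$ sends it into the opposite unipotent radical of $U_P$, in which case I would invoke Lemma~\ref{Lemma A} (and, if necessary, the cuspidality of $\pi_i$ or the smallness of $\mathcal{E}_\tau$) to conclude $\mathrm{I}(\gamma)=0$.

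To implement the first case, I would write $u^1=\prod_\alpha X_\alpha(t_\alpha)$ as a product over the positive roots $\alpha$ appearing in the upper-triangular $\GL_{2n}$-blocks of $u^1\in M$, and suppose every such $w\alpha$ is either a root of $M_P$ of any sign or a positive root of $U_P$. Then $wu^1w^{-1}\in P$, and the factorization $\gamma=(wu^1w^{-1})\cdot(wu^2)$ places $\gamma$ in $P\cdot wu^2\cdot L$, which is the required relation $\gamma\sim wu^2$.

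For the second case, suppose some root $\alpha$ of $u^1$ satisfies $w\alpha\in\mathrm{Roots}(U_P^-)$. I would construct a one-parameter subgroup $U'\subset U$ satisfying the hypotheses of Lemma~\ref{Lemma A} by exploiting the symplectic symmetry of $w$: since $w$ is a Weyl element of $\Sp_{4kn}$, the negative of $w\alpha$ lies in $\mathrm{Roots}(U_P)$, and pulling back along $w$ produces a candidate one-parameter subgroup of $H$ whose position can be read off from the explicit form of $w$ in \eqref{dd1}. The main obstacle is verifying that this candidate actually lies in $U$ \emph{and} in the support of $\psi_U$, which by \eqref{def psiU for Sp} is concentrated on the diagonal entries of the $X_i$-blocks and on the blocks $Y_1,Y_4$. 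A case-by-case analysis tracking which of $\mu_1(l),\epsilon_1(l)$ are zero versus one, combined where necessary with a root-exchange manipulation, should either produce the required $U'$, or leave a configuration where the cuspidality of $\pi_i$ (by exhibiting a unipotent radical of a proper parabolic of $G$ inside $L_\gamma$) or the smallness of $\mathcal{E}_\tau$ (by producing an inner Fourier coefficient attached to a unipotent orbit of $\GL_{2kn}$ greater than or non-comparable with $(k^{2n})$) closes the argument.
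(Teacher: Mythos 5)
Your proposed dichotomy does not match how this claim actually works, and the two halves of it both have problems. In the ``bad root'' direction, a root $\alpha$ of $u^1$ with ${}^{w}X_\alpha\not\subset P$ does \emph{not} force $\mathrm{I}(\gamma)=0$: in the paper's analysis (sub-case $\mu_{1,2}(i_1)=1$) exactly such an entry is \emph{removed}, not killed, by multiplying $\gamma$ on the \emph{right} by an element $\mathrm{e}(x(t_1))=(x(t_1),I_{2n})$ of the embedded copy of $G$ inside $L$ and then absorbing a combined element (e.g.\ $(x_{n-j+1,2n-i_1+1}(t_1),\mathrm{e}(x(t_1)),\ldots)$) into $P$ on the left. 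So reaching $\gamma\sim wu^2$ requires the two-sided $P\times L$ coset structure, whereas your ``good case'' uses left $P$-absorption only and your ``bad case'' only contemplates vanishing; configurations that are bad in your sense but still end with $\gamma\sim wu^2$ fall through the dichotomy. Moreover, your mechanism for producing the subgroup $U'$ needed in Lemma~\ref{Lemma A} is not viable: the roots of $u^1$ lie in the Levi $M$, so negating $w\alpha$ and pulling back along $w$ gives a root subgroup of $M$, not of $U$, while Lemma~\ref{Lemma A} requires $U'\subset U$ with $\psi_U|_{U'}\neq1$. In the paper the relevant subgroups are the specific $x_i$ of \eqref{def of xi}, and whether ${}^{w}x_i<U_P$ is read off from the $0/1$ pattern of the diagonal blocks $\mu_{1,l}$, $\epsilon_1$ of $w$ in \eqref{dd1} --- not from the roots supporting $u^1$.

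Beyond that, the genuinely hard part of the claim is precisely what your plan defers to an unspecified ``case-by-case analysis'': in the sub-case $\mu_{1,2}(i_1)=0$ one cannot apply Lemma~\ref{Lemma A} directly; the paper constructs a composite element $x(a,t_1,t_2)=x_{n+j,2n+i_1}(a)x_{i_1}(-at_1)x_{n+j}(at_2)\in U$ together with a compensating $u'\in U$ with $\psi_U(u')=1$ satisfying $u^1x(a,t_1,t_2)u'=x_{n+j,2n+i_1}(a)u^1$, deduces vanishing unless $t_1=t_2$, and iterates to force $t_l=t_1$ for all $l$ across the $k-1$ blocks before a final application of Lemma~\ref{Lemma A} with $x_{(k-1)2n-n+j}$. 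The whole argument is organized as an induction on the position $(l,i_1,i_2)$ of the first nonzero entry of $u^1$, and it uses neither the cuspidality of $\pi_i$ nor the smallness of $\mathcal{E}_\tau$ (those enter only in the later claims). As written, your proposal is missing both the correct vanishing mechanism and the removal mechanism via the right $L$-action, so it does not constitute a proof.
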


Fix $\gamma=wu^2$ as in \eqref{dd1} and \eqref{uni1}.
For $1\le i\le n-1$, let $v_i$ denote the simple reflections in the Weyl group of $G$, which are contained inside the standard maximal parabolic subgroup of $G$ whose Levi part is $\GL_n$ (the Siegel parabolic subgroup). Using the reflections
\begin{align}\label{eq:refl}
\mathrm{e}(v_i)=(v_i,I_{2n})\in H,
\end{align}
we may assume $\epsilon_1=\text{diag}(I_j,0_{n-j})$, where $0_{n-j}\in\Mat_{n-j}$ is the zero matrix and $0\le j\le n$.
This implies $\epsilon_4=\text{diag}(0_{n-j},I_j)$, whence
\begin{align}\label{uni2}
u^2=\begin{pmatrix} I_{(k-1)2n+j}&&&&&\\ &I_{n-j}&T&&&\\ &&I_{n}&&&\\ &&&I_n&T'&\\ &&&&I_{n-j}&\\ &&&&&I_{(k-1)2n+j}\end{pmatrix}.
\end{align}
\begin{claim}\label{claim:preparation mu l 1}
Assume that $u^2$ takes the form \eqref{uni2}. Then $\mathrm{I}(\gamma)=0$ unless there is some
$0\le j<n$ such that $\mu_{1,l}=\text{diag}(I_{j_l},0_{2n-j_l})$ with $0\le j_l\le j$ for all $1\le l\le k-1$.
\end{claim}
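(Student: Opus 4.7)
My plan is to apply Lemma~\ref{Lemma A} to the character-nontrivial root subgroups of $U$, supplemented by cuspidality of $\pi_1$ (or $\pi_2$) and the smallness of $\mathcal{E}_{\tau}$. The character $\psi_U$ from \eqref{def psiU for Sp} is nontrivial precisely on the root vectors at the diagonal entries of each $X_l$ (for $1 \le l \le k-2$), of $Y_1$, and of $Y_4$. The block form \eqref{dd1} determines the signed permutation associated to $w$: a diagonal $1$ in $\mu_1, \mu_4, \epsilon_1, \epsilon_4$ fixes its index in the same ``half'' of $H$, while a $0$ routes it through an anti-diagonal block ($\mu_2, \mu_3, \epsilon_2, \epsilon_3$) to an index in the opposite half.

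For each character-nontrivial root vector I compute when its conjugate by $w$ lies in $U_P$. For the $(a,a)$-entry of $X_l$, at global position $((l-1)2n+a,\; l \cdot 2n+a)$, this happens iff $\mu_{1,l}(a) = 1$ and $\mu_{1,l+1}(a) = 0$; for $Y_1(a,a)$ iff $\mu_{1,k-1}(a) = 1$ and $\epsilon_1(a) = 0$; for $Y_4(a,a)$ iff $\mu_{1,k-1}(n+a) = 1$ and $\epsilon_4(a) = 1$. Invoking Lemma~\ref{Lemma A} in each case forces, for the sets $S_l := \{a : \mu_{1,l}(a) = 1\}$: the nestedness $S_1 \subseteq \cdots \subseteq S_{k-1}$; the first-half constraint $S_{k-1} \cap \{1,\ldots,n\} \subseteq \{1,\ldots,j\}$ (using $\epsilon_1 = \diag(I_j, 0_{n-j})$); and the second-half constraint $S_{k-1} \cap \{n+1,\ldots,2n\} \subseteq \{n+1,\ldots,2n-j\}$ (using $\epsilon_4 = \diag(0_{n-j}, I_j)$).

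To eliminate residual $1$'s in the second half of $\mu_{1,l}$, I pass to cuspidality of $\pi_1$. For a configuration with $\mu_{1,l}(n+a) = 1$ for some $a \le n-j$, I exhibit a root subgroup $V \subset G$, embedded in $L$ via $(v, I)$, such that $\gamma V \gamma^{-1} \subseteq U_P$; this entails tracking $(v, I)$ through the $k-1$ left $\GL_{2n}$ blocks, the four ``corner'' $n \times n$ sub-matrices of $g_1$ inside the middle $\Sp_{4n}$, and the $k-1$ right $\GL_{2n}$ blocks (the latter via $v^*$), applying the signed permutation of $w$ at each step. The left-$U_P$-invariance of $f$ (as a section of $\Ind_P^H \mathcal{E}_{\tau}$) then reduces the inner integration to a cuspidal integral $\int_{V(F) \backslash V(\A)} \varphi_1(gv)\,dv = 0$. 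Border configurations for which no such $V$ achieves full containment are handled instead by smallness: the inner integration becomes a Fourier coefficient of $\mathcal{E}_{\tau}$ attached to a unipotent orbit greater than or non-comparable with $(k^{2n})$, which vanishes by Theorem~\ref{exthspeh1} (condition \eqref{def:Whittaker--Speh--Shalika 1} of Definition~\ref{def1}). A final simultaneous sort using Weyl reflections inside the Siegel $\GL_j \subset G$ --- which preserve $\epsilon_1$ and, by nestedness, can be chosen to bring each $S_l$ to $\{1,\ldots,j_l\}$ at once --- produces the claimed form $\mu_{1,l} = \diag(I_{j_l}, 0_{2n-j_l})$ with $j_l \le j$.

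The main obstacle is this last cuspidality/smallness step: identifying the correct one-parameter unipotent $V \subset G$ for each configuration and verifying that \emph{every} nonzero entry of $\gamma V \gamma^{-1}$ lies in $U_P$, since partial containment would not suffice to invoke cuspidality. The four corners of the embedding of $g_1$ inside the middle $\Sp_{4n}$ block interact with the anti-diagonal blocks $\epsilon_2, \epsilon_3$ of $w$ in a way that couples the conditions on $\mu_{1,l}(a), \mu_{1,l}(n+a), \epsilon_1(a), \epsilon_4(a)$ simultaneously, and tracing this combinatorics case-by-case is where the bulk of the work lies.
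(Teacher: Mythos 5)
Your first stage is sound and is essentially the paper's first stage: applying Lemma~\ref{Lemma A} to the diagonal entries of the $X_l$, $Y_1$, $Y_4$ blocks does give the nestedness of the supports of the $\mu_{1,l}$ and the boundary constraints coming from $\epsilon_1$ and $\epsilon_4$, which is \eqref{mu1l} together with the sorting by the reflections $\mathrm{e}(v_i)$.

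The gap is in your elimination of the residual $1$'s at positions $n+1,\ldots,2n-j$. First, the cuspidality mechanism you describe does not work: $\int_{V(F)\backslash V(\A)}\varphi_1(vg)\,dv$ vanishes only when $V$ is (or contains) the unipotent radical of a proper parabolic subgroup of $G$, and a one-parameter root subgroup of $\Sp_{2n}$ ($n\ge 2$) is never such a radical, so the inner integral you produce has no reason to vanish; the only point in this claim where cuspidality is available is the degenerate case $u^2=I$ (equivalently $j=n$, and this also supplies $j<n$, which you never address), where ${}^{w}(I_{2n},G)\cap U_P$ contains the full Siegel radical of the \emph{second} copy, so it is $\pi_2$, not $\pi_1$, that is used. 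Second, and more structurally, configurations with $\beta_l\ne 0$ are not all killed at this stage, and the correct dichotomy is governed by $u^2$, i.e.\ by $T'$ --- something your $w$-only bookkeeping never sees. When one of the first $b_{k-1}$ columns of $T'$ is nonzero, the term is killed by Lemma~\ref{Lemma A} applied to an element of the form $x_{(k-1)2n-n+l,2kn+1}(a)x_{(k-1)2n-n+l}(-a)\in U$, whose conjugate by $u^2$ becomes a single root subgroup mapped by $w$ into $U_P$; when those columns vanish, the term is \emph{not} shown to vanish here at all --- instead one multiplies on the right by $v_0\in G\times G$ and on the left by a permutation in $\GL_{2kn}(F)$ to show the representative is equivalent to one of the desired form with a strictly larger $j'$, which is precisely why the claim is stated with ``for some $0\le j<n$'' rather than with the $j$ fixed by $\epsilon_1$ (such representatives are only eliminated later, in Claim~\ref{claim: a>0}). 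Your ``smallness'' fallback is not used in the proof of this claim and is left unspecified: you construct no unipotent group and character and do not verify that the resulting coefficient is attached to an orbit greater than or non-comparable with $(k^{2n})$. As it stands, the second half of your argument would fail.
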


\begin{claim}\label{claim: a>0}
If $j>0$, $\mathrm{I}(\gamma)=0$.
\end{claim}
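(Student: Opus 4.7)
The plan is to apply Lemma~\ref{Lemma A} by producing a one-parameter subgroup $U'\subset U$ on which $\psi_U$ is nontrivial and for which ${}^{w}U'\subset U_P$. Because $j>0$, the Weyl element $w$ has identity blocks $\epsilon_1=\diag(I_j,0_{n-j})$ and $\epsilon_4=\diag(0_{n-j},I_j)$ in its middle $4n\times 4n$ portion, and by Claim~\ref{claim:preparation mu l 1} each $\mu_{1,l}=\diag(I_{j_l},0_{2n-j_l})$ with $0\le j_l\le j$. The constraint that $w$ be a Weyl element (exactly one nonzero entry in every row and column) then determines the partial anti-diagonal patterns of $\epsilon_2,\epsilon_3,\mu_2,\mu_3$, and the nontrivial $I_j$ in $\epsilon_1$ in particular creates new anti-diagonal entries in $\mu_3$ and in $\epsilon_3$ that are absent when $j=0$.

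The candidates for $U'$ are the root subgroups at the diagonal entries of $X_1,\ldots,X_{k-2},Y_1,Y_4$; by the definition \eqref{def psiU for Sp}, these are precisely the root subgroups of $U$ on which $\psi_U$ is nontrivial. For each such candidate sitting at matrix position $(\alpha,\beta)$ in the ambient $4kn\times 4kn$ matrix, I would trace where $w$ sends $\alpha$ and $\beta$ through the block decomposition of $w$, aiming to verify $w(\alpha)\le 2kn<w(\beta)$, which places ${}^{w}U'$ in the upper-right $2kn\times 2kn$ block of $H=\Sp_{4kn}$, namely in $U_P$. The identity portion $I_j$ in $\epsilon_1$ (or, symmetrically, $I_j$ in $\epsilon_4$), combined with the constraint $j_l\le j$, should shift at least one of these diagonal root positions into the $U_P$-region; the conjugation by $u^2$ afterwards preserves this containment since $u^2\in N_H$.

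The main obstacle will be the combinatorial bookkeeping across subcases: the integers $j_1,\ldots,j_{k-1}$ are only constrained by $0\le j_l\le j$ and can vary independently, so no single choice of $U'$ will work uniformly. I expect to split into a few subcases distinguishing which $j_l$'s equal $j$ and which are strictly smaller, and to select $U'$ accordingly (for instance, a diagonal entry of $Y_1$ when $j_{k-1}<j$, and a diagonal entry of $X_{k-2}$ when $j_{k-2}<j_{k-1}=j$, etc.). Should this case analysis become unwieldy, the fallback is to invoke the smallness of $\mathcal{E}_{\tau}$ (Theorem~\ref{exthspeh1}): exploit the identity patterns in $\epsilon_1,\epsilon_4,\mu_{1,l}$ to exhibit an inner integration in $\mathrm{I}(\gamma)$ that computes a Fourier coefficient on $\mathcal{E}_{\tau}$ along a unipotent orbit of $\GL_{2kn}$ strictly greater than $(k^{2n})$, forcing $\mathrm{I}(\gamma)=0$.
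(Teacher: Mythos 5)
Your primary route via Lemma~\ref{Lemma A} cannot succeed, and the difficulty is not just bookkeeping. The only one-parameter subgroups of $U$ on which $\psi_U$ is nontrivial are (up to directions where $\psi_U$ is trivial) the $x_i$ of \eqref{def of xi}, and for $w$ of the shape produced by Claim~\ref{claim:preparation mu l 1} a direct check of where $w$ sends the pair of indices of $x_i$ shows that ${}^{w}x_i<U_P$ forces $j_{l+1}<r\le j_l$ for some block $l$ (or $j<r\le j_{k-1}$ for the last step into the $\epsilon_1$-columns, which is impossible since $j_{k-1}\le j$). In other words, Lemma~\ref{Lemma A} only bites when the sequence $j_1,\ldots,j_{k-1},j$ decreases somewhere, and those representatives have already been eliminated in the proof of Claim~\ref{claim:preparation mu l 1}; the configurations that actually survive to Claim~\ref{claim: a>0} (for instance $j_1=\cdots=j_{k-1}=j>0$, or $j_l=0$ for all $l$ with $0<j<n$) admit no character-nontrivial one-parameter subgroup conjugating into $U_P$. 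So there is no choice of $U'$ to find, no matter how the subcases are split.

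Your fallback is the right mechanism, but only for part of the claim, and as stated it also misses one point. When some $j_l>0$, the paper indeed argues via the smallness of $\mathcal{E}_\tau$: it builds a unipotent group $Y$ inside ${}^{\gamma^{-1}}U\cap\GL_{2kn}$ from the composition $((2n)^{l_0-1},j_{l_0},\ldots,j_{k-1},j,b)$ and, after conjugation, exhibits the semi-Whittaker coefficient attached to $((k+1)1^{2kn-k-1})$ as an inner integration; note this orbit is \emph{non-comparable} with $(k^{2n})$ rather than strictly greater, which is still covered by part~\eqref{def:Whittaker--Speh--Shalika 1} of Definition~\ref{def1}, so you should not restrict your search to strictly larger orbits. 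The genuine gap is the remaining subcase $j_l=0$ for all $l$ (with $0<j<n$): there no such larger or non-comparable coefficient is available, and the paper instead proceeds as in Claim~\ref{claim:rank X < n} --- one expands along an auxiliary unipotent group, uses the smallness of $\mathcal{E}_\tau$ only to kill the nontrivial terms of the expansion, and then the surviving constant term is killed because it is invariant under the unipotent radical of a parabolic subgroup of $G$ in the second copy, i.e.\ by the \emph{cuspidality} of $\pi_2$. Your proposal never invokes cuspidality, so it has no mechanism at all for this subcase; any complete proof of Claim~\ref{claim: a>0} must bring it in.
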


It remains to consider $j=0$. Thus $j_l=0$ for all $l$. This already implies
$w=\left(\begin{smallmatrix}&I_{2kn}\\-I_{2kn}\end{smallmatrix}\right)$. Multiplying on
the right by $(I_{2n},\left(\begin{smallmatrix}&I_n\\-I_n\end{smallmatrix}\right))$ (this shifts the block $T$ in \eqref{uni2}
and ${}^w(I_{2n},\left(\begin{smallmatrix}&I_n\\-I_n\end{smallmatrix}\right))\in P$), we need to consider all representatives with
\begin{align}\label{open1}
u^2=u^2[T]=\begin{pmatrix}
I_{(k-1)2n}&&&&&\\ &I_n&&T&&\\ &&I_n&&T'&&\\ &&&I_n&&\\ &&&&I_n&\\
&&&&&I_{(k-1)2n}\end{pmatrix}.
\end{align}
The group $\GL_n\times\GL_n$, embedded inside $G\times G$ as the
group of matrices of the form
\begin{align*}[A,B]=\left (\left(\begin{smallmatrix} A&\\ &A^*\end{smallmatrix}\right), \left(\begin{smallmatrix} B&\\ &B^*\end{smallmatrix}\right)\right )\qquad
(X^*=J_{n}{}^tX^{-1}J_{n}),
\end{align*}
acts on all matrices \eqref{open1} by $[A,B]\cdot u^2[T]=u^2[AT(J_{n}{}^tBJ_{n})]$.
Hence, a set of representatives for this action can be taken to be any $n+1$ matrices whose ranks are $0,1,\ldots, n$, e.g., the matrices
$\left(\begin{smallmatrix} &0\\ I_l&\end{smallmatrix}\right)$, where $0\leq l\leq n$.
\begin{claim}\label{claim:rank X < n}
If $T=\left(\begin{smallmatrix} &0\\ I_l&\end{smallmatrix}\right)$ with $0\leq l<n$, $\mathrm{I}(\gamma)=0$.
\end{claim}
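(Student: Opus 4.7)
I would exploit the rank deficiency of $T$ (i.e., $l<n$) to locate a non-trivial unipotent subgroup inside the stabilizer $L_\gamma$, and then argue vanishing via cuspidality of $\pi_2$ (with the smallness of $\mathcal{E}_{\tau}$ as a backup).

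First, I would pin down $L_\gamma\cap(\{I\}\times G)$ by computing $\gamma(1,g_2)\gamma^{-1}$ and requiring it to lie in $P$. Since $\gamma=wu^2[T]$ with $w=\left(\begin{smallmatrix}&I_{2kn}\\-I_{2kn}\end{smallmatrix}\right)$ and $u^2[T]\in U_P$, this is equivalent to requiring $u^2[T](1,g_2)u^2[T]^{-1}\in w^{-1}Pw=P^{-}$, the opposite Siegel parabolic of $H$. Writing $g_2=\left(\begin{smallmatrix}a&b\\c&d\end{smallmatrix}\right)\in\Sp_{2n}$ in $n\times n$ blocks, a direct calculation inside the middle $\Sp_{4n}$ factor of $M$ shows that the top-right quadrant of $u^2[T](1,g_2)u^2[T]^{-1}$ consists (up to sign) of $n\times n$ sub-blocks of the shape $T(d-I)$, $TcT'$, $b$, and $(I-a)T'$. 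Demanding that these vanish cuts out a subgroup of $G$ which, precisely when $\mathrm{rank}(T)=l<n$, contains a positive-dimensional unipotent subgroup $V$ coming from the non-trivial kernels of $T$ and $T'$.

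Second, with $(1,V)\subset L_\gamma$, I would rearrange the order of integration so that an inner integral of the form $\int_{V(F)\backslash V(\A)}\varphi_2({}^{\iota}(g_2v))\,dv$ appears. Provided $V$ can be taken to be (or, after a completion step, to contain) the unipotent radical of a proper parabolic subgroup of $G$, cuspidality of $\pi_2$ gives $\mathrm{I}(\gamma)=0$.

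\textbf{Main obstacle.} The principal difficulty is arranging that the unipotent subgroup $V$ is the full unipotent radical of a proper parabolic of $G$, rather than only a proper subgroup cut out by the linear conditions $TcT'=0$, $T(d-I)=0$, $(I-a)T'=0$. If only a strictly smaller unipotent subgroup is obtained directly, a supplementary Fourier expansion along a suitable subgroup of $U_P$ is needed, with the new variables absorbed into $f$ by left $P$-invariance. One then completes $V$ either by invoking the diagonal $\SL_c$-invariance of $f_{W(\mathcal{E}_{\tau})}$ from Claim \ref{claim:extra invariance}, or by recasting the remaining integration as a Fourier coefficient of $\mathcal{E}_{\tau}$ associated with a unipotent orbit of $\GL_{2kn}$ strictly larger than $(k^{2n})$, which vanishes by Theorem \ref{exthspeh1}.
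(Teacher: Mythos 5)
Your overall skeleton (stabilizer computation, rank deficiency of $T$ producing a unipotent subgroup in $L_\gamma\cap(1\times G)$, then cuspidality of $\pi_2$, with the smallness of $\mathcal{E}_\tau$ as auxiliary input) is the same family of ideas the paper uses, and your identification of the conditions $b=0$, $T(d-I)=0$, $(I-a)T'=0$, $TcT'=0$ is consistent with the paper: these conditions in fact cut out the \emph{full} unipotent radical \eqref{stab10} of the parabolic of $G$ with Levi $\GL_{n-l}\times\Sp_{2l}$. So the obstacle you single out — that $V$ might be too small and need ``completion'' — is not the real difficulty. The genuine gap is in your second step: you cannot simply ``rearrange the order of integration'' to produce $\int_{V(F)\backslash V(\A)}\varphi_2({}^{\iota}(g_2v))\,dv$, because for $0<l<n$ the conjugate ${}^{\gamma_l}v$ of an element $v$ of \eqref{stab10} is $\diag(v',{v'}^*)u'$ with $u'\in U_P$ but with a \emph{nontrivial Levi component} $v'$ lying in the unipotent subgroup $V'=\{\left(\begin{smallmatrix}I_{n-l}&z\\&I_{n+l}\end{smallmatrix}\right)\}<\GL_{2kn}$. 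The section $f$ is left $U_P(\A)$-invariant but is not invariant under $\diag(v',{v'}^*)$, so the inner $(U,\psi_U)$-coefficient of $f$ at $\gamma_l$ is not a priori invariant under right translation of $g_2$ by $V(\A)$; this is exactly the invariance hypothesis in the paper's ``second vanishing mechanism,'' and without it cuspidality cannot be invoked.

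The paper supplies this invariance as follows: one observes that ${}^{\gamma_l}U\cap\GL_{2kn}=V_{((2n)^k)}$ but with a \emph{degenerate} character $\psi_l$ (the block $\left(\begin{smallmatrix}0&\\&I_{n+l}\end{smallmatrix}\right)$ replacing $I_{2n}$), then expands the inner coefficient \eqref{inner1} along $V'(F)\backslash V'(\A)$ — a subgroup of the Levi $\GL_{2kn}$, not of $U_P$ — and all nonconstant terms are Fourier coefficients attached to orbits strictly greater than $(k^{2n})$, hence vanish by Definition~\ref{def1}\eqref{def:Whittaker--Speh--Shalika 1}. The surviving constant term is $V'(\A)$-invariant, which is precisely what absorbs $\diag(v',{v'}^*)$; only then does the constant term of $\varphi_2$ along \eqref{stab10} appear and vanish by cuspidality. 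Your proposed fixes do not deliver this: a Fourier expansion along a subgroup of $U_P$ is vacuous (since $f$ is already left $U_P(\A)$-invariant), and the $\SL_c^{\Delta}$-invariance of Claim~\ref{claim:extra invariance} does not apply to $\diag(v',{v'}^*)$, which is not a diagonal embedding. Your third suggestion (vanishing of coefficients attached to orbits bigger than $(k^{2n})$) is the right ingredient, but it must be deployed to reduce the $V'$-expansion to its constant term — thereby creating the invariance needed for the cuspidality argument — rather than to kill the integral outright.
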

Finally denote the remaining representative by $\delta$,
\begin{align}\label{open2}
\delta=\begin{pmatrix} &I_{2kn}\\ -I_{2kn}\end{pmatrix}\begin{pmatrix}
I_{(k-1)2n}&&&&\\ &I_{2n}&&I_{2n}&\\ &&&I_{2n}&\\
&&&&I_{(k-1)2n}\end{pmatrix}.
\end{align}
The group $L_\delta$ is described as follows. First, inside $G\times G$ we obtain the group
$G$ embedded as $g\mapsto (g,{}^{\iota}g)$. In
$U$, we obtain the subgroup $V$ of all matrices of the form
\begin{align*}
\begin{pmatrix} u_1&&&\\ &I_{2n}&&\\ &&I_{2n}&\\ &&&u_1^*\end{pmatrix}
\begin{pmatrix} I_{(k-1)2n}&u_2&-u_2&0\\ &I_{2n}&&-u_2'\\ &&I_{2n}&-u_2'\\
&&&I_{(k-1)2n}\end{pmatrix}\qquad (u_2'=J_{2n}{}^tu_2J_{(k-1)2n}).
\end{align*}

Since all summands but $\mathrm{I}(\gamma)=\mathrm{I}(\delta)$ vanish,
\begin{align}\label{eq:int one rep}
Z(s,\varphi_1,\varphi_2,f)=\mathrm{I}(\delta)=
\int\limits_{L_{\delta}(F)\backslash L({\A})}
\varphi_1(g_1)\,\overline{\varphi_2({}^{\iota}g_2)}\,
f(\delta u(g_1,g_2),s)\,\psi_U(u)\,du\,dg_1\,dg_2.
\end{align}
We factor the integration through $L_{\delta}(\A)$. The quotient $L_{\delta}\backslash L=(I_{2n},G)\ltimes U_0$, where we recall that $U_0=U\cap U_{P}$.
Therefore \eqref{eq:int one rep} becomes
\begin{align*}
\int\limits_{G(\A)}
\int\limits_{U_0(\A)}
\int\limits_{G(F)\backslash G(\A)}
\int\limits_{V(F)\backslash V(\A)}
\varphi_1(g_1)\,\overline{\varphi_2({}^{\iota}({}^{\iota}g_1g_2))}\,
f(\delta vu_0(g_1,{}^{\iota}g_1)(1,g_2),s)\,\psi_U(vu_0)\,dv\,dg_1\,du_0\,dg_2.
\end{align*}
Conjugating $V$ across $\delta$, we obtain the subgroup $V_{((2n)^{k})}$ of $\GL_{2kn}$,
where $V_{((2n)^{k})}$ is the unipotent subgroup defined in \S~\ref{speh}, and after adjusting $\psi_U(v)$ we obtain the character
\eqref{eq:wss character}, so that we can write the integral in the form
\begin{align*}
\int\limits_{G(\A)}
\int\limits_{U_0(\A)}
\int\limits_{G(F)\backslash G(\A)}
\varphi_1(g_1)\,\overline{\varphi_2(g_1{}^{\iota}g_2)}\,
f_{W(\mathcal{E}_{\tau})}(\delta u_0(g_1,{}^{\iota}g_1)(1,g_2),s)\,\psi_U(u_0)\,dg_1\,du_0\,dg_2.
\end{align*}
We remind the reader that
\begin{align*}
f_{W(\mathcal{E}_{\tau})}(h,s)=\int\limits_{V_{((2n)^k)}(F)\backslash V_{((2n)^k)}({\A})}
f(vh,s)\,\psi^{-1}(v)\,dv,
\end{align*}
where $\psi$ is defined by \eqref{eq:wss character}.
By Claim~\ref{claim:extra invariance}, the $(k,2n)$ functional is invariant under
$\SL_{2n}^{\Delta}(\A)$. Since ${}^{\delta}(g_1,{}^{\iota}g_1)=\text{diag}(g_1,\ldots,g_1)\in \SL_{2n}^{\Delta}(\A)$, the integral over $G(F)\backslash G(\A)$ produces the inner product $\langle\varphi_1,\pi({}^{\iota}g_2)\varphi_2\rangle$. Changing $g_2\mapsto{}^{\iota}g_2$, we reach
\begin{align*}
\int\limits_{G(\A)}
\int\limits_{U_0(\A)}
\langle\varphi_1,\pi(g_2)\varphi_2\rangle
f_{W(\mathcal{E}_{\tau})}(\delta u_0(1,{}^{\iota}g_2),s))\,\psi_U(u_0)\,du_0\,dg_2.
\end{align*}
This is the integral \eqref{global2}.
In particular $\mathrm{I}(\delta)=0$ whence $Z(s,\varphi_1,\varphi_2,f)$ itself vanishes, unless $\pi_1=\pi_2=\pi$.
The proof of the theorem is complete.
\begin{remark}\label{remark:global parameters for SO}
The only differences in the construction for $G=\SO_{2n}$ are that $M=\GL_{2n}\times\ldots\times\GL_{2n}\times\SO_{4n}$
and $\iota=\left(\begin{smallmatrix}&I_n\\-I_n\end{smallmatrix}\right)$. The remaining parameters are similar:
the character $\psi_U$ is still given by \eqref{def psiU for Sp}, the embedding $(g_1,g_2)$ is the same,
$U_0=U\cap U_{P}$, ${\mathcal E}_{\tau}$ (still) corresponds to the unipotent orbit $(k^{2n})$, and
$\delta=\left(\begin{smallmatrix} &I_{2kn}\\ I_{2kn}\end{smallmatrix}\right)
\diag(I_{(k-1)2n},\left(\begin{smallmatrix}I_{2n}&A\\ &I_{2n}\end{smallmatrix}\right),I_{(k-1)2n})$ with
$A=\left(\begin{smallmatrix} -I_{n}\\ &I_{n}\end{smallmatrix}\right)$.
\end{remark}

\begin{proof}[Proof of Lemma~\ref{Lemma A}]
The result clearly follows if ${}^{\gamma}U'<U_P$: indeed by definition, $U'<{}^{\gamma^{-1}}P\cap U<L_{\gamma}$, so that we can factor the integral $\mathrm{I}(\gamma)$ through $U'(F)\backslash U'(\A)=F\backslash\A$ ($U'$ is a one-parameter subgroup), then the left invariance properties of $f$ yield an inner integration $\int_{F\backslash {\A}}\psi(u)du$, which vanishes.

Assume that $U'$ exists with the stated properties. We will show that ${}^{w}U'<U_P$ implies the existence of
another one-parameter subgroup $U''$ of $U$ such that ${}^{\gamma}U''<U_P$ and $\psi_U|_{U''}\ne1$, whence
$\mathrm{I}(\gamma)=0$.

For $1\leq i<j\le 4kn$, let $x_{i,j}$ denote the one-parameter unipotent subgroup along the positive root $(i,j)$ of $H$. Put
\begin{align}\label{def of xi}
x_i=\begin{cases}x_{i,2n+i}&1\le i\le (k-1)2n-n,\\x_{i,4n+i}&(k-1)2n-n+1\le i\le (k-1)2n.\end{cases}
\end{align}
Then $\psi_U|_{x_i}\ne 1$ for all $i$, and $\psi_U$ is trivial on any other subgroup $x_{i,j}$ contained in $U$.
Our assumption is thus ${}^{w}x_i\in U_P$ for some $i$. Since $u\in M$ ($\gamma=wu$),
$U''={}^{u^{-1}}x_i<U$, and because $u$ is unipotent, $\psi_U(x_i(a))=\psi_U({}^{u^{-1}}x_i(a))$ for $a\in\A$.
Hence $U''$ has the required properties.
\end{proof}

\begin{proof}[Proof of Claim~\ref{claim:u1 nontrivial}]
The proof is by induction on $(l,i_1,i_2)$ where $1\leq l\leq k-1$ and $1\leq i_1<i_2\leq 2n$. Assume that the
$(i_1,i_2)$-th entry of $u_{l}$ is nonzero, where $l$ is minimal such that for all
$(i'_1,i'_2)$ with either $i'_1<i'_2<i_2$ or $i_1< i'_1<i'_2=i_2$, and all $l\leq l'\le k-1$, the $(i'_1,i'_2)$-th entry of
$u_{l'}$ is zero. For instance if the $(1,2)$-th coordinate of $u_1$ is nonzero, we set $l=1$ and $(i_1,i_2)=(1,2)$.

In general, if we can write $u^1=u'v^1$ where ${}^wu'\in P$ and $v^1$ is of the same form as $u^1$, that is, an element in the product of $k-1$ factors of $\GL_{2n}$ in $M$, then $h=wu^1u^2\sim wv^1u^2$. Hence if $(l,i_1,i_2)$ are given, it is implicitly assumed
we cannot write $u^1=u'v^1$ where the $(i_1,i_2)$-th coordinate of $v^1$ is zero.

All $1\leq l\leq k-1$ are handled in the same manner, and for simplicity we assume $l=1$.
Denote by $e_{i_1,i_2}\in\Mat_{2n}$ the matrix whose $(i_1,i_2)$-th entry is one and whose other entries are all zero, and
for $a\in\A$ put $y_{i_1,i_2}(a)=I_{2n}+ae_{i_1,i_2}$. Then
\begin{align*}
u^1=(y_{i_1,i_2}(t_1)u_{1}',y_{i_1,i_2}(t_2)u_{2}',\ldots,y_{i_1,i_2}(t_{k-1})u_{k-1}')
\end{align*}
with $t_1\ne 0$. The matrices $u_{l}'\in\Mat_{2n}$ are upper triangular unipotent matrices whose $(j_1,j_2)$-th entries are zero
for all $i_1\le j_1$ and $j_2\le i_2$.
We show that either $\mathrm{I}(\gamma)=0$ or
\begin{align*}
wu\sim wv^1{v}^2,
\end{align*}
where $v^1=(u_{1}',u_{2}',\ldots,u_{k-1}')$ and
$v^2$ takes the form \eqref{uni1}.

There are three cases to consider. Either $i_1<i_2\le n$, $i_1\le n<i_2$, or $n<i_1<i_2$.  We will present the details in the second case; the other two cases are treated similarly.  Assume
\begin{align*}
1\le i_1\le n,\qquad i_2=n+j,\qquad 1\le j\le n.
\end{align*}
There are initially $4$ possibilities for the values of $\mu_{1,1}(i_1)$ and
$\mu_{1,1}(n+j)$. Matrix multiplication implies
\begin{align*}
\mu_{1,1}(i_1)=0,\qquad\mu_{1,1}(n+j)=1,
\end{align*}
since otherwise ${}^{w}(y_{i_1,i_2}(t_1),1,\ldots,1)\in P$, contradicting our assumption.

Now we show
\begin{align}\label{eq:mu l n+j 1}
\forall 2\le l\le k-1, \quad \mu_{1,l}(n+j)=1,\qquad \text{ and  }\quad \epsilon_4(j)=0.
\end{align}
Here $\epsilon_4(j)$ is the $(j,j)$-th coordinate of $\epsilon_4$.
Indeed, if $\mu_{1,2}(n+j)=0$, then ${}^{w}x_{n+j}<U_P$ ($x_{n+j}$ is defined by \eqref{def of xi}) and since $\psi_U|_{x_{n+j}}\ne 1$,
Lemma~\ref{Lemma A} implies $\mathrm{I}(\gamma)=0$. In general for any $2\le l\le k-1$, using the unipotent subgroup $x_{(l-1)n+j}$ we deduce from Lemma~\ref{Lemma A} that $\mu_{1,l}(n+j)=1$. To deduce $\epsilon_4(j)=0$ use Lemma~\ref{Lemma A} with $x_{(k-1)2n-n+j}$. This proves \eqref{eq:mu l n+j 1}.

Since $w\in H$,
\begin{align*}
\epsilon_1(n-j+1)=\epsilon_4(j)=0.
\end{align*}

Next, we show
\begin{align}\label{eq:mu l  1}
\forall 1\le l\le k-1,\quad\mu_{1,l}(n-j+1)=0.
\end{align}
Indeed, if $\mu_{1,l}(n-j+1)=1$, matrix multiplication implies
${}^{w}x_{2nl-n-j+1}<U_P$.
Since $\psi_U|_{x_{2nl-n-j+1}}\ne 1$, it follows from Lemma~\ref{Lemma A} that $\mathrm{I}(\gamma)=0$.

Now there are two possibilities, $\mu_{1,2}(i_1)=1$ or $0$. Assume the former. Using Lemma~\ref{Lemma A} as
above (the proof of \eqref{eq:mu l n+j 1}), we conclude that $\mu_{1,l}(i_1)=\epsilon_1(i_1)=1$ for all
$3\le l\le k-1$. Since $w\in H$, $\epsilon_1(i_1)=1$ implies $\epsilon_4(n-i_1+1)=1$, and an argument similar to the one used for \eqref{eq:mu l  1} implies that $\mu_{1,l}(2n-i_1+1)=0$ for all $1\le l\le k-1$. Then we see
that ${}^{w}(1,y_{i_1,i_2}(t_2),\ldots,y_{i_1,i_2}(t_{k-1}))\in P$ ($i_2=n+j$). Therefore, we may assume that $u$ is such that
\begin{align*}
u^1=(y_{i_1,i_2}(t_1)u_{1}',u_{2}',\ldots,u_{k-1}').
\end{align*}
Let $x(t_1)=y_{i_1,n+j}(-t_1)y_{n-j+1,2n-i_1+1}(\zeta t_1)\in G$, where $\zeta=\pm 1$. For brevity, put
$\mathrm{e}(x(t_1))=(x(t_1),I_{2n})\in H$. We have
\begin{align*}
wu=wu^1u^2\sim wu^1u^2\mathrm{e}(x(t_1))=wu^1\mathrm{e}(x(t_1))v^2,
\end{align*}
where $v^2$ has the same form as $u^2$ ($\mathrm{e}(x(t_1))$ does not commute with $u^2$).
Also
\begin{align*}
u^1\mathrm{e}(x(t_1))&=(y_{i_1,i_2}(t_1)\mathrm{e}(x(t_1)) u_{1}',\mathrm{e}(x(t_1)) u_{2}',\ldots,\mathrm{e}(x(t_1)) u_{k-1}')\\&=(x_{n-j+1,2n-i_1+1}(t_1),\mathrm{e}(x(t_1)),\ldots,\mathrm{e}(x(t_1)))v^1.
\end{align*}
However, from the structure of $w$ it follows that
\begin{align*}
{}^{w}(x_{n-j+1,2n-i_1+1}(t_1),\mathrm{e}(x(t_1)),\ldots,\mathrm{e}(x(t_1)))\in P.
\end{align*}
Hence $wu\sim wv^1v^2$. This completes the first case $\mu_{1,2}(i_1)=1$.

Now assume $\mu_{1,2}(i_1)=0$. Recall that we
assumed
\begin{align*}
u^1=(y_{i_1,i_2}(t_1)u_{1}',y_{i_1,i_2}(t_2)u_{2}',\ldots,y_{i_1,i_2}(t_{k-1})u_{k-1}')
\end{align*}
with $t_1\ne 0$. Let
\begin{align*}
x(a,t_1,t_2)=x_{n+j,2n+i_1}(a)x_{i_1}(-at_1)x_{n+j}(at_2),
\end{align*}
where $x_{n+j,2n+i_1}$ was defined before \eqref{def of xi}. By the definition of $x_{i_1},x_{n+j}$, $x_{n+j,2n+i_1}$ and $\psi_U$,
\begin{align*}
\psi_U(x_{n+j,2n+i_1}(a_1)x_{i_1}(a_2)x_{n+j}(a_3))=\psi_U(a_2+a_3),\qquad\forall a_i\in \A.
\end{align*}
It follows from matrix multiplication that one can choose $u'\in U$ in such a way that $\psi_U(u')=1$ and
\begin{align*}
u^1x(a,t_1,t_2)u'=x_{n+j,2n+i_1}(a)u^1.
\end{align*}
In fact, $u'$ is a product of matrices of the form $x_{2n+c,d}(\cdot)$ where $1\le c<d\le 2n$.
Thus if $t_1\ne t_2$,
\begin{align*}
\psi_U(x(a,t_1,t_2)u')=\psi_U(x(a,t_1,t_2))=\psi(a(t_2-t_1))
\end{align*}
which is nontrivial, and also ${}^{w}x_{n+j,2n+i_1}<U_P$, hence $\mathrm{I}(\gamma)=0$.
Therefore, we may assume $t_1=t_2$.
Next consider the value of $\mu_{1,3}(i_1)$. If $\mu_{1,3}(i_1)=1$, we proceed as in the case $\mu_{1,2}(i_1)=1$. If $\mu_{1,3}(i_1)=0$, continue as above to deduce $t_3=t_1$.
Proceeding in this manner we need to consider the case where
$u^1$ is such that $t_l=t_1$ for all $2\le l\le k-1$. Finally, if $\epsilon_1(i_1)=1$ we proceed as in the case $\mu_{1,2}(i_1)=1$.
If $\epsilon_1(i_1)=0$, since we established $t_{k-1}=t_1$ we can use Lemma~\ref{Lemma A} with the unipotent subgroup
$x_{(k-1)2n-n+j}$. This completes the proof of the case $\mu_{1,2}(i_1)=0$ and thereby the case $i_1\le n<i_2$.
\end{proof}

\begin{proof}[Proof of Claim~\ref{claim:preparation mu l 1}]
Our first step is to prove that if $I(\gamma)\neq0$, then
\begin{align}\label{mu1l}
\forall 1\le l\le k-1,\quad \mu_{1,l}=\text{diag}(\alpha_l,0_{n-j},\beta_l,0_j).
\end{align}
Here $\alpha_l\in\Mat_j$ and $\beta_l\in\Mat_{n-j}$ are diagonal matrices, whose diagonal elements are zeros and ones.
Consider the matrix $\mu_{1,l}$, and assume that for some $1\le i\le j$ we have $\mu_{1,l}(2n-j+i)=1$.
Then, it follows from matrix multiplication that ${}^{w}x_{2nl-n+i}<U_P$. Hence by Lemma~\ref{Lemma A}, $\mathrm{I}(\gamma)=0$ for this representative. Thus we may assume $\mu_{1,l}=\text{diag}(d,0_j)$ where $d$ is a diagonal matrix of size $2n-j$.
Similarly, if $\mu_{1,l}(j+i)=1$ for some $1\le i\le n-j$, we
use the unipotent subgroup $x_{2nl+j+i}$ and Lemma~\ref{Lemma A} to deduce that this
representative contributes zero. Thus, we have shown \eqref{mu1l}.

Consider the matrix $\mu_{1,1}$. Assume $\alpha_1$ contains $j_1$ nonzero entries and $\beta_1$ contains
$b_1$ nonzero entries (these nonzero entries must then be $1$), where $0\le j_1\le j$ and $0\le b_1\le n-j$. Using the Weyl elements $\mathrm{e}(v_1), \dots, \mathrm{e}(v_{j-1})$ and $\mathrm{e}(v_{j+1}),\ldots, \mathrm{e}(v_{n-1})$ (see \eqref{eq:refl}), we have $wu^2\sim w'v^2$ where $v^2$ is a matrix
of the form \eqref{uni2} with possibly a different matrix $T$, and $w'$ is as follows. First, we have
\begin{align*}
\mu_{1,1}=\text{diag}(I_{j_1},0_{n-j_1},I_{b_1},0_{n-b_1})
\end{align*}
(we do not need to use $\mathrm{e}(v_{j})$ for this),
and for $2\le l\le k-1$, the matrices $\mu_{1,l}$ are still of the form \eqref{mu1l},
perhaps with different $\alpha_l$ and $\beta_l$. Finally, the matrix $\epsilon_1$
of both $w$ and $w'$ is the same. Re-denote $w=w'$ and $u^2=v^2$.

Next we claim that if $\mathrm{I}(wu^2)\ne0$, then the first $j_1$ (resp., $b_1$) diagonal entries of $\alpha_2$ (resp., $\beta_2$) are $1$, i.e., $\alpha_2=\diag(I_{j_1},\alpha_2')$, $\beta_2=\diag(I_{b_1},\beta_2')$ and then
\begin{align*}
\mu_{1,2}=\text{diag}(I_{j_1},\alpha_2',0_{n-j},I_{b_1},\beta_2',0_j).
\end{align*}
Indeed, suppose $\mu_{1,2}(i)=0$ for some $1\le i\le j_1$.
Then ${}^{w}x_{i}<U_P$ and we get zero contribution. Similarly for $\beta_2$.

Now using multiplication on the right by the Weyl elements $\mathrm{e}(v_{j_1+1}),\ldots,\mathrm{e}(v_{j-j_1-1})$ (if $j_1<j-1$),
or $\mathrm{e}(v_{j+1}),\ldots,\mathrm{e}(v_{n-b_1-1})$ (if $b_1<n-j-1$), we deduce $wu^2\sim w'v^2$.
Here $\mu_{1,1}$ and $\epsilon_1$ are the same for $w$ and $w'$, but the matrix $\mu_{1,2}$ of
$w'$ is
\begin{align*}
\text{diag}(I_{j_2},0_{n-j_2},I_{b_2},0_{n-b_2}),
\end{align*}
 where
$0\le j_1\le j_2\le j$ and $0\le b_1\le b_2\le n-j$. Again, put $w=w'$ and $u^2=v^2$.

Proceeding this way, we may assume that if $wu^2$ is a representative with nonzero contribution, then
for some $0\le j\le n$ we have $\epsilon_1=\text{diag}(I_j,0_{n-j})$, and there are $0\le j_1\le \ldots\le j_{k-1}\le j$ and
$0\le b_1\le\ldots\le b_{k-1}\le n-j$ such that
\begin{align}\label{eq:mu l before v0}
\forall 1\le l\le k-1,\quad\mu_{1,l}=\text{diag}(I_{j_l},0_{n-j_l},I_{b_l},0_{n-b_l}).
\end{align}
Also $u^2$ has the form \eqref{uni2}.

Note that in all cases we can assume that $wu^2\not\sim w$, in particular that $u^2$ is not the identity matrix, i.e., $T\ne 0$. Otherwise
${}^{wu^2}(I_{2n},G)\cap U_P={}^{w}(I_{2n},G)\cap U_P$, and ${}^{w}(I_{2n},G)\cap U_P$ contains the unipotent
radical of the Siegel parabolic subgroup of $G$. Hence, since $\pi_2$ is cuspidal,
$\mathrm{I}(\gamma)=0$ (then the claim is proved).

The last paragraph implies that we can also assume $j<n$, since otherwise $wu^2\sim w$.

Next assume $b_l\ne 0$, for some $l$. Since $b_l\leq b_{k-1}$, we also have $b_{k-1}\ne0$. The rank of $T$ is at most $n-j$.
Further assume that one of the first $b_{k-1}$ columns of $T'$ is nonzero. We prove that in this case, $wu^2$ contributes zero to the integral. Conjugating by a suitable element in
$\GL_{2kn}(F)$, we may assume that the $(1,l)$-th entry of $T'$ is $1$ for some $1\le l\le b_{k-1}$. Consider the unipotent element
$x(a)=x_{(k-1)2n-n+l,2kn+1}(a)x_{(k-1)2n-n+l}(-a)\in U$. Then $\psi_U(x(a))\ne 1$ and
$u^2x(a)(u^2)^{-1}=x_{(k-1)2n-n+l,2kn+1}(a)$. Also since
$b_{k-1}\ne 0$ and $1\le l\le b_{k-1}$, we have
${}^{w}x_{(k-1)2n-n+l,2kn+1}<U_P$. Hence by Lemma~\ref{Lemma A} we get zero contribution from this element.

Assuming the first $b_{k-1}$ columns of $T'$ are zero, let
\begin{align*}
v_0=(\begin{pmatrix}I_{n-b_{k-1}}&&&\\ &&I_{b_{k-1}}&\\ &-I_{b_{k-1}}&&\\
&&&I_{n-b_{k-1}}\end{pmatrix},I_{2n})\in H.
\end{align*}
Then $v_0$ and $u^2$ commute. Also, since $b_l\le b_{k-1}$ for all
$l$, then  $wv_0\sim w'$ (because $v_0\in G\times G$), where in $w'$ we have
\begin{align}\label{eq:mu l new}
\mu_{1,l}=\text{diag}(I_{j_l},0_{n-j_l-b_l},I_{b_l},0_n).
\end{align}
Here we used the fact that
$b_l\le n-j\le n-j_l$ for all $l$, in order to permute each $\mu_{1,l}$ from \eqref{eq:mu l before v0} to \eqref{eq:mu l new}
(the last $0_{b_l}$ block of $0_{n-j_l}$ was permuted to form $0_n$ with the last $0_{n-b_l}$ block).
Multiplying on the left by a
suitable permutation matrix in $\GL_{2kn}$, we deduce $w'u^2\sim w''v^2$, where $v^2$ is defined as in \eqref{uni2} with
$j$ replaced by $j'$ such that $j<j'$, and $T$ is replaced by a
matrix of size $(n-j')\times n$. The matrix $w''$ has the structure
that
\begin{align*}
\mu_{1,l}=\text{diag}(I_{j_l'},0_{n-j_l'},0_n)
\end{align*}
for some $j_l'\le j'$.

Therefore, we reduced the structure of $\gamma$ to the form $wu^2$ where for some $0\le j<n$,
$\mu_{1,l}=\text{diag}(I_{j_l},0_{2n-j_l})$ with $0\le j_l\le j$ for all $1\le l\le k-1$,
and $u^2$ takes the form \eqref{uni2}.
\end{proof}

\begin{proof}[Proof of Claim~\ref{claim: a>0}]
We have $0<j<n$, $\mu_{1,l}=\text{diag}(I_{j_l},0_{2n-j_l})$ and $0\le j_l\le j$ for all $1\le l\le k-1$. Also $u^2$ is of the
form \eqref{uni2}. First assume $j_l>0$ for some $l$, and let $l_0$ be the minimal $l$ with this property.
Let $V$ be the unipotent radical of the standard parabolic subgroup of $\GL_{2kn}$ corresponding to the following composition
\begin{align*}
((2n)^{l_0-1},j_{l_0},j_{l_0+1},\ldots,j_{k-1},j,b)
\end{align*}
of $2kn$ (here $b$ is uniquely determined by the previous integers). Identify $V/[V,V]$ with the abelian group
\begin{align*}
\Mat_{2n}\oplus\ldots\oplus \Mat_{2n}\oplus\Mat_{j_{l_0}\times (j_{l_0+1})}\oplus\Mat_{(j_{l_0+1})\times (j_{l_0+2})}
\oplus\ldots\oplus \Mat_{j_{k-1}\times j}\oplus \Mat_{j\times b}.
\end{align*}
Let $X$ be the subgroup of $V/[V,V]$ consisting of vectors such that in their projection into the rightmost component $\Mat_{j\times b}$, the first
$b-(2n-j_{l_0})$ columns are zero. Define the unipotent group $Y$ as the preimage of $X$ under the quotient map $V\mapsto V/[V,V]$.
Define a character of $V/[V,V]$ by multiplying $\psi\circ\text{tr}$ of each component. Here $\text{tr}$ of a
non-square matrix is still defined as the sum of entries on the (principal) diagonal. Pulling back this character to a character
of $V$ and restricting to $Y$ yields a character denoted $\psi_Y$.
We see that $Y$ is contained in ${}^{\gamma^{-1}}U\cap \GL_{2kn}$. Thus we obtain the Fourier
coefficient
\begin{align}\label{inner3}
\int\limits_{Y(F)\backslash Y({\A})}f(yh,s)\psi_Y(y)\,dy,\qquad h\in H(\A),
\end{align}
as an inner integration. We claim that this coefficient vanishes for all data. Indeed, after a suitable conjugation of a subgroup of $Y$ by a Weyl element of $\GL_{2kn}(F)$ ($f$ is left-invariant by $\GL_{2kn}(F)$), we obtain in \eqref{inner3} an inner integration
\begin{align*}
\int\limits_{Y'(F)\backslash Y'({\A})}f(y'h,s)\psi(\sum_{i=1}^{k+1}y'_{i,i+1})dy'.
\end{align*}
Here $Y'=V_{(1^{k+1},2kn-k-1)}$. This Fourier coefficient is attached
to the unipotent orbit $((k+1)1^{2kn-k-1})$. See e.g., \cite{G2}. However, the representation $\mathcal{E}_\tau$ is attached
to the unipotent orbit $(k^{2n})$ which is non-comparable with
$((k+1)1^{2kn-k-1})$. Hence this integral and thereby \eqref{inner3} is identically zero.

It remains to consider $\gamma$ such that
for $w$ we have $0<j<n$ and $j_l=0$ for all $l$. This case is omitted here, as it is very similar to the
proof of Claim~\ref{claim:rank X < n} below.
\end{proof}

\begin{proof}[Proof of Claim~\ref{claim:rank X < n}]
Here we consider representatives $wu^2[\left(\begin{smallmatrix}&0\\I_l\end{smallmatrix}\right)]$ with $w=\left(\begin{smallmatrix}&I_{2kn}\\-I_{2kn}\end{smallmatrix}\right)$ and $0\leq l<n$. In fact we can assume $l>0$, since in the proof of Claim~\ref{claim:preparation mu l 1} above we showed $\mathrm{I}(w)=0$
(i.e., $\mathrm{I}(wu^2)=0$ when $u^2$ is the identity, see after \eqref{eq:mu l before v0}).

Put
$\gamma_l=\mu_lwu^2[\left(\begin{smallmatrix}&0\\I_l\end{smallmatrix}\right)]$ where
\begin{align*}
\mu_l=\diag(\left(\begin{smallmatrix}&I_{n-l}\\I_l\end{smallmatrix}\right),I_{(4k-2)n}, \left(\begin{smallmatrix}&I_{l}\\I_{n-l}\end{smallmatrix}\right))\in P
\end{align*}
($\mu_l$ is included to simplify the notation of $L_{\gamma_l}$). The groups $L_{\gamma_l}$ are
similar to what we obtained for the element $\delta$ defined in  \eqref{open2}. In particular,
inside $G\times G$ we obtain the unipotent subgroup
\begin{align}\label{stab10}
(I_{2n},\begin{pmatrix} I_{n-l}&&\\ u_1&I_{l}&\\u_2&&I_l\\u_3&u_2'&u_1'&I_{n-l}\end{pmatrix}).
\end{align}
The important observation is that ${}^{\gamma_l}U\cap\GL_{2kn}=V_{((2n)^{k})}$, but the character we obtain
on this group is different from the one obtained for the representative $\delta$. To describe it,
put $V=V_{((2n)^{k})}$ and identify $V/[V,V]$ with the direct product of $k-1$ copies of $\Mat_{2n}$.
Define the character $\psi_{l}$ of $V$ by pulling back the character
\begin{align*}
(X_1,\ldots X_{k-1})\mapsto\psi(\text{tr}(X_1\left(\begin{smallmatrix}0&\\&I_{n+l}\end{smallmatrix}\right)+X_2+\ldots+X_{k-1})).
\end{align*}
Thus, as an inner integration we obtain the integral
\begin{align}\label{inner1}
\int\limits_{(U\cap U_P)({\A})}
\int\limits_{V(F)\backslash V({\A})}
f(v\gamma_l u_0(g_1,g_2),s)\psi^{-1}_{l}(v)\psi_U(u_0)\,dv\,du_0.
\end{align}

Consider the subgroup
\begin{align*}
V'=\left\{\begin{pmatrix}I_{n-l}&z\\&I_{n+l}\\& &I_{(k-1)2n}\end{pmatrix}\right\}<\GL_{2kn}.
\end{align*}
For a fixed $h\in H(\A)$, expand the function
\begin{align*}
a\mapsto\int\limits_{V(F)\backslash V({\A})}f(v\diag(a,I_{(k-1)2n})h,s)\psi^{-1}_{l}(v)\,dv,\qquad a\in\GL_{2n}(\A),
\end{align*}
along $V'(F)\backslash V'({\A})$. All nontrivial
Fourier coefficients correspond to unipotent orbits which are strictly
greater than $(k^{2n})$, hence by the definition of the $(k,2n)$ representation
(Definition~\ref{def1} part~\eqref{def:Whittaker--Speh--Shalika 1}), they all vanish. We are left with the constant term, so that
integral~\eqref{inner1} becomes
\begin{align*}
\int\limits_{(U\cap U_P)({\A})}
\int\limits_{V'(F)\backslash V'({\A})}
\int\limits_{V(F)\backslash V({\A})}f(vv'\gamma_l u_0(g_1,g_2),s)\psi^{-1}_l(v)\psi_U(u_0)\,dv\,dv'\,du_0.
\end{align*}
As a function of $g_2$, this integral is left invariant under the unipotent radical \eqref{stab10}:
indeed for $v$ of the form \eqref{stab10}, ${}^{\gamma_l}v=\diag({v'},{v'}^*)u'$ where
\begin{align*}
v'=\diag(\begin{pmatrix}I_{n-l}&u_1'&-u_2'\\&I_l\\&&I_l\\&&&I_{n-l}\end{pmatrix},I_{(k-1)2n})\in V'
\end{align*}
and $u'\in U_P$, and now we use
the left invariance properties of the function $f$ and change variables in $V'$. Using this fact we obtain the constant term of
$\varphi_2$ along \eqref{stab10}, which is zero because $\pi_2$ is cuspidal.
Therefore $\mathrm{I}(\gamma_l)=0$ for all $0\leq l< n$.
\end{proof}

\section{Computation of the local factors with unramified data}\label{Computation of the local factors with unramified data}
Recall that by Theorem~\ref{theorem:main theorem classical groups} and using the same notation, for $\Real(s)\gg0$,
\begin{align*}
Z(s,\varphi_1,\varphi_2,f)=
\int\limits_{G({\A})}\int\limits_{U_0({\A})}
\langle\varphi_1,\pi(g)\varphi_2\rangle f_{W({\mathcal E}_{\tau})}(\delta u_0(1,{}^{\iota}g),s)
\,\psi_U(u_0)\,du_0\,dg.
\end{align*}
Assume $\varphi_1$ and $\varphi_2$ are decomposable. Then we can write
$\langle\varphi_1,\pi(g)\varphi_2\rangle=\prod_{\nu}\omega_{\nu}(g_{\nu})$, where $\omega_{\nu}$ is a matrix coefficient of
$\pi_{\nu}^{\vee}$ for all $\nu$. Let $S$ be a sufficiently large finite set of places of $F$ (which depends only on $\tau$),
and write $F_S$, $\tau_S$, etc., for the product of local factors over the places of $S$. Then if $f$ is decomposable, by Claim~\ref{claim:decomp Lambda} we can write
\begin{align*}
f_{W({\mathcal E}_{\tau})}(h,s)=f_{W(({\mathcal E}_{\tau})_S)}(h_S,s)\prod_{\nu\notin S}f_
{W(({\mathcal E}_{\tau})_{\nu})}(h_{\nu},s)\qquad (h\in H(\A)).
\end{align*}
Here $f_{W(({\mathcal E}_{\tau})_S)}$ is the composition of a function in the space of the representation
$\Ind_{P(F_S)}^{H(F_S)}((\mathcal{E}_{\tau})_S\delta_P^s)$ with the functional \eqref{eq:partial Lambda S}, and
$f_{W(({\mathcal E}_{\tau})_{\nu})}$ belongs to the space of $\Ind_{P(F_{\nu})}^{H(F_{\nu})}(W((\mathcal{E}_{\tau})_{\nu})\delta_P^s)$
where $W((\mathcal{E}_{\tau})_{\nu})$ is the unique $(k,c)$ model of $(\mathcal{E}_{\tau})_{\nu}$. Both $f_{W(({\mathcal E}_{\tau})_S)}$ and $f_{W(({\mathcal E}_{\tau})_{\nu})}$ are regarded as complex-valued functions. Then
\begin{align}\label{eq:almost Euler}
Z(s,\varphi_1,\varphi_2,f)=Z_S(s,\omega_S,f_
{W(({\mathcal E}_{\tau})_S)})\prod_{\nu\notin S}Z_{\nu}(s,\omega_{\nu},
f_{W(({\mathcal E}_{\tau})_{\nu})}),
\end{align}
where
\begin{align*}
&Z_S(s,\omega_S,f_
{W(({\mathcal E}_{\tau})_S)})=\int\limits_{G(F_S)}\int\limits_{U_0(F_S)}
\omega_{S}(g)f_
{W(({\mathcal E}_{\tau})_S)}(\delta_{S} u_0(1,{}^{\iota_{S}}g),s)\,\psi_{U,S}(u_0)\,du_0\,dg,\\
&Z_{\nu}(s,\omega_{\nu},f_
{W(({\mathcal E}_{\tau})_{\nu})})=\int\limits_{G(F_{\nu})}\int\limits_{U_0(F_{\nu})}
\omega_{\nu}(g)f_
{W(({\mathcal E}_{\tau})_{\nu})}(\delta_{\nu} u_0(1,{}^{\iota_{\nu}}g),s)\,\psi_{U,\nu}(u_0)\,du_0\,dg.\notag
\end{align*}
This is the weaker form of an Eulerian integral we can obtain, called an ``almost Euler product" by Takeda \cite{Tk}.


In this section we compute the local factors $Z_{\nu}(s,\omega_{\nu},f_{W(({\mathcal E}_{\tau})_{\nu})})$ with unramified data. In order to compute the integral for $G\times \GL_k$, we shall reduce it to the $\GL_n\times \GL_k$ integral. The latter will be further reduced to the case of $n=1$, which is computed directly.
Throughout this section notation is local and references to the field are omitted (e.g., $\Sp_{2n}=\Sp_{2n}(F)$).
Local fields are of characteristic $0$. Representations are always assumed to act on complex vector spaces, and are smooth. Over archimedean fields representations are also assumed to be admissible Fr\'{e}chet of moderate growth (e.g., an irreducible representation is automatically assumed to have these properties as well). Note that the local representation $\pi_{\nu}$ is irreducible and unitary, $\tau_{\nu}$ is irreducible generic, and an unramified twist of $\tau_{\nu}$ is unitary (usually the cuspidal representation $\tau$ is already taken to be unitary, then this twist is trivial and $\tau_{\nu}$ is already unitary). However, parts of the arguments are more convenient to state in a more general context.

\subsection{The integrals for $\Sp_{2n}$ and $\SO_{2n}$}\label{Local factors for classical}
We present the local integrals for $G=\Sp_{2n}$ and $\SO_{2n}$ over a local field $F$.
Let $\pi$ be an irreducible representation of $G$ and $\tau$ be an irreducible, generic representation of $\GL_k$.

We now consider two possible cases for the representation $\tau$: these are the cases relevant for the study of the integrals on the right hand side of \eqref{eq:almost Euler}.
In the first case $\tau$ is a component at a place $\nu$ of an irreducible cuspidal automorphic representation $\Upsilon$ of $\GL_k(\A)$, and define $\rho_c(\tau)=(\mathcal{E}_{\Upsilon})_{\nu}$. The representation $\rho_c(\tau)$ affords at least one $(k,c)$ model $W(\rho_c(\tau))$, which we denote for brevity $W_c(\tau)$ (recall $c=2n$).

In the second case $F$ is $p$-adic and $\tau$ is irreducible, generic and unramified. Write
$\tau=\Ind_{B_{\GL_k}}^{\GL_k}(\chi_1\otimes\ldots\otimes\chi_k)$, then by Claim~\ref{claim:sigma k c is always at most k c} the representation $\sigma_{k,c}=\Ind_{P_{(c^k)}}^{\GL_{kc}}(\chi_1\otimes \ldots \otimes \chi_k)$ is $(k,c)$. The representation $\sigma_{k,c}$ might be reducible, but it is irreducible when the parameters $\chi_i$ are in ``general position". Either way, we let $\rho_c(\tau)$ be the unique irreducible constituent of $\sigma_{k,c}$ which is $(k,c)$.

When the local integrals arise as the integrals in the decomposition \eqref{eq:almost Euler} at $\nu\notin S$,
by Claim~\ref{claim:E tau sub and quotient} both cases above coincide and $\rho_c(\tau)=\sigma_{k,c}$, which affords a unique $(k,c)$ model $W_c(\tau)$.

Recall that $H=\Sp_{2kc}$ if $G=\Sp_{2n}$, and $H=\SO_{2kc}$ when $G=\SO_{2n}$. Let $P=M_P\ltimes U_P$ be the Siegel parabolic subgroup of $H$ with $M_P=\{\diag(g,g^*):g\in\GL_{kc}\}$. Also fix a maximal compact subgroup $K_H$ in $H$.

The local integral takes the form
\begin{align*}
Z(s,\omega,f_{W_c(\tau)})=\int\limits_{G}\int\limits_{U_0}
\omega(g)f_{W_c(\tau)}(\delta u_0(1,{}^{\iota}g),s)\,\psi_U(u_0)\,du_0\,dg.\notag
\end{align*}
Here $\omega$ is a matrix coefficient of $\pi^{\vee}$; $f_{W_c(\tau)}$ belongs to the space of the representation
$\Ind_{P}^{H}(W_c(\tau))$, and is regarded as a complex-valued function; $h\mapsto f_{W_c(\tau)}(h,s)$ is the unique extension of $f_{W_c(\tau)}$ to a standard section of $\Ind_{P}^{H}(W_c(\tau)\delta_P^s)$
(i.e., its restriction to $K_H$ is independent of $s$, see e.g., \cite[\S~IV.1]{W});
$\delta=\delta_0\delta_1$ with
\begin{align*}
&\delta_0=\begin{pmatrix} &I_{kc}\\ \epsilon_0I_{kc}\end{pmatrix},\qquad
\delta_1=\begin{pmatrix}
I_{(k-1)c}&&&&\\&I_n&&-\epsilon_0I_n\\&&I_n&&I_n\\&&&I_n\\&&&&I_n\\&&&&&I_{(k-1)c}\end{pmatrix},\qquad
\epsilon_0=\begin{cases}-1&G=\Sp_{2n},\\1&G=\SO_{2n};
\end{cases}
\end{align*}
the unipotent subgroup $U_0$, the restriction of $\psi_U$ to $U_0$, $(1,g)$ and $\iota$ are defined by
\begin{align*}
&U_0=\left\{\left(\begin{array}{cccc}I_{(k-1)c}&&X&Z\\&I_c&&X'\\&&I_c\\&&&I_{(k-1)c}\end{array}\right)\in H\right\}\qquad\left(\begin{array}{c}{}^tZJ_{(k-1)c}+\epsilon_0J_{(k-1)c}Z=0\\ X'=-\epsilon_0J_{c}{}^tXJ_{(k-1)c}\end{array}\right),\\
&\psi_U(u_0)=\psi(\tr(\left(\begin{smallmatrix}0&I_n\end{smallmatrix}\right)X\left(\begin{smallmatrix}0\\I_n\end{smallmatrix}\right))),\\
&(1,g)=\diag(I_{(k-1)c+n},g,I_{n+(k-1)c}),\qquad \iota=\left(\begin{smallmatrix}&I_n\\-\epsilon_0I_n\end{smallmatrix}\right).
\end{align*}
The integral, at least formally, can be regarded as a morphism in the space
\begin{align}\label{eq:homspace G GLk}
\Hom_{G\times G}(J_{U,\psi_U^{-1}}(\Ind_{P}^{H}(W_c(\tau)\delta_P^s)),\pi^{\vee}\otimes\pi).
\end{align}
Here $J_{U,\psi_U^{-1}}(\cdots)$ is the twisted Jacquet module with respect to $U$ and $\psi_U^{-1}$, regarded as a representation of $G\times G$ by virtue of the embedding $(g_1,g_2)$. This follows from the construction and can be verified directly.

\begin{theorem}\label{theorem:local integrals}
The integrals $Z(s,\omega,f_{W_c(\tau)})$ satisfy the following properties.
\begin{enumerate}[leftmargin=*]
\item \label{item:convergence}They are absolutely convergent in a right half-plane $\Real(s)\gg0$ depending only on the representations.
\item \label{item:nonvanishing}Over non-archimedean fields, one can choose data $(\omega,f_{W_c(\tau)})$ such that the integral is absolutely convergent and equals $1$, for all $s$. Over archimedean fields, for any $s$ there is data $(\omega,f_{W_c(\tau)})$ where $f_{W_c(\tau)}$ is $K_{G}$-finite, such that the integral is holomorphic and nonzero in a neighborhood of $s$.
\item \label{item:meromorphic}They admit meromorphic continuation to the plane, and over a non-archimedean field with residue cardinality $q$, the continuation is a rational function in $q^{-s}$, i.e., belongs to $\C(q^{-s})$.
\end{enumerate}
\end{theorem}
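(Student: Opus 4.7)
The plan is to treat the three assertions in turn, modeling the argument after the Rankin--Selberg theory for classical groups and adapting it to the Whittaker--Speh--Shalika setting.

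For the convergence statement \eqref{item:convergence}, I would proceed by gauge estimates. Using the Iwasawa decomposition $H = P K_H$, the section $h\mapsto f_{W_c(\tau)}(h,s)$ is controlled on the Levi $M_P\cong\GL_{kc}$ by a Whittaker-type gauge on the $(k,c)$ model, multiplied by $\delta_P^{\Real(s)+1/2}$; such gauges on $(k,c)$ models are available by essentially the same Jacquet asymptotic argument used for ordinary Whittaker models, combined with the extra invariance property of Claim~\ref{claim:extra invariance}. Decomposing $G$ via its own Iwasawa decomposition and using that $\omega$ is bounded on $G$ (since $\pi$ is unitary), the integral is majorized, after a change of variables, by a convergent integral over the torus of $M_P$ once $\Real(s)$ is sufficiently large; the precise lower bound depends only on the unramified exponents of $\pi$ and on the central characters appearing in $\rho_c(\tau)$.

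For the nonvanishing assertion \eqref{item:nonvanishing}, in the $p$-adic setting I would use the standard small-bump argument. Pick $f_{W_c(\tau)}$ so that $h\mapsto f_{W_c(\tau)}(\delta h,s)$ is supported in a very small compact open neighborhood $N$ of the identity in $H$; by Bruhat-type considerations this forces the variables $u_0$ and $(1,{}^{\iota}g)$ to lie in small neighborhoods of the identity, where both the section and the character $\psi_U$ are essentially constant. Now choose a matrix coefficient $\omega$ with $\omega(1)\neq 0$ whose support is sufficiently small, and normalize so that the integral collapses to the value $1$ independently of $s$. In the archimedean case, the same geometric idea applies with smooth $K_H$-finite bumps replacing locally constant ones; nonvanishing at a fixed $s_0$ is produced by carefully choosing such data, and the result is holomorphic and nonzero in a neighborhood of $s_0$ by continuity of the integrand together with a uniform majorization provided by the gauge estimates of the previous step.

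For the meromorphic continuation and rationality statement \eqref{item:meromorphic}, the strategy is to combine an unfolding reduction with the known rationality of the classical Rankin--Selberg integrals. The local integral $Z(s,\omega,f_{W_c(\tau)})$ is to be reduced, following the outline of \S~\ref{Local factors for classical} and \S~\ref{Local factors for GLn}, to a corresponding $\GL_n\times\GL_k$ integral, which is then reduced in \S~\ref{final reduction n = 1 linear groups} to the rank-one $\GL_1\times\GL_k$ and $\GL_1\times\GL_{2k}$ integrals of \cite{JS1,JPSS}; the latter are elements of $\C(q^{-s})$ over $p$-adic fields and admit meromorphic continuation to $\C$ over archimedean fields. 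An alternative route, in the $p$-adic case, is to apply Bernstein's continuation principle to the family of functionals in \eqref{eq:homspace G GLk}: the integral defines a meromorphic family with at most polynomial dependence on $q^{\pm s}$ in an appropriate basis, and Bernstein's lemma produces a rational continuation. The main obstacle is the reduction from the classical-group integral to the $\GL_n\times\GL_k$ integral, since it requires a delicate choice of section together with a careful unipotent integration in order to identify the inner integral with a general linear group analogue; this is precisely the content of the later sections of the paper.
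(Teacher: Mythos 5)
Your parts \eqref{item:convergence} and \eqref{item:nonvanishing} are broadly in the spirit of the intended argument: convergence does come from the Iwasawa decomposition together with asymptotics of the $(k,c)$ model (the paper argues via the cuspidal support of $\omega$ as in \cite{GJ}, the vanishing of $(k,c)$ model functions outside a cone as in \cite[\S~6]{CS1}, and handles the $U_0$-integration as in \cite[\S~4]{Soudry}); note, however, that your appeal to boundedness of $\omega$ rests on unitarity, which is not assumed in \S~\ref{Local factors for classical}, and you do not address the $du_0$-integration, which is the delicate part since $\delta U_0\delta^{-1}\not\subset U_P$. For \eqref{item:nonvanishing} your small-bump argument is the right one (cf.\ \cite[\S~4]{RS}), except that you cannot choose a matrix coefficient ``whose support is sufficiently small'': the support of $\omega$ is not at your disposal; the correct move is to shrink the support of the section so that the smooth function $\omega$, normalized with $\omega(1)=1$, is constant on the relevant neighborhood.

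The genuine gap is in \eqref{item:meromorphic}. Your Bernstein route omits the hypothesis that makes Bernstein's continuation principle (in \cite{Banks}) applicable: one must first prove that, outside finitely many values of $q^{-s}$, the space \eqref{eq:homspace G GLk} is at most one-dimensional. This multiplicity-one statement is the real content of the paper's proof of \eqref{item:meromorphic}; it is established by a local analogue of the global unfolding (for $k=1$ it is due to \cite{HKS}), and without it there is no finite-dimensional solution space to which the principle can be applied. Your alternative route, reducing to the $\GL_n\times\GL_k$ integral and then to the rank-one integrals of \cite{JS1,JPSS}, is not available for this purpose: the reductions in Lemmas~\ref{lemma:reduction from classical to GLn} and \ref{lemma:reduction from GLn to GLa GLb} are proved only for the normalized unramified data, and their proofs use right $K_H$-invariance and support arguments in an essential way, so they give no information about an arbitrary section. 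Finally, the archimedean case is covered by neither of your routes: Bernstein's principle is not applicable there, and the paper instead argues directly, reducing the $G$-integral to a torus integral and invoking the Dixmier--Malliavin lemma \cite{DM} together with asymptotic expansions as in \cite{Soudry3}, \cite[\S~3.2]{GRS4} and \cite{KM}. To complete your proposal you would need to supply both the uniqueness statement for \eqref{eq:homspace G GLk} and an archimedean argument of this kind.
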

\begin{proof}
We provide only a sketch of the proof, because very similar statements can be found in numerous places in the literature, e.g.,
\cite{GJ,JPSS,GPS,BG,Soudry,Soudry3,GRS4,LR,me4,me5,KM}.

Over non-archimedean fields, convergence follows from the following observations: if $\pi$ is supercuspidal, the matrix coefficient is compactly supported modulo the center; in the general case, one can write $\omega$ as a sum of products of matrix coefficients
of the representations appearing in the cuspidal support of $\pi^{\vee}$, as in \cite{GJ}; the Iwasawa decomposition can then be used to reduce to an integral over the torus of $G$; functions in $W_c(\tau)$ vanish on torus elements outside a cone, similarly to Whittaker functions (see \cite[\S~6]{CS1}); the unipotent integration is handled as in
\cite[\S~4]{Soudry}.

Part~\eqref{item:nonvanishing} is shown by selecting a section $f$ which is supported in the open orbit $P\delta U(G,G)$, such that the function $(u_0,g)\mapsto f(\delta u_0(1,{}^{\iota}g),s)$ on $U_0\times G$ vanishes outside the product of compact neighborhoods in $U_0$ and $G$. The compact neighborhood $\mathcal{N}_G$ in $G$ can be taken to be sufficiently small, such that $\omega$ is constant on $\mathcal{N}_G$. See \cite[\S~4]{RS}. The argument on the support also implies absolute convergence.

Regarding meromorphic continuation one can use Bernstein's continuation principle (in \cite{Banks}), which also implies the rationality statement. To apply this principle we need the following uniqueness result: outside a finite set of values of $q^{-s}$, the space \eqref{eq:homspace G GLk} is at most one-dimensional. The proof of this result is analogous to the global unfolding of the integral (for $k=1$ this uniqueness was proved in \cite{HKS}).
According to \eqref{item:convergence}, in a right half-plane the integral can be regarded as a morphism in \eqref{eq:homspace G GLk}.
Combining the uniqueness result with \eqref{item:nonvanishing}, the meromorphic continuation follows.

Over archimedean fields the proof of \eqref{item:nonvanishing} is similar. The meromorphic continuation is more difficult, because Bernstein's result is not applicable, but one can argue directly by reducing the integral over $G$ to an integral over a torus, then using
the Dixmier--Malliavin Lemma \cite{DM} and asymptotic results as in \cite{Soudry3} (see \cite[\S~3.2]{GRS4}). See also \cite{KM} for an asymptotic expansion of matrix coefficients.
\end{proof}

\subsection{The integrals for $\GL_n$}\label{Local factors for GL}
As mentioned in \S~\ref{global classical}, the global and hence local integrals can also be defined for general linear groups.
As we show in Lemma~\ref{lemma:reduction from classical to GLn} below, the
$G\times \GL_k$ integral reduces to a $\GL_n\times \GL_k$ integral. We therefore define this integral, in a purely local context, where it will be needed.

Let $\pi$ be an irreducible representation $\GL_n$, and $\tau$ and $\tau'$ be irreducible generic representations of $\GL_k$ such that the central character of $\tau$ is the inverse of the central character of $\tau'$ (e.g., $\tau'=\tau^{\vee}$).
The representation $\rho_n(\tau)$ is defined as in \S~\ref{Local factors for classical}, either as $(\mathcal{E}_{\Upsilon})_{\nu}$ when $\tau$ is assumed to be a component of a cuspidal representation $\Upsilon$, or as the unique irreducible $(k,n)$ constituent of
$\sigma_{k,n}=\Ind_{P_{(n^k)}}^{\GL_{kn}}(\chi_1\otimes \ldots \otimes \chi_k)$.
We similarly define $\rho_n(\tau')$. Note that for the applications in this work, one can simply take $\tau'=\tau^{\vee}$.

Put $G=\GL_n$, $H=\GL_{2kn}$ and
$P=P_{(kn,kn)}$. Then $M_P=M_{{(kn,kn)}}$ and $U_P=V_{(kn,kn)}$. Let $Q=M\ltimes U$ be the standard parabolic subgroup of $H$ with $M=M_{(n^{k-1},2n,n^{k-1})}$. To define $\psi_U$, note that $U$ contains a top left and bottom right copies of $V_{(n^{k-1})}$. The character $\psi_U$ is given by the inverse of \eqref{eq:wss character} on each copy of $V_{(n^{k-1})}$, and if the middle $4n\times 4n$ block of $u\in U$ is written in the form
\begin{align*}
\begin{pmatrix}I_n&u_1&u_2&u_3\\&I_n&&u_4\\&&I_n&u_5\\&&&I_n\end{pmatrix},\qquad u_i\in\Mat_n,
\end{align*}
then $\psi_U$ restricts to the character $\psi(\mathrm{tr}(-u_1+u_4))$. The embedding of $G\times G$ in $H$ is defined by
\begin{align*}
(g_1,g_2)=\diag(g_1,\ldots,g_1,g_1,g_2,g_1,\ldots,g_1),\qquad g_1,g_2\in G,
\end{align*}
where $g_1$ appears $k$ times on the left of $g_2$, and $k-1$ times on the right.
The $\GL_n\times\GL_k$ integral is
\begin{align*}
Z(s,\omega,f_{W_n(\tau)\otimes W_n(\tau')})=\int\limits_{G}\int\limits_{U_0}
\omega(g)f_{W_n(\tau)\otimes W_n(\tau')}(\delta u_0(1,g),s)\,\psi_U(u_0)\,du_0\,dg.\notag
\end{align*}
Here $\omega$ is a matrix coefficient of $\pi^{\vee}$; the section $h\mapsto f_{W_n(\tau)\otimes W_n(\tau')}(h,s)$ is on
\begin{align*}
\Ind_{P}^{H}((W_n(\tau)\otimes W_n(\tau'))\delta_P^{s});
\end{align*}
\begin{align*}
&\delta=\delta_0\delta_1,\qquad \delta_0=\begin{pmatrix}&I_{kn}\\I_{kn}\end{pmatrix},\qquad
\delta_1=\begin{pmatrix}I_{(k-1)n}\\&I_n&I_n\\&&I_n\\&&&I_{(k-1)n}\end{pmatrix},\\
&U_0=U\cap U_{P}=\left\{\left(\begin{array}{cccc}I_{(k-1)n}&&X&Z\\&I_n&&Y\\&&I_n\\&&&I_{(k-1)n}\end{array}\right)\right\},\\
&\psi_U(u_0)=\psi(\tr(Y\left(\begin{smallmatrix} I_n\\0\end{smallmatrix}\right))).
\end{align*}
We may also set $c=n$ in this case, to unify the notation. To avoid confusion, we will usually defer from this and write $n$ explicitly.

The immediate analog of Theorem~\ref{theorem:local integrals} applies to the $\GL_n\times\GL_k$ integral.
In particular over non-archimedean fields, it is absolutely convergent in a right half-plane, can be regarded as an element of
\begin{align}\label{eq:homspace GLn GLk}
\Hom_{G\times G}(J_{U,\psi_U^{-1}}(\Ind_{P}^{H}((W_n(\tau)\otimes W_n(\tau'))\delta_P^s)),\pi^{\vee}\otimes\pi),
\end{align}
and admits meromorphic continuation which belongs to $\C(q^{-s})$.

\subsection{Preliminaries for the unramified computation}\label{prelim unr}
Henceforth until the end of the paper, let $F$ be a non-archimedean local field with residue cardinality $q$, $\mathcal{O}$ be its ring of integers, $\varpi\in\mathcal{O}$ be a uniformizer, and normalize the absolute value so that $|\varpi|=q^{-1}$. Let $\psi$ be a nontrivial additive character of $F$, and assume it is unramified, i.e., its conductor is $0$. We choose a Haar measure on $F$ which is self-dual with respect to $\psi$, in particular it assigns the volume $1$ to $\mathcal{O}$.

We fix hyperspecial maximal compact subgroups: $K_{\GL_l}=\GL_l(\mathcal{O})$, $K_{G}=G(\mathcal{O})$ and $K_{H}=H(\mathcal{O})$, where $G$ is either $\Sp_{2n}$, $\SO_{2n}$ or $\GL_n$ and $H$ is defined according to $G$. These choices satisfy the compatibility conditions $(K_G,K_G)<K_H$, $K_{\GL_{kc}}=M_P\cap K_H$ and if $G\ne\GL_n$, ${}^{\iota}K_G=K_G$. The measures of $K_G$ and $K_H$ are normalized to be $1$.

For any irreducible unramified representations $\sigma$ and $\tau$ of $\GL_N$ and $\GL_k$ (resp.), the $L$-function
$L(s,\sigma\times\tau)$ was defined in \cite{JS1,JPSS}. If $t_{\sigma}$ and $t_{\tau}$ are the Satake parameters of $\sigma$ and $\tau$, regarded as representatives of the semi-simple conjugacy classes in $\GL_N(\C)$ and $\GL_k(\C)$ associated to $\sigma$ and $\tau$,
\begin{align*}
L(s,\sigma\times\tau)=\det(1-t_{\sigma}\otimes t_{\tau}q^{-s})^{-1}.
\end{align*}
Moreover, for any finite dimensional representation $\kappa$ of
$\GL_k(\C)$, define
\begin{align*}
L(s,\tau,\kappa)=\det(1-\kappa(t_{\tau})q^{-s})^{-1}.
\end{align*}
In particular by definition
\begin{align*}
L(s,\tau\times\tau)=L(s,\tau,\mathrm{\Sym}^2)L(s,\tau,\wedge^2),
\end{align*}
where $\mathrm{\Sym}^2$ is the symmetric square and $\wedge^2$ is the exterior square representation.
This equality actually holds for any irreducible admissible representation $\tau$ by Shahidi \cite[Corollary~8.2]{Sh5}.
Also denote $L(s,\tau)=L(s,\tau,\mathrm{id})$, where $\mathrm{id}$ is the identity representation of $\GL_k(\C)$.

Let $G$ be either $\Sp_{2n}$ or $\SO_{2n}$. Recall that if $G=\Sp_{2n}$, then ${}^LG=\SO_{2n+1}(\C)$ and we set $N=2n+1$, and
if $G=\SO_{2n}$, then ${}^LG=\SO_{2n}(\C)$ and $N=2n$. Assume that $\pi$ is an irreducible unramified representation of $G$. Let $\Pi$ be the lift of $\pi$ to $\GL_N$, obtained using the Satake isomorphism (\cite{Satake63,Bo}). The representation $\Pi$ is the irreducible unramified representation of $\GL_N$ whose Satake parameter is the transfer of the parameter of $\pi$ under the natural embedding ${}^LG\rightarrow\GL_N(\C)$. Then by definition $L(s,\pi\times\tau)=L(s,\Pi\times\tau)$.

Furthermore, let $R=M_R\ltimes U_R$ be a Siegel parabolic subgroup of $G$. One can choose an irreducible unramified principal series representation $\pi'_n$ of $\GL_n\cong M_R$ such that $\pi$ is the irreducible unramified constituent of $\Ind_{R}^{G}({\pi'_n}^{\vee})$. Then
the definition implies
\begin{align*}
L(s,\pi\times\tau)=[L(s,\tau)]L(s,\pi_n\times\tau)L(s,\pi_n^{\vee}\times\tau),
\end{align*}
where $L(s,\tau)$ appears only when $G=\Sp_{2n}$.

\subsection{Local decomposition of $(k,c)$ functionals}\label{decomposition of functionals}
Let $k$ and $c$ be positive integers. We describe a realization of local $(k,c)$ functionals.
Let
\begin{align*}
\tau=\Ind_{B_{\GL_k}}^{\GL_k}(\chi_1\otimes\ldots\otimes\chi_k),\qquad \sigma_{k,c}=\Ind_{P_{(c^k)}}^{\GL_{kc}}(\chi_1\otimes \ldots \otimes \chi_k),
\end{align*}
where $\tau$ is assumed to be irreducible and $\chi_1,\ldots,\chi_k$ are unramified quasi-characters of $F^*$.
We do not assume at this point that $\sigma_{k,c}$ is irreducible.
According to Claim~\ref{claim:sigma k c is always at most k c},
the space of $(k,c)$ functionals on $\sigma_{k,c}$ is one-dimensional. We construct such a functional using the Jacquet integral.

Put
\begin{align*}
&w_{k,c}=\left(\begin{smallmatrix}&&&I_{c}\\&&I_{c}\\&\udots\\I_{c}\end{smallmatrix}\right)\in \GL_{kc}.
\end{align*}
The following defines a $(k,c)$ functional on $\sigma_{k,c}$:
\begin{align}\label{eq:nk functional using w_{k,c}}
\xi\mapsto\int\limits_{V_{(c^{k})}}\xi(w_{k,c}v)\psi^{-1}(v)\,dv.
\end{align}
Here $\xi$ belongs to the space of $\sigma_{k,c}$, and $\psi$ is defined by \eqref{eq:wss character}.
Twisting the inducing data using auxiliary complex parameters, i.e., replacing $\chi_i$ by $|~|^{\zeta_i}\chi_i$ with $\zeta_i\in\C$ for $1\leq i\leq k$, there is a cone where the integral is absolutely convergent (the proof is identical to the proof for the similar intertwining integral, see e.g., \cite[\S~2]{Sh4}). We can also choose data such that \eqref{eq:nk functional using w_{k,c}} is absolutely convergent and equals $1$, for all choices of $\zeta_i$, namely a function with support in
$P_{(c^k)}w_{k,c}\mathcal{N}$, where $\mathcal{N}$ is a small compact open neighborhood of the identity in $\GL_{kc}$. Since the space of $(k,c)$ functionals on $\sigma_{k,c}$ is one-dimensional, Bernstein's continuation principle (in \cite{Banks}) implies that \eqref{eq:nk functional using w_{k,c}} admits analytic continuation in the parameters $\zeta_i$, and it also follows (by the aforementioned choice of data) that it is a nonzero functional for all $\zeta_i$, in particular on $\sigma_{k,c}$ when setting $\zeta_1=\ldots=\zeta_k=0$.

Let $0\subset\mathbb{V}_1\subset\ldots \subset\mathbb{V}_l\subset\sigma_{k,c}$ be a Jordan--H\"{o}lder series of $\sigma_{k,c}$, and $i$ be minimal such that \eqref{eq:nk functional using w_{k,c}} does not vanish on $\mathbb{V}_i$ ($i$ exists because the functional does not vanish on $\sigma_{k,c}$). Then \eqref{eq:nk functional using w_{k,c}} restricts to a nonzero functional on $\mathbb{V}_i$ and factors through the quotient $\mathbb{V}_{i-1}\backslash \mathbb{V}_i$. Since the Jacquet functor is exact and the dimension of
$J_{V_{(c^k)},\psi}(\sigma_{k,c})$ is $1$, $\mathbb{V}_{i-1}\backslash \mathbb{V}_i$ is the unique irreducible constituent of
$\sigma_{k,c}$ affording a $(k,c)$ functional, and we denote it by $\rho_c(\tau)$.
The corresponding $(k,c)$ model of $\rho_c(\tau)$ is denoted $W_c(\tau)$; it is isomorphic to $\rho_c(\tau)$ and a summand of the $(k,c)$ model of $\sigma_{k,c}$.

We describe a decomposition result for the functional \eqref{eq:nk functional using w_{k,c}} on $\sigma_{k,c}$. For simplicity, the dependence on the twisting parameters $\zeta_i$ is omitted from the notation.
Assume $c=a+b$ with $a,b\geq1$ and put $V=V_{(c^{k})}$.
As in \S~\ref{speh}, denote $v\in V_{(c^k)}$ by
$v=(v_{i,j})_{1\leq i,j\leq k}$, where $v_{i,j} \in\Mat_{c}$.
We rewrite the blocks $v_{i,j}$ of $v\in V$ in the form
\begin{align}\label{eq:the blocks of V}
v_{i,j}=\left(\begin{array}{cc}v_{i,j}^{1}&v_{i,j}^{2}\\v_{i,j}^{3}&v_{i,j}^{4}\end{array}\right),\qquad
v_{i,j}^{1}\in\Mat_a, \quad  v_{i,j}^{4}\in\Mat_b.
\end{align}
For $t=1,\ldots,4$, let $V^t$ be the subgroup consisting of the matrices $v\in V$ such that in each block $v_{i,j}$ with $i<j$, the coordinates of
$v_{i,j}^{t'}$ are zero for all $t'\ne t$.
Also define for any $a,b\geq1$,
\begin{align*}
&l_{a,b}=\left(\begin{array}{cccccccc}I_a\\0&0&I_a\\0&0&0&0&I_a&\ddots\\&&&&&&I_a&0\\0&I_b\\0&0&0&I_b&&\ddots\\&&&&&&0&I_b\end{array}\right)\in\GL_{k(a+b)}.
\end{align*}
For example if $k=3$, $a=2$ and $b=3$,
\begin{align*}
&l_{2,3}=\left(\begin{smallmatrix}I_2\\&&I_2\\&&&&I_2\\&I_3\\&&&I_3\\&&&&&I_3\end{smallmatrix}\right).
\end{align*}
\begin{lemma}\label{lemma:decomposition for V functionals, n=a+b}
For $a,b\geq1$ such that $c=a+b$, and for any $\xi$ in the space of $\sigma_{k,c}$,
\begin{align*}
\int\limits_{V_{(c^{k})}}\xi(w_{k,c}v)\psi^{-1}(v)\,dv=\int\limits_{V^3}\xi_{W_a(\tau)\otimes W_b(\tau)}(l_{a,b}v)\,dv,
\end{align*}
where $\xi_{W_a(\tau)\otimes W_b(\tau)}$ is defined by \eqref{eq:def of xi W W} below and belongs to the space of the representation
\begin{align}\label{eq:decomp a b space}
\Ind_{P_{(ka,kb)}}^{\GL_{kc}}((W_a(\tau)\otimes W_b(\tau))\delta_{P_{(ka,kb)}}^{-1/(2k)}).
\end{align}

This equality is valid in the domain
where \eqref{eq:nk functional using w_{k,c}} is absolutely convergent and in general by meromorphic continuation.
\end{lemma}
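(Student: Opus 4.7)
The strategy is to evaluate the Jacquet integral on the left in stages, exploiting the block decomposition $c = a + b$, and to identify the result with a $(k,c)$-style functional on an alternative realization of $\sigma_{k,c}$ via parabolic induction from $P_{(ka,kb)}$. First, factor $V_{(c^k)}$ as the product (as a variety) $V^1 V^4 V^2 V^3$, using the four subgroups defined before the lemma. Because the trace of a block $v_{i,j}$ decomposes as $\mathrm{tr}(v^1_{i,j}) + \mathrm{tr}(v^4_{i,j})$ in the notation of \eqref{eq:the blocks of V}, the character \eqref{eq:wss character} restricts to the $(k,a)$ Whittaker--Speh--Shalika character on the natural copy $V^1 \cong V_{(a^k)}$ (embedded via the top-left $a \times a$ sub-blocks), to the $(k,b)$ character on $V^4 \cong V_{(b^k)}$, and trivially to $V^2$ and $V^3$.

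Next, realize $\sigma_{k,c}$ as an irreducible constituent of $\Ind_{P_{(ka,kb)}}^{\GL_{kc}}((\sigma_{k,a} \otimes \sigma_{k,b}) \delta_{P_{(ka,kb)}}^{-1/(2k)})$. This uses induction in stages twice: starting from $\sigma_{k,c} = \Ind_{P_{(c^k)}}^{\GL_{kc}}(\bigotimes_i \chi_i)$, refine $P_{(c^k)}$ through $P_{(a,b)} \subset \GL_c$ to land in $\Ind_{P_{(a,b,\ldots,a,b)}}^{\GL_{kc}}(\chi_1 \otimes \chi_1 \otimes \ldots \otimes \chi_k \otimes \chi_k)$. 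The standard intertwining operator associated to the Weyl element $l_{a,b}$ (which is precisely the permutation rearranging the ordering $(a,b,a,b,\ldots,a,b)$ into $(a,a,\ldots,a,b,b,\ldots,b)$) identifies this with a constituent of $\Ind_{P_{(a^k, b^k)}}^{\GL_{kc}}(\chi_1 \otimes \ldots \otimes \chi_k \otimes \chi_1 \otimes \ldots \otimes \chi_k)$; a second application of induction in stages rewrites the latter as $\Ind_{P_{(ka,kb)}}^{\GL_{kc}}(\sigma_{k,a} \otimes \sigma_{k,b})$ up to the stated modular twist. The exponent $-1/(2k)$ is forced by comparing $\delta_{P_{(c^k)}}^{1/2}$ against the product of modular character corrections $\delta_{P_{(a^k)}}^{1/2} \delta_{P_{(b^k)}}^{1/2} \delta_{P_{(ka,kb)}}^{1/2}$ accumulated along this chain of identifications.

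With these ingredients in hand, perform the LHS integration in the order $V^1, V^4, V^2, V^3$: the $V^1$- and $V^4$-integrations against their nontrivial characters produce the $(k,a)$ and $(k,b)$ Jacquet functionals on $\sigma_{k,a}$ and $\sigma_{k,b}$ (viewed as inducing data), yielding values in $W_a(\tau)$ and $W_b(\tau)$; the $V^2$-integration (trivial character), together with a Weyl element factorization $w_{k,c} = l_{a,b} \cdot (w_{k,a} \oplus w_{k,b}) \cdot p_0$ for a suitable $p_0 \in P_{(ka,kb)}$, combines via a change of variables to absorb $V^2$ into the definition of $\xi_{W_a(\tau) \otimes W_b(\tau)}$ from \eqref{eq:def of xi W W}, with $w_{k,c}$ reducing to $l_{a,b}$ on the outside. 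What remains is exactly the integration over $V^3$ as on the RHS. The main technical obstacle is carrying out the explicit Weyl element factorization and the modular character bookkeeping; both amount to block-level manipulations that need to be done carefully to produce the specific exponent $-1/(2k)$. Convergence in the cone noted before the lemma is ensured by the same twist-by-auxiliary-parameters argument used to define \eqref{eq:nk functional using w_{k,c}}, so that the identity holds there and extends by meromorphic continuation, the overall scalar being pinned down to $1$ by testing on a vector supported in the open Bruhat cell $P_{(c^k)} w_{k,c} \mathcal{N}$, using uniqueness of the $(k,c)$ functional on $\sigma_{k,c}$ (Claim~\ref{claim:sigma k c is always at most k c}).
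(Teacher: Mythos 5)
Your proposal follows the same route as the paper's proof: the decomposition of $V_{(c^k)}$ into the four subgroups $V^1,V^2,V^3,V^4$, the observation that $\psi$ restricts to the $(k,a)$ and $(k,b)$ characters on $V^1,V^4$ and trivially on $V^2,V^3$, the use of the $V^2$-integration as an intertwining operator into $\Ind_{P_{(ka,kb)}}^{\GL_{kc}}((\sigma_{k,a}\otimes\sigma_{k,b})\delta_{P_{(ka,kb)}}^{-1/(2k)})$, and the convergence-then-meromorphic-continuation argument. However, the one matrix identity on which the whole manipulation rests is misstated, and as stated that step fails. You claim $w_{k,c}=l_{a,b}\,\diag(w_{k,a},w_{k,b})\,p_0$ with $p_0\in P_{(ka,kb)}$. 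This is false: already for $k=2$, $a=b=1$, the unique candidate $p_0=\diag(w_{2,1},w_{2,1})\,l_{1,1}\,w_{2,2}$ equals $l_{1,1}$ itself (the transposition of the two middle coordinates), which sends $e_2$ to $e_3$ and hence does not lie in $P_{(2,2)}$. The correct identity --- the opening line of the paper's proof --- is the conjugation $w_{k,c}=l_{a,b}^{-1}\,\diag(w_{k,a},w_{k,b})\,l_{a,b}$, with no parabolic correction: $l_{a,b}$ necessarily appears on both sides. This matters structurally. The trailing $l_{a,b}$ is precisely what survives as the $l_{a,b}$ multiplying $v\in V^3$ in the asserted formula, while the leading $l_{a,b}^{-1}$ is absorbed into the $V^2$-integral defining the operator $T_{l_{a,b}}$ of \eqref{eq:def of xi'}, whose torus equivariance is what produces the exponent $-1/(2k)$ in \eqref{eq:decomp a b space}. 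With your factorization the extra element sits between $\diag(w_{k,a},w_{k,b})$ and $v$ and cannot be absorbed by the left equivariance of $\xi$ (which is under $P_{(c^k)}$, not $P_{(ka,kb)}$), so the change of variables you describe cannot be carried out as written.

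Two further remarks. First, the closing appeal to uniqueness of the $(k,c)$ functional to pin a scalar to $1$ is unnecessary: once the conjugation identity is in place, the lemma is an exact Fubini-plus-change-of-variables computation with no scalar ambiguity; and as a fallback the uniqueness route is not free, since you would have to verify independently that the right-hand side is $(V_{(c^k)},\psi)$-equivariant (which needs the same conjugation identities) and then evaluate both sides on a test vector for each value of the auxiliary parameters. Second, the phrase ``realize $\sigma_{k,c}$ as an irreducible constituent'' presupposes irreducibility, which this section explicitly does not assume; fortunately the one-dimensionality of Claim~\ref{claim:sigma k c is always at most k c} holds regardless, and the paper's argument never uses irreducibility. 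Apart from these points, your outline of the $V^1,V^4$ integrations producing the $(k,a)$ and $(k,b)$ functionals and leaving the $V^3$ integral is exactly the paper's proof.
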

\begin{proof}
First note that
\begin{align*}
w_{k,c}={}^{l_{a,b}^{-1}}\diag(w_{k,a},w_{k,b})
\end{align*}
(see \eqref{eq:conjugations notation} for our notation regarding conjugations).
Write the integral over $V_{(c^{k})}$ as an iterated integral $dV^2\,dV^1\,dV^4\,dV^3$. We have
\begin{align*}
w_{k,c}V_{(c^{k})}=l_{a,b}^{-1}\ {}^{\diag(w_{k,a},w_{k,b})l_{a,b}}V^2
\diag(w_{k,a},w_{k,b})
\ {}^{l_{a,b}}(V^1V^4)\ l_{a,b}V^3.
\end{align*}
The character $\psi$ is trivial on $V^2$ and $V^3$.
Then \eqref{eq:nk functional using w_{k,c}} becomes
\begin{align}\label{eq:nk functional using w_{k,c} decomp 1}
\int\limits_{V^3}\int\limits_{V^4}\int\limits_{V^1}
\int\limits_{V^2}\xi(l_{a,b}^{-1}\,({}^{\diag(w_{k,a},w_{k,b})l_{a,b}}v^2)\diag(w_{k,a}{}^{l_{a,b}}v^1,w_{k,b}{}^{l_{a,b}}v^4)l_{a,b} v^3)\psi^{-1}(v^1)\psi^{-1}(v^4)\,dv^2\,dv^1\,dv^4\,dv^3.
\end{align}

Denote, for any $\xi$ in the space of $\sigma_{k,c}$,
\begin{align}\label{eq:def of xi'}
T_{l_{a,b}}\xi(g)=\int\limits_{V^2}\xi(l_{a,b}^{-1}\,({}^{\diag(w_{k,a},w_{k,b})l_{a,b}}v^2)g)\,dv^2\qquad(g\in\GL_{kc}).
\end{align}
For
$A_i\in \GL_a$ and $B_i\in \GL_b$,
\begin{align*}
&T_{l_{a,b}}\xi(\diag(A_1,\ldots,A_{k},B_1,\ldots,B_{k}))\\&=
\delta_{P_{(c^{k})}}^{1/2}(\diag(A_1,B_1,\ldots,A_{k},B_{k}))\\&\quad\times
\prod_{i=1}^k\chi_i(\det A_{i})\chi_i(\det B_{i})
\prod_{i=1}^k|\det A_{k-i+1}|^{b(k-i)}|\det B_{i}|^{-a(k-i)}T_{l_{a,b}}\xi(I_{kc}).
\end{align*}
Then if $A=\diag(A_1,\ldots,A_{k})$, $B=\diag(B_1,\ldots,B_{k})$ and
$\chi(A)=\prod_{i=1}^k\chi_i(\det A_{i})$,
\begin{align*}
T_{l_{a,b}}\xi(\diag(A,B))=&\delta_{P_{(a^{k})}}^{1/2}(A)\delta_{P_{(b^{k})}}^{1/2}(B)\delta_{P_{(ka,kb)}}^{(k-1)/(2k)}(\diag(A,B))
\chi(A)\chi(B)T_{l_{a,b}}\xi(I_{kc}).
\end{align*}
Also $T_{l_{a,b}}\xi(ug)=T_{l_{a,b}}\xi(g)$ for $u\in V_{(ka,kb)}$.
Therefore $T_{l_{a,b}}$ is an intertwining operator from the space of $\sigma_{k,c}$ to the space of the representation
\begin{align}\label{eq:img of xi'}
\Ind_{P_{(ka,kb)}}^{\GL_{kc}}((\sigma_{a,k}\otimes\sigma_{b,k})\delta_{P_{(ka,kb)}}^{-1/(2k)}).
\end{align}
Now \eqref{eq:nk functional using w_{k,c} decomp 1} becomes
\begin{align}\label{eq:nk functional using w_{k,c} decomp 2}
\int\limits_{V^3}\int\limits_{V^4}\int\limits_{V^1}
T_{l_{a,b}}\xi(\diag(w_{k,a}{}^{l_{a,b}}v^1,w_{k,b}{}^{l_{a,b}}v^4)l_{a,b} v^3)\psi^{-1}(v^1)\psi^{-1}(v^4)\,dv^1\,dv^4\,dv^3.
\end{align}
The integrals $dv^1dv^4$ constitute the applications of $(k,a)$ and $(k,b)$ functionals, e.g., ${}^{l_{a,b}}V^1=\diag(V_{(a^k)},I_{kb})$. Hence
if
\begin{align}\label{eq:def of xi W W}
\xi_{W_a(\tau)\otimes W_b(\tau)}(g)=\int\limits_{V^4}\int\limits_{V^1}
T_{l_{a,b}}\xi(\diag(w_{k,a}{}^{l_{a,b}}v^1,w_{k,b}{}^{l_{a,b}}v^4)g)\psi^{-1}(v^1)\psi^{-1}(v^4)\,dv^1\,dv^4,
\end{align}
the function $\xi_{W_a(\tau)\otimes W_b(\tau)}$ belongs to the space of \eqref{eq:decomp a b space}. Integral~\eqref{eq:nk functional using w_{k,c} decomp 2} is equal to
\begin{align*}
\int\limits_{V^3}\xi_{W_a(\tau)\otimes W_b(\tau)}(l_{a,b} v^3)\,dv^3,
\end{align*}
as claimed.
\end{proof}
\begin{corollary}\label{corollary:k n functionals nonzero}
Assume $1-q^{-s}\chi_i(\varpi)\chi_j^{-1}(\varpi)\ne0$ for $\Real(s)\geq1$, for all $i\ne j$. Then
the functional \eqref{eq:nk functional using w_{k,c}} is nonzero on the normalized unramified vector $\xi$ in the space of $\sigma_{k,c}$.
Note that the assumption always holds if we consider $|~|^{\zeta_i}\chi_i$ instead of $\chi_i$ and take $\zeta_i\gg\zeta_{j}$ for all $i<j$.
\end{corollary}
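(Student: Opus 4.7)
\emph{Proof plan.} The plan is to argue by induction on $c$. The base case is $c=1$, handled directly by the classical Gindikin--Karpelevich formula. Each inductive step reduces the $(k,c)$ functional to a combination of a $(k,c-1)$ and a $(k,1)$ functional via Lemma~\ref{lemma:decomposition for V functionals, n=a+b}.

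\emph{Base case} $(c=1)$: Here $\sigma_{k,1}=\tau$ is the unramified principal series $\Ind_{B_{\GL_k}}^{\GL_k}(\chi_1\otimes\ldots\otimes\chi_k)$, $w_{k,1}$ is the long Weyl element of $\GL_k$, and $V_{(1^k)}=N_{\GL_k}$, so that \eqref{eq:nk functional using w_{k,c}} is the standard Jacquet--Whittaker integral. A direct Gindikin--Karpelevich computation shows that its value on the normalized spherical vector (after meromorphic continuation in auxiliary parameters if required for absolute convergence) equals, up to a unit, $\prod_{1\le i<j\le k}(1-q^{-1}\chi_i(\varpi)\chi_j^{-1}(\varpi))$, and each factor is nonzero by the hypothesis specialized at $s=1$.

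\emph{Inductive step.} Assuming the claim for $(k,c-1)$, apply Lemma~\ref{lemma:decomposition for V functionals, n=a+b} with $a=c-1$, $b=1$ to rewrite \eqref{eq:nk functional using w_{k,c}} on $\xi$ as $\int_{V^3}\xi_{W_{c-1}(\tau)\otimes W_1(\tau)}(l_{c-1,1}v)\,dv$. By the construction \eqref{eq:def of xi'}--\eqref{eq:def of xi W W}, $\xi_{W_{c-1}(\tau)\otimes W_1(\tau)}$ is obtained by first applying the intertwining operator $T_{l_{c-1,1}}$ to $\xi$ and then applying the $(k,c-1)$ and $(k,1)$ functionals in parallel to the two tensor factors of the image. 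A standard Gindikin--Karpelevich calculation shows that $T_{l_{c-1,1}}$ sends the normalized spherical vector of $\sigma_{k,c}$ to a nonzero rational multiple of the normalized spherical vector of the target parabolic induction, the scalar being a product of ratios $(1-q^{-s}\chi_i(\varpi)\chi_j^{-1}(\varpi))/(1-q^{-s'}\chi_i(\varpi)\chi_j^{-1}(\varpi))$ with $\Real(s),\Real(s')\ge 1$. By the induction hypothesis and the base case, the inner $(k,c-1)$ and $(k,1)$ functionals evaluate to nonzero constants on their respective spherical components; the outer integration over $V^3$ is then an intertwining-type integral whose value on the resulting spherical vector is a further Gindikin--Karpelevich product in the same family of factors, nonzero under the hypothesis.

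\emph{Main obstacle and alternative.} The main obstacle is the careful bookkeeping of all Gindikin--Karpelevich scalars and verifying that no denominator factor of the form $1-q^{-s}\chi_i(\varpi)\chi_j^{-1}(\varpi)$ with $\Real(s)\ge 1$ appears. A cleaner route, which we may prefer, is to first twist by replacing each $\chi_i$ with $|~|^{\zeta_i}\chi_i$ where $\Real(\zeta_1)\gg\cdots\gg\Real(\zeta_k)$. In this range the Jacquet integral converges absolutely, and its value on the spherical vector is manifestly nonzero. Bernstein's continuation principle \cite{Banks}, combined with the one-dimensionality of the space of $(k,c)$ functionals on $\sigma_{k,c}$ established in Claim~\ref{claim:sigma k c is always at most k c} and the paragraph following \eqref{eq:nk functional using w_{k,c}}, then produces the value at the original characters as a rational function in $q^{\zeta_i}\chi_i(\varpi)$ whose only possible singularities lie on the loci $q^{-s}\chi_i(\varpi)\chi_j^{-1}(\varpi)=1$ with $\Real(s)\ge 1$. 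The hypothesis rules these out, so specialization to $\zeta_1=\cdots=\zeta_k=0$ yields a finite and nonzero value, as required.
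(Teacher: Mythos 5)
Your overall skeleton is the same as the paper's: induction on $c$, the base case via the value of the Jacquet--Whittaker integral on the spherical vector, and the inductive step via Lemma~\ref{lemma:decomposition for V functionals, n=a+b} with $\{a,b\}=\{1,c-1\}$, with the Gindikin--Karpelevich formula controlling the scalar by which $T_{l_{a,b}}$ moves the spherical vector. However, there is a genuine gap at the decisive step, namely the outer integral over $V^3$. This integral is \emph{not} an intertwining operator and cannot be evaluated by a ``further Gindikin--Karpelevich product'': it runs over only a proper subgroup of a unipotent radical, and its integrand is $v\mapsto\xi_{W_a(\tau)\otimes W_b(\tau)}(l_{a,b}v)$, a function already composed with the $(k,a)$ and $(k,b)$ functionals, so one cannot ``first evaluate the inner functionals on the spherical components'' and then integrate -- the inner functionals are applied to translates depending on $v$, not to the identity. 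The paper's actual argument at this point is different and is the crux of the proof: one shows that the integrand vanishes unless all coordinates of $v$ lie in $\mathcal{O}$, by multiplying on the right by integral unipotent elements $\mathcal{X}_{i,j}\subset K_{\GL_{kc}}$ (which fix the unramified $\xi$) and moving them past ${}^{l}v$, where the $\psi$-equivariance of the $(k,c-1)$ model produces the factor $\psi(\mathrm{tr}(v^3_{i,j}x_{i,j}))$; this forces $v^3_{i,j}\in\Mat(\mathcal{O})$ block by block (in a specific order along the diagonals), after which the $V^3$ integral collapses to the single value $\xi_{W_a(\tau)\otimes W_b(\tau)}(I_{kc})$, and only then do the intertwining-operator constant (via \eqref{eq:def of xi'} and rank-one Gindikin--Karpelevich factors) and the inductive nonvanishing of the $(k,1)$ and $(k,c-1)$ functionals enter. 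Without this localization argument your inductive step does not go through.

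The ``cleaner alternative'' you say you may prefer is also not valid as stated. Twisting by $\zeta_i$ with $\Real(\zeta_1)\gg\cdots\gg\Real(\zeta_k)$ and invoking Bernstein's principle gives meromorphic (rational) dependence on the parameters, but controlling the \emph{pole} locus of the continuation says nothing about its \emph{zeros}: a rational function nonzero in a cone can perfectly well vanish at $\zeta=0$. The content of Corollary~\ref{corollary:k n functionals nonzero} is precisely that the value of \eqref{eq:nk functional using w_{k,c}} on the spherical vector does not vanish at the given point, and the hypothesis $1-q^{-s}\chi_i(\varpi)\chi_j^{-1}(\varpi)\neq0$ for $\Real(s)\geq1$ is what excludes the zeros of the explicit product of local factors -- a product you only obtain by carrying out the inductive computation (localization of the $V^3$ integral plus the Gindikin--Karpelevich constants). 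So the alternative route assumes what is to be proved; also the assertion that the convergent Jacquet integral is ``manifestly nonzero'' on the spherical vector in the dominant range is not manifest, since the integrand oscillates. Your base case itself is fine: the value $\prod_{i<j}\bigl(1-q^{-1}\chi_i(\varpi)\chi_j^{-1}(\varpi)\bigr)$ is nonzero under the hypothesis at $s=1$ (the paper instead invokes irreducibility of $\tau$ together with the Casselman--Shalika formula; both work).
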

\begin{proof}
We use induction on $c$. For $c=1$, the functional \eqref{eq:nk functional using w_{k,c}} is the usual Whittaker functional given by a Jacquet integral. Since $\sigma_{k,1}=\tau$ which is irreducible, this functional is nonzero on $\xi$ by the Casselman--Shalika formula \cite{CS2} and the irreducibility criterion for principal series representations (e.g., \cite{BZ2,CS1}).

Assume $c>1$ and apply Lemma~\ref{lemma:decomposition for V functionals, n=a+b} using $a=1$ and $b=c-1$. Conjugating $V^3$ by $l_{1,c-1}$, we obtain
\begin{align*}
\int\limits_{V^3}\xi_{W_1(\tau)\otimes W_{c-1}(\tau)}({}^{l_{1,c-1}}v)\,dv.
\end{align*}
We will show that the coordinates of $v$ can be assumed
to lie in $\mathcal{O}$. If the coordinates of $v$ are given by \eqref{eq:the blocks of V}, i.e., the nontrivial coordinates of $v$ are the blocks $v_{i,j}^{3}$ where $v_{i,j}^{3}\in\Mat_{c-1\times 1}$, then
\begin{align*}
{}^{l_{1,c-1}}v=\begin{pmatrix}I_{k}\\ [v]&I_{k(c-1)}\end{pmatrix},\qquad
& [v]=\begin{pmatrix}0&v_{1,2}^3&\cdots&v_{1,k}^3\\\vdots&\ddots&\ddots&\vdots\\\vdots&&&v_{k-1,k}^3\\0&\cdots&\cdots&0\end{pmatrix}.
\end{align*}
(Direct matrix multiplication.) Consider matrices of the form
\begin{align*}
\begin{pmatrix}I_{k}& [x]\\&I_{k(c-1)}\end{pmatrix},\qquad
& [x]=\begin{pmatrix}
0&0&\cdots&0\\
\vdots&x_{1,2}&\ddots&\vdots\\
\vdots&\vdots&\ddots&0\\
0&x_{1,k}&\cdots&x_{k-1,k}\end{pmatrix},\qquad x_{i,j}\in\Mat_{1\times c-1}.
\end{align*}
For each $1\leq i\leq k-1$ and $2\leq j\leq k$, let $\mathcal{X}_{i,j}$ be the subgroup consisting of matrices
of this form where the coordinates of $[x]$
are zero except at the block $x_{i,j}$, which takes arbitrary coordinates in $\mathcal{O}$. Starting with
$i=k-1$ and $j=k$, for $x\in\mathcal{X}_{i,j}$ we have
\begin{align*}
\xi_{W_1(\tau)\otimes W_{c-1}(\tau)}({}^{l_{1,c-1}}v)=
\xi_{W_1(\tau)\otimes W_{c-1}(\tau)}({}^{l_{1,c-1}}vx)=\psi(\text{tr}(v_{i,j}^3x_{i,j}))
\xi_{W_1(\tau)\otimes W_{c-1}(\tau)}({}^{l_{1,c-1}}v).
\end{align*}
The first equality follows since $\xi$ is unramified, the second follows from the invariance properties of $W_{c-1}(\tau)$.
Thus the coordinates of $v_{i,j}^3$ can be taken in $\mathcal{O}$, and since $\xi$ is unramified, the integration over these coordinates becomes an integral of the constant function $1$ over $\Mat_{c-1\times1}(\mathcal{O})$. Since the measure of $\mathcal{O}$ was chosen to be $1$, this integration evaluates to the constant $1$. Proceeding with this argument for
$(i,j)=(k-2,k-1), (k-3,k-2)$, etc., the blocks $v_{l,l+1}^3$ can each be taken in $\mathcal{O}$ and the integral over these coordinates is $1$, for $l=k-1,\ldots,1$. Then we continue with
$v_{k-2,k}^3$ using $\mathcal{X}_{k-2,k}$ and in this way show that all the diagonal $v_{l,l+2}^3$ can be taken in $\mathcal{O}$, $l=k-2,\ldots,1$. The last block to consider is $v_{1,k}^3$, which
we handle using $\mathcal{X}_{1,k}$. We deduce
\begin{align*}
\int\limits_{V^3}\xi_{W_1(\tau)\otimes W_{c-1}(\tau)}({}^{l_{1,c-1}}v)\,dv&=
1\times\xi_{W_1(\tau)\otimes W_{c-1}(\tau)}(I_{kc})
\\&=\int\limits_{V^4}\int\limits_{V^1}
T_{l_{1,c-1}}\xi(\diag(w_{k,1}{}^{l_{1,c-1}}v^1,w_{k,c-1}{}^{l_{1,c-1}}v^4))\psi^{-1}(v^1)\psi^{-1}(v^4)\,dv^1\,dv^4,
\end{align*}
where for the second equality we used \eqref{eq:def of xi W W}, and $T_{l_{1,c-1}}$ is the intertwining operator given by
\eqref{eq:def of xi'}. Since
$\xi$ is unramified, $T_{l_{1,c-1}}\xi$ is a scalar multiple of the normalized unramified vector $\xi'$ in the space of \eqref{eq:img of xi'}. We only need to show that this scalar is nonzero. We may decompose $T_{l_{1,c-1}}$ into rank-$1$ intertwining operators on spaces of the form
\begin{align*}
\Ind_{B_{\GL_2}}^{\GL_2}(|~|^{-(c-2l+1)/2}\chi_i\otimes|~|^{-(c-2l'+1)/2}\chi_j),\qquad i<j,\qquad l'\leq l-1.
\end{align*}
According to the Gindikin--Karpelevich formula (\cite[Theorem~3.1]{CS1}), each intertwining operator takes the normalized unramified vector in this space to a constant multiple of the normalized unramified vector in its image, and this constant is given by
\begin{align*}
\frac{1-q^{-1-l+l'}\chi_i(\varpi)\chi_j^{-1}(\varpi)}{1-q^{-l+l'}\chi_i(\varpi)\chi_j^{-1}(\varpi)}.
\end{align*}
Since $-l+l'\leq-1$, if the quotient has a zero or pole, then
$1-q^{-s}\chi_i(\varpi)\chi_j^{-1}(\varpi)=0$ for $\Real(s)\geq1$, contradicting our assumption.

We deduce that
\begin{align*}
&\int\limits_{V^4}\int\limits_{V^1}
T_{l_{1,c-1}}\xi(\diag(w_{k,1}{}^{l_{1,c-1}}v^1,w_{k,c-1}{}^{l_{1,c-1}}v^4))\psi^{-1}(v^1)\psi^{-1}(v^4)\,dv^1\,dv^4
\end{align*}
is a nonzero multiple of
\begin{align*}
&\int\limits_{V^4}\int\limits_{V^1}
\xi'(\diag(w_{k,1}{}^{l_{1,c-1}}v^1,w_{k,c-1}{}^{l_{1,c-1}}v^4))\psi^{-1}(v^1)\psi^{-1}(v^4)\,dv^1\,dv^4.
\end{align*}
Since
$\xi'(\diag(x,I_{k(c-1)}))$ (resp., $\xi'(\diag(I_{k},y))$) is the normalized unramified vector in the space of $\sigma_{k,1}$ (resp., $\sigma_{k,c-1}$), and by the inductive hypothesis the $(k,1)$ (resp., $(k,c-1)$) functional is nonzero on this element,
we conclude that the $(k,c)$ functional is nonzero on $\xi$.
\end{proof}

Recall the diagonal embedding $\GL_c\rightarrow \GL_{kc}$ given by $h\mapsto h^{\triangle}=\diag(h,\ldots,h)$. We prove a local analog of
Claim~\ref{claim:extra invariance}.
\begin{proposition}\label{proposition:k n functionals invariance}
Let $\lambda$ be a $(k,c)$ functional on $\sigma_{k,c}$. For a vector $\xi$ in the space of $\sigma_{k,c}$, let
$\lambda_{\xi}(g)=\lambda(g\cdot\xi)$ ($g\in\GL_{kc}$). Then $\lambda_{\xi}(h^{\triangle}g)=\tau((\det{h})I_k)\lambda_{\xi}(g)$ for all $h\in \GL_c$. In particular $\lambda_{\xi}(h^{\triangle}g)=\lambda_{\xi}(g)$ for $h\in \SL_c$. The same assertion applies to the representation $\rho_c(\tau)$.
\end{proposition}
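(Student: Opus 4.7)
The approach I would take rests on two observations: first, that $\GL_c^{\triangle}$ preserves the data defining $\lambda$, so that uniqueness supplies a scalar; second, that the Jacquet-integral formula \eqref{eq:nk functional using w_{k,c}} lets us compute this scalar explicitly. To begin, a direct matrix calculation shows that $h^{\triangle}$ normalizes $V_{(c^k)}$: conjugation sends a block matrix $v=(v_{i,j})$ to $(hv_{i,j}h^{-1})$. Since the trace is conjugation-invariant, the character $\psi$ of \eqref{eq:wss character} is preserved, i.e.\ $\psi({}^{h^{\triangle}}v)=\psi(v)$. It follows that the linear form $\xi\mapsto\lambda(h^{\triangle}\cdot\xi)$ again lies in $\Hom_{V_{(c^k)}}(\sigma_{k,c},\psi)$. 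Invoking the uniqueness statement of Claim~\ref{claim:sigma k c is always at most k c}, there is a scalar $c(h)$ with $\lambda(h^{\triangle}\cdot\xi)=c(h)\lambda(\xi)$, and the map $h\mapsto c(h)$ is a character of $\GL_c$ by multiplicativity.

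To identify $c(h)$, first twist the inducing data $\chi_i\rightsquigarrow|\cdot|^{\zeta_i}\chi_i$ so that the Jacquet integral \eqref{eq:nk functional using w_{k,c}} converges absolutely, then work in that cone and pass to general $\chi_i$ by analytic continuation. In this range I would compute
\begin{align*}
\lambda(h^{\triangle}\cdot\xi)=\int_{V_{(c^k)}}\xi(w_{k,c}v h^{\triangle})\psi^{-1}(v)\,dv.
\end{align*}
The change of variables $v\mapsto h^{\triangle}v h^{-\triangle}$ preserves both the Haar measure (since $\mathrm{Ad}(h^{\triangle})$ acts with determinant one on the Lie algebra of $V_{(c^k)}$) and the character $\psi$ (by the observation above). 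Next, the permutation $w_{k,c}$ commutes with $h^{\triangle}$ because $h^{\triangle}=\diag(h,\ldots,h)$ is invariant under reordering of the $k$ blocks; hence $w_{k,c}h^{\triangle}=h^{\triangle}w_{k,c}$, and the integral becomes $\int\xi(h^{\triangle}w_{k,c}v)\psi^{-1}(v)\,dv$. Finally, since $h^{\triangle}\in M_{(c^k)}$ and the induced structure gives $\xi(h^{\triangle}g)=\delta_{P_{(c^k)}}^{1/2}(h^{\triangle})(\chi_1\otimes\cdots\otimes\chi_k)(h^{\triangle})\xi(g)$, and since $\delta_{P_{(c^k)}}(h^{\triangle})=1$ (the two exponents from blocks below and above any $v_{i,j}$ cancel when all $m_i=h$), I obtain $c(h)=\chi_1(\det h)\cdots\chi_k(\det h)=\omega_\tau(\det h)$, which is exactly the central character of $\tau$ at $(\det h)I_k$, i.e.\ $\tau((\det h)I_k)$. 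Applying this with $h$ replaced by nothing and right-translating by $g$ yields the stated identity $\lambda_{\xi}(h^{\triangle}g)=\tau((\det h)I_k)\lambda_{\xi}(g)$.

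For the assertion about $\rho_c(\tau)$: since $\rho_c(\tau)$ is the unique irreducible $(k,c)$-constituent $\mathbb{V}_i/\mathbb{V}_{i-1}$ of $\sigma_{k,c}$ (as constructed in \S~\ref{decomposition of functionals}), any $(k,c)$ functional on $\rho_c(\tau)$ pulls back to a nonzero element of $\Hom_{V_{(c^k)}}(\mathbb{V}_i,\psi)$ vanishing on $\mathbb{V}_{i-1}$, and the invariance just established for $\sigma_{k,c}$ restricts to $\mathbb{V}_i$ and descends to the quotient. The main technical point is the justification of the change of variables and the use of analytic continuation; once that is in place the computation is mechanical.
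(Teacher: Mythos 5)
Your proposal is correct and follows essentially the same route as the paper: by uniqueness of the $(k,c)$ functional one may take $\lambda$ to be the Jacquet integral \eqref{eq:nk functional using w_{k,c}}, and then the facts that $h^{\triangle}$ normalizes $V_{(c^k)}$ measure-preservingly, stabilizes $\psi$, commutes with $w_{k,c}$, and transforms $\xi$ by $\prod_i\chi_i(\det h)$ (with $\delta_{P_{(c^k)}}(h^{\triangle})=1$) give the character $\tau((\det h)I_k)$, with the statement for $\rho_c(\tau)$ inherited from its realization inside the $(k,c)$ model of $\sigma_{k,c}$. Your extra preliminary step (uniqueness forcing a scalar character) and the explicit convergence/continuation remarks are harmless elaborations of what the paper leaves implicit.
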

\begin{proof}
Since the representation $\sigma_{k,c}$ admits a unique $(k,c)$ functional, we can
assume that $\lambda$ is given by \eqref{eq:nk functional using w_{k,c}}.
Since $h^{\triangle}$ normalizes $V_{(c^k)}$ without changing the measure, stabilizes $\psi$ and commutes with $w_{k,c}$,
\begin{align*}
\lambda_{\xi}(h^{\triangle}g)=\int\limits_{V_{(c^{k})}}\xi(h^{\triangle}w_{k,c}vg)\psi^{-1}(v)\,dv.
\end{align*}
Now the assertion follows because $\xi(h^{\triangle}g)=\prod_i\chi_i(\det(h))\xi(g)$ by the definition of $\sigma_{k,c}$. Since $W_c(\tau)$ is a summand of the $(k,c)$ model of $\sigma_{k,c}$, the same result is valid for $\rho_c(\tau)$.
\end{proof}
\begin{remark}
Since the space of $(k,c)$ functionals on $\sigma_{k,c}$ is one-dimensional, $J_{V_{(c^{k})},\psi}(\sigma_{k,c})$ is one-dimensional. Hence
a priori $\GL_c^{\triangle}$ acts by a character (which must then be trivial on $\SL_c^{\triangle}$).
\end{remark}

\begin{claim}\label{claim:tau unitary k n functionals nonzero}
Let $\tau$ be an unramified twist of an irreducible generic unramified and unitary representation of $\GL_k$. Then
$\sigma_{k,c}$ is irreducible and the assumption of Corollary~\ref{corollary:k n functionals nonzero} is satisfied.
\end{claim}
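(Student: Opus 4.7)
Plan of proof. Write $\tau = |\det|^r \tau_0$ where $r \in \R$ and $\tau_0$ is unitary, generic and unramified. Write $\tau_0 = \Ind_{B_{\GL_k}}^{\GL_k}(\chi_1^0 \otimes \cdots \otimes \chi_k^0)$, so that $\chi_i = |\cdot|^r \chi_i^0$. By \cite[Corollary~2.5]{JS} applied to $\tau_0$ and $\tau_0^{\vee}$, we have $q^{-1/2} < |\chi_i^0(\varpi)| < q^{1/2}$ for every $i$. Note that this bound is exactly the ingredient already used in the paragraph preceding Claim~\ref{claim:E tau sub and quotient} to deduce irreducibility of $\sigma_{k,c}$ in the unitary case; the argument is insensitive to a simultaneous unramified twist of all $\chi_i$. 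The plan is first to invoke this input to obtain irreducibility, and then to translate the same bound into the numerical non-vanishing needed for Corollary~\ref{corollary:k n functionals nonzero}.

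For irreducibility, observe that the twist $|\cdot|^r$ affects all characters by the same shift, hence does not alter whether segments are linked. Thus the segments of $\chi_i \circ \det$ and $\chi_j \circ \det$ (each of length one, being single characters of $\GL_c$) are unlinked in Zelevinsky's sense \cite[\S 3, \S 4]{Z3} if and only if the corresponding segments attached to $\chi_i^0$ and $\chi_j^0$ are. The bound above guarantees the latter: two characters of $\GL_c$ of the form $\chi_i \circ \det$ and $\chi_j \circ \det$ are linked only when $|\chi_i(\varpi)\chi_j^{-1}(\varpi)| \in \{q^{c}, q^{-c}\}$, but $|\chi_i^0(\varpi)\chi_j^{0,-1}(\varpi)| < q$, ruling this out. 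Therefore, by \cite[Theorem~4.2]{Z3}, $\sigma_{k,c}$ is irreducible.

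For the numerical assumption, note that $|\chi_i(\varpi)\chi_j^{-1}(\varpi)| = |\chi_i^0(\varpi)\chi_j^{0,-1}(\varpi)| < q^{1/2}\cdot q^{1/2} = q$. Hence for $\Real(s) \geq 1$,
\begin{align*}
\bigl| q^{-s}\chi_i(\varpi)\chi_j^{-1}(\varpi) \bigr| \;\leq\; q^{-1}\bigl| \chi_i^0(\varpi)\chi_j^{0,-1}(\varpi) \bigr| \;<\; 1,
\end{align*}
so $1 - q^{-s}\chi_i(\varpi)\chi_j^{-1}(\varpi) \neq 0$, as required.

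The argument is essentially bookkeeping and there is no real obstacle; the only mild subtlety is to ensure that an unramified twist preserves both the unlinkedness of segments and the sharp bound $|\chi_i(\varpi)\chi_j^{-1}(\varpi)| < q$, and both follow from the fact that the twist shifts all characters by the same amount, leaving all ratios $\chi_i\chi_j^{-1}$ unchanged.
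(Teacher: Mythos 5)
Your proof is correct and takes essentially the same route as the paper: reduce to the unitary case via the twist (which leaves all ratios $\chi_i\chi_j^{-1}$ unchanged), invoke \cite[Corollary~2.5]{JS} to get $q^{-1/2}<|\chi_i^0(\varpi)|<q^{1/2}$, deduce irreducibility from unlinked segments exactly as in \S~\ref{speh}, and verify the hypothesis of Corollary~\ref{corollary:k n functionals nonzero} from $|\chi_i(\varpi)\chi_j^{-1}(\varpi)|<q$. One small correction that does not affect the conclusion: the length-$c$ segments attached to $\chi_i\circ\det$ and $\chi_j\circ\det$ are linked whenever $\chi_i\chi_j^{-1}=|~|^{\pm m}$ for some integer $1\leq m\leq c$, i.e.\ $|\chi_i(\varpi)\chi_j^{-1}(\varpi)|\in\{q^{\pm 1},\ldots,q^{\pm c}\}$, not only for $q^{\pm c}$; all of these are still excluded since $q^{-1}<|\chi_i(\varpi)\chi_j^{-1}(\varpi)|<q$.
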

\begin{proof}
Let $d\in\R$ be such that $\tau=|\det|^{d}\tau_0$, where
$\tau_0$ is an irreducible generic unramified and unitary representation of $\GL_k$.
Write $\tau_0=\Ind_{B_{\GL_k}}^{\GL_k}(\chi^0_1\otimes\ldots\otimes\chi^0_k)$.
As we explained in \S~\ref{speh}, the representation
$\Ind_{P_{(c^k)}}^{\GL_{kc}}(\chi^0_1\otimes \ldots \otimes \chi^0_k)$ is irreducible and
$q^{-1/2}<|\chi^0_i(\varpi)|<q^{1/2}$ for all $i$. Since $\chi_i=|~|^d\chi^0_i$, we conclude that
$\sigma_{k,c}$ is irreducible and
$|\chi_i(\varpi)\chi_j^{-1}(\varpi)|=|\chi^0_i(\varpi)(\chi^0_j)^{-1}(\varpi)|<q$.
\end{proof}
\subsection{The computation of the integrals with unramified data}
In this section we compute the integrals from \eqref{eq:almost Euler} with unramified data.
We use the notation and conventions from \S~\ref{prelim unr}. Let $G=\Sp_{2n}$ or $\SO_{2n}$. Let $\pi$ be an irreducible unramified representation of $G$. Let $\tau$ be an unramified twist of an irreducible unitary generic unramified representation of $\GL_k$.
Recall that the $G\times\GL_k$ integrals
were described in \S~\ref{Local factors for classical} and the $\GL_n\times\GL_k$ integrals were defined in \S~\ref{Local factors for GL}. We also use notation from these sections.

Define
\begin{align}\label{eq:dtau}
d_{\tau}(s)=&
\left[\frac{L(\alpha s+1/2,\tau)}{L(\alpha s+n+1/2,\tau)}\right]
\prod_{1\leq j\leq \lfloor n/2\rfloor}
\frac{L(2\alpha s+2j,\tau,\mathrm{Sym}^2)}{L(2\alpha s+2j+2n-2\lfloor n/2\rfloor-1,\tau,\mathrm{Sym}^2)}\\\nonumber&\times
\prod_{1\leq j\leq \lceil n/2\rceil}
\frac{L(2\alpha s+2j-1,\tau,\wedge^2)}{L(2\alpha s+2j+2n-2\lceil n/2\rceil,\tau,\wedge^2)},
\end{align}
where if $G=\Sp_{2n}$, $\alpha=2kn+1$, and if $G=\SO_{2n}$, $\alpha=2kn-1$; and the factor in square brackets here and in Theorem~\ref{theorem:unramified computation for Sp(2n),SO(2n)} below is included only for $\Sp_{2n}$.

Let $\omega^0$ be the unramified matrix coefficient of $\pi^{\vee}$ normalized such that $\omega^0(I_{2n})=1$. Let $f_{W_c(\tau)}^0$ be the unramified element in the space of $\Ind_{P}^{H}(W_c(\tau))$ normalized by $f_{W_c(\tau)}^0(I_{2kc})=1$, and extended to a standard section of
$\Ind_{P}^{H}(W_c(\tau)\delta_P^s)$.

The following lemma reduces the $G\times \GL_k$ integral to the
$\GL_n\times \GL_k$ integral. Its proof occupies \S~\ref{proof of reduction lemma} below.
\begin{lemma}\label{lemma:reduction from classical to GLn}
Assume $\pi$ is an irreducible quotient of $\Ind_{R}^{G}(\pi_n)$, where
$R=M_R\ltimes U_R$ is the Siegel parabolic subgroup with $M_R=\{\diag(a,a^*):a\in\GL_n\}$ and $\pi_n$
is an irreducible unramified representation of $\GL_n$. Let $\omega_n^0$ be the normalized unramified matrix coefficient of $\pi_n^{\vee}$ and $\rho_{W_n(\tau)\otimes W_n(\tau^{\vee})}^0$ be the normalized unramified function in
the space of
\begin{align*}
\Ind_{P_{(kn,kn)}}^{\GL_{2kn}}(W_n(\tau)\otimes W_n(\tau^{\vee})).
\end{align*}
Then
\begin{align}\label{eq:lemma decomp classical}
Z(s,\omega^0,f_{W_c(\tau)}^0)=d_{\tau}(s)Z(\alpha s/(kn),\omega_n^0,\rho_{W_n(\tau)\otimes W_n(\tau^{\vee})}^0).
\end{align}
\end{lemma}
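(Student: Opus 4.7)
The plan is to reduce the classical $G \times \GL_k$ integral to the $\GL_n \times \GL_k$ integral by exploiting the realization of $\pi$ as a quotient of $\Ind_R^G(\pi_n)$ and applying Iwasawa decomposition on $G$. Since $\pi^\vee$ embeds in $\Ind_R^G(\pi_n^\vee)$, applying this to normalized spherical vectors and using the Iwasawa decomposition $G = U_R M_R K_G$ gives the standard expression
\begin{align*}
\omega^0(g) = \int_{K_G} \omega_n^0(m(kg))\,\delta_R^{1/2}(m(kg))\,dk,
\end{align*}
where $m(\cdot)$ denotes the Levi component. Substituting into $Z(s,\omega^0,f_{W_c(\tau)}^0)$, changing variables $g \mapsto k^{-1}g$, and absorbing $(1,{}^\iota k^{-1}) \in K_H$ via the right $K_H$-invariance of the spherical section collapses the $K_G$-integration. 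A second Iwasawa step then replaces the $G$-integration by integration over $M_R \cong \GL_n$ and $U_R$, with the $M_R$-part accounting for the matrix-coefficient ingredient of the $\GL_n \times \GL_k$ integral on the right-hand side.

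The heart of the proof is the $U_R$-integration. Conjugating $(1,{}^\iota U_R)$ through $\delta$ places its one-parameter subgroups inside $U_P$, and the pairing of each such subgroup against $f_{W_c(\tau)}^0$ is computed using the Jacquet-integral realization \eqref{eq:nk functional using w_{k,c}} of the $(k,c)$-functional. Applying Lemma~\ref{lemma:decomposition for V functionals, n=a+b} with $a=b=n$ decomposes the $(k,2n)$-functional into a $(k,n)\otimes(k,n)$-form, and the resulting rank-one Jacquet integrals, evaluated on unramified vectors via the Gindikin--Karpelevich formula, produce exactly the ratios of $L(2\alpha s+\cdot,\tau,\mathrm{Sym}^2)$, $L(2\alpha s+\cdot,\tau,\wedge^2)$, and (only for $\Sp_{2n}$) $L(\alpha s+\cdot,\tau)$ factors collected in $d_\tau(s)$. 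The $\SL_c^\triangle$-invariance from Proposition~\ref{proposition:k n functionals invariance}, together with the nonvanishing results Corollary~\ref{corollary:k n functionals nonzero} and Claim~\ref{claim:tau unitary k n functionals nonzero}, ensures that the relevant functionals are nonzero on normalized unramified vectors and allows us to identify the inducing data as $W_n(\tau)\otimes W_n(\tau^\vee)$ rather than $W_n(\tau)\otimes W_n(\tau)$. The rescaling $s\mapsto\alpha s/(kn)$ arises from comparing $\delta_P^s$ on classical $H$ with $\delta_{P_{(kn,kn)}}^{s'}$ on $\GL_{2kn}$, evaluated on the Siegel Levi $M_P=\GL_{2kn}$; the constant $\alpha = 2kn\pm1$ reflects the modulus exponent of the Siegel parabolic of $\Sp_{4kn}$ versus $\SO_{4kn}$.

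The main obstacle will be the explicit matrix-level bookkeeping in the second paragraph: tracking the conjugations $\delta^{-1}(1,{}^\iota U_R)\delta$ inside classical $H$, identifying each resulting unipotent integration as a specific rank-one Gindikin--Karpelevich application on the $(k,c)$-model, and matching the cumulative normalizations exactly with the product structure of $d_\tau(s)$. The asymmetry between $\Sp_{2n}$ and $\SO_{2n}$ (captured by the sign $\epsilon_0=\mp1$ in $\delta$ and by the fact that $U_R$ is built from symmetric or antisymmetric matrices of size $n$) must be handled uniformly so that the two cases produce the stated difference, a single extra $L(\alpha s+1/2,\tau)/L(\alpha s+n+1/2,\tau)$ factor for $\Sp_{2n}$ arising from the anisotropic diagonal part of the symplectic Siegel unipotent.
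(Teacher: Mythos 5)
Your outline assembles several of the right ingredients (the $K_G$-collapse of the matrix coefficient, Lemma~\ref{lemma:decomposition for V functionals, n=a+b} with $a=b=n$, the Iwasawa decomposition $G=RK_G$, the Gindikin--Karpelevich formula, and the rescaling $s\mapsto\alpha s/(kn)$), but the mechanism you propose for producing $d_\tau(s)$ does not work. The decomposition of the $(k,2n)$-functional into a $(k,n)\otimes(k,n)$-form is exactly normalized on unramified data --- it sends the normalized unramified vector to the normalized unramified vector with value $1$ at the identity (checked as in Corollary~\ref{corollary:k n functionals nonzero}) --- so it contributes no $L$-factors; and the $U_R$-integration by itself is far too small to be the source of $d_\tau(s)$: the Gindikin--Karpelevich computation yielding \eqref{eq:dtau} is attached to the full unipotent radical $U^2$ of the Siegel parabolic of the rank-$kn$ classical subgroup $H'$ (dimension of order $(kn)^2/2$), whereas $\dim U_R$ is only of order $n^2/2$. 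The step you are missing is the one the paper's proof turns on: after moving $V^3$ past $(1,{}^{\iota}g)$, the block $U_2$ of ${}^{l_{n,n}}U_0$ has a missing $n\times n$ corner; one factors the $g$-integral through $U_R$ and conjugates $(1,{}^{\iota}z)$, $z\in U_R$, across $({}^{l_{n,n}}\delta_1)u_0'$, so that the $U_R$-coordinates fill precisely this corner (the character contributions cancel against the $(k,n)$-character), and only then does the combined $U^2$-integration become the standard intertwining operator $M(s)$, whose value on the unramified vector --- computed via Langlands' theory and Gindikin--Karpelevich applied to the unramified constituent $\Delta(\tau,n,s)$, with adjoint action $\mathrm{st}\oplus\wedge^2$ for $\Sp$ and $\wedge^2$ for $\SO$ --- is $d_\tau(s)$.

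Relatedly, your claim that the invariance and nonvanishing statements (Proposition~\ref{proposition:k n functionals invariance}, Corollary~\ref{corollary:k n functionals nonzero}, Claim~\ref{claim:tau unitary k n functionals nonzero}) allow you to ``identify the inducing data as $W_n(\tau)\otimes W_n(\tau^{\vee})$'' is a misattribution: the dual appears because $M(s)$ maps \eqref{rep:induced f before M(s)} to a representation induced from $|\det|^{-n/2+\alpha s}W_{n}(\tau)\otimes|\det|^{-n/2-\alpha s}W_{n}(\tau^{\vee})$, the second $\GL_{kn}$-block now being read inside the Siegel Levi $M_P\cong\GL_{2kn}$ via $g\mapsto g^*$; the cited invariance and nonvanishing results serve only to collapse the $g_1$-copy of $G$ and to normalize unramified vectors. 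You also leave unaddressed why the leftover integrations over $V^3$ and $U^3$ contribute exactly $1$ (the support argument forcing these variables into $K_H$, using the equivariance of the inner $\GL_n\times\GL_k$ integral); this is more than bookkeeping, since without it the asserted factorization into $d_\tau(s)$ times the $\GL_n\times\GL_k$ integral is not established.
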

Since $\pi$ is irreducible and unramified, by Langlands' classification one can choose an unramified principal series representation ${\pi'_n}$ of $\GL_n$, such that $\pi$ is a quotient of $\Ind_{R}^{G}(\pi'_n)$ and in addition, ${\pi'_n}$ contains an irreducible unramified quotient ${\pi_n}$. Then $\Ind_{R}^{G}(\pi_n)$ is an unramified quotient of $\Ind_{R}^{G}({\pi'_n})$, hence contains $\pi$.
Thus the assumption of the lemma is always satisfied.
\begin{theorem}\label{theorem:unramified computation for GL(n)}
For irreducible unramified representations $\pi$ of $\GL_n$, $\tau$ and $\tau'$ of $\GL_k$ (as in \S~\ref{Local factors for GL}), if $\omega^0$ is the normalized unramified matrix coefficient of $\pi^{\vee}$ and
$f_{W_n(\tau)\otimes W_n(\tau')}^0$ is the normalized unramified element in the space of $\Ind_{P_{(kn,kn)}}^{\GL_{2kn}}(W_n(\tau)\otimes W_n(\tau'))$,
\begin{align*}
Z(s,\omega^0,f_{W_n(\tau)\otimes W_n(\tau')}^0)=\frac{L(kns+1/2,\pi^{\vee}\times\tau)L(kns+1/2,\pi\times{\tau'}^{\vee})}{\prod_{j=1}^{n}L(2kns+j,\tau\times{\tau'}^{\vee})}.
\end{align*}
\end{theorem}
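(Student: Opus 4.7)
The plan is to prove the formula by reducing the $\GL_n\times\GL_k$ integral inductively to the base case $n=1$, which will be identified with a combination of classical Rankin--Selberg integrals of $\GL_1\times\GL_k$ and $\GL_1\times\GL_{2k}$ type.

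For the reduction step, the key tool is Lemma~\ref{lemma:decomposition for V functionals, n=a+b}, which I would apply with $a=1$ and $b=n-1$ to both $(k,n)$-models appearing in the inducing data. This expresses $W_n(\tau)$ as an iterated integral involving $W_1(\tau)$ (the classical Whittaker model of $\tau$) and $W_{n-1}(\tau)$, and similarly for $W_n(\tau')$. Simultaneously, applying the Iwasawa decomposition $\GL_n = T_{\GL_n}N_{\GL_n}K_{\GL_n}$ on the ambient group $G$---combined with the bi-$K_{\GL_n}$-invariance properties of the unramified data $\omega^0$ and $f^0_{W_n(\tau)\otimes W_n(\tau')}$---reduces the integration over $G$ to an integration over the diagonal torus. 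After conjugating various unipotent subgroups across the Weyl element $\delta$ and re-ordering the iterated integrations, the integral should factor as a product of a $\GL_{n-1}\times\GL_k$ integral with appropriately shifted spectral parameters and a residual one-dimensional piece. Invoking the inductive hypothesis on the former and computing the latter using Gindikin--Karpelevich type identities (as in the proof of Corollary~\ref{corollary:k n functionals nonzero}) should yield the claimed formula, with the inductively accumulated factors in the numerator building up to $L(kns+1/2,\pi^\vee\times\tau)L(kns+1/2,\pi\times{\tau'}^\vee)$ and the denominator contributions telescoping to $\prod_{j=1}^n L(2kns+j,\tau\times{\tau'}^\vee)$.

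For the base case $n=1$, the group $G=F^*$ and $\pi=\mu$ is an unramified character. The unipotent $U_0$ has an explicit description and the $(k,1)$-models reduce to classical Whittaker models. The inner unipotent integration $du_0$ can be unfolded, and the integrand separates into two Jacquet--Piatetski-Shapiro--Shalika integrals of $\GL_1\times\GL_k$ type for the pairs $(\mu^{-1},\tau)$ and $(\mu,{\tau'}^\vee)$, which by \cite{JS1,JPSS} compute $L(ks+1/2,\mu^{-1}\times\tau)$ and $L(ks+1/2,\mu\times{\tau'}^\vee)$ respectively, divided by a $\GL_1\times\GL_{2k}$ Rankin--Selberg integral applied to an unramified vector in $\Ind_{P_{(k,k)}}^{\GL_{2k}}(\tau\otimes{\tau'}^\vee)$, whose unramified value equals $L(2ks+1,\tau\times{\tau'}^\vee)$. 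This gives the formula in the base case.

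The main obstacle will be the bookkeeping in the inductive step: one must carefully track the interplay between the decomposition of the $(k,n)$-functional in Lemma~\ref{lemma:decomposition for V functionals, n=a+b}, the iterated torus integration on $G$, and the conjugations of unipotent subgroups by $\delta$ and by the Weyl element $w_{k,n}$ entering \eqref{eq:nk functional using w_{k,c}}. A further technical difficulty is ensuring that the Gindikin--Karpelevich ratios telescope correctly to produce the $\GL_{2k}$-valued denominator $\prod_{j=1}^n L(2kns+j,\tau\times{\tau'}^\vee)$ rather than a less uniform product of $\GL_k\times\GL_k$ factors---this is the ``surprising'' feature alluded to in the introduction and should be handled by observing that the unramified vector in $\Ind_{P_{(k,k)}}^{\GL_{2k}}(\tau\otimes{\tau'}^\vee)$ carries an unramified $L$-factor of $\GL_{2k}$ type.
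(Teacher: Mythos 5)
Your overall architecture matches the paper's: induction on $n$ with the rank-one peeling $a=1$, $b=n-1$ based on Lemma~\ref{lemma:decomposition for V functionals, n=a+b}, a base case $n=1$ tied to the classical Rankin--Selberg integrals of $\GL_1\times\GL_k$ and $\GL_1\times\GL_{2k}$, and Gindikin--Karpelevich ratios supplying the telescoping denominator. However, both halves of your sketch have genuine gaps. For the base case, the integrand does \emph{not} ``separate'' into two independent JPSS $\GL_1\times\GL_k$ integrals divided by a $\GL_1\times\GL_{2k}$ integral: the section is a single function on $\GL_{2k}$, and no direct unfolding produces such a factorization. The paper's Proposition~\ref{proposition:unramified computation for GL(1)} instead introduces an auxiliary integral $\Psi(\zeta,s,\cdot)$ built from the Whittaker model of $\mathrm{I}(W_1(\tau),W_1(\tau'),s)$, proves a multiplicity-one statement (Lemma~\ref{lemma:uniqueness}) for the relevant $\Hom$-space so that $Z$ and $\Psi$ are proportional, and then pins down the proportionality constant as the JPSS $\gamma$-factor $\gamma(ks+1/2,\pi^{-1}\times\tau,\psi)$ via Claim~\ref{claim:justification} (a delicate argument with specially chosen sections, Fourier inversion, and order-of-integration justifications). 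Only after that does the $\GL_1\times\GL_{2k}$ unramified computation, together with the Casselman--Shalika value $W_{f^0}(I_{2k},s)=L(2ks+1,\tau\times{\tau'}^{\vee})^{-1}$, produce the second numerator factor and the denominator; one numerator factor comes from converting an $L$-factor through the $\gamma$-factor, not from a second visible $\GL_1\times\GL_k$ integral. Your sketch omits this functional-equation-plus-uniqueness mechanism, which is the actual content of the base case.

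For the inductive step, replacing the matrix coefficient by a full Iwasawa reduction to the torus is not how the factorization is achieved, and as described it leaves the key point unaddressed: in Lemma~\ref{lemma:reduction from GLn to GLa GLb} one realizes $\omega^0$ through the induced model $\Ind_{P_{(1,n-1)}}^{\GL_n}(\pi_1\otimes\pi_{n-1})$ and, crucially, factors the $G$-integration through the unipotent radical $U_R$ of $P_{(1,n-1)}$ so that the missing unipotent coordinates complete a genuine intertwining operator $M(s)$; its effect on the unramified vector, computed by Gindikin--Karpelevich, is exactly the factor $d_{\tau,\tau',1,n-1}(s)$, and the auxiliary integrations over $V^3$ and $U^3$ must be shown to collapse to volume $1$ by $K_H$-invariance and character-nonvanishing arguments. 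The output is then a product of a $\GL_1\times\GL_k$ doubling integral and a $\GL_{n-1}\times\GL_k$ doubling integral, with matrix coefficients of $\pi_1^{\vee}$ and $\pi_{n-1}^{\vee}$; the residual one-dimensional piece is \emph{not} computable by Gindikin--Karpelevich identities alone, as your wording suggests---it is precisely the base-case integral and requires Proposition~\ref{proposition:unramified computation for GL(1)}. Without the $U_R$-trick your ``re-ordering of iterated integrations'' does not produce a full intertwining operator, and without the correct treatment of the $\GL_1$ factor the numerator $L$-factors involving $\pi$ cannot be assembled.
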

This theorem is proved in \S~\ref{Local factors for GLn}.
As a corollary we obtain the computation of the $G\times\GL_k$ integrals with unramified data.
\begin{theorem}\label{theorem:unramified computation for Sp(2n),SO(2n)}\footnote{There was a typo in the formula in the original announcement
\cite{CFGK}; we would like to thank Dihua Jiang for pointing it out to us.}
When all data are unramified,
\begin{align*}
Z(s,\omega^0,f_{W_c(\tau)}^0)=\frac{L(\alpha s+1/2,\pi\times\tau)}
{[L(\alpha s+n+1/2,\tau)]\prod\limits_{1\leq j\leq n}L(2\alpha s+2j,\tau,\wedge^2)
L(2\alpha s+2j-1,\tau,\mathrm{Sym}^2)}.
\end{align*}
\end{theorem}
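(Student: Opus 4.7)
[Proof proposal for Theorem~\ref{theorem:unramified computation for Sp(2n),SO(2n)}]
The plan is to combine the two results that precede the statement. By Langlands' classification there exists an irreducible unramified representation $\pi_n$ of $\GL_n$ such that $\pi$ is a quotient of $\Ind_R^G(\pi_n)$, so the hypothesis of Lemma~\ref{lemma:reduction from classical to GLn} is satisfied. Applying that lemma gives
\begin{align*}
Z(s,\omega^0,f_{W_c(\tau)}^0)=d_{\tau}(s)\,Z(\alpha s/(kn),\omega_n^0,\rho^0_{W_n(\tau)\otimes W_n(\tau^{\vee})}).
\end{align*}
Next, I would apply Theorem~\ref{theorem:unramified computation for GL(n)} with $\pi$ replaced by $\pi_n$, with $\tau'=\tau^\vee$, and with $s$ replaced by $\alpha s/(kn)$. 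Since $kn\cdot(\alpha s/(kn))=\alpha s$ and $(\tau^\vee)^\vee=\tau$, this yields
\begin{align*}
Z(\alpha s/(kn),\omega_n^0,\rho^0_{W_n(\tau)\otimes W_n(\tau^{\vee})})=\frac{L(\alpha s+1/2,\pi_n^\vee\times\tau)\,L(\alpha s+1/2,\pi_n\times\tau)}{\prod_{j=1}^{n}L(2\alpha s+j,\tau\times\tau)}.
\end{align*}

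The remaining work is $L$-factor bookkeeping. For the numerator, the identity from \S~\ref{prelim unr},
\begin{align*}
L(\alpha s+1/2,\pi\times\tau)=[L(\alpha s+1/2,\tau)]\,L(\alpha s+1/2,\pi_n\times\tau)\,L(\alpha s+1/2,\pi_n^\vee\times\tau),
\end{align*}
(the bracketed factor appearing only for $G=\Sp_{2n}$) expresses the numerator as $L(\alpha s+1/2,\pi\times\tau)$ divided by $[L(\alpha s+1/2,\tau)]$. In the $\Sp_{2n}$ case this $[L(\alpha s+1/2,\tau)]$ cancels precisely against the bracketed numerator of $d_\tau(s)$, leaving $1/[L(\alpha s+n+1/2,\tau)]$; in the $\SO_{2n}$ case both bracketed factors are absent and nothing to cancel. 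For the denominator, Shahidi's identity
\begin{align*}
L(2\alpha s+j,\tau\times\tau)=L(2\alpha s+j,\tau,\mathrm{Sym}^2)\,L(2\alpha s+j,\tau,\wedge^2)
\end{align*}
splits the product into a $\mathrm{Sym}^2$ part and a $\wedge^2$ part, each running through arguments $2\alpha s+1,2\alpha s+2,\ldots,2\alpha s+n$.

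The key combinatorial step is to verify that the remaining shifts in $d_\tau(s)$ transform these two products into the desired denominator $\prod_{j=1}^{n}L(2\alpha s+2j,\tau,\wedge^2)L(2\alpha s+2j-1,\tau,\mathrm{Sym}^2)$. For $\mathrm{Sym}^2$: the numerators $L(2\alpha s+2j,\tau,\mathrm{Sym}^2)$ for $1\leq j\leq \lfloor n/2\rfloor$ in $d_\tau(s)$ cancel exactly the even-argument factors in $\prod_{j=1}^{n}L(2\alpha s+j,\tau,\mathrm{Sym}^2)$, while the denominators $L(2\alpha s+2j+2n-2\lfloor n/2\rfloor-1,\tau,\mathrm{Sym}^2)$ contribute the odd arguments in the range $[n+1,2n-1]$ (for $n$ even) or $[n+2,2n-1]$ (for $n$ odd); together with the uncancelled odd arguments in $[1,n]$ these assemble into $\prod_{j=1}^{n}L(2\alpha s+2j-1,\tau,\mathrm{Sym}^2)$. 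The parallel calculation with the roles of even and odd arguments swapped produces $\prod_{j=1}^{n}L(2\alpha s+2j,\tau,\wedge^2)$ from the $\wedge^2$ factors. One checks both parities of $n$ separately; this is the only delicate point, but it is purely a counting exercise. Substituting these simplifications into the product $d_\tau(s)\cdot Z(\alpha s/(kn),\omega_n^0,\rho^0_{W_n(\tau)\otimes W_n(\tau^\vee)})$ yields the asserted formula.
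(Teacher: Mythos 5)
Your proposal is correct and follows the paper's own proof essentially verbatim: apply Lemma~\ref{lemma:reduction from classical to GLn} (whose hypothesis holds by Langlands' classification, as noted after the lemma), then Theorem~\ref{theorem:unramified computation for GL(n)} with $\tau'=\tau^{\vee}$ and $s\mapsto\alpha s/(kn)$, and finish with the identities $L(s,\pi\times\tau)=[L(s,\tau)]L(s,\pi_n\times\tau)L(s,\pi_n^{\vee}\times\tau)$ and $L(s,\tau\times\tau)=L(s,\tau,\mathrm{Sym}^2)L(s,\tau,\wedge^2)$. The only difference is that you spell out the even/odd cancellation against $d_{\tau}(s)$ (in both parities of $n$), which the paper leaves implicit, and your counting is accurate.
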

\begin{proof}[Proof of Theorem~\ref{theorem:unramified computation for Sp(2n),SO(2n)}]
According to Theorem~\ref{theorem:unramified computation for GL(n)} the $\GL_n\times\GL_{k}$ integral
\begin{align*}
Z(\alpha s/(kn),\omega_{n}^0,\rho_{W_n(\tau)\otimes W_n(\tau^{\vee})}^0)=\frac{L(\alpha s+1/2,\pi_n^{\vee}\times\tau)L(\alpha s+1/2,\pi_n\times\tau)}{\prod_{j=0}^{n-1}L(2\alpha s+j+1,\tau\times\tau)}.
\end{align*}
Combining this with Lemma~\ref{lemma:reduction from classical to GLn}, the formula~\eqref{eq:dtau} for $d_{\tau}(s)$ and using the identities
\begin{align*}
&L(s,\pi\times\tau)=[L(s,\tau)]L(s,\pi_n\times\tau)L(s,\pi_n^{\vee}\times\tau),\\
&L(s,\tau\times\tau)=L(s,\tau,\mathrm{Sym}^2)L(s,\tau,\wedge^2)
\end{align*}
(see \S~\ref{prelim unr}) gives the result.
\end{proof}
Now we can deduce the meromorphic continuation of the global partial $L$-function.
\begin{theorem}\label{theorem:mero}
Let $\pi$ and $\tau$ be irreducible automorphic cuspidal representations of $G(\A)$ and $\GL_k(\A)$, respectively. Let $S$ be a finite set of places of $F$, outside which all data are unramified.
Then $L^S(s,\pi\times\tau)$ admits meromorphic continuation to $\C$.
\end{theorem}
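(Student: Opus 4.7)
The plan is to extract $L^S(s,\pi\times\tau)$ from the ``almost Euler product'' identity \eqref{eq:almost Euler} by combining the meromorphic continuation of the global integral with the explicit unramified evaluation given by Theorem~\ref{theorem:unramified computation for Sp(2n),SO(2n)}. Enlarging $S$ if necessary, I would assume that outside $S$ all local data are unramified and that the decompositions used in \eqref{eq:almost Euler} are valid. I would then choose decomposable cusp forms $\varphi_1,\varphi_2$ in the space of $\pi$ and a decomposable section $f$ whose local components are the normalized unramified data at every $\nu\notin S$ and, at each place of $S$, are chosen according to Theorem~\ref{theorem:local integrals}\eqref{item:nonvanishing}.

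Next, I would take the product over $\nu\notin S$ of the formula of Theorem~\ref{theorem:unramified computation for Sp(2n),SO(2n)} and combine it with \eqref{eq:almost Euler}. In the region of absolute convergence this yields
\begin{align*}
L^S(\alpha s+1/2,\pi\times\tau)=\frac{D^S(s)\,Z(s,\varphi_1,\varphi_2,f)}{Z_S(s,\omega_S,f_{W((\mathcal{E}_\tau)_S)})},
\end{align*}
where $D^S(s)$ denotes the product of the partial $L$-functions $[L^S(\alpha s+n+1/2,\tau)]$, $L^S(2\alpha s+2j,\tau,\wedge^2)$ and $L^S(2\alpha s+2j-1,\tau,\mathrm{Sym}^2)$ for $1\leq j\leq n$. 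Each of the three factors on the right admits meromorphic continuation to $\C$: the global integral $Z(s,\varphi_1,\varphi_2,f)$ by Theorem~\ref{theorem:main theorem classical groups} (via the meromorphic continuation of the Eisenstein series); the partial $L$-functions in $D^S(s)$ by well-known results of Godement--Jacquet, Jacquet--Shalika and Shahidi on standard, exterior-square and symmetric-square $L$-functions of cuspidal representations of $\GL_k$; and the denominator $Z_S(s)$ by Theorem~\ref{theorem:local integrals}\eqref{item:meromorphic}, being a finite product of meromorphic local integrals. Since $s\mapsto\alpha s+1/2$ is an affine change of variable, the identity then transfers meromorphic continuation to $L^S(s,\pi\times\tau)$ itself.

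The hard part will be to ensure that the denominator $Z_S(s)$ is not identically zero on $\C$, so that the identity above genuinely realizes the partial $L$-function as a quotient of meromorphic functions. For this I would appeal to Theorem~\ref{theorem:local integrals}\eqref{item:nonvanishing}: at each non-archimedean place of $S$ one can pick local data making the local integral identically equal to $1$ as a function of $s$, while at each archimedean place and any prescribed point $s_0$ one can pick $K_G$-finite local data making the local integral holomorphic and nonzero in a neighborhood of $s_0$. Taking the tensor product of such choices over the places of $S$ yields global data for which $Z_S(s)\not\equiv 0$, completing the argument.
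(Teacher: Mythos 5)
Your plan follows the paper's own proof essentially verbatim: combine the meromorphic continuation of $Z(s,\varphi_1,\varphi_2,f)$ from Theorem~\ref{theorem:main theorem classical groups} with the decomposition \eqref{eq:almost Euler} and the unramified evaluation of Theorem~\ref{theorem:unramified computation for Sp(2n),SO(2n)}, then divide out the partial $L$-functions of $\tau$, whose meromorphy is known (the paper quotes Langlands' theory of Eisenstein series; your references serve the same purpose).

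The one step you justify incorrectly is the treatment of $Z_S$. It is \emph{not} a finite product of local integrals over the places of $S$: the functional $\Lambda_S$ of \eqref{eq:partial Lambda S} is a joint functional on $\rho_S=\otimes_{\nu\in S}\rho_\nu$, and no uniqueness (hence no factorization) of $(k,c)$ functionals is asserted at the ramified or archimedean places --- this is exactly why \eqref{eq:almost Euler} is only an ``almost Euler product'' in Takeda's sense, and why Claim~\ref{claim:decomp Lambda} splits off only the places outside $S$. Consequently you cannot apply Theorem~\ref{theorem:local integrals} place by place and then ``take the tensor product of such choices over the places of $S$.'' What is needed instead is to prove, for the single integral $Z_S$ over $G(F_S)\times U_0(F_S)$, that it is meromorphic in $s$ and that data can be chosen making it holomorphic and nonzero near any prescribed $s_0$ (not identically zero already suffices for meromorphy of the quotient); the paper handles this by remarking that the proof of Theorem~\ref{theorem:local integrals}, which is stated for one place, adapts to the finite product $F_S$. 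With that substitution your argument is complete and coincides with the paper's.
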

\begin{remark}
This theorem is not new, it follows from Langlands' general theory of Eisenstein series \cite{La2,La5}, which is applicable in a much wider setting (e.g., for a large class of groups $G$). It is provided as an illustration of the applicability of our results.
\end{remark}
\begin{proof}
According to Theorem~\ref{theorem:main theorem classical groups}, the global integral $Z(s,\varphi_1,\varphi_2,f)$ admits meromorphic continuation to $\C$, and for $\Real(s)\gg0$ coincides with \eqref{global2}. For decomposable data, we can write \eqref{global2} in the form \eqref{eq:almost Euler}: the product of an integral $Z_S$ and infinitely many local integrals $Z_{\nu}$ for the places $\nu\notin S$.
The integral $Z_S$ is meromorphic and can be chosen to be holomorphic and nonzero, in a neighborhood of a given $s\in\C$.
This can be proved along the lines of Theorem~\ref{theorem:local integrals} (which deals with one place).
Therefore the product of integrals over the places outside $S$ admits meromorphic continuation.

For each integral $Z_{\nu}$ with $\nu\notin S$, all data are unramified:
the local representations $\pi_{\nu}$ and $\tau_{\nu}$ are irreducible unramified, $\tau_{\nu}$ is also generic, and $\psi_{\nu}$ is unramified. In addition,
because $\tau=|\det|^d\tau_0$ for some $d\in \R$ where $\tau_0$ is unitary, $\tau_{\nu}$ is the unramified twist of a unitary representation.

By virtue of Theorem~\ref{theorem:unramified computation for Sp(2n),SO(2n)} (applied to $Z_{\nu}$), the product of local integrals over all $\nu\notin S$ is precisely
$L^S(s,\pi\times\tau)$ divided by products of partial $L$-functions $L^S(s,\tau)$, $L^S(s,\tau,\wedge^2)$ and $L^S(s,\tau,\mathrm{Sym}^2)$ (with $s$ replaced by a suitable linear polynomial of $s$). Since by Langlands' general theory of Eisenstein series \cite{La2,La5}, each of the $L$-functions in the denominator is meromorphic, we deduce that $L^S(s,\pi\times\tau)$ admits meromorphic continuation.
\end{proof}
\begin{remark}
In a subsequent paper (\cite{CFK}) we develop the local theory of the doubling integrals over all places of $F$ (including the ramified and archimedean ones), and define the local $\gamma$-, $\epsilon$- and $L$-factors. This enables us to define the complete $L$-function
$L(s,\pi\times\tau)$, and study its analytic behavior.
In particular we show that it satisfies a global functional equation $L(s,\pi\times\tau)=\epsilon(s,\pi\times\tau)L(1-s,\pi^{\vee}\times\tau^{\vee})$.
\end{remark}
\subsubsection{Proof of Lemma~\ref{lemma:reduction from classical to GLn}}\label{proof of reduction lemma}
The proof consists of two steps.
First, we use the realization of the $(k,c)$ functional using $(k,a)$ and $(k,b)$ functionals given in \S~\ref{decomposition of functionals}, for $a=b=n$. Note that here $c=2n$. This changes the inducing data of $f_{W_c(\tau)}$. Then we write the unipotent integration over $U_0$ as an iterated integral, where the inner part is ``almost" an intertwining operator (some coordinates are missing, they are taken from $U_R$), the middle part is the unipotent integration of a $\GL_n\times \GL_k$ integral,
and the outer integral reduces to a constant. This essentially completes the reduction, with the remaining part being to compute the
proportionality factor $d_{\tau}(s)$ of the operator.

We replace the matrix coefficient with a suitable element of an unramified principal series. Since $\pi$ is an irreducible quotient of $\Ind_{R}^{G}(\pi_n)$, the representation $\pi^{\vee}$ is a subrepresentation of $\Ind_{R}^{G}(\pi_n^{\vee})$, and we can further regard
$\pi_n^{\vee}$ as a subrepresentation of an unramified principal series representation of $\GL_n$. By transitivity of induction,
$\pi^{\vee}$ is embedded in an unramified principal series of $G$. Specifically, this is obtained by taking a function
${\phi^{\vee}}$ in the space of $\Ind_{R}^{G}(\pi_n^{\vee})$ and evaluating at the identity of $G$. Thus we can realize the $G$-invariant pairing on $\pi\times\pi^{\vee}$ using the Iwasawa decomposition $G=B_GK_G$.
Let $\phi^0$ and ${\phi^{\vee,0}}$ be the unramified vectors in the spaces of $\Ind_{R}^{G}(\pi_n)$ and $\Ind_{R}^{G}(\pi_n^{\vee})$, respectively, normalized by $\phi^0(I_{2n})={\phi^{\vee,0}}(I_{2n})=1$. Then
\begin{align*}
\omega^0(g)=\int\limits_{K_G}\phi^0(o){\phi^{\vee}}^0(og)\,do=\int\limits_{K_G}\phi^{\vee,0}(og)\,do.
\end{align*}

Observe that for any $g_0\in G$,
\begin{align*}
&\int\limits_{U_0}f_{W_c(\tau)}^0(\delta u_0(g_0,{}^{\iota}g_0)(1,{}^{\iota}g),s)\,\psi_U(u_0)\,du_0
\\&=\int\limits_{U_0}f_{W_c(\tau)}^0(\diag(g_0,\ldots,g_0,g_0^*,\ldots,g_0^*)\delta u_0(1,{}^{\iota}g),s)\,\psi_U(u_0)\,du_0
\end{align*}
(direct computation)
and by Proposition~\ref{proposition:k n functionals invariance}, for any $h\in H$,
\begin{align*}
f_{W_c(\tau)}^0(\diag(g_0,\ldots,g_0,g_0^*,\ldots,g_0^*)h,s)=f_{W_c(\tau)}^0(h,s).
\end{align*}
In addition, the embeddings of the two copies of $G$ in $H$ commute and $f_{W_c(\tau)}^0$ is right $K_H$-invariant, so that
for any $o\in K_G$,
\begin{align*}
f_{W_c(\tau)}^0(h(1,{}^{\iota}(o^{-1}g)),s)=
f_{W_c(\tau)}^0(h(1,{}^{\iota}(o^{-1}g))(o^{-1},1),s)
=f_{W_c(\tau)}^0(h(o^{-1},{}^{\iota}o^{-1})(1,{}^{\iota}g),s).
\end{align*}
Therefore
\begin{align}\notag
Z(s,\omega^0,f_{W_c(\tau)}^0)=&\int\limits_{G}
(\int\limits_{K_G}\phi^{\vee,0}(og)\,do)
\int\limits_{U_0}f_{W_c(\tau)}^0(\delta u_0(1,{}^{\iota}g),s)\,\psi_U(u_0)\,du_0\,dg\\\notag
=&\int\limits_{G}
\int\limits_{K_G}\phi^{\vee,0}(g)
\int\limits_{U_0}f_{W_c(\tau)}^0(\delta u_0(1,{}^{\iota}(o^{-1}g)),s)\,\psi_U(u_0)\,du_0\,do\,dg
\\\label{int:1}
=&\int\limits_{G}\phi^{\vee,0}(g)\int\limits_{U_0}f_{W_c(\tau)}^0(\delta u_0(1,{}^{\iota}g),s)\,\psi_U(u_0)\,du_0\,dg.
\end{align}
Note that the measure of $K_G$ was taken to be $1$.
Apply Lemma~\ref{lemma:decomposition for V functionals, n=a+b} to the function on $\GL_{kc}$ given by $x\mapsto f_{W_c(\tau)}(\diag(x,x^*)h,s)$ with $a=b=n$. Then with $V^3$ and $l_{n,n}$ as defined in \S~\ref{decomposition of functionals},
\begin{align*}
&f_{W_c(\tau)}^0(h,s)=\int\limits_{V^3}f_{W_n(\tau)\otimes W_n(\tau)}^0(l_{n,n}vh,s)\,dv.
\end{align*}
Using transitivity of induction and \eqref{eq:decomp a b space}, we see that $f_{W_n(\tau)\otimes W_n(\tau)}^0(h,s)$ belongs to the space of the representation
\begin{align}\label{rep:induced f before M(s)}
\Ind_{L}^{H}(|\det|^{-n/2+\alpha s}W_{n}(\tau)\otimes |\det|^{n/2+\alpha s}W_{n}(\tau)),
\end{align}
where $L$ is the standard parabolic subgroup of $H$ with a Levi part $M_L=\GL_{kn}\times\GL_{kn}$. It is an unramified function. In addition, $f_{W_n(\tau)\otimes W_n(\tau)}^0(I_{2kc},s)=1$ because by Lemma~\ref{lemma:decomposition for V functionals, n=a+b},
if we assume that $W_c(\tau)$ is realized by \eqref{eq:nk functional using w_{k,c}},
\begin{align*}
1=f_{W_c(\tau)}^0(I_{2kc},s)=\int\limits_{V^3}f_{W_n(\tau)\otimes W_n(\tau)}^0({}^{l_{n,n}}v,s)=f_{W_n(\tau)\otimes W_n(\tau)}^0(I_{2kc},s),
\end{align*}
where for the last equality see the proof of Corollary~\ref{corollary:k n functionals nonzero} (and recall that the volume of $\mathcal{O}$ is $1$).

With the above modifications, integral~\eqref{int:1} becomes
\begin{align}\label{int:2}
\int\limits_{G}
\phi^{\vee,0}(g)\int\limits_{U_0}\int\limits_{V^3}f_{W_n(\tau)\otimes W_n(\tau)}^0(l_{n,n}v\delta u_0(1,{}^{\iota}g),s)\psi_U(u_0)\,dv\,du_0\,dg.
\end{align}
This integral is absolutely convergent for $\Real(s)\gg0$ as a triple integral; this is obtained using the auxiliary complex parameters which guarantee the convergence of \eqref{eq:nk functional using w_{k,c}} (if $\zeta_1\gg\ldots\gg\zeta_k\gg0$, then
$\Real(s)\gg\zeta_1$, see e.g., \cite[Lemma~3.1]{Soudry2}, \cite[Claim~5.20]{me5}). All forthcoming manipulations are justified in this right half-plane.

Next we shift $v$ to the right of $(1,{}^{\iota}g)$.
Observe the following properties, which are immediate to verify.
\begin{enumerate}[leftmargin=*]
\item\label{it:GL observe 1} $\delta_0$ normalizes $V^3$.
\item\label{it:GL observe 2} For $v\in V^3$, ${}^v\delta_1=\delta_1u'$ where $u'\in U_0$ and $\psi_U(u')=1$.
\item\label{it:GL observe 3} The elements of $V^3$ normalize $U_0$ and fix $\psi_U|_{U_0}$.
\item\label{it:GL observe 4} $V^3$ commutes with $(1,{}^{\iota}g)$.
\item\label{it:GL observe 5} $\delta_0$ commutes with $l_{n,n}$.
\item\label{it:GL observe 6} $l_{n,n}$ commutes with $(1,{}^{\iota}g)$.
\end{enumerate}
($\delta_0$, $\delta_1$ were given in \S~\ref{Local factors for classical}.)
We also see that
\begin{align}\label{eq:U_0' classical}
U_0'={}^{l_{n,n}}U_0=\left\{\begin{pmatrix}I_{kn}&&U_1&U_2\\&I_{kn}&U_3&U_4\\&&I_{kn}\\&&&I_{kn}\end{pmatrix}\right\},
\end{align}
where $U_1=\left(\begin{smallmatrix}*&*\\0&*\end{smallmatrix}\right)$ with $0\in\Mat_{n}$, so that
\begin{align}\label{eq:U_1 classical}
\left\{\begin{pmatrix}I_{kn}&U_1\\&I_{kn}\end{pmatrix}\right\}
\end{align}
is the unipotent subgroup appearing in the integral for $\GL_n\times \GL_k$ (defined in \S~\ref{Local factors for GL}); for $\Sp_{2n}$ (resp., $\SO_{2n}$),
restriction of $\psi_U$ to the coordinates of $U_1$ gives the character $\psi_U$ (resp., $\psi_U^{-1}$) for the $\GL_n\times \GL_k$ integral; $\psi_U$ is trivial on $U_2$ and $U_3$;
$U_2$ and $U_3$ each takes the form $\left(\begin{smallmatrix}*&*\\0&*\end{smallmatrix}\right)$ where $0\in\Mat_n$; and $U_4$ is already determined by $U_1$ and the form defining $H$.

Utilizing properties \eqref{it:GL observe 1}--\eqref{it:GL observe 6}, integral~\eqref{int:2} equals
\begin{align}\label{int:3}
&\int\limits_{G}\phi^{\vee,0}(g)\int\limits_{U_0'}
\int\limits_{V^3}f_{W_n(\tau)\otimes W_n(\tau)}^0(\delta_0({}^{l_{n,n}}\delta_1) u_0'(1,{}^{\iota}g)l_{n,n}v,s)
\,\psi_U(u_0')\,dv\,du_0'\,dg.
\end{align}
Here $\psi_U$ is regarded as a character of $U_0'$ using conjugation (${}^{l_{n,n}^{-1}}u_0'\in U_0$).

To produce a $\GL_n\times\GL_k$ integral (pertaining to the statement of the lemma), we must alter $f_{W_n(\tau)\otimes W_n(\tau)}^0$ such that its restriction to $\GL_{2kn}\cong M_P$ becomes a section of a representation induced from
$W_n(\tau)\otimes W_n(\tau^{\vee})$. This would be the result of an application of an intertwining operator. If we had arbitrary coordinates in the bottom left $n\times n$ block of $U_2$, then the integral over $U_2$ together with the Weyl element $\diag(I_{kn},\left(\begin{smallmatrix}&I_{kn}\\\epsilon_0I_{kn}\end{smallmatrix}\right),I_{kn})$ (when $G=\Sp_{2n}$ or $kn$ is even) would constitute this operator (recall that $\epsilon_0=-1$ for $\Sp_{2n}$ and $\epsilon_0=1$ for $\SO_{2n}$). To fill in these missing coordinates we factor the integral through $U_R$. Refer to \S~\ref{Local factors for GL} for the definition of the $\GL_n\times\GL_k$ integral.

Let $U_0^{\bullet}$ be the group obtained from $U_0'$ by replacing the $0$ block of $U_2$ with arbitrary coordinates. This group will ``receive" the coordinates from $U_R$. We can still write the elements of $U_0^{\bullet}$ in the form \eqref{eq:U_0' classical}, i.e.,
\begin{align}\label{eq:U_0' classical2}
\left\{\begin{pmatrix}I_{kn}&&U_1&U_2\\&I_{kn}&U_3&U_4\\&&I_{kn}\\&&&I_{kn}\end{pmatrix}\right\},
\end{align}
the only difference being the block $U_2$, which now does not contain the $0$ block. Now factoring
\eqref{int:3} through $U_R$, it becomes
\begin{align}\label{int:3.5}
&\int\limits_{U_R\backslash G}\int\limits_{U_R}\phi^{\vee,0}(zg)\int\limits_{U_0'}
\int\limits_{V^3}f_{W_n(\tau)\otimes W_n(\tau)}^0(\delta_0({}^{l_{n,n}}\delta_1) u_0'(1,{}^{\iota}(zg))l_{n,n}v,s)
\,\psi_U(u_0')\,dv\,du_0'\,dz\,dg.
\end{align}
By definition $\phi^{\vee,0}(zg)=\phi^{\vee,0}(g)$.
For $z\in U_R$ and $u_0'\in U_0'$,
\begin{align*}
{}^{(1,{}^{\iota}z^{-1})}(({}^{l_{n,n}}\delta_1) u_0')=m_z({}^{l_{n,n}}\delta_1)u_z,
\end{align*}
where
$m_z$ belongs to the unipotent subgroup $V_{((2n)^k)}$ of $M_P$ and $u_z\in U_0^{\bullet}$. Moreover, as $z$ and $u_0'$ vary over $U_R$ and $U_0'$, $u_z$ varies over $U_0^{\bullet}$. In coordinates, put $z=\left(\begin{smallmatrix}I_n&z\\&I_n\end{smallmatrix}\right)$ and for $1\leq l\leq 4$ and $1\leq i,j\leq k$, denote the $(i,j)$-th block of $U_l$ appearing in \eqref{eq:U_0' classical2} by $U_l^{i,j}\in\Mat_n$. For $u_0'$, the block corresponding
to $U_l^{i,j}$ is denoted by $[u_0']_l^{i,j}$. Then
$m_z\in V_{((2k-1)n,n)}$, ${}^{\delta_0}m_z\in V_{(n,(2k-1)n)}$ and the top $n$ rows of ${}^{\delta_0}m_z$ are
\begin{align*}
\begin{pmatrix}I_n&\epsilon_0z[u_0']_3^{k,2}&\cdots&\epsilon_0z[u_0']_3^{k,k}&\epsilon_0z&\epsilon_0z[u_0']_4^{k,2}&\cdots&\epsilon_0z[u_0']_4^{k,k}\end{pmatrix}.
\end{align*}
We change variables in $u_z$ to remove the dependency on $z$. The change is described as follows: for $l\in\{1,3\}$, $1\leq i\leq k-1$ and $2\leq j\leq k$,
\begin{align*}
&[u_0']_l^{i,j}\mapsto [u_0']_l^{i,j}-\epsilon_0[u_0']_l^{i,1}z[u_0']_3^{k,j}, \qquad
[u_0']_1^{k,j}\mapsto [u_0']_1^{k,j}+z[u_0']_3^{k,j},\\
&[u_0']_2^{i,j}\mapsto [u_0']_2^{i,j}-\epsilon_0[u_0']_1^{i,1}z[u_0']_4^{k,j},\qquad
[u_0']_2^{i,1}\mapsto [u_0']_2^{i,1}-\epsilon_0[u_0']_1^{i,1}z.
\end{align*}
In this list, changes to
$[u_0']_2^{i,j}$ and $[u_0']_3^{i,j}$ are only applied if $i+j\leq k+1$ for $\Sp_{2n}$ and $i+j\leq k$ for $\SO_{2n}$, because outside of this range the coordinates are already determined by the definition of $H$.
Only the change to $[u_0']_1^{k,2}$ affects $\psi_U$, and we obtain $\psi(\mathrm{tr}(-\epsilon_0z[u_0']_3^{k,2}))$ (for $\SO_{2n}$ as mentioned after \eqref{eq:U_1 classical} $\psi_U$ restricts to the inverse of the character for the $\GL_n\times\GL_k$ integral, i.e., to $\psi^{-1}(\mathrm{tr}(U_1^{k,2}))$). In addition,
for any $h\in H$,
\begin{align*}
f_{W_n(\tau)\otimes W_n(\tau)}^0(m_zh,s)=\psi(\mathrm{tr}(\epsilon_0z[u_0']_3^{k,2}))f_{W_n(\tau)\otimes W_n(\tau)}^0(h,s)
\end{align*}
because of the character of the (top left) $(k,n)$-functional $W_n(\tau)$. Therefore \eqref{int:3.5} becomes
\begin{align}\label{int:4}
&\int\limits_{U_R\backslash G}\phi^{\vee,0}(g)\int\limits_{U_0^{\bullet}}
\int\limits_{V^3} f_{W_n(\tau)\otimes W_n(\tau)}^0(\delta_0({}^{l_{n,n}}\delta_1) u_0^{\bullet}(1,{}^{\iota}g)l_{n,n}v,s)\psi_U(u_0^{\bullet})\,dv\,du_0^{\bullet}\,dg.
\end{align}
Note that here the integration over $U_R$ is incorporated into the integration over $U_0^{\bullet}$.

Write $\delta_0=w\delta_0'w'$ as follows.
For $G=\Sp_{2n}$, $\delta_0'$ is the embedding in $M_P$ of the element
$\left(\begin{smallmatrix}&I_{kn}\\I_{kn}\end{smallmatrix}\right)$ corresponding to $\delta_0$ in the $\GL_n\times\GL_{k}$ integral, and
\begin{align*}
w=w'=&\begin{pmatrix}I_{kn}\\&&I_{kn}\\&-I_{kn}\\&&&I_{kn}\end{pmatrix}.
\end{align*}
If $G=\SO_{2n}$, set $\kappa=I_{4kn}$ if $kn$ is even, otherwise
$\kappa=\diag(I_{2kn-1},\left(\begin{smallmatrix}&1\\1\end{smallmatrix}\right),I_{2kn-1})$. Then $\delta_{0}'$ is the embedding
of $\left(\begin{smallmatrix}&I_{kn}\\I_{kn}\end{smallmatrix}\right)$ in $M_{{}^{\kappa}P}$, i.e., when $kn$ is odd, it is obtained from the embedding in $M_P$ by conjugation with $\kappa$, and
\begin{align*}
w=&\begin{pmatrix}I_{kn}\\&&I_{kn}\\&I_{kn}\\&&&I_{kn}\end{pmatrix}\kappa,\qquad
w'=\kappa\begin{pmatrix}I_{kn}\\&&I_{kn}\\&I_{kn}\\&&&I_{kn}\end{pmatrix}.
\end{align*}
(The element $\kappa$ is needed because when $kn$ is odd, we must have $\det{w}=\det{w'}=1$.) To make the notation uniform,
set $\kappa=I_{4kn}$ when $G=\Sp_{2n}$.

For $u_0^{\bullet}\in U_0^{\bullet}$, let $u^i$ denote the element obtained from $u_0^{\bullet}$ by zeroing out the coordinates in the blocks $U_j$ with $j\ne i$ (see \eqref{eq:U_0' classical2}). Write
\begin{align*}
\delta_0u_0^{\bullet}=w\cdot{}^{(\delta_{0}'w')}u^2\cdot\delta_0'\cdot {}^{w'}(u^1u^4)\cdot w'u^3.
\end{align*}
Since ${}^{l_{n,n}}\delta_1\in U_P$, it commutes with $u_0^{\bullet}$ and with $u^3$.
Also $\delta_1'={}^{w'}({}^{l_{n,n}}\delta_1)$ and $\delta'=\delta_0'\delta_1'$ are the embeddings in $M_{{}^{\kappa}P}$ of the elements corresponding to $\delta_1$ and $\delta$ for the $\GL_n\times\GL_k$ integral, except that for $G=\SO_{2n}$, $\delta_1'$ is actually the embedding of $\delta_1^{-1}$, and $\delta'$ is the embedding of
\begin{align*}
\left(\begin{smallmatrix}&I_{kn}\\I_{kn}\end{smallmatrix}\right)\diag(I_{(k-1)n},\left(\begin{smallmatrix}I_n&-I_n\\&I_n\end{smallmatrix}\right),I_{(k-1)n}).
\end{align*}
Then
\begin{align*}
\delta_0({}^{l_{n,n}}\delta_1)u_0^{\bullet}=w\cdot{}^{(\delta_{0}'w')}u^2\cdot\delta'\cdot {}^{w'}(u^1u^4)\cdot w'u^3.
\end{align*}

Denote the subgroup of elements $^{(\delta_{0}'w')}u^2$ by $U^2$,
let $U^{1,4}$ be the subgroup of elements ${}^{w'}(u^1u^4)$ and $U^3$ be the subgroup of elements $u^3$. For example,
\begin{align*}
U^2=\left\{\begin{pmatrix}I_{kn}\\&I_{kn}&Z\\&&I_{kn}\\&&&I_{kn}\end{pmatrix}\in H\right\}.
\end{align*}
Recall that $u^4$ is uniquely determined by $u^1$ and $H$. Then for any $h\in H$,
\begin{align*}
&\int\limits_{U_0^{\bullet}}f_{W_n(\tau)\otimes W_n(\tau)}^0(\delta_0({}^{l_{n,n}}\delta_1)u_0^{\bullet}h,s)\psi_U(u_0^{\bullet})\,du_0^{\bullet}\\&
=\int\limits_{U^3}\int\limits_{U^{1,4}}\int\limits_{U^2}f_{W_n(\tau)\otimes W_n(\tau)}^0(wu^2
\delta'uw'u^3h,s)\psi_U(u)\,du^2\,du\,du^{3}.
\end{align*}
Below we will show that the integration over $U^3$ evaluates to the constant $1$. The $du$-integral is the unipotent integration appearing in the $\GL_n\times\GL_k$ integral (defined in \S~\ref{Local factors for GL}), when we identify $\GL_{2kn}$ with $M_{{}^{\kappa}P}$.
The integration over $U^2$ defines an intertwining operator $M(s)$ from the space of \eqref{rep:induced f before M(s)} to
\begin{align*}
\Ind_{{}^{\kappa}L}^{H}(|\det|^{-n/2+\alpha s}W_{n}(\tau)\otimes |\det|^{-n/2-\alpha s}W_{n}(\tau^{\vee})).
\end{align*}
The image $M(s)f_{W_n(\tau)\otimes W_n(\tau)}^0$ of $M(s)$ on $f_{W_n(\tau)\otimes W_n(\tau)}^0$ is the normalized unramified vector multiplied by a constant which we denote $d_{\tau}(s)$, and we indeed prove below that it is equal to \eqref{eq:dtau}.
Again, identify $\GL_{2kn}$ with $M_{{}^{\kappa}P}$.
When we restrict $M(s)f_{W_n(\tau)\otimes W_n(\tau)}^0$ to $\GL_{2kn}$ we obtain a rational section
of
\begin{align}\label{rep:M(s)f}
|\det|^{(\alpha-n)/2}\Ind_{P_{(kn,kn)}}^{\GL_{2kn}}((W_n(\tau)\otimes W_n(\tau^{\vee}))\delta_{P_{(kn,kn)}}^{\ell s}),\qquad\ell=\alpha/(kn).
\end{align}
Let $\rho_{W_n(\tau)\otimes W_n(\tau^{\vee})}^0$ be the normalized unramified vector in the space of
\begin{align*}
\Ind_{P_{(kn,kn)}}^{\GL_{2kn}}(W_n(\tau)\otimes W_n(\tau^{\vee})).
\end{align*}
Then for any $h\in \GL_{2kn}$,
\begin{align}\label{eq:M(s) of f with Wn}
M(s)f_{W_n(\tau)\otimes W_n(\tau)}^0(h,s)=|\det h|^{(\alpha-n)/2}d_{\tau}(s)\rho_{W_n(\tau)\otimes W_n(\tau^{\vee})}^0(h,\ell s),
\end{align}
where $h\mapsto\rho_{W_n(\tau)\otimes W_n(\tau^{\vee})}^0(h,\ell s)$ is the standard section of
\begin{align*}
\Ind_{P_{(kn,kn)}}^{\GL_{2kn}}((W_n(\tau)\otimes W_n(\tau^{\vee}))\delta_{P_{(kn,kn)}}^{\ell s})
\end{align*}
corresponding to $\rho_{W_n(\tau)\otimes W_n(\tau^{\vee})}^0$. Now \eqref{int:4} takes the form
\begin{align}\label{int:4.5}
&\int\limits_{U_R\backslash G}\phi^{\vee,0}(g)
\int\limits_{V^3}
\int\limits_{U^3}\int\limits_{U^{1,4}}M(s)f_{W_n(\tau)\otimes W_n(\tau)}^0(
\delta'uw'u^3(1,{}^{\iota}g)l_{n,n}v,s)\psi_U(u)\,du\,du^{3}\,dv\,dg.
\end{align}

Let $g=\diag(g',{g'}^*)\in M_R$, $g'\in\GL_n$. Then
\begin{align*}
{}^{w'}(1,{}^{\iota}g)=\diag(I_{kn},g',I_{(k-1)n})\in\GL_{2kn}
\end{align*}
is the embedding $(I_n,\GL_n)$ in the construction of the $\GL_n\times\GL_k$ integral.
Apply the Iwasawa decomposition $G=RK_G$. The change of measure $\delta_R^{-1}(g)$ incurred by this decomposition, the conjugation of $U^3$ by $(1,{}^{\iota}g)$ and the additional $\delta_R^{1/2}(g)$ emitted from $\phi^{\vee,0}$, multiply the integrand by $|\det g' |^{(n-\alpha)/2}$
(which will cancel out with the power of $|\det|$ from \eqref{rep:M(s)f}). Also note that $\phi^{\vee,0}(g)=\delta_R^{1/2}(g)\phi_n^{\vee,0}(g')$, where $\phi_n^{\vee,0}$ is the normalized unramified vector in the space of $\pi_n^{\vee}$. Then \eqref{int:4.5} equals
\begin{align}\label{int:5}
\int\limits_{V^3}\int\limits_{U^3}
\int\limits_{\GL_n}
\int\limits_{U^{1,4}}
|\det g' |^{(n-\alpha)/2}\phi_n^{\vee,0}(g')
 M(s)f_{W_n(\tau)\otimes W_n(\tau)}^0(\delta'u(1,g')w'u^3l_{n,n}v,s)\psi_U(u)\,du\,dg'\,du^3\,dv.
\end{align}

Let $\phi_n^0$ be the normalized unramified vector in the space of $\pi_n$.
Since for $g'\in\GL_n$,
\begin{align*}
\omega_n^0(g')=\int\limits_{K_{\GL_n}}\phi_n^0(o){\phi_n^{\vee,0}}(og')\,do=\int\limits_{K_{\GL_n}}\phi_n^{\vee,0}(og')\,do,
\end{align*}
as in the beginning of this section we can replace $\phi_n^{\vee,0}(g')$ with $\omega_n^0(g')$ (see \S~\ref{proof of lemma:reduction from GLn to GLa GLb} for more details). Then \eqref{int:5} becomes
\begin{align}\notag
&\int\limits_{V^3}\int\limits_{U^3}\int\limits_{\GL_n}
\int\limits_{U^{1,4}}
|\det g' |^{(n-\alpha)/2}\omega_n^{0}(g')
 M(s)f_{W_n(\tau)\otimes W_n(\tau)}^0(\delta'u(1,g')w'u^3l_{n,n}v,s)\psi_U(u)\,du\,dg'\,du^3\,dv\\
 &=
\int\limits_{V^3}\int\limits_{U^3}Z'(|\det |^{(n-\alpha)/2}\omega_n^{0},(w'u^3l_{n,n}v)\cdot
M(s)f_{W_n(\tau)\otimes W_n(\tau)}^0)\,du^3\,dv,\label{int:6}
\end{align}
where $Z'(\cdots)$ is the $\GL_n\times\GL_{k}$ integral, with the exception that
for $\SO_{2n}$, $\delta_1'$ ($\delta_1'$ is the unipotent part of $\delta'$) and $\psi_U$ appearing in $Z'$ are the inverses of those defined in \S~\ref{Local factors for GL}. The value of the $\GL_n\times\GL_{k}$ integral with unramified data is invariant with respect to this change. To see this, replace the section on $\GL_{2kn}$ with its right translate by $\diag(-I_{kn},I_{kn})$ (this matrix commutes with the embedding $(\GL_n,\GL_n)<\GL_{2kn}$).

We will show that the $du^3$-integral in \eqref{int:6} vanishes unless $v\in K_H$ and $u^3\in K_H$. Since $M(s)f_{W_n(\tau)\otimes W_n(\tau)}^0$ is unramified, for any $v,u^3\in K_H$ and $h\in H$ we have
\begin{align*}
M(s)f_{W_n(\tau)\otimes W_n(\tau)}^0(h(w'u^3l_{n,n}v),s)=M(s)f_{W_n(\tau)\otimes W_n(\tau)}^0(h,s),
\end{align*}
thus \eqref{int:6} becomes
\begin{align*}
&Z'(|\det|^{(n-\alpha)/2}\omega_n^{0},
M(s)f_{W_n(\tau)\otimes W_n(\tau)}^0)\int\limits_{V^3}\int\limits_{U^3}1\,du^3\,dv
=1\times Z'(|\det|^{(n-\alpha)/2}\omega_n^{0},M(s)f_{W_n(\tau)\otimes W_n(\tau)}^0),
\end{align*}
because the measure of $\mathcal{O}$ is $1$.
Finally using \eqref{eq:M(s) of f with Wn},
\begin{align*}
Z'(|\det|^{(n-\alpha)/2}\omega_n^{0},M(s)f_{W_n(\tau)\otimes W_n(\tau)}^0)=d_{\tau}(s)Z(\ell s,\omega_n^0,\rho_{W_n(\tau)\otimes W_n(\tau^{\vee})}^0),
\end{align*}
which is the integral appearing on the right hand side of \eqref{eq:lemma decomp classical}. This will complete the proof of the lemma,
once we handle the integrals over $V^3$ and $U^3$ in \eqref{int:6} and compute $d_{\tau}(s)$.

Conjugate the elements $w'$ and $l_{n,n}$ to the right. They disappear because $M(s)f_{W_n(\tau)\otimes W_n(\tau)}^0$ is $K_H$-invariant on the right. In coordinates, for $G=\Sp_{2n}$,
\begin{align*}
&{}^{w'}(U^3\cdot{}^{l_{n,n}}V^3)=\left\{\begin{pmatrix}I_{kn}\\&I_{kn}\\V&U^3&I_{kn}\\0& V' &&I_{kn}\end{pmatrix}\in H\right\},\qquad
V=\begin{pmatrix}0&V_{1,2}^3&\cdots&V_{1,k}^3\\\vdots&&\ddots&\vdots\\\vdots&&&V_{k-1,k}^3\\0&\cdots&\cdots&0\end{pmatrix},\qquad V_{i,j}^3\in\Mat_{n},
\end{align*}
where $V'$ is uniquely defined given $V$ and $H$.

To show that the coordinates of $V_{i,j}^3$ must belong to $\mathcal{O}$, otherwise the $du^3$-integral vanishes, consider matrices
\begin{align*}
x=\begin{pmatrix}
I_{kn}&&[x]
\\&I_{kn}&&[x]'
\\&&I_{kn}\\&&&I_{kn}
\end{pmatrix}\in H,\qquad
[x]=\begin{pmatrix}
x_{1,2}&0&\cdots&0\\
\vdots&\ddots&\ddots&\vdots\\
x_{1,k}&\cdots&x_{k-1,k}&0\\
0&\cdots&\cdots&0
\end{pmatrix},\qquad x_{i,j}\in\Mat_n.
\end{align*}
For each $1\leq i\leq k-1$ and $2\leq j\leq k$, let $\mathcal{X}_{i,j}$ be the subgroup of these matrices $x$ where the only nonzero block in $[x]$ is $x_{i,j}$, which takes arbitrary coordinates in $\mathcal{O}$. Then $\mathcal{X}_{i,j}<K_H$. We handle $V_{i,j}^3$ using
$\mathcal{X}_{i,j}$. Starting with $V_{1,2}^3$, we proceed along the diagonal $(l,l+1)$ with $l=2,\ldots,k-1$ in increasing order,
then the diagonal $(l,l+2)$, $l=1,\ldots, k-2$, etc., the last block of $V$ to handle being $V_{1,k}^3$, for which we
use $\mathcal{X}_{1,k}$.

Consider $z\in{}^{w'}(U^3\cdot{}^{l_{n,n}}V^3)$. Let $v_{i,j}^3$ be the block of $z$ corresponding to $V_{i,j}^3$. If $v_{i,j}^3\in\Mat_n(\mathcal{O})$, we can assume $v_{i,j}^3=0$ since $M(s)f_{W_n(\tau)\otimes W_n(\tau)}^0$ is $K_H$-invariant on the right. To show $v_{i,j}^3\in\Mat_n(\mathcal{O})$, assuming we have already shown this for the previous blocks in the order along the diagonals, note that for $x\in \mathcal{X}_{i,j}$, ${}^{x^{-1}}z=u_xz_x$ with $u_x\in P$ and
$z_x\in{}^{w'}(U^3\cdot{}^{l_{n,n}}V^3)$. The projection of $u_x$ to $M_P$ belongs to the unipotent group $U$ of the $\GL_n\times\GL_k$ integral. The invariance properties of this integral (see \eqref{eq:homspace GLn GLk}) imply that
\begin{align*}
&Z'(|\det |^{(n-\alpha)/2}\omega_n^{0},(zx)\cdot
M(s)f_{W_n(\tau)\otimes W_n(\tau)}^0)
\\&=\psi(\mathrm{tr}(v_{i,j}^3x_{i,j}))Z'(|\det |^{(n-\alpha)/2}\omega_n^{0},z_x\cdot
M(s)f_{W_n(\tau)\otimes W_n(\tau)}^0).
\end{align*}
Regarding $z_x$, the coordinates depending on $x$ belong to the blocks of $U^3$ and this dependence can be removed by a change of variables. Therefore if we consider the integral $du^3$ over the coordinates of $U^3$ appearing in $z$,
\begin{align*}
&\int\limits_{U^3}Z'(|\det |^{(n-\alpha)/2}\omega_n^{0},z\cdot
M(s)f_{W_n(\tau)\otimes W_n(\tau)}^0)\,du^3\\&=
\int\limits_{U^3}Z'(|\det |^{(n-\alpha)/2}\omega_n^{0},(zx)\cdot
M(s)f_{W_n(\tau)\otimes W_n(\tau)}^0)\,du^3\\&=
\psi(\mathrm{tr}(v_{i,j}^3x_{i,j}))\int\limits_{U^3}Z'(|\det |^{(n-\alpha)/2}\omega_n^{0},z\cdot
M(s)f_{W_n(\tau)\otimes W_n(\tau)}^0)\,du^3.
\end{align*}
Hence the integral $du^3$ is zero unless $v_{i,j}^3\in\Mat_{n}(\mathcal{O})$ (cf. the proof of Corollary~\ref{corollary:k n functionals nonzero}), and we can proceed to the next block.

Next we handle the coordinates of $U^3$. Let $U_{i,j}$ denote the $(i,j)$-th $n\times n$ block of $U^3$ ($1\leq i,j\leq k$) and note that
$U_{k,1}$ is $0$, because this is the bottom left block of $U^3$ (see after \eqref{eq:U_1 classical}). We show that the coordinates of $U_{i,j}$ can be taken in $\mathcal{O}$. Consider
\begin{align*}
x=\begin{pmatrix}
I_{(k-1)n}&&\\
&I_{n}&&[x]\\
&&I_{kn}&[y]&[x]'\\
&&&I_{kn}&&\\
&&&&I_{n}\\
&&&&&I_{(k-1)n}
\end{pmatrix}\in H,\qquad
\begin{matrix}
[x]=\begin{pmatrix}x_{1,2}&\cdots& x_{k,2}\end{pmatrix},\\ \\
[y]=\begin{pmatrix}
0&x_{k,k}'&\cdots&x_{k,3}'&0\\
x_{1,3}&x_{2,3}&\cdots&x_{k-1,3}&x_{k,3}\\
\vdots&&&&\vdots\\
x_{1,k}&x_{2,k}&\cdots&x_{k-1,k}&x_{k,k}\\
0&x_{1,k}'&\cdots&x_{1,3}'&0
\end{pmatrix},
\end{matrix}
\end{align*}
where $x_{i,j}\in\Mat_n$. Note that while $[x]$ can take arbitrary coordinates, in the notation for $[y]$ coordinates are dependent, since $x\in H$. Let $\mathcal{X}_{i,j}$ be the subgroup of matrices $x$ such that $x_{i,j}\in\Mat_n(\mathcal{O})$ (if $j>2$, $x_{i,j}$ also depends on $H$) and all other blocks which are independent of $x_{i,j}$ are $0$. We handle $U_{i,j}$ using $\mathcal{X}_{i,j}$ and as above, the order matters.

Let $z\in{}^{w'}U^3$. Denote the coordinates in $z$ corresponding to the blocks $U_{i,j}$ by $u_{i,j}$.
For any $i$ and $j$, if $u_{i,j}\in\Mat_n(\mathcal{O})$, we can assume $u_{i,j}=0$ because the section is right $K_H$-invariant.
Let $1\leq i\leq k$. If $u_{l_1,1}=u_{l_2,2}=0$ for all $l_1\geq i$ and $l_2>i$, then for $x\in\mathcal{X}_{i,2}$ we have  ${}^{x^{-1}}z=u_xz$, where $u_x\in M_P$, $u_x$ belongs to the unipotent subgroup $U_0$ appearing in the $\GL_n\times\GL_k$ integral and
\begin{align*}
&Z'(|\det |^{(n-\alpha)/2}\omega_n^{0},z\cdot
M(s)f_{W_n(\tau)\otimes W_n(\tau)}^0)
\\&=Z'(|\det |^{(n-\alpha)/2}\omega_n^{0},(zx)\cdot
M(s)f_{W_n(\tau)\otimes W_n(\tau)}^0)
\\&=\psi(\mathrm{tr}(x_{i,2}u_{i,2}))Z'(|\det |^{(n-\alpha)/2}\omega_n^{0},z\cdot
M(s)f_{W_n(\tau)\otimes W_n(\tau)}^0).
\end{align*}
Thus the integrand vanishes unless $u_{i,2}\in\Mat_n(\mathcal{O})$.
We start with $U_{k,2}$ using $\mathcal{X}_{k,2}$ and deduce
$U_{k,2}\subset\Mat_n(\mathcal{O})$, which implies $U_{k-1,1}\subset\Mat_{n}(\mathcal{O})$. Then $U_{k-1,2}\subset\Mat_n(\mathcal{O})$, using $\mathcal{X}_{k-1,2}$.

To handle $U_{k,3}$ a change of variables is needed. For $x\in\mathcal{X}_{k,3}$, ${}^{x^{-1}}z=u_xz_x$, where $z_x$ belongs to ${}^{w'}U^3$ but depends on $x$. However, we can change variables in the blocks $U_{i,j}$ with $i\leq k-2$ and $j\geq3$ (in particular, blocks which have not been handled) to remove this dependency. The element $u_x$ belongs to $P$ and its projection to $M_P$ is in the subgroup $U$ of the $\GL_n\times\GL_k$ integral. It follows that
\begin{align*}
&\int\limits_{U^3}Z'(|\det |^{(n-\alpha)/2}\omega_n^{0},z\cdot
M(s)f_{W_n(\tau)\otimes W_n(\tau)}^0)\,du^3\\&=
\int\limits_{U^3}\int\limits_{\mathcal{X}_{k,3}}Z'(|\det |^{(n-\alpha)/2}\omega_n^{0},(zx)\cdot
M(s)f_{W_n(\tau)\otimes W_n(\tau)}^0)\,dx\,du^3\\&=
\int\limits_{U^3}Z'(|\det |^{(n-\alpha)/2}\omega_n^{0},z\cdot
M(s)f_{W_n(\tau)\otimes W_n(\tau)}^0)\,du^3\int\limits_{\mathcal{X}_{k,3}}\psi(\mathrm{tr}(x_{k,3}u_{k,3}))\,dx,
\end{align*}
which equals zero unless $u_{k,3}\in\Mat_n(\mathcal{O})$ (the measure of $\mathcal{X}_{k,3}$ was taken to be $1$). Thus
$U_{k,3}\subset\Mat_n(\mathcal{O})$ and then also $U_{k-2,1}\subset\Mat_n(\mathcal{O})$, so that we can proceed with $U_{k-2,2}$.
In general, once $U_{i,2}\subset\Mat_n(\mathcal{O})$, we handle the diagonal
$U_{i+l,2+l}$, $l=k-i,\ldots,1$. Then in particular $U_{i-1,1}\subset\Mat_n(\mathcal{O})$ so that we can continue with
$U_{i-1,2}$. Once $U_{1,2}\subset\Mat_n(\mathcal{O})$ (thereby $U_{k-1,k}\subset\Mat_n(\mathcal{O})$), we proceed with the remaining blocks $U_{i,j}$ on the diagonals, from bottom to top: the first diagonal is $U_{l,k},U_{l-1,k-1},\ldots, U_{1,k-l+1}$ with $l=k-1$, the next diagonal
$U_{l,k},\ldots, U_{1,k-l+1}$ with $l=k-2$, etc. The last block is $U_{1,k}$. In this way, the changes of variables are always to blocks which have not been considered.

The case of $\SO_{2n}$ is similar: $V^3$ is the same, there are fewer coordinates in $U^3$.

It remains to compute $d_{\tau}(s)$. Consider the standard Levi subgroup of $H$ isomorphic to $\GL_{kn}\times H'$, where $H'$ is a classical group of the same type as $H$ but with rank $kn$. Regard $H'$ as a subgroup of $H$ via this isomorphism. Fix the Borel subgroup $B_{H'}=B_H\cap H'$.
Looking at \eqref{rep:induced f before M(s)} we see that the restriction of $f_{W_n(\tau)\otimes W_n(\tau)}^0$ to ${H'}$ is the normalized unramified element, in the space of the unramified principal series representation of ${H'}$ induced (normalized induction) from
\begin{align*}
\otimes_{1\leq i\leq k,1\leq j\leq n}\chi_i|~|^{\alpha s+j-1/2}.
\end{align*}
Let $\Delta(\tau,n,s)$ be the irreducible unramified constituent of this representation.

Assume $H'=\Sp_{2kn}$. The subgroup $U^2$ is the unipotent radical of a standard parabolic subgroup of $H'$ whose Levi part is $\GL_{kn}$.
The adjoint action of $\GL_{kn}(\C)$ on the Lie algebra of the $L$-group of $U^2$ is $\text{st}\oplus\wedge^2$,
where $\text{st}$ is the standard representation.
By Langlands' theory \cite{La2} and the Gindikin--Karpelevich formula \cite[Theorem~3.1]{CS1},
\begin{align*}
d_{\tau}(s)=\frac{L(0,\Delta(\tau,n,s),\text{st})}{L(1,\Delta(\tau,n,s),\text{st})}\frac{
L(0,\Delta(\tau,n,s),\wedge^2)}{L(1,\Delta(\tau,n,s),\wedge^2)}.
\end{align*}
The first quotient (for the standard representation) contributes
\begin{align}\label{eq:constant term st}
\prod_{1\leq i\leq k}\prod_{1\leq j\leq n}\frac{L(\alpha s +j-1/2,\chi_i)}{L(\alpha s
+j+1/2,\chi_i)}
=\frac{L(\alpha s +1/2,\tau)}{L(\alpha s +n+1/2,\tau)}.
\end{align}
The second quotient (the exterior square) contributes, for each pair $1\leq i\ne i'\leq k$,
\begin{align*}
\prod_{1\leq j,j'\leq n}\frac{L(2\alpha s +j+j'-1,\chi_i\chi_{i'})}{L(2\alpha s +j+j',\chi_i\chi_{i'})}=
\prod_{1\leq j\leq n}\frac{L(2\alpha s +j,\chi_i\chi_{i'})}{L(2\alpha s +j+n,\chi_i\chi_{i'})},
\end{align*}
and for $1\leq i\leq k$,
\begin{align*}
&\prod_{1\leq j_1<n}\,
\prod_{j_1<j_2\leq n}\frac{L(2\alpha s +j_1+j_2-1,\chi_i^2)}{L(2\alpha s +j_1+j_2,\chi_i^2)}
=\prod_{1\leq j<n}
\frac{L(2\alpha s +2j,\chi_i^2)}{L(2\alpha s +j+n,\chi_i^2)}.
\end{align*}
(This product is empty for $n=1$, then it simply equals $1$.)
Thus when $n$ is odd we obtain
\begin{align*}
\prod_{1\leq j\leq (n-1)/2}
\frac{L(2\alpha s +2j,\tau,\mathrm{Sym}^2)}{L(2\alpha s +2j+n,\tau,\mathrm{Sym}^2)}
\prod_{1\leq j\leq (n+1)/2}
\frac{L(2\alpha s +2j-1,\tau,\wedge^2)}{L(2\alpha s +2j+n-1,\tau,\wedge^2)},
\end{align*}
and for even $n$,
\begin{align*}
\prod_{1\leq j\leq n/2}
\frac{L(2\alpha s +2j,\tau,\mathrm{Sym}^2)}{L(2\alpha s +2j+n-1,\tau,\mathrm{Sym}^2)}
\prod_{1\leq j\leq n/2}
\frac{L(2\alpha s +2j-1,\tau,\wedge^2)}{L(2\alpha s +2j+n,\tau,\wedge^2)}.
\end{align*}
The constant $d_{\tau}(s)$ is this product multiplied by \eqref{eq:constant term st}.
For $H'=\SO_{2kn}$ the adjoint action of $\GL_{kn}(\C)$ on the Lie algebra of the dual group of $U^2$ is $\wedge^2$.
The only change in the computation above is we omit \eqref{eq:constant term st}.
The proof of the lemma is complete.

\subsection{Local factors for $\GL_n$}\label{Local factors for GLn}
In this section we prove Theorem~\ref{theorem:unramified computation for GL(n)}. We proceed with the set-up from
\S~\ref{prelim unr}. Let $G=\GL_n$ and $\pi$ be an irreducible unramified representation of $G$. Let $\tau$ and $\tau'$
be unramified twists of irreducible unitary generic unramified representations of $\GL_k$, with the additional assumption on the central characters of $\tau$ and $\tau'$, namely $\tau\tau'(aI_k)=1$ for all $a\in F^*$. For the definition of the $\GL_n\times\GL_k$ integral see \S~\ref{Local factors for GL}, but we recall that $H=\GL_{2kn}$, $P=P_{(kn,kn)}$ and the sections belong to $\Ind_{P}^{H}((W_n(\tau)\otimes W_n(\tau'))\delta_P^{s})$.

Let $\omega^0$ and $f_{W_n(\tau)\otimes W_n(\tau')}^0$ be the normalized unramified elements given by the theorem.
We reduce the $G\times \GL_k$ integral to the
case $n=1$, which is computed directly. Put $\alpha=kn$ and for any positive integers $a$ and $b$ such that $a+b=n$,
\begin{align*}
&d_{\tau,\tau',a,b}(s)=\prod_{1\leq j\leq b}\frac{L(2\alpha s+j,\tau\times{\tau'}^{\vee})}{L(2\alpha s+a+j,\tau\times{\tau'}^{\vee})}.
\end{align*}
\begin{lemma}\label{lemma:reduction from GLn to GLa GLb}
For $a$ and $b$ as above, write $\pi$ as a quotient of $\Ind_{R}^{G}(\pi_a\otimes\pi_b)$, where $R=P_{(a,b)}$ and $\pi_a$ and $\pi_b$ are irreducible unramified representations of $\GL_a$ and $\GL_b$. Let
$\omega_a^0$ and $\omega_b^0$ be the normalized unramified matrix coefficients of $\pi_a^{\vee}$ and $\pi_b^{\vee}$, $\rho_{W_a(\tau)\otimes W_a(\tau')}^0$ be the normalized unramified function in the space of
\begin{align*}
\Ind_{P_{(ka,ka)}}^{\GL_{2ka}}(W_a(\tau)\otimes W_a(\tau')),
\end{align*}
and $\varrho_{W_b(\tau)\otimes W_b(\tau')}^0$ be the normalized unramified function in the space of
\begin{align*}
\Ind_{P_{(kb,kb)}}^{\GL_{2kb}}(W_b(\tau)\otimes W_b(\tau')).
\end{align*}
Then
\begin{align*}
Z(s,\omega^0,f_{W_n(\tau)\otimes W_n(\tau')}^0)=d_{\tau,\tau',a,b}(s)Z(\alpha s/(ka),\omega_{a}^0,\rho_{W_a(\tau)\otimes W_a(\tau')}^0)Z(\alpha s/(kb),\omega_{b}^0,\varrho_{W_b(\tau)\otimes W_b(\tau')}^0).
\end{align*}
\end{lemma}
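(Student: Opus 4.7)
The plan is to imitate the strategy used to prove Lemma~\ref{lemma:reduction from classical to GLn}, now splitting the $\GL_n$ side rather than the classical side. The three main ingredients should be: an Iwasawa realization of $\omega^0$ as a section, two applications of the decomposition of $(k,n)$-functionals from Lemma~\ref{lemma:decomposition for V functionals, n=a+b}, and an unramified intertwining operator that regroups the resulting pieces into the target form.

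First I would replace $\omega^0$ by the Iwasawa integral of a normalized unramified section. Since $\pi^{\vee}$ embeds in $\Ind_R^G(\pi_a^{\vee}\otimes\pi_b^{\vee})$ and $\omega^0(g)=\int_{K_G}\phi^{\vee,0}(og)\,do$, the compact $K_G$-integration can be absorbed into the outer $G$-integration after commuting with the embeddings of the two copies of $G$ in $H$ and using $K_H$-invariance of $f^0$, exactly as in the opening of the proof of Lemma~\ref{lemma:reduction from classical to GLn}. The outer integration then splits via $G=R K_G$ into an integration over $M_R\cong\GL_a\times\GL_b$. Next I would apply Lemma~\ref{lemma:decomposition for V functionals, n=a+b} (with $c=n=a+b$) to each of the two $(k,n)$-models $W_n(\tau)$ and $W_n(\tau')$ appearing in the inducing data. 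After the inner Jacquet integrations this re-expresses $f^0$, up to explicit $|\det|$-shifts coming from the normalization formula \eqref{eq:decomp a b space}, as a section absorbed from
\begin{align*}
\Ind_{P_{(ka,kb,ka,kb)}}^{\GL_{2kn}}\bigl(W_a(\tau)\otimes W_b(\tau)\otimes W_a(\tau')\otimes W_b(\tau')\bigr)
\end{align*}
into the Siegel induction on $H$.

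To bring this to the form required for a product of a $\GL_a\times\GL_k$ and a $\GL_b\times\GL_k$ integral, I would apply the standard unramified intertwining operator that permutes the middle two blocks, producing a section induced from $W_a(\tau)\otimes W_a(\tau')\otimes W_b(\tau)\otimes W_b(\tau')$ along $P_{(ka,ka,kb,kb)}$. By Gindikin--Karpelevich, this operator multiplies the normalized unramified vector by a ratio of local $L$-factors; a direct shift computation, parallel to the one at the end of the proof of Lemma~\ref{lemma:reduction from classical to GLn}, should identify this ratio with $d_{\tau,\tau',a,b}(s)$. The appearance of $L(\cdot,\tau\times{\tau'}^{\vee})$ is natural here, as the swapped inducing data correspond to the Speh-type blocks attached to $\tau$ on one side and $\tau'$ on the other, and the Gindikin--Karpelevich ratio then combines their Satake parameters into $\tau\otimes{\tau'}^{\vee}$. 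It then remains to split the $U_0$-integration and the leftover unipotent pieces (from the $V^3$ of the decomposition lemma and from the intertwining step) according to the block decomposition $(ka,ka,kb,kb)$, pairing them with the $\GL_a$- and $\GL_b$-parts of the Iwasawa integral on $M_R$. The auxiliary coordinates, together with the ``inner'' $U_0$-coordinates that straddle the $a$-block/$b$-block boundary, should integrate to~$1$ against the unramified section by the standard $K_H$-invariance plus character-trick argument used in Corollary~\ref{corollary:k n functionals nonzero} and throughout \S~\ref{proof of reduction lemma}, leaving exactly the two smaller integrals.

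The main obstacle will be the bookkeeping of normalizations: tracking the $|\det|$-twists produced by each application of Lemma~\ref{lemma:decomposition for V functionals, n=a+b}, the modular-character corrections $\delta_R^{1/2}$ from Iwasawa on $G$, the $\delta_P^s$-twist of the Eisenstein section on $H$, and verifying that the Gindikin--Karpelevich ratio matches the explicit product defining $d_{\tau,\tau',a,b}(s)$. The hypothesis on the central characters of $\tau$ and $\tau'$ ensures the central-character condition required for the two smaller integrals of \S~\ref{Local factors for GL}. The remaining coordinate collapses --- absorbing $\mathcal{O}$-integrals of the constant function to $1$ and removing $K_H$-invariant tails --- are routine but lengthy, as in \S~\ref{proof of reduction lemma}.
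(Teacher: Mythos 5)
Your route is essentially the paper's: Iwasawa realization of $\omega^0$ as an integral of the unramified section $\phi^{\vee,0}$, two applications of Lemma~\ref{lemma:decomposition for V functionals, n=a+b} (one for each of $W_n(\tau)$ and $W_n(\tau')$) producing a section induced along $P_{(ka,kb,ka,kb)}$, an unramified intertwining operator swapping the middle two blocks to reach the $(ka,ka,kb,kb)$ arrangement, identification of the Gindikin--Karpelevich constant with $d_{\tau,\tau',a,b}(s)$ via the $\tau\otimes{\tau'}^{\vee}$ Satake parameters, and collapse of the leftover $V^3$- and $U^3$-type integrations to $1$ by the $K_H$-invariance/character trick. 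All of this matches the paper's proof.

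The one genuine gap is the mechanism that produces the intertwining operator, which is the technical heart of the argument: you cannot simply ``apply'' $M(s)$ to the section, it must arise from a unipotent integration already present inside $Z(s,\omega^0,f^0_{W_n(\tau)\otimes W_n(\tau')})$. After conjugating $U_0$ by $\diag(l_{a,b},l_{a,b})$, the block $U_2$ of $U_0'$ has a zero $a\times b$ corner, so the integration available in the integral covers only a proper subgroup of $\diag(I_{ka},V_{(ka,kb)},I_{kb})$, the group whose integration defines $M(s)$. The paper completes it by factoring the outer $dg$-integration through $U_R$ (the unipotent radical of $R=P_{(a,b)}$ in $\GL_n$) and checking, using the invariance of $W_a(\tau)$ under the relevant unipotents and changes of variables that leave $\psi_U$ intact, that the $U_R$-coordinates fold into $U_0'$ to give the enlarged group $U_0^{\bullet}$ with a full $U_2$-block; only then does the $du^2$-integration over $U^2$ become the intertwining operator, and only then does the decomposition $\delta_0=w\,\diag(\delta_{0,a},\delta_{0,b})\,w'$ expose the two smaller doubling integrals. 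Your plan invokes $G=RK_G$ only to split off $M_R$ and track modulus characters, and files the rest under routine bookkeeping, so this completion step (and the cancellation of the characters emitted when the $U_R$-conjugation is pushed past $\delta_0({}^{\diag(l_{a,b},l_{a,b})}\delta_1)$) is missing; without it the appeal to a ``standard'' intertwining operator is not justified. Aside from this, your outline agrees with the paper's argument.
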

\begin{proposition}\label{proposition:unramified computation for GL(1)}
For $n=1$ and when all data are unramified,
\begin{align*}
Z(s,\omega^0,f_{W_1(\tau)\otimes W_1(\tau')}^0)=\frac{L(ks+1/2,\pi^{-1}\times\tau)L(ks+1/2,\pi\times{\tau'}^{\vee})}
{L(2ks+1,\tau\times{\tau'}^{\vee})}.
\end{align*}
\end{proposition}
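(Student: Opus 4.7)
Since $n=1$, $\pi$ is an unramified character of $F^*$, $\omega^0=\pi^{-1}$, $H=\GL_{2k}$, and $P=P_{(k,k)}$. By Casselman--Shalika, the restriction of the normalized unramified section $f^0$ to $M_P\cong\GL_k\times\GL_k$ equals the product of the normalized unramified Whittaker functions of $\tau$ and $\tau'$. I would split the $F^*$-integration into two Iwasawa regions and recognize each as a $\GL_1\times\GL_k$ Jacquet--Shalika integral \cite{JS1}; the algebraic sum then produces the claimed ratio, whose denominator $L(2ks+1,\tau\times(\tau')^\vee)$ reflects the underlying $\GL_1\times\GL_{2k}$ Rankin--Selberg structure of \cite{JPSS}.

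The mechanism is cleanest in the case $k=1$: then $U_0$ is trivial, $\delta=\bigl(\begin{smallmatrix}0&1\\1&1\end{smallmatrix}\bigr)$, and a direct Iwasawa decomposition of $\delta(1,g)\in\GL_2$ yields
\begin{align*}
f^0(\delta(1,g),s)=\begin{cases}|g|^{s+1/2}\tau(g),&|g|\leq 1,\\ |g|^{-s-1/2}\tau'(g),&|g|>1.\end{cases}
\end{align*}
Integrating against $\pi^{-1}(g)\,d^*g$ gives two geometric series, $\frac{1}{1-A}+\frac{B}{1-B}=\frac{1-AB}{(1-A)(1-B)}$, with $A=\pi^{-1}(\varpi)\tau(\varpi)q^{-s-1/2}$ and $B=\pi(\varpi)\tau'(\varpi)^{-1}q^{-s-1/2}$. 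Identifying $(1-A)^{-1}=L(s+1/2,\pi^{-1}\times\tau)$, $(1-B)^{-1}=L(s+1/2,\pi\times(\tau')^\vee)$, and $(1-AB)^{-1}=L(2s+1,\tau\times(\tau')^\vee)$ gives the desired formula in this case.

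For general $k$ the same pattern applies after integrating out $U_0$. Using $\delta_0\in K_H$ and right $K_H$-invariance of $f^0$, the subgroup $\delta_0 U_0\delta_0^{-1}$ lies in $\bar U_P$ as a codimension-one subgroup, and the character $\psi_U=\psi(y_1)$ transfers to a specific character picking out one entry, so the $U_0$-integration becomes a controlled matrix integration in $\bar U_P$. Splitting the $g$-integration by Iwasawa position of $\delta u_0(1,g)$ produces a ``good'' region collapsing to $\delta_P^{s+1/2}\cdot W^0_\tau(\diag(g,I_{k-1}))$, which contributes the $\GL_1\times\GL_k$ Jacquet--Shalika integral $L(ks+1/2,\pi^{-1}\times\tau)$, and an ``opposite'' region where Iwasawa places the $g$-dependence in the second $\GL_k$ block with a dualization $\tau'\mapsto(\tau')^\vee$, contributing $L(ks+1/2,\pi\times(\tau')^\vee)$. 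The identity $\frac{1}{1-A}+\frac{B}{1-B}=\frac{1-AB}{(1-A)(1-B)}$, now applied multiplicatively over the $k$-tuple Satake parameters of $\tau$ and $(\tau')^\vee$, assembles the contributions into the formula with denominator $L(2ks+1,\tau\times(\tau')^\vee)$; equivalently, one may recognize the combined sum as the $\GL_1\times\GL_{2k}$ Jacquet--Piatetski-Shapiro--Shalika integral \cite{JPSS} applied to the Whittaker function of $\Pi=\Ind_{P_{(k,k)}}^{\GL_{2k}}(\tau\otimes(\tau')^\vee)$.

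The main obstacle is the explicit Iwasawa computation of the off-diagonal block $B(g,u_0)\in\Mat_k$ in the opposite region for $k\geq 2$, together with the careful matching of the character $\psi_U$ with the Whittaker character of $\Pi$ under conjugation by $\delta$. The duality swap $\tau'\leftrightarrow(\tau')^\vee$ (which accounts for $\pi$ rather than $\pi^{-1}$ in the second $L$-factor) is already visible in the $k=1$ prototype, and its general-$k$ analogue follows from a parallel Iwasawa computation systematically parametrized by the coordinates of $U_0$.
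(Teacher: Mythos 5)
Your $k=1$ computation is correct (and is essentially the case already covered by \cite{PSR}, which the paper simply cites before assuming $k>1$), but the passage to general $k$ is where the actual content lies, and your sketch does not supply it. The inner $u_0$-integration is not something that can be ``integrated out'' as an intertwining-type operation: after moving $\delta_0$, what you face is an \emph{almost}-Whittaker coefficient of $\Ind_{P_{(k,k)}}^{\GL_{2k}}((W_1(\tau)\otimes W_1(\tau'))\delta_P^s)$ in which the $(k,k+1)$ coordinate is frozen at the value $1$ coming from $\delta_1$ and the character sits on $x_1$ rather than being generic. Relating this truncated coefficient to genuine Whittaker functionals is precisely the crux, and the paper does it not by a direct Iwasawa analysis but by Soudry's mechanism: a uniqueness lemma for the $(\GL_1\times\GL_1,\psi_U)$-equivariant functionals (proved by Bruhat theory over $P\backslash\GL_{2k}/\GL_1U$), Bernstein continuation, and a Fourier-inversion argument yielding the functional equation $\gamma(ks+1/2,\pi^{-1}\times\tau,\psi)\,Z(s,f)=\Psi(0,s,f)$, where $\Psi$ is a bona fide $\GL_1\times\GL_{2k}$ Rankin--Selberg integral; the unramified value of $\Psi$ is then read off from Casselman--Shalika together with \cite{JS1}, and the $\gamma$-factor converts $L(-ks+1/2,\pi\times\tau^{\vee})$ into $L(ks+1/2,\pi^{-1}\times\tau)$. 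Your proposal contains no mechanism that could produce this step; you yourself flag the ``explicit Iwasawa computation of the off-diagonal block in the opposite region'' as the main obstacle, and that is exactly the part that is missing.

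Moreover, the assembly step you propose is incorrect as stated. Writing $\alpha_i,\beta_j$ for the Satake parameters of $\tau,\tau'$, the target is
\begin{align*}
\frac{\prod_i\bigl(1-\pi^{-1}(\varpi)\alpha_i q^{-ks-1/2}\bigr)^{-1}\prod_j\bigl(1-\pi(\varpi)\beta_j^{-1}q^{-ks-1/2}\bigr)^{-1}}{\prod_{i,j}\bigl(1-\alpha_i\beta_j^{-1}q^{-2ks-1}\bigr)^{-1}},
\end{align*}
with $2k$ factors upstairs and $k^2$ downstairs. Applying your rank-one identity ``multiplicatively over the $k$-tuple'' pairs each $\alpha_i$ with each $\beta_j$ and would reproduce each numerator factor $k$ times, so the two-region picture cannot simply multiply out; the correct source of the denominator is the single Casselman--Shalika evaluation $W_{f^0}(I_{2k},s)=L(2ks+1,\tau\times{\tau'}^{\vee})^{-1}$ for the induced representation, used inside one $\GL_1\times\GL_{2k}$ integral, not a product of $\GL_1\times\GL_k$ identities. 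Finally, for $k\geq2$ the absolute convergence and rearrangement of your region-by-region decomposition is not addressed (the paper's identity is one of meromorphic continuations, established in different half-planes). So the proposal reproduces the known rank-one prototype but leaves the genuinely new case $k\geq2$ unproved, and its proposed route would need to be replaced by (or shown equivalent to) a uniqueness-plus-functional-equation argument of the kind the paper uses.
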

The lemma is proved in \S~\ref{proof of lemma:reduction from GLn to GLa GLb} and the proposition in \S~\ref{final reduction n = 1 linear groups}.
Now we can compute the integral inductively, for all $n$.
\begin{proof}[Proof of Theorem~\ref{theorem:unramified computation for GL(n)}]
We argue using induction on $n$. We have to show that the integral with unramified data equals
\begin{align*}
\frac{L(kn s+1/2,\pi^{\vee}\times\tau)L(kn s+1/2,\pi\times{\tau'}^{\vee})}{\prod_{j=1}^{n}L(2kn s+j,\tau\times{\tau'}^{\vee})}.
\end{align*}
The result holds for $n=1$ by Proposition~\ref{proposition:unramified computation for GL(1)}.
Consider a $\GL_n\times \GL_{k}$ integral.
Assume
the formula is true for $n-1$ and apply Lemma~\ref{lemma:reduction from GLn to GLa GLb} to the integral with $a=1$ and $b=n-1$. The integral becomes the product of $d_{\tau,\tau',1,n-1}(s)$,
the $\GL_1\times \GL_{k}$ integral and the $\GL_{n-1}\times \GL_{k}$ integral. Using the case $n=1$,
\begin{align*}
Z(ns,\omega_{1}^0,\rho_{W_1(\tau)\otimes W_1(\tau')}^0)=\frac{L(kns+1/2,\pi_1^{-1}\times\tau)L(kns+1/2,\pi_1\times{\tau'}^{\vee})}{
L(2kns+1,\tau\times{\tau'}^{\vee})}.
\end{align*}
Applying the induction hypothesis to the $\GL_{n-1}\times \GL_{k}$ integral,
\begin{align*}
Z(ns/(n-1),\omega_{n-1}^0,\varrho_{W_{n-1}(\tau)\otimes W_{n-1}(\tau')}^0)=\frac{L(kn s+1/2,\pi_{n-1}^{\vee}\times\tau)L(kns+1/2,\pi_{n-1}\times{\tau'}^{\vee})}{\prod_{j=1}^{n-1}L(2kns+j,\tau\times{\tau'}^{\vee})}.
\end{align*}
Together we obtain
\begin{align*}
\frac{L(kn s+1/2,\pi^{\vee}\times\tau)L(kn s+1/2,\pi\times{\tau'}^{\vee})}
{L(2kn s+1,\tau\times{\tau'}^{\vee})^2\prod_{j=2}^{n-1}L(2kn s+j,\tau\times{\tau'}^{\vee})},
\end{align*}
and multiplying this by
\begin{align*}
d_{\tau,\tau',1,n-1}(s)=\prod_{1\leq j\leq n-1}\frac{L(2kn s+j,\tau\times{\tau'}^{\vee})}{L(2kn s+1+j,\tau\times{\tau'}^{\vee})}
=\frac{L(2kn s+1,\tau\times{\tau'}^{\vee})}{L(2kns+n,\tau\times{\tau'}^{\vee})}
\end{align*}
gives the result.
\end{proof}
We turn to the proof of the reduction lemma.
\subsubsection{Proof of Lemma~\ref{lemma:reduction from GLn to GLa GLb}}\label{proof of lemma:reduction from GLn to GLa GLb}
The proof is a straightforward modification of the proof of Lemma~\ref{lemma:reduction from classical to GLn}, but manipulations applied to the $(k,c)$ functional are now doubled, because we work with both $W_n(\tau)$ and $W_n(\tau')$. We focus on the differences between the proofs, and when possible, use similar notation. Also recall that the definitions of $U_0$, $\delta$, $\delta_0$, $\delta_1$ and $\psi_U$ were given in
\S~\ref{Local factors for GL}.

We replace $\omega^0$ with the normalized unramified vector $\phi^{\vee,0}$ in the space of $\Ind_{R}^{G}(\pi^{\vee})$. As in the proof of Lemma~\ref{lemma:reduction from classical to GLn}, we write
\begin{align*}
\omega^0(g)=\int\limits_{K_G}\phi^{\vee,0}(og)\,do.
\end{align*}
Then for any $g_0\in G$,
\begin{align*}
&\int\limits_{U_0}f_{W_n(\tau)\otimes W_n(\tau')}^0(\delta u_0(g_0,g_0)(1,g),s)\,\psi_U(u_0)\,du_0
\\&=\int\limits_{U_0}f_{W_n(\tau)\otimes W_n(\tau')}^0(\diag(g_0,\ldots,g_0)\delta u_0(1,g),s)\,\psi_U(u_0)\,du_0
\\&=\tau(\det g_0I_k)\tau'(\det g_0I_k)\int\limits_{U_0}f_{W_n(\tau)\otimes W_n(\tau')}^0(\delta u_0(1,g),s)\,\psi_U(u_0)\,du_0.
\end{align*}
Here the second equality follows from Proposition~\ref{proposition:k n functionals invariance}. Our condition on the central characters of $\tau$ and $\tau'$ implies $\tau(\det g_0I_k)\tau'(\det g_0I_k)=1$. Combining this with the right $K_H$-invariance of
$f_{W_n(\tau)\otimes W_n(\tau')}^0$, we deduce
\begin{align}\notag
Z(s,\omega^0,f_{W_n(\tau)\otimes W_n(\tau')}^0)=&\int\limits_{G}
(\int\limits_{K_G}\phi^{\vee,0}(og)\,do)
\int\limits_{U_0}f_{W_n(\tau)\otimes W_n(\tau')}^0(\delta u_0(1,g),s)\,\psi_U(u_0)\,du_0\,dg
\\\label{int:GL start}
=&\int\limits_{G}\phi^{\vee,0}(g)\int\limits_{U_0}f_{W_n(\tau)\otimes W_n(\tau')}^0(\delta u_0(1,g),s)\,\psi_U(u_0)\,du_0\,dg.
\end{align}
(Cf. \eqref{int:1}.)

Let $a,b\geq1$ be given by the statement of the lemma ($a$ and $b$ need not be equal). Apply
Lemma~\ref{lemma:decomposition for V functionals, n=a+b} twice, to the functions on $\GL_{kn}$ given by
\begin{align*}
x\mapsto f_{W_n(\tau)\otimes W_n(\tau')}^0(\diag(x,I_{kn})h,s),\qquad
y\mapsto f_{W_n(\tau)\otimes W_n(\tau')}^0(\diag(I_{kn},y)h,s),
\end{align*}
where $h\in H$ is fixed.
In the notation of that lemma,
\begin{align}\label{eq:section to plug in GL}
&f_{W_n(\tau)\otimes W_n(\tau')}^0(h,s)=\int\limits_{V^3}\int\limits_{V^3}f_{(W_a(\tau)\otimes W_b(\tau))\otimes(W_a(\tau')\otimes W_b(\tau'))}^0(\diag(l_{a,b},l_{a,b})\diag(v,v')h,s)\,dv\,dv'.
\end{align}
This is a section in the space of the representation
\begin{align}\label{rep:induced f before M(s) GL}
\Ind_{L}^{H}(|\det|^{-b/2+\alpha s}W_{a}(\tau)\otimes |\det|^{a/2+\alpha s}W_{b}(\tau)\otimes |\det|^{-b/2-\alpha s}W_{a}(\tau')\otimes |\det|^{a/2-\alpha s}W_{b}(\tau')),
\end{align}
where $L=P_{(ka,kb,ka,kb)}$.

Substituting \eqref{eq:section to plug in GL} into \eqref{int:GL start} one obtains
\begin{align}\label{int:GL start 2}
\int\limits_{G}
\phi^{\vee,0}(g)\int\limits_{U_0}
\int\limits_{V^3}\int\limits_{V^3}f_{\cdots}^0(\diag(l_{a,b},l_{a,b})\diag(v,v')\delta u_0(1,g),s)\psi_U(u_0)\,dv\,dv'\,du_0\,dg.
\end{align}
(Cf. \eqref{int:2}.)

Properties \eqref{it:GL observe 1}--\eqref{it:GL observe 6} from \S~\ref{proof of reduction lemma} now take the following form:
\begin{enumerate}[leftmargin=*]
\item\label{it:GL observe 1 in GL} ${}^{\delta_0^{-1}}\diag(v,v')=\diag(v',v)$.
\item\label{it:GL observe 2 in GL} If $v,v'\in V^3$, ${}^{\diag(v',v)}\delta_1=\delta_1u'$ where $u'\in U_0$ and $\psi_U(u')=1$.
\item\label{it:GL observe 3 in GL} The elements of both copies of $V^3$ normalize $U_0$ and fix $\psi_U|_{U_0}$.
\item\label{it:GL observe 4 in GL} The group $\diag(V^3,I_{kn})$ commutes with $(1,g)$.
\item\label{it:GL observe 5 in GL} $\delta_0$ commutes with $\diag(l_{a,b},l_{a,b})$.
\item\label{it:GL observe 6 in GL} $\diag(l_{a,b},I_{kn})$ commutes with $(1,g)$.
\end{enumerate}
Define
\begin{align}\label{eq:U_0' GL}
U_0'={}^{\diag(l_{a,b},l_{a,b})}U_0=\left\{\begin{pmatrix}I_{ka}&&U_1&U_2\\&I_{kb}&U_3&U_4\\&&I_{ka}\\&&&I_{kb}\end{pmatrix}\right\}.
\end{align}
Here $U_4$ is independent of $U_1$, so that
\begin{align*}
\left\{\begin{pmatrix}I_{ka}&U_1\\&I_{ka}\end{pmatrix}\right\},\qquad
\left\{\begin{pmatrix}I_{kb}&U_4\\&I_{kb}\end{pmatrix}\right\}
\end{align*}
are the unipotent subgroups corresponding to the $\GL_a\times \GL_k$ and $\GL_b\times \GL_k$ integrals, and
restriction of $\psi_U$ to the coordinates of $U_1$ and $U_4$ gives the character $\psi_U$ defined for these integrals. The subgroup $U^{1,4}$ is now $U_1\times U_4$, and here we avoid the notation $U^{1,4}$. Also $\psi_U$ is trivial on $U_2$ and $U_3$, and $U_2$ (resp., $U_3$) takes the form $\left(\begin{smallmatrix}*&*\\0&*\end{smallmatrix}\right)$ where $0\in\Mat_{a\times b}$ (resp., $0\in\Mat_{b\times a}$).
Cf. \eqref{eq:U_0' classical} and \eqref{eq:U_1 classical}.

Utilizing properties \eqref{it:GL observe 1 in GL}--\eqref{it:GL observe 6 in GL}, integral~\eqref{int:GL start 2} equals
\begin{align}\label{int:GL start 3}
&\int\limits_{G}\phi^{\vee,0}(g)\int\limits_{U_0'}
\int\limits_{V^3}\int\limits_{V^3} f_{\cdots}^0(\delta_0({}^{\diag(l_{a,b},l_{a,b})}\delta_1) u_0'\diag(I_{kn},l_{a,b}v)(1,g)\diag(l_{a,b}v',I_{kn}),s)\\&\psi_U(u_0')\,dv\,dv'\,du_0'\,dg.\notag
\end{align}
(Cf. \eqref{int:3}.)

Let
\begin{align*}
U_0^{\bullet}=\left\{\begin{pmatrix}I_{ka}&&U_1&U_2\\&I_{kb}&U_3&U_4\\&&I_{ka}\\&&&I_{kb}\end{pmatrix}\right\},
\end{align*}
which is similar to \eqref{eq:U_0' GL} except the block $U_2$, which contains arbitrary coordinates in place of the $0$ block (cf. \eqref{eq:U_0' classical2}).
Then for $u_0^{\bullet}\in U_0^{\bullet}$, $u^i$ denotes the element obtained from $u_0^{\bullet}$ by zeroing out the coordinates in the blocks $U_j$ with $j\ne i$.

As in the proof of Lemma~\ref{lemma:reduction from classical to GLn} (\eqref{int:3}--\eqref{int:3.5}), we proceed by factoring the integral through $U_R$, to produce an intertwining operator. Then \eqref{int:GL start 3} equals
\begin{align}\label{int:GL start 3.5}
&\int\limits_{U_R\backslash G}\int\limits_{U_R}\phi^{\vee,0}(zg)\int\limits_{U_0'}
\int\limits_{V^3}\int\limits_{V^3} f_{\cdots}^0(\delta_0({}^{\diag(l_{a,b},l_{a,b})}\delta_1) u_0'\diag(I_{kn},l_{a,b}v)(1,zg)\diag(l_{a,b}v',I_{kn}),s)\\&\psi_U(u_0')\,dv\,dv'\,du_0'\,dz\,dg.\notag
\end{align}
(Cf. \eqref{int:3.5}.)

Let $z\in U_R$. First we conjugate $\diag(I_{kn},{V^3})$ by $(1,z)$. We can write
\begin{align*}
\diag(I_{kn},l_{a,b}v)(1,z)&=\diag(I_{kn},l_{a,b})\diag(I_{kn},v)(1,z)\\
&=\diag(I_{kn},l_{a,b})(1,z)\diag(I_{kn},v_z)\diag(I_{kn},v)
\\&=\left({}^{\diag(I_{kn},l_{a,b})}(1,z)\right)\diag(I_{kn},l_{a,b})\diag(I_{kn},v_z)\diag(I_{kn},v)
\end{align*}
with $v_z\in V_{(a,kn-a)}$. Then conjugating $({}^{\diag(l_{a,b},l_{a,b})}\delta_1)u_0'$ by ${}^{\diag(I_{kn},l_{a,b})}(1,z)$ we obtain
${}^{\diag(l_{a,b},l_{a,b})}\delta_1$ multiplied by a general element of $U_0^{\bullet}$, and when we take ${}^{\diag(I_{kn},l_{a,b})}(1,z)$ to the left and conjugate by $\delta_0$, and use the invariance properties of $W_a(\tau)$ on the top left $a\times a$ block
($W_a(\tau)$ in the inducing data of $f_{\cdots}^0$), we see that this element vanishes without emitting a character.
Turning to $v_z$, the element ${}^{\diag(I_{kn},l_{a,b})}\diag(I_{kn},v_z)$ normalizes
$U_0^{\bullet}$ with a change of variables that emits a character. This character is cancelled after we take
${}^{\diag(I_{kn},l_{a,b})}\diag(I_{kn},v_z)$ to the left: conjugate it by $\delta_0({}^{\diag(l_{a,b},l_{a,b})}\delta_1)$ and again use the invariance properties of $W_a(\tau)$. Also
$\phi^{\vee,0}(zg)=\phi^{\vee,0}(g)$. Altogether
\eqref{int:GL start 3.5} becomes
\begin{align}\label{int:GL start 4}
&\int\limits_{U_R\backslash G}\phi^{\vee,0}(g)\int\limits_{U_0^{\bullet}}
\int\limits_{V^3}\int\limits_{V^3} f_{\cdots}^0(\delta_0({}^{\diag(l_{a,b},l_{a,b})}\delta_1)u_0^{\bullet}\diag(I_{kn},l_{a,b}v)(1,g)\diag(l_{a,b}v',I_{kn}),s)\\&\psi_U(u_0^{\bullet})\,dv\,dv'\,du_0^{\bullet}\,dg.\notag
\end{align}
Here the $dz$-integration was incorporated into $du_0^{\bullet}$ (cf. \eqref{int:4}).

Put $\delta_0=w\diag(\delta_{0,a},\delta_{0,b})w'$ with
\begin{align*}
&w=\left(\begin{array}{cccc}I_{ka}\\&&I_{kb}\\&I_{ka}\\&&&I_{kb}\end{array}\right),\qquad\delta_{0,a}=\begin{pmatrix}&I_{ka}\\I_{ka}\end{pmatrix},\qquad
\delta_{0,b}=\begin{pmatrix}&I_{kb}\\I_{kb}\end{pmatrix},\qquad w'=w^{-1}.
\end{align*}
Then
\begin{align*}
\delta_0u_0^{\bullet}=w\cdot{}^{(\diag(\delta_{0,a},\delta_{0,b})w')}u^2\cdot\diag(\delta_{0,a},\delta_{0,b})
\cdot {}^{w'}(u^1u^4)\cdot w'u^3.
\end{align*}
Also $\diag(\delta_{0,a},\delta_{0,b})\cdot {}^{w'}({}^{\diag(l_{a,b},l_{a,b})}\delta_1)=\diag(\delta_a',\delta_b')$ is the embedding in $M_{(2ka,2kb)}$ of
the elements $\delta$ corresponding to the $\GL_a\times\GL_k$ and $\GL_b\times\GL_k$ integrals.
Let $U^2$ be the subgroup of elements $^{(\diag(\delta_{0,a},\delta_{0,b})w')}u^2$, $U^{1}$ corresponding to ${}^{w'}(u^1)$,
$U^{4}$ corresponding to ${}^{w'}(u^4)$, and $U^3$ to $u^3$. For $h\in H$,
\begin{align*}
&\int\limits_{U_0^{\bullet}}f_{\cdots}^0(\delta_0({}^{\diag(l_{a,b},l_{a,b})}\delta_1)u_0^{\bullet}h,s)\psi_U(u_0^{\bullet})\,du_0^{\bullet}
\\&=\int\limits_{U^3}\int\limits_{U^{4}}\int\limits_{U^{1}}\int\limits_{U^2}f_{\cdots}^0(wu^2
\diag(\delta_a',\delta_b')u^1u^4w'u^3h,s)\psi_U(u)\,du^2\,du^1\,du^4\,du^{3}.
\end{align*}
The $du^2$-integration over $U^2=\diag(I_{ka},V_{(ka,kb)},I_{kb})$ defines an intertwining operator $M(s)$ from the space of \eqref{rep:induced f before M(s) GL} to the space of
\begin{align}\label{rep:induced f after M(s) GL}
\Ind_{P_{(ak,ak,bk,bk)}}^{H}(|\det|^{-b/2+\alpha s}W_{a}(\tau)\otimes |\det|^{-b/2-\alpha s}W_{a}(\tau')\otimes |\det|^{a/2+\alpha s}W_{b}(\tau)\otimes |\det|^{a/2-\alpha s}W_{b}(\tau')),
\end{align}
applied to $f_{\cdots}^0$. The result is a rational section in the space of \eqref{rep:induced f after M(s) GL}, such that for
all $\diag(x,y)\in M_{(2ka,2kb)}$,
\begin{align*}
&M(s)f_{\cdots}^0(\diag(x,y),s)\\&=d_{\tau,\tau',a,b}(s)|\det{x}|^{kb-b/2}|\det{y}|^{-ka+a/2}\rho_{W_{a}(\tau)\otimes W_{a}(\tau')}^0(x,\alpha s/(ka))\varrho_{W_{b}(\tau)\otimes W_{b}(\tau')}^0(y,\alpha s/(kb)).
\end{align*}
The powers of $|\det{x}|$ and $|\det{y}|$ will cancel out as we explain.

For $g=\diag(x,y)\in M_R$, conjugating $\diag(I_{kn},V^3)$ by $(1,g)$ multiplies the measure by $|\det y|^{(k-1)a}$; conjugating
$U^3$ by ${}^{\diag(I_{kn},l_{a,b})}(1,g)$ multiplies the measure by $|\det x|^{(1-k)b}$; and when we use the Iwasawa decomposition $G=RK_G$ and consider the modulus character emitted by $\phi^{\vee,0}$, the integrand is further multiplied by $\delta_R^{-1/2}(g)$; so that the total change of measure is $|\det{x}|^{-kb+b/2}|\det{y}|^{ka-a/2}$.

In addition,
\begin{align*}
{}^{w'}({}^{\diag(I_{kn},l_{a,b})}(1,g))=\diag(I_{ka},x,I_{(k-1)a},I_{kb},y,I_{(k-1)b})=(1,x)(1,y),
\end{align*}
where $(1,x)$ is the embedding of $\GL_a$ in the
$\GL_a\times\GL_k$ integral on the top left block of $M_{(2ka,2kb)}$, for the representations $\pi_a\times\tau$, and
$(1,y)$ is the embedding corresponding to the $\GL_b\times\GL_k$ integral on the bottom right block of
$M_{(2ka,2kb)}$, for $\pi_b\times\tau$. Regarding the unipotent integrations over the copies of $V^3$ and over $U^3$, we can see (using conjugations as in \S~\ref{proof of reduction lemma}) that the $du^3$-integral vanishes unless both copies of $V^3$ are in $K_H$, then the integral over $U^3$ also vanishes outside $U^3\cap K_H$, so that the integrals $du^3dvdv'$ evaluate to $1$. Integral~\eqref{int:GL start 4} is then equal to
\begin{align*}
d_{\tau,\tau',a,b}(s)Z(\alpha s/(ka),\omega_a^0,\rho_{W_{a}(\tau)\otimes W_{a}(\tau')}^0)Z(\alpha s/(kb),\omega_b^0,\varrho_{W_{b}(\tau)\otimes W_{b}(\tau')}^0).
\end{align*}
In conclusion,
\begin{align*}
Z(s,\omega^0,f_{W_n(\tau)\otimes W_n(\tau')}^0)=d_{\tau,\tau',a,b}(s)Z(\alpha s/(ka),\omega_a^0,\rho_{W_{a}(\tau)\otimes W_{a}(\tau')}^0)Z(\alpha s/(kb),\omega_b^0,\varrho_{W_{b}(\tau)\otimes W_{b}(\tau')}^0).
\end{align*}

Let us turn to $d_{\tau,\tau',a,b}(s)$. Put $H'=\GL_{kn}$.
Restricting the normalized unramified section in the space of \eqref{rep:induced f before M(s) GL} to the subgroup
$\diag(I_{ka},H',I_{kb})$ of $H$, it becomes an unramified element in the space of the unramified principal series representation of ${H'}$ induced
from
\begin{align*}
(\otimes_{1\leq i\leq k,1\leq j\leq b}\chi_i|~|^{\alpha s+(a-b)/2+j-1/2})\otimes
(\otimes_{1\leq i'\leq k,1\leq j'\leq a}\chi'_{i'}|~|^{-\alpha s-n/2+j'-1/2}).
\end{align*}
The adjoint action of $\GL_{kb}(\C)\times\GL_{ka}(\C)$ on the Lie algebra $\Mat_{kb\times ka}(\C)$ is given by $[A,B]\cdot T=ATB^{-1}$. The value of $d_{\tau,\tau',a,b}(s)$ now follows as in \S~\ref{proof of reduction lemma}.

\subsection{Proof of Proposition~\ref{proposition:unramified computation for GL(1)}}\label{final reduction n = 1 linear groups}
Here $\pi$ is an unramified quasi-character of $G=\GL_1=F^*$. Since the $(k,1)$ model $W_1(\tau)$ is simply the Whittaker model, we can in this section consider any irreducible generic unramified representations $\tau$ and $\tau'$ of $\GL_{k}$ (e.g., non-unitary), such that their central characters are inverses of one another. Since $\tau$ is irreducible, $W_1(\tau)$ is isomorphic to $\tau$ (and similarly for $\tau'$).
For $k=1$ Proposition~\ref{proposition:unramified computation for GL(1)} was proved in \cite[\S~6.1]{PSR} (using \cite{GJ}). Henceforth assume $k>1$.

The proof of the proposition and in particular the proof of Claim~\ref{claim:justification} below, is based on the ideas of Soudry \cite{Soudry,Soudry3,Soudry2} (in the context of Rankin--Selberg integrals for $\SO_{2n+1}\times\GL_k$, see also \cite{me3} for the application of these ideas to Rankin--Selberg integrals for $\SO_{2n}\times\GL_k$).

For the $\GL_1\times\GL_k$ integral, $H=\GL_{2k}$ and $P=P_{(k,k)}$. Then $U_P=V_{(k,k)}$.
The section $h\mapsto f_{W_1(\tau)\otimes W_1(\tau')}(h,s)$ is on
\begin{align}\label{ind s}
\mathrm{I}(W_1(\tau),W_1(\tau'),s)=\Ind_{P_{(k,k)}}^{\GL_{2k}}((W_1(\tau)\otimes W_1(\tau'))\delta_{P_{(k,k)}}^{s}).
\end{align}
We recall the definitions of $U$, $\psi_U$, the embedding $(g_1,g_2):\GL_1\times\GL_1\rightarrow\GL_{2k}$ and subgroup $U_0<U$
from \S~\ref{Local factors for GL} (for $n=1$).
Here $U=V_{(1^{k-1},2,1^{k-1})}$ and
\begin{align*}
\psi_U(u)=\psi(-\sum_{i=1}^{k-1}u_{i,i+1}+u_{k,k+2}-\sum_{i=1}^{k-2}u_{k+1+i,k+2+i}).
\end{align*}
Note that $U$ is obtained from $N_{\GL_{2k}}$ by removing the $(k,k+1)$-th coordinate, and $\psi_U$ is ``almost" a generic character
of $N_{\GL_{2k}}$. For brevity, throughout this section we write a general element of $V_{(k,k)}$ in the form
\begin{align*}
[\begin{smallmatrix}y&z\\u&x\end{smallmatrix}]=\begin{pmatrix}I_{k-1}&&y&z\\&1&u&x\\&&1\\&&&I_{k-1}\end{pmatrix}.
\end{align*}
Then $U_0$ is the subgroup of elements $\{[\begin{smallmatrix}y&z\\0&x\end{smallmatrix}]\}$ with arbitrary $x,y$ and $z$, and
$\psi_U([\begin{smallmatrix}y&z\\0&x\end{smallmatrix}])=\psi(x_1)$, where $x_1$ is the leftmost coordinate of (the row) $x$.
The measure $du_0$ on $U_0$ is the product measure on the coordinates of $x,y$ and $z$ separately, e.g., regarding $y$ as an element of $F^{k-1}$.
The product $\GL_1\times\GL_1$ is embedded in the diagonal torus of $\GL_{2k}$ by $(g_1,g_2)=\diag(g_1I_{k},g_2,g_1I_{k-1})$; it normalizes $U$ and stabilizes $\psi_U$. Also
\begin{align*}
\delta_0=\left(\begin{smallmatrix}&I_k\\I_k\end{smallmatrix}\right),\qquad
\delta_1=\diag(I_{k-1},\left(\begin{smallmatrix}1&1\\&1\end{smallmatrix}\right),I_{k-1})=[\begin{smallmatrix}0&0\\1&0\end{smallmatrix}].
\end{align*}

The $\GL_{1}\times \GL_{k}$ integral $Z(s,\omega,f_{W_1(\tau)\otimes W_1(\tau')})$ takes the form
\begin{align}\label{integral before Fourier transform}
\int\limits_{F^*}\int f_{W_1(\tau)\otimes W_1(\tau')}(\delta_0
[\begin{smallmatrix}y&z\\1&x\end{smallmatrix}]\diag(I_{k},a,I_{k-1}),s)
\psi(x_1)\omega(a)\,dx\,dy\,dz\,d^*a.
\end{align}
(We multiplied $\delta_1$ by $u_0\in U_0$.) Here and below, the domains of integration for variables
$[\begin{smallmatrix}y&z\\u&x\end{smallmatrix}]$ are omitted for brevity; they are products of $F$ according to the dimensions of the variables. Since $\pi^{-1}$ is a quasi-character, we can replace the matrix coefficient $\omega$ with
$\pi^{-1}$ in \eqref{integral before Fourier transform}, and denote $Z(s,f_{W_1(\tau)\otimes W_1(\tau')})=Z(s,\pi^{-1},f_{W_1(\tau)\otimes W_1(\tau')})$.
This integral is absolutely convergent in a right half-plane which is independent of the choice of section,
and in this domain it satisfies the following equivariance properties:
\begin{align}\label{eq:equivariance for GL1 GL1}
Z(s,(g_1,g_2)u\cdot f_{W_1(\tau)\otimes W_1(\tau')})=\psi_U^{-1}(u)\pi(g_2)\pi^{-1}(g_1)Z(s,f_{W_1(\tau)\otimes W_1(\tau')}),\qquad\forall g_1,g_2\in\GL_1, u\in U.
\end{align}
Therefore, in its domain of convergence it can be regarded as an element of
\begin{align}\label{eq:space lemma uniqueness GL}
\Hom_{\GL_1\times\GL_1}(J_{U,\psi_U^{-1}}(\mathrm{I}(W_1(\tau),W_1(\tau'),s)),\pi^{-1}\otimes\pi).
\end{align}
(This is \eqref{eq:homspace GLn GLk} for $n=1$.)
\begin{lemma}\label{lemma:uniqueness}
For all but a finite set of values of $q^{-s}$, the space \eqref{eq:space lemma uniqueness GL}
is at most one-dimensional.
\end{lemma}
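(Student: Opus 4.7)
The plan is to carry out the local analog of the global unfolding used for Theorem~\ref{theorem:main theorem classical groups}, at the level of twisted Jacquet modules, using the geometric lemma of Bernstein--Zelevinsky. Applying this lemma to $J_{U,\psi_U^{-1}}(\mathrm{I}(W_1(\tau),W_1(\tau'),s))$ produces a filtration indexed by the double cosets $P(F)\backslash\GL_{2k}(F)/L(F)$, where $L=(\GL_1\times\GL_1)U$; by the Bruhat decomposition the representatives $\gamma$ take the form $wu$ with $w$ a Weyl element and $u\in M\cap N_{\GL_{2k}}$. Each double coset contributes a subquotient of the form of a compactly induced representation of $L$, built from the restriction of $W_1(\tau)\otimes W_1(\tau')$ (twisted by $\delta_P^{s}$ and by $\psi_U^{-1}$) to a stabilizer $L_{\gamma}={}^{\gamma^{-1}}P\cap L$.

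The heart of the argument will be to show that for all but one double coset, the corresponding subquotient admits no nonzero morphism to $\pi^{-1}\otimes\pi$, outside a finite set of values of $q^{-s}$. I would mirror the three vanishing techniques used in \S~\ref{global symplectic}: (i) If a one-parameter subgroup $U'$ of $U$ satisfies $\psi_U|_{U'}\ne 1$ and ${}^{\gamma}U'<U_P$, then the left $U_P$-invariance of the induction and the twisted Jacquet functor force the subquotient to map to zero; this is the local analog of Lemma~\ref{Lemma A}. (ii) If the $(\GL_1\times\GL_1)$-action on $L_{\gamma}$ is incompatible with the central character of $\pi^{-1}\otimes\pi$, i.e.\ the torus action on the subquotient disagrees with that prescribed by \eqref{eq:equivariance for GL1 GL1}, then Hom vanishes; this replaces the cuspidality arguments used in the global case, which are unavailable for $\GL_1$. (iii) If the inner integration over ${}^{\gamma^{-1}}V_{(k,k)}\cap U$ forces a non-Whittaker character on $W_1(\tau)$ or $W_1(\tau')$, the uniqueness and character constraints of the Whittaker model (Shalika's theorem) kill the contribution; this is the local counterpart of the vanishing obtained in the global case from the smallness of $\mathcal{E}_{\tau}$.

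For the remaining open orbit representative $\delta$, I would explicitly describe $L_{\delta}$ and show that the resulting Hom space is controlled by a pair of $(k,1)$-functionals on $W_1(\tau)$ and $W_1(\tau')$, together with the $\GL_1\times\GL_1$-equivariance. Since both $W_1(\tau)$ and $W_1(\tau')$ are Whittaker models of irreducible generic representations, each of these functionals is unique up to scalar, so the open orbit contributes a one-dimensional Hom space. Combining this with the vanishing of all other subquotients yields the desired bound.

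The main obstacle will be step~(iii): carefully tracking how the Whittaker/non-Whittaker structure of $W_1(\tau)$ and $W_1(\tau')$ propagates through the twisted Jacquet functor applied to the parabolic induction. One must check that the subquotients corresponding to non-open orbits always produce a strictly larger unipotent integration against a non-Whittaker character on one of the two factors. The finite exceptional set of $q^{-s}$ accounts for values where the intertwining operators implicit in identifying subquotients of the Bernstein--Zelevinsky filtration acquire poles, and where the filtration itself fails to split into a direct sum; outside this set the argument produces at most a one-dimensional Hom space.
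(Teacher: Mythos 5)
Your overall strategy is the paper's: reduce to a double-coset analysis (the paper phrases it via Frobenius reciprocity, converting \eqref{eq:space lemma uniqueness GL} into a space of bilinear forms $\Bil_{\GL_{2k}}(\ind_{\GL_1U}^{\GL_{2k}}(\pi^{-1}\otimes\psi_U),\mathrm{I}(W_1(\tau),W_1(\tau'),s))$ and then applying Bruhat theory to $P\backslash\GL_{2k}/\GL_1U$, which is essentially your geometric-lemma filtration), kill the non-open cosets, and get dimension one on the open coset $\delta_0\delta_1$ from uniqueness of Whittaker functionals on $\tau\otimes\tau'$. Your mechanism (i) is exactly the paper's argument \eqref{character nontrivial}, and your open-orbit analysis agrees with the paper's treatment of $h=\delta_0\delta_1$.

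The gap is in how you dispose of the two remaining cosets, represented by $\delta_0$ and $\delta_0\kappa$. On these cosets the conjugated character ${}^{h}\psi_U$ restricts to a \emph{degenerate} character of one copy of $N_{\GL_k}$ (trivial on one simple root subgroup), and your step (iii) — that a non-Whittaker character kills the contribution by uniqueness of the Whittaker model — is false: such a functional need not vanish, it merely factors through the Jacquet module $J_{V_{(1,k-1)}}(W_1(\tau))$ (resp.\ $J_{V_{(k-1,1)}}(W_1(\tau'))$), which is generally nonzero for a generic irreducible $\tau$. Indeed these cosets \emph{cannot} be killed for all $s$; they are precisely the reason the lemma allows a finite exceptional set. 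The paper eliminates them by an eigencharacter comparison (your step (ii), but made quantitative and $s$-dependent): because of $\delta_P^{s}$, any functional in $\Hom(h)$ forces $\GL_1$ to act through $|x|^{\mp ks}$ times $\pi(x)\theta(x)$, while on a Jordan--H\"older constituent of the Jacquet module the torus acts by one of finitely many characters $\beta$ independent of $s$ (see \eqref{eq:uniqueness for k=1 eq 1}); the identity $|x|^{ks}=\beta^{-1}(x)\pi(x)\theta(x)$ can hold for at most finitely many values of $q^{-s}$. This is the sole source of the exceptional set. Your closing attribution of that set to poles of intertwining operators and to the filtration failing to split is incorrect: no intertwining operators enter the argument, and the bound via the semi-simplification $\bigoplus_h\Hom(h)$ needs no splitting. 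As written, your proposal would either claim unconditional vanishing on these two cosets (false) or leave the finite exceptional set unaccounted for.
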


The proof of the lemma appears at the end of this section. The statement is valid also for $k=1$ (see Remark~\ref{remark:uniqueness k=1}).
There is a choice of section such that $Z(s,f_{W_1(\tau)\otimes W_1(\tau')})$ is absolutely convergent for all $s$, and equals a nonzero constant (independent of $s$). To see this, take $f_{W_1(\tau)\otimes W_1(\tau')}$ such that $\delta_0\cdot f_{W_1(\tau)\otimes W_1(\tau')}$ is right-invariant by $\mathcal{N}$ and supported in
$P{}^{\delta_0}\delta_1\mathcal{N}$, where $\mathcal{N}$ is a small compact open neighborhood of the identity in $\GL_{2k}$. Together with Lemma~\ref{lemma:uniqueness}, Bernstein's continuation principle (in \cite{Banks}) implies that \eqref{integral before Fourier transform} admits meromorphic continuation to a rational function in $q^{-s}$.

We compute \eqref{integral before Fourier transform} by comparing it to another integral defined using the Whittaker model of \eqref{ind s}.
First consider the Jaquet integral realizing the Whittaker functional on \eqref{ind s}, defined by
\begin{align*}
f_{W_1(\tau)\otimes W_1(\tau')}\mapsto\int f_{W_1(\tau)\otimes W_1(\tau')}(\delta_0[\begin{smallmatrix}y&z\\u&x\end{smallmatrix}],s)\psi(u)\,dx\,dy\,dz\,du.
\end{align*}
This integral is absolutely convergent for $\Real(s)\gg0$ and admits analytic continuation to a function in $\C[q^{-s},q^s]$ (for a rational section of \eqref{ind s}, the continuation is in $\C(q^{-s})$). For any fixed $s$, the Whittaker model of \eqref{ind s} consists of the Whittaker functions
\begin{align*}
W_{f_{W_1(\tau)\otimes W_1(\tau')}}(h,s)=\int f_{W_1(\tau)\otimes W_1(\tau')}(\delta_0[\begin{smallmatrix}y&z\\u&x\end{smallmatrix}]h,s)\psi(u)\,dx\,dy\,dz\,du\qquad (h\in\GL_{2k}),
\end{align*}
where on the right hand side the integral is defined by analytic continuation.

For any representation $\vartheta$ of $\GL_{2k}$, let $\vartheta^*$ be the representation on the same space of $\vartheta$, defined by $\vartheta^*(h)=\vartheta(h^*)$, where $h^*=J_{2k}{}^tg^{-1}J_{2k}$.
Denote $\widetilde{W}_{f_{W_1(\tau)\otimes W_1(\tau')}}(h,s)=W_{f_{W_1(\tau)\otimes W_1(\tau')}}(J_{2k}{}^th^{-1},s)$. The Whittaker model of
$\mathrm{I}(W_1(\tau),W_1(\tau'),s)^*$ consists of the functions $\widetilde{W}_{f_{W_1(\tau)\otimes W_1(\tau')}}$ (see \cite[\S~2.1]{JPSS}), again defined by analytic continuation. Also denote
\begin{align*}
[t,v]=\diag(I_{k},\begin{pmatrix}1&&\\&I_{k-2}&\\-t&v&1\end{pmatrix}),\qquad
w'=\diag(I_{k},\left(\begin{smallmatrix}&I_{k-1}\\1\end{smallmatrix}\right)).
\end{align*}
Now consider the following integral
\begin{align}\label{int:after functional equation to compare}
&\Psi(\zeta,s,f_{W_1(\tau)\otimes W_1(\tau')})=\int\limits_{F^*}\int\limits_{F^{k-2}}\int\limits_{F} W_{f_{W_1(\tau)\otimes W_1(\tau')}}(\diag(I_{2k-1},a)[t,v]
w',s)\pi^{-1}(a)|a|^{\zeta+k-1}\,dt\,dv\,d^*a.
\end{align}
Our first step is to show that for any fixed $s$, this integral is absolutely convergent in a left half-plane $\Real(\zeta)\ll0$, and admits meromorphic continuation to (a function in) $\C(q^{-\zeta})$. Observe that the integrand vanishes unless $t$ and $v$ belong to compact subgroups, independent of $a$: if
\begin{align*}
\mathrm{e}(\epsilon)={}^{{w'}^{-1}}\left(\begin{smallmatrix}I_{k-1}\\&1&&\epsilon\\&&I_{k-1}&\\&&&1\end{smallmatrix}\right)
\end{align*}
and $\epsilon$ is sufficiently small, depending on $f_{W_1(\tau)\otimes W_1(\tau')}$,
\begin{align*}
W_{f_{W_1(\tau)\otimes W_1(\tau')}}(\diag(I_{2k-1},a)[t,v]w',s)&=
W_{f_{W_1(\tau)\otimes W_1(\tau')}}(\diag(I_{2k-1},a)[t,v]w'\mathrm{e}(\epsilon),s)
\\&=\psi^{-1}(\epsilon t)W_{f_{W_1(\tau)\otimes W_1(\tau')}}(\diag(I_{2k-1},a)[t,v]w',s).
\end{align*}
Hence if $t$ is large, the Whittaker function must vanish. Therefore \eqref{int:after functional equation to compare} becomes a finite sum of integrals
\begin{align}\label{int:after functional equation to compare 2}
&\int\limits_{F^*}\int\limits_{F^{k-2}}W^{(i)}_{f_{W_1(\tau)\otimes W_1(\tau')}}(\diag(I_{2k-1},a)[0,v]
w',s)\pi^{-1}(a)|a|^{\zeta+k-1}\,dv\,d^*a,
\end{align}
where $W^{(i)}_{f_{W_1(\tau)\otimes W_1(\tau')}}$ are Whittaker functions, right-translations of $W_{f_{W_1(\tau)\otimes W_1(\tau')}}$. Next let
\begin{align*}
\mathrm{e}(\epsilon_1,\ldots,\epsilon_{k-2})={}^{{w'}^{-1}}\diag(I_{k},\begin{pmatrix}1&&&&\epsilon_1\\&\ddots&&&\vdots\\&&1&0&\epsilon_{k-2}\\&&&1&0\\&&&&1\end{pmatrix}),
\end{align*}
and for a fixed $1\leq i\leq k-2$ choose $\epsilon_i$ sufficiently small and $\epsilon_j=0$ for $j\ne i$. Then as with $t$ above, using $\mathrm{e}(\epsilon_1,\ldots,\epsilon_{k-2})$ (instead of $\mathrm{e}(\epsilon)$) we handle the coordinates of $v$ from left to right, starting with $i=1$ up to $i=k-2$, and see that \eqref{int:after functional equation to compare 2} becomes a finite sum of integrals
\begin{align}
&\int\limits_{F^*}W^{(j)}_{f_{W_1(\tau)\otimes W_1(\tau')}^0}(\diag(I_{2k-1},a),s)\pi^{-1}(a)|a|^{\zeta+k-1}\,d^*a\notag
\\&=\int\limits_{F^*}W^{(j)}_{f_{W_1(\tau)\otimes W_1(\tau')}^0}(\diag(I_{2k-1},a^{-1}),s)\pi(a)|a|^{1-k-\zeta}\,d^*a\notag
\\&=\int\limits_{F^*}\widetilde{W}^{(j)}_{f_{W_1(\tau)\otimes W_1(\tau')}^0}(\diag(a,I_{2k-1})J_{2k},s)\pi(a)|a|^{1-k-\zeta}\,d^*a.
\label{eq:int with W tilda}
\end{align}
But each of these is a Rankin--Selberg integral for $\GL_{2k}\times\GL_1$ and its convergence for
$\Real(\zeta)\ll0$ and continuation to $\C(q^{-\zeta})$ was proved in \cite{JPSS} (in fact, already in \cite{GJ}).

In its domain of convergence and in general by meromorphic continuation, $\Psi(\zeta,s,f_{W_1(\tau)\otimes W_1(\tau')})$ satisfies equivariance properties similar to
\eqref{eq:equivariance for GL1 GL1} but with $\pi$ replaced by $|~|^{-\zeta}\pi$:
\begin{align*}
&\Psi(\zeta,s,(g_1,g_2)u\cdot f_{W_1(\tau)\otimes W_1(\tau')})=\psi_U^{-1}(u)|g_2|^{-\zeta}\pi(g_2)|g_1|^{\zeta}\pi^{-1}(g_1)\Psi(\zeta,s,f_{W_1(\tau)\otimes W_1(\tau')}).
\end{align*}
Then for $\zeta=0$, the meromorphic continuation of $\Psi(\zeta,s,f_{W_1(\tau)\otimes W_1(\tau')})$ belongs to
\eqref{eq:space lemma uniqueness GL} and by Lemma~\ref{lemma:uniqueness}, it is proportional to
the meromorphic continuation of $Z(s,f_{W_1(\tau)\otimes W_1(\tau')})$.

\begin{claim}\label{claim:justification}
Let $\Real(s)\ll0$. Then
\begin{align}\label{int:after functional equation}
&\gamma(k s+1/2,\pi^{-1}\times\tau,\psi)
Z(s,f_{W_1(\tau)\otimes W_1(\tau')})=\Psi(0,s,f_{W_1(\tau)\otimes W_1(\tau')}).
\end{align}
Here $\gamma(k s+1/2,\pi^{-1}\times\tau,\psi)$ is the Rankin--Selberg $\gamma$-factor of $\pi^{-1}\times\tau$ (\cite{JPSS}).
\end{claim}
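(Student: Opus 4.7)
The plan is to invoke Lemma~\ref{lemma:uniqueness} to reduce the identity to a single scalar, and then evaluate both sides on an explicit test section to identify that scalar with the Rankin--Selberg $\gamma$-factor. Both $Z(s,\cdot)$ and $\Psi(0,s,\cdot)$ are meromorphic families of $\GL_1\times\GL_1$-equivariant functionals on $\mathrm{I}(W_1(\tau),W_1(\tau'),s)$: the equivariance of $Z$ is \eqref{eq:equivariance for GL1 GL1}, and at $\zeta=0$ the twists $|g_i|^{\pm\zeta}$ in the equivariance of $\Psi$ trivialize, so both descend to the one-dimensional Hom space of Lemma~\ref{lemma:uniqueness}. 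Hence there exists a meromorphic scalar $c(s)$ with $\Psi(0,s,f)=c(s)\,Z(s,f)$ for every section $f$, and the task becomes to prove $c(s)=\gamma(ks+1/2,\pi^{-1}\times\tau,\psi)$.

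To identify $c(s)$, I would evaluate both integrals on a carefully chosen test section adapted to the Whittaker realization. Using the identifications $W_1(\tau)\cong\tau$ and $W_1(\tau')\cong\tau'$, take $f$ supported on the open Bruhat cell $P\delta_0U_P$, whose restriction to this cell factorizes as $m\delta_0u_P\mapsto W_\tau(m_1)W_{\tau'}(m_2)\phi(u_P)$, where $m=\mathrm{diag}(m_1,m_2)\in M_P$, $W_\tau$ and $W_{\tau'}$ are fixed Whittaker functions, and $\phi\in\mathcal{S}(F^k)$ is a Schwartz--Bruhat function encoding the $U_P$-coordinates. For such $f$, unfolding the $(y,z)$-integration in $Z(s,f)$ together with the character $\psi(x_1)$ and the left-equivariance of $W_\tau$ collapses the integral to the classical Jacquet--Piatetski-Shapiro--Shalika zeta integral $\Psi_{\mathrm{JPSS}}(W_\tau,\phi,ks+1/2)$ for $\pi^{-1}\times\tau$ from \cite{JPSS}, multiplied by a constant depending on $W_{\tau'}$. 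For $\Psi(0,s,f)$, the additional $u$-integration inside the Jacquet integral defining $W_f$ pairs against $\psi(u)$, and combined with the $(t,v)$-integration and the Weyl element $w'$ it effects a Fourier transform on the $U_P$-coordinate carried by $\phi$; the reduction yields the dual integral $\widetilde{\Psi}_{\mathrm{JPSS}}(W_\tau,\widehat{\phi},ks+1/2)$ with the same $W_{\tau'}$-constant. Applying the Rankin--Selberg local functional equation of \cite{JPSS}, $\widetilde{\Psi}_{\mathrm{JPSS}}(W_\tau,\widehat{\phi},s)=\gamma(s,\pi^{-1}\times\tau,\psi)\,\Psi_{\mathrm{JPSS}}(W_\tau,\phi,s)$, and comparing the two evaluations of $c(s)$ then forces $c(s)=\gamma(ks+1/2,\pi^{-1}\times\tau,\psi)$.

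The hard part will be the explicit reduction in the second step, in particular for $\Psi(0,s,f)$: tracking how the $\psi(u)$-twisted $u$-integral inside $W_f$ interacts with the left translation by $[t,v]w'$ and the $(t,v)$-integration so as to produce precisely a Fourier transform on the variables carried by $\phi$, with the correct normalization and shift in the complex parameter. Lemma~\ref{lemma:uniqueness} already delivers the existence of the proportionality, and convergence of $\Psi$ in $\zeta$ has been verified in the reduction to \eqref{eq:int with W tilda}, so all of the substantive content lies in this bookkeeping, together with the verification that the test section can be chosen so that both $\Psi_{\mathrm{JPSS}}(W_\tau,\phi,ks+1/2)$ and $\widetilde{\Psi}_{\mathrm{JPSS}}(W_\tau,\widehat{\phi},ks+1/2)$ are nonzero near the $s$ under consideration, which justifies comparing the resulting scalars.
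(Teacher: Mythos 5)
Your overall strategy is the same as the paper's: use Lemma~\ref{lemma:uniqueness} to reduce the claim to identifying one proportionality constant, evaluate both functionals on a special section built from a Whittaker function supported near the open cell, and conclude with the $\GL_1\times\GL_k$ local functional equation of \cite{JPSS}. The problem is in the reduction step, which you defer as ``bookkeeping'' but which is where the actual content of the proof lies, and your description of it is not correct as stated.

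First, the target integrals are misidentified. For $\GL_1\times\GL_k$ with $k>1$, the Rankin--Selberg integrals and their functional equation do not involve a Schwartz--Bruhat function and its Fourier transform (that is the equal-rank $\GL_k\times\GL_k$ theory, and in any case $U_P=V_{(k,k)}$ has $k^2$ coordinates, not $k$). What the reduction actually yields is the pair \eqref{int:lhs of RS for n=1 and k>1} and \eqref{int:rhs of RS for n=1 and k>1}: the integral $\int_{F^*}W(\diag(a,I_{k-1}))\pi^{-1}(a)|a|^{ks+1/2-(k-1)/2}\,d^*a$ and its shifted dual carrying an extra unipotent integration over $F^{k-2}$, related by $\gamma(ks+1/2,\pi^{-1}\times\tau,\psi)$ (up to $\pi(-1)^{k-1}=1$, as $\pi$ is unramified); see \cite[p.~70]{Soudry} for this form of \cite[Theorem~2.7]{JPSS}. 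No Fourier transform of a test function appears in the final comparison.

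Second, and more substantively, a direct unfolding of $Z(s,f)$ on an open-cell section does not collapse to a Rankin--Selberg integral: in $Z$ the $(k,k+1)$-coordinate of $V_{(k,k)}$ is frozen at $1$ (the factor $\delta_1$) and is missing from the $U_0$-integration, so the full Jacquet integral over $V_{(k,k)}$ defining the Whittaker function never appears on the $Z$-side, and conjugating the torus variable $a$ past $\delta_1$ rescales that frozen coordinate. The paper's proof is devoted precisely to overcoming this: it inserts auxiliary $du\,dt$ integrations detecting $u=1$, smooths the section along the one-parameter subgroup $\jmath'(r)$ against a one-variable Schwartz--Bruhat function $\phi$ and uses Fourier inversion, justifies the interchange of integration order by the convergence estimate \eqref{int:justification 3}, and then applies Lemma~\ref{lemma:uniqueness} a second time to establish $Z=Z'$ in \eqref{eq:Z and Z'}; only after this can the special section $\phi f_{W_1(\tau)\otimes W_1(\tau')}$ be evaluated to give \eqref{int:lhs of RS for n=1 and k>1}. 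Your proposal contains no substitute for this mechanism, so the evaluation of $Z$ on your test section, and hence the identification $c(s)=\gamma(ks+1/2,\pi^{-1}\times\tau,\psi)$, is not yet justified. The $\Psi$-side is indeed the easier half, but its outcome is the shifted dual integral above rather than a Fourier transform acting on your test function.
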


The proof is given below.
Since $\gamma(k s+1/2,\pi^{-1}\times\tau,\psi)\in\C(q^{-s})$, we deduce that $\Psi(0,s,f_{W_1(\tau)\otimes W_1(\tau')})$ admits meromorphic continuation to $\C(q^{-s})$. Then \eqref{int:after functional equation} immediately holds as an identity in $\C(q^{-s})$.
Now we can compute $Z(s,f_{W_1(\tau)\otimes W_1(\tau')}^0)$ using $\Psi(0,s,f_{W_1(\tau)\otimes W_1(\tau')}^0)$. For the computation of the latter, we start with $\Psi(\zeta,s,f_{W_1(\tau)\otimes W_1(\tau')}^0)$ for $\Real(\zeta)\ll0$, then take $\zeta=0$.
Since $\tau$ and $\tau'$ are irreducible, we can also take
$s$ such that \eqref{ind s} is irreducible (e.g., $\Real(s)\ll0$).

Since $f_{W_1(\tau)\otimes W_1(\tau')}^0$ is unramified, it is invariant on the right with respect to $w'$, and
\begin{align*}
&\Psi(\zeta,s,f_{W_1(\tau)\otimes W_1(\tau')}^0)=\int\limits_{F^*}\int\limits_{F^{k-2}}\int\limits_{F} W_{f_{W_1(\tau)\otimes W_1(\tau')}^0}(\diag(I_{2k-1},a)[t,v],s)\pi^{-1}(a)|a|^{\zeta+k-1}\,dt\,dv\,d^*a.\notag
\end{align*}
The $dtdv$-integration can be computed by arguing as above: using conjugations by ${}^{w'}\mathrm{e}(\epsilon)$, now with $\epsilon\in\mathcal{O}^*$ we see that the integrand vanishes unless $t\in\mathcal{O}$, and then since
$f_{W_1(\tau)\otimes W_1(\tau')}^0$ is unramified, the integral $dt$ equals $1$. Similarly we show that the coordinates of $v$ belong in $\mathcal{O}$, and the integral over these coordinates equals $1$. Thus
\begin{align*}
\Psi(\zeta,s,f_{W_1(\tau)\otimes W_1(\tau')}^0)&=
\int\limits_{F^*}\widetilde{W}_{f_{W_1(\tau)\otimes W_1(\tau')}^0}(\diag(a,I_{2k-1}),s)\pi(a)|a|^{1-k-\zeta}\,d^*a
\end{align*}
(see \eqref{eq:int with W tilda}).

Next observe that since $f_{W_1(\tau)\otimes W_1(\tau')}^0$ is normalized and unramified and
\eqref{ind s} is irreducible (by our choice of $s$), by the Casselman--Shalika formula \cite{CS2} and \cite{BZ2,CS1},
\begin{align*}
\widetilde{W}_{f_{W_1(\tau)\otimes W_1(\tau')}^0}(I_{2k},s)=W_{f_{W_1(\tau)\otimes W_1(\tau')}^0}(I_{2k},s)=L(2k s+1,\tau\times{\tau'}^{\vee})^{-1}\ne0.
\end{align*}
Also
\begin{align*}
\mathrm{I}(W_1(\tau),W_1(\tau'),s)^*=\Ind_{P_{(k,k)}}^{\GL_{2k}}(|\det|^{k s}{\tau'}^{\vee}\otimes|\det|^{-k s}\tau^{\vee})
\end{align*}
is irreducible unramified and generic.
Then by \cite[Proposition~2.3]{JS1} (see also \cite[\S~6]{GJ}),
\begin{align*}
\Psi(\zeta,s,f_{W_1(\tau)\otimes W_1(\tau')}^0)&=\frac{L(-\zeta-ks+1/2,\pi\times\tau^{\vee})L(-\zeta+ks+1/2,\pi\times{\tau'}^{\vee})}{L(2k s+1,\tau\times{\tau'}^{\vee})}.
\end{align*}
Then we can take $\zeta=0$ in this equality.
Since also by \cite{GJ,JS1,JPSS},
\begin{align*}
\gamma(ks+1/2,\pi^{-1}\times\tau,\psi)^{-1}
=\frac{L(ks+1/2,\pi^{-1}\times\tau)}{L(-ks+1/2,\pi\times\tau^{\vee})},
\end{align*}
Claim~\ref{claim:justification} implies
\begin{align*}
Z(s,\omega^0,f_{W_1(\tau)\otimes W_1(\tau')}^0)&=
Z(s,f_{W_1(\tau)\otimes W_1(\tau')}^0)\\&=
\gamma(ks+1/2,\pi^{-1}\times\tau,\psi)^{-1}\Psi(0,s,f_{W_1(\tau)\otimes W_1(\tau')}^0)\\&=
\frac{L(k s+1/2,\pi^{-1}\times\tau)L(k s+1/2,\pi\times{\tau'}^{\vee})}{L(2k s+1,\tau\times{\tau'}^{\vee})}.
\end{align*}
This completes the proof of the proposition.

\begin{proof}[Proof of Claim~\ref{claim:justification}]
We start with the left hand side of \eqref{int:after functional equation}. We take $\Real(s)\gg0$, where it is absolutely convergent.
Put
\begin{align*}
\jmath(t)=\diag(I_{k},\left(\begin{smallmatrix}1&-t\\&1\end{smallmatrix}\right),I_{k-2}).
\end{align*}
For fixed $u,t\in F$,
\begin{align*}
&\int f_{W_1(\tau)\otimes W_1(\tau')}(\delta_0[\begin{smallmatrix}y&z\\u&x\end{smallmatrix}]\jmath(t),s)\psi(x_1)\,dx\,dy\,dz\\
&=\psi((u-1)t)\int f_{W_1(\tau)\otimes W_1(\tau')}(\delta_0[\begin{smallmatrix}y&z\\u&x\end{smallmatrix}],s)\psi(x_1)\,dx\,dy\,dz.
\end{align*}
Since $\int_F\psi((u-1)t)dt=0$ unless $u=1$, when we apply this to $Z(s,f_{W_1(\tau)\otimes W_1(\tau')})$ we obtain
\begin{align}\label{int:justification 1}
\int\limits_{F^*}\int f_{W_1(\tau)\otimes W_1(\tau')}(\delta_0[\begin{smallmatrix}y&z\\u&x\end{smallmatrix}]\jmath(t)\diag(I_{k},a,I_{k-1}),s)
\psi(x_1)\pi^{-1}(a)\,dx\,dy\,dz\,dt\,du\,d^*a.
\end{align}

For a Schwartz--Bruhat function $\phi$ on $F$, define
\begin{align*}
\phi f_{W_1(\tau)\otimes W_1(\tau')}(h,s)=
\int\limits_Ff_{W_1(\tau)\otimes W_1(\tau')}(h\jmath'(r),s)\phi(r)\,dr,\qquad
\jmath'(r)=[\begin{smallmatrix}0&0\\r&0\end{smallmatrix}].
\end{align*}
Also let $\widehat{\phi}$ be the Fourier transform of $\phi$, defined by $\widehat{\phi}(t)=\int_F\phi(r)\psi^{-1}(rt)dr$.

Formally, we can change the order of integration $dtdu\mapsto dudt$ and consider the integral
\begin{align}\label{int:justification 2}
&Z'(s,f_{W_1(\tau)\otimes W_1(\tau')})\\&=\notag
\int\limits_{F^*}\int f_{W_1(\tau)\otimes W_1(\tau')}(\delta_0[\begin{smallmatrix}y&z\\u&x\end{smallmatrix}]\jmath(t)\diag(I_{k},a,I_{k-1}),s)
\psi(x_1)\pi^{-1}(a)\,dx\,dy\,dz\,du\,dt\,d^*a.
\end{align}
The convergence of \eqref{int:justification 2} is in the sense that
\begin{align}\label{int:justification 3}
\int\limits_{F^*}\int\limits_F \left|\int f_{W_1(\tau)\otimes W_1(\tau')}(\delta_0[\begin{smallmatrix}y&z\\u&x\end{smallmatrix}]\jmath(t)\diag(I_{k},a,I_{k-1}),s)
\psi(x_1)\pi^{-1}(a)\,dx\,dy\,dz\,du\right|\,dt\,d^*a<\infty.
\end{align}
To see this, note that since $f_{W_1(\tau)\otimes W_1(\tau')}$ is locally constant on the right, one can always choose $\phi$ such that
$\phi f_{W_1(\tau)\otimes W_1(\tau')}=f_{W_1(\tau)\otimes W_1(\tau')}$. Hence the left hand side of \eqref{int:justification 3} becomes
\begin{align}\label{int:justification 4}
\int\limits_{F^*}\int\limits_F \left|\int f_{W_1(\tau)\otimes W_1(\tau')}(\delta_0[\begin{smallmatrix}y&z\\u&x\end{smallmatrix}]\jmath(t)\diag(I_{k},a,I_{k-1})\jmath'(r),s)\phi(r)
\psi(x_1)\pi^{-1}(a)\,dr\,dx\,dy\,dz\,du\right|\,dt\,d^*a.
\end{align}
We can change the order of integration: first integrate over $x, y, z$ and $u$, and then over $r$ because $\phi$ is compactly supported and
the integral over $[\begin{smallmatrix}y&z\\u&x\end{smallmatrix}]$ is absolutely convergent (because $\Real(s)\gg0$, see e.g., \cite[\S~4.4--\S~4.6]{Soudry} and \cite[\S~11.15, Lemma~1]{Soudry}).
Therefore we can conjugate $\jmath'(r)$ to the left and after changing variables in $x_1$ and $u$, obtain $\psi^{-1}(a^{-1}rt)$. We then integrate first over $r$ to obtain $\widehat{\phi}(a^{-1}t)$, and change variables $t\mapsto at$. Then
$\jmath(t)\mapsto \jmath(at)$ and $\jmath(at)\diag(I_{k},a,I_{k-1})=\diag(I_{k},a,I_{k-1})\jmath(t)$. Integral~\eqref{int:justification 4} equals
\begin{align*}
\int\limits_{F^*}\int\limits_F \left|\int f_{W_1(\tau)\otimes W_1(\tau')}(\delta_0[\begin{smallmatrix}y&z\\u&x\end{smallmatrix}]\diag(I_{k},a,I_{k-1})\jmath(t),s)\widehat{\phi}(t)
\psi(x_1)\pi^{-1}(a)|a|\,dx\,dy\,dz\,du\right|\,dt\,d^*a.
\end{align*}
The $dt$-integration produces a finite sum of integrals, and each is bounded in $\Real(s)\gg0$ (see \cite[\S~4.4--\S~4.6]{Soudry}). This proves \eqref{int:justification 3}.

Then for $\Real(s)\gg0$, integral~\eqref{int:justification 2} also belongs to \eqref{eq:space lemma uniqueness GL} hence by Lemma~\ref{lemma:uniqueness}, it is proportional to \eqref{int:justification 1}. The proportionality factor is $1$. Indeed, repeating the manipulations above used for the proof of \eqref{int:justification 3},
\begin{align*}
&Z'(s,\phi f_{W_1(\tau)\otimes W_1(\tau')})
\\&=
\int\limits_{F^*}\int\limits_F \int f_{W_1(\tau)\otimes W_1(\tau')}(\delta_0[\begin{smallmatrix}y&z\\u&x\end{smallmatrix}]\jmath(at)\diag(I_{k},a,I_{k-1}),s)\widehat{\phi}(t)
\psi(x_1)\pi^{-1}(a)|a|\,dx\,dy\,dz\,du\,dt\,d^*a.
\end{align*}
Changing $u\mapsto u+1$, conjugating $\jmath(at)$ to the left and changing variables in $x_1$: $x_1\mapsto x_1+(u+1)at$ and in $z$, and since
$f_{W_1(\tau)\otimes W_1(\tau')}(\delta_0\jmath(at)h,s)=\psi^{-1}(at)f_{W_1(\tau)\otimes W_1(\tau')}(\delta_0h,s)$,
we obtain
\begin{align*}
&\int\limits_{F^*}\int\limits_F \int f_{W_1(\tau)\otimes W_1(\tau')}(\delta_0[\begin{smallmatrix}y&z\\u+1&x\end{smallmatrix}]\diag(I_{k},a,I_{k-1}),s)\widehat{\phi}(t)\psi(aut)
\psi(x_1)\pi^{-1}(a)|a|\,dx\,dy\,dz\,du\,dt\,d^*a.
\end{align*}
Then integrating first over $t$ and since $\int_F\widehat{\phi}(t)\psi(aut)dt=\phi(au)$ by the Fourier inversion formula, the last integral equals
\begin{align*}
&\int\limits_{F^*}\int\limits_F \int f_{W_1(\tau)\otimes W_1(\tau')}(\delta_0[\begin{smallmatrix}y&z\\u+1&x\end{smallmatrix}]\diag(I_{k},a,I_{k-1}),s)\phi(au)
\psi(x_1)\pi^{-1}(a)|a|\,dx\,dy\,dz\,du\,d^*a.
\end{align*}
Noticing that $[\begin{smallmatrix}y&z\\u+1&x\end{smallmatrix}]=[\begin{smallmatrix}y&z\\1&x\end{smallmatrix}]\jmath'(u)$ and
$\jmath'(u)\diag(I_{k},a,I_{k-1})=\diag(I_{k},a,I_{k-1})\jmath'(au)$, and changing $u\mapsto a^{-1}u$, we arrive at
$Z(s,\phi f_{W_1(\tau)\otimes W_1(\tau')})$. Therefore in $\C(q^{-s})$,
\begin{align}\label{eq:Z and Z'}
Z(s,f_{W_1(\tau)\otimes W_1(\tau')})=Z'(s,f_{W_1(\tau)\otimes W_1(\tau')}).
\end{align}

Let $W\in W_1(\tau)$ and choose $f_{W_1(\tau)\otimes W_1(\tau')}$ such that $\delta_0\cdot f_{W_1(\tau)\otimes W_1(\tau')}$ is right-invariant by a small neighborhood of the identity $\mathcal{N}$ in $\GL_{2k}$, supported in $P\mathcal{N}$, and such that for all $a\in\GL_k$, $\delta_0\cdot f_{W_1(\tau)\otimes W_1(\tau')}(\diag(a,I_k),s)=|\det a|^{k(s+1/2)}W(a)$.
Now take a Schwartz--Bruhat function $\phi$ on $F$ such that for all $s$ and $h\in \GL_{2k}$,
\begin{align*}
\int\limits_Ff_{W_1(\tau)\otimes W_1(\tau')}(h\jmath(t),s)\widehat{\phi}(t)\,dt=f_{W_1(\tau)\otimes W_1(\tau')}(h,s).
\end{align*}
Our choice of data for the computation is now the section
$\phi f_{W_1(\tau)\otimes W_1(\tau')}$.
Plugging this section into \eqref{int:justification 2}, we see that $Z'(s,\phi f_{W_1(\tau)\otimes W_1(\tau')})$ equals
\begin{align*}
&\int\limits_{F^*}\int f_{W_1(\tau)\otimes W_1(\tau')}(\delta_0[\begin{smallmatrix}y&z\\u&x\end{smallmatrix}]\diag(I_{k},a,I_{k-1})\jmath(t)\jmath'(r),s)\phi(r)
\psi(x_1)\pi^{-1}(a)|a|\,dr\,dx\,dy\,dz\,du\,dt\,d^*a.
\end{align*}
We can change the order of integration: first integrate over $x, y, z$ and $u$, and then over $r$ because $\phi$ is compactly supported.
Therefore we can conjugate $\jmath'(r)$ to the left and after changing variables in $x_1$ and $u$, obtain $\psi^{-1}(rt)$. Then we integrate first over $r$ to obtain $\widehat{\phi}(t)$. Now integrate over $t$, and by our choice of $\widehat{\phi}$ obtain
\begin{align*}
&\int\limits_{F^*}\int f_{W_1(\tau)\otimes W_1(\tau')}(\delta_0[\begin{smallmatrix}y&z\\u&x\end{smallmatrix}]\diag(I_{k},a,I_{k-1}),s)
\psi(x_1)\pi^{-1}(a)|a|\,dx\,dy\,dz\,du\,d^*a.
\end{align*}
Conjugate $\diag(I_{k},a,I_{k-1})$ to the left. For our choice of $f_{W_1(\tau)\otimes W_1(\tau')}$ we see that the
integrand vanishes unless the coordinates of $[\begin{smallmatrix}y&z\\u&x\end{smallmatrix}]$ are small. Thus
\begin{align*}
Z'(s,\phi f_{W_1(\tau)\otimes W_1(\tau')})=\int\limits_{F^*}W(\diag(a,I_{k-1}))\pi^{-1}(a)|a|^{ks+1/2-(k-1)/2}\,d^*a,
\end{align*}
which is the Rankin--Selberg integral for $\GL_1\times\GL_k$ and $\pi^{-1}\times\tau_0$ (\cite[\S~2.4(3)]{JPSS} with $j=0$). This integral is absolutely convergent for $\Real(s)\gg0$, and admits meromorphic continuation to $\C(q^{-s})$. Together with \eqref{eq:Z and Z'} we deduce, in $\C(q^{-s})$ and in particular when $\Real(s)\ll0$,
\begin{align}\label{int:lhs of RS for n=1 and k>1}
Z(s,f_{W_1(\tau)\otimes W_1(\tau')})=\int\limits_{F^*}W(\diag(a,I_{k-1}))\pi^{-1}(a)|a|^{ks+1/2-(k-1)/2}\,d^*a.
\end{align}

For the right hand side of \eqref{int:after functional equation}, since
\begin{align*}
W_{\phi f_{W_1(\tau)\otimes W_1(\tau')}}(h,s)=\int\limits_FW_{f_{W_1(\tau)\otimes W_1(\tau')}}(h\jmath'(r),s)\phi(r)\,dr,
\end{align*}
a similar (but simpler) computation shows, for $\Real(\zeta)\ll0$,
\begin{align}\label{int:rhs of RS for n=1 and k>1 !}
\Psi(\zeta,s,\phi f_{W_1(\tau)\otimes W_1(\tau')})=
\int\limits_{F^{k-2}}\int\limits_{F^*}W(\begin{pmatrix}0&1&0\\0&0&I_{k-2}\\a&0&v\end{pmatrix})
\pi^{-1}(a)|a|^{\zeta+ks+1/2-(k-1)/2}\,d^*a\,dv.
\end{align}
This is again a Rankin--Selberg integral, now in the complex parameter $\zeta+ks$, which admits
meromorphic continuation to $\C(q^{-\zeta-ks})$ and is absolutely convergent when $\Real(s)\ll0$ for any $\zeta$. Hence we can take $\zeta=0$ on the right hand side of \eqref{int:rhs of RS for n=1 and k>1 !} and obtain
\begin{align*}
\int\limits_{F^{k-2}}\int\limits_{F^*}W(\begin{pmatrix}0&1&0\\0&0&I_{k-2}\\a&0&v\end{pmatrix})\pi^{-1}(a)|a|^{ks+1/2-(k-1)/2}\,d^*a\,dv.
\end{align*}
Therefore when we take $\zeta=0$ on the left hand side of \eqref{int:rhs of RS for n=1 and k>1 !}, when $\Real(s)\ll0$,
\begin{align}\label{int:rhs of RS for n=1 and k>1}
\Psi(0,s,\phi f_{W_1(\tau)\otimes W_1(\tau')})=
\int\limits_{F^{k-2}}\int\limits_{F^*}W(\begin{pmatrix}0&1&0\\0&0&I_{k-2}\\a&0&v\end{pmatrix})
\pi^{-1}(a)|a|^{ks+1/2-(k-1)/2}\,d^*a\,dv.
\end{align}
Finally \eqref{int:lhs of RS for n=1 and k>1} and \eqref{int:rhs of RS for n=1 and k>1} are related by $\gamma(s,\pi^{-1}\times\tau_0,\psi)\pi(-1)^{k-1}$ (see \cite[p.~70]{Soudry} for this version of \cite[Theorem~2.7]{JPSS}), and $\pi(-1)=1$.
\end{proof}
\begin{remark}
Alternatively, one can replace $\pi$ by $|~|^{-\zeta}\pi$ throughout this section. Lemma~\ref{lemma:uniqueness} is then valid outside a finite set of $q^{-s}$ and $q^{-\zeta}$,
Bernstein's continuation principle will imply that the integrals admit continuation to $\C(q^{-s},q^{-\zeta})$, and
Claim~\ref{claim:justification} provides an identity in $\C(q^{-s},q^{-\zeta})$.
\end{remark}
\begin{proof}[Proof of Lemma~\ref{lemma:uniqueness}]
The Jacquet module $J_{U,\psi_U^{-1}}(\mathrm{I}(W_1(\tau),W_1(\tau'),s))$ is a representation of the product $\GL_1\times\GL_1$, but since $\{(g_1,g_1):g_1\in F^*\}=C_{2k}$ (the center of $\GL_{2k}$), which acts trivially on $W_1(\tau)\otimes W_1(\tau')$ by our condition on $\tau$ and $\tau'$ (their central characters are inverses of one another), it is natural to restrict our attention to
one of the copies of $\GL_1$.

Identify $\GL_1$ with $\{(1,g_2):g_2\in F^*\}$, and in this manner regard $J_{U,\psi_U^{-1}}(\mathrm{I}(W_1(\tau),W_1(\tau'),s))$ as a representation of $\GL_1$.
It is enough to prove the statement for
\begin{align*}
\Hom_{\GL_1}(J_{U,\psi_U^{-1}}(\mathrm{I}(W_1(\tau),W_1(\tau'),s)),\pi).
\end{align*}
According to \cite[1.9 (b), (d)]{BZ2}, this space is isomorphic to
\begin{align}
&\Hom_{\GL_{2k}}(\mathrm{I}(W_1(\tau),W_1(\tau'),s),\Ind_{\GL_1U}^{\GL_{2k}}(\pi\otimes\psi_U^{-1}))\notag
\\&\label{bil}
\cong\Bil_{\GL_{2k}}(\ind_{\GL_1U}^{\GL_{2k}}{(\pi^{-1}\otimes\psi_U)},
\mathrm{I}(W_1(\tau),W_1(\tau'),s)).
\end{align}
Here $\Bil(\cdots)$ is the space of $\GL_{2k}$-equivariant bilinear forms and $\ind(\cdots)$ is the compact
induction.

For $h\in P\backslash \GL_{2k}/ \GL_1U$ (a finite set), put
\begin{align}\label{eq:hom h}
\Hom(h)=\Hom_{(\GL_1U)^h}({}^h(\pi^{-1}\otimes\psi_U)\otimes (W_1(\tau)\otimes W_1(\tau')\delta_{P}^{s}),\theta),
\end{align}
where $(\GL_1U)^h={}^h(\GL_1U)\cap P$; for a representation $\vartheta$ of
$\GL_1U$, ${}^h\vartheta$ is the
representation of $(\GL_1U)^h$ on the space of $\vartheta$ given by
${}^h\vartheta(x)=\vartheta({}^{h^{-1}}x)$; and
$\theta(x)=\delta_{\mathcal{C}(h)}(x,{}^{h^{-1}}x)\delta_{P}^{-1/2}(x)$, where
\begin{align*}
\mathcal{C}(h)=\{(x,{}^{h^{-1}}x):x\in (\GL_1U)^h\}<P\times \GL_1U
\end{align*}
 and
$\delta_{\mathcal{C}(h)}$ is the modulus character of $\mathcal{C}(h)$. To us, the only important properties of $\theta$ are that it is independent of $s$ and trivial on unipotent elements (being a modulus character). Also note that by definition, the space of the representation
on the left in $\Hom(h)$ is the space of $W_1(\tau)\otimes W_1(\tau')$.

According to the Bruhat theory (see e.g., \cite[Theorems~1.9.4 and 1.9.5]{Silb}, \cite[p.~48]{Soudry}), the space \eqref{bil} injects into the
semi-simplification
\begin{align*}
\bigoplus_{h\in P\backslash \GL_{2k}/ \GL_1U}\Hom(h).
\end{align*}
We may assume that a representative $h$ is either a permutation $w$ or $w\delta_1$, where
$\delta_1=[\begin{smallmatrix}0&0\\1&0\end{smallmatrix}]$, and recall
$\delta_0=\left(\begin{smallmatrix}&I_{k}\\I_{k}&\end{smallmatrix}\right)$. Put
$\kappa=\diag(I_{k-1},\left(\begin{smallmatrix}&1\\1\end{smallmatrix}\right),I_{k-1})$. Also write $h\sim h'$ if
$Ph\GL_1U=Ph'\GL_1U$.

First assume $w\not\sim\delta_0$ and $w\not\sim\delta_0\kappa$.
We claim that
\begin{align}\label{character nontrivial}
\psi_U|_{{}^{h^{-1}}V_{(k,k)}\cap U}\ne1.
\end{align}
Granted this, we can choose $u\in{{}^{h^{-1}}V_{(k,k)}\cap U}$ such that
$\psi_U(u)\ne1$. Then in \eqref{eq:hom h}, ${}^h(\pi^{-1}\otimes\psi_U)({}^hu)=\psi_U(u)\ne1$, and ${}^hu$ acts trivially on $W_1(\tau)\otimes W_1(\tau')$ (because ${}^hu\in V_{(k,k)}$). Thus the action of ${}^hu$ on the left in \eqref{eq:hom h} is nontrivial, while $\theta(u)=1$ on the right. This implies $\Hom(h)=0$.

We turn to prove \eqref{character nontrivial}. Note that ${}^hV_{(k,k)}={}^wV_{(k,k)}$.
Write $w=\left(\begin{smallmatrix}A_1&A_2\\A_3&A_4\end{smallmatrix}\right)$ with $A_i\in\Mat_k$.  Since in particular $w\ne\delta_0$, we can assume $A_1\ne0$. If the first column of $A_1$ is nonzero, let $i_0$ be the number of consecutive columns of $A_1$ starting from the first which are nonzero, so by definition $0<i_0\leq k$; if the first column of $A_1$ is zero, put $i_0=0$. Assume $i_0>0$. Then
we can write (perhaps after multiplying $w$ by a permutation from $M_P=M_{(k,k)}$)
\begin{align*}
w=\begin{pmatrix}I_{i_0}\\&0\\&I_{i_1}&\\&&\ddots\end{pmatrix},
\end{align*}
where the zero block above $I_{i_1}$ is the $(k-i_0)\times i_1$ zero matrix and $i_1\geq1$.

For any $y\in F$, let
$\jmath_{i,l}(y)\in N_{\GL_{2k}}$ be such that its $(i,l)$-th coordinate is $y$, and the remaining coordinates above the diagonal are zero. We need to show that for some $i,l$ such that $\jmath_{i,l}(y)\in V_{(k,k)}$, ${}^{h^{-1}}\jmath_{i,l}(y)$ belongs to $U$ and $\psi_U({}^{h^{-1}}\jmath_{i,l}(y))\ne1$.

This is clear if $i_0=k$ (then we can take $w=I_{2k}$), using $\jmath_{k,k+2}(y)$ (which commutes with $\delta_1$).
If $i_0=k-1$, either ${}^{h^{-1}}\jmath_{k-1,k+1}(y)=\jmath_{k-1,k}(y)$ if $h=w$, or when $h=w\delta_1$,
\begin{align*}
{}^{h^{-1}}\jmath_{k-1,k+1}(y)={}^{\delta_1^{-1}}\jmath_{k-1,k}(y)=\jmath_{k-1,k}(y)\jmath_{k-1,k+1}(y).
\end{align*}
In both cases we use $\jmath_{k-1,k+1}(y)$.
Also if $0<i_0<k-1$, ${}^{h^{-1}}\jmath_{i_0,k+1}(y)=\jmath_{i_0,i_0+1}(y)$. This verifies \eqref{character nontrivial} when $i_0>0$.

When $i_0=0$, we let $0<i_1<k$ be the number of consecutive columns of $A_1$, starting from the first, which are zero ($A_1\ne0$ whence $i_1<k$). Then we write
\begin{align*}
w=\begin{pmatrix}0&I_{i_2}\\\vdots&&\ddots\\0\\I_{i_1}\\&\ddots\end{pmatrix},
\end{align*}
where $I_{i_1}$ starts at the $(k+1,1)$-th coordinate, $i_2>0$ is the number of
consecutive nonzero columns of $\left(\begin{array}{cc}A_1&A_2\end{array}\right)$ starting from the $(i_1+1)$-th column, and $I_{i_2}$ begins at the $(1,i_1+1)$-th coordinate. Note that
$i_2\leq k$ and $i_1+i_2\leq 2k-1$ (because $i_1<k$).

Thus we can assume
\begin{align*}
w=\begin{pmatrix}0&I_{i_2}\\\vdots&0&\\0&\vdots\\I_{i_1}&0\\&0&I_{i_3}\\&&&\ddots\end{pmatrix},
\end{align*}
where $i_3\geq1$
and $I_{i_3}$ starts at the $(k+i_1+1,i_1+i_2+1)$-th coordinate.
By matrix multiplication,
\begin{align*}
{}^{w^{-1}}\jmath_{i_2,k+i_1+1}(y)=\jmath_{i_1+i_2,i_1+i_2+1}(y).
\end{align*}

If $i_1+i_2\geq k+2$ or $i_1+i_2\leq k-2$, then $\delta_1$ commutes with $\jmath_{i_1+i_2,i_1+i_2+1}(y)$ and $\psi_U$ is nontrivial on
$\jmath_{i_1+i_2,i_1+i_2+1}(y)$, hence we can take $\jmath_{i_2,k+i_1+1}(y)$. If $i_1+i_2=k+1$, then $i_2>1$ (since $i_1<k$) and ${}^{w^{-1}}\jmath_{i_2-1,k+i_1+1}(y)=\jmath_{k,k+2}(y)$, so that we can take $\jmath_{i_2-1,k+i_1+1}(y)$. When $i_1+i_2=k-1$,
${}^{w^{-1}}\jmath_{i_2,k+i_1+1}(y)=\jmath_{k-1,k}(y)$ and in both cases ($h=w$ or $h=w\delta_1$),
$\psi_U$ is nontrivial on ${}^{h^{-1}}\jmath_{i_2,k+i_1+1}(y)$ (as above, when $i_0=k-1$). If $i_1+i_2=k$ and $i_3\geq2$, then again ${}^{h^{-1}}\jmath_{i_2,k+i_1+2}(y)=\jmath_{k,k+2}(y)$.

The remaining case is $i_1+i_2=k$, in particular $i_2<k$, and $i_3=1$. In this case we further write
\begin{align*}
w=\begin{pmatrix}0&I_{i_2}&0\\\vdots&0&\vdots&I_{i_4}\\0&\vdots&&&\ddots\\I_{i_1}&0\\&0&1&0\\&&&0&I_{i_5}\\&&&&&\ddots\end{pmatrix},
\end{align*}
where $i_4>0$ (because $i_2<k$). If $i_5=0$, then $i_1=k-1$ whence $i_2=1$ and $i_4=k-1$, so that
\begin{align*}
w=\left(\begin{smallmatrix}&1\\&&&I_{k-1}\\I_{k-1}\\&&1\end{smallmatrix}\right)=\delta_0\kappa,
\end{align*}
contradicting our assumption ($w\not\sim\delta_0\kappa$). Therefore $i_5>0$. Then $\jmath_{i_2+i_4,k+i_1+2}(y)\in V_{(k,k)}$ because
$i_2+i_4\leq k$ (also $k+i_1+2\leq 2k$ since $k+i_1+1+i_5\leq 2k$),
${}^{w^{-1}}\jmath_{i_2+i_4,k+i_1+2}(y)=\jmath_{k+i_4+1,k+i_4+2}(y)$ which commutes with $\delta_1$ because $k+i_4+1\geq k+2$,
and $\psi_U(\jmath_{k+i_4+1,k+i_4+2}(y))\ne1$. In this case take $\jmath_{i_2+i_4,k+i_1+2}(y)$. This verifies \eqref{character nontrivial} when $i_0=0$, completing all cases.

There are now three possibilities remaining for $h$: $\delta_0$, $\delta_0\kappa$ or $\delta_0\delta_1$ (note that
$\delta_0\kappa\sim\delta_0\kappa\delta_1$), because we proved $\Hom(h)=0$ in all other cases.

Consider $h=\delta_0$. Then
\begin{align*}
(\GL_1U)^{\delta_0}=({}^{\delta_0}\GL_1)\ltimes(N_{\GL_{k}}\times N_{\GL_{k}}).
\end{align*}
Moreover, if we write $\diag(x,I_{2k-1})\diag(v,v')\in (\GL_1U)^{\delta_0}$ where $v,v'\in N_{\GL_k}$,
\begin{align}\label{eq:comp for delta0 uniqueness}
{}^{\delta_0}(\pi^{-1}\otimes\psi_U)(\diag(x,I_{2k-1})\diag(v,v'))=\pi^{-1}(x)\psi^{-1}(\sum_{i=1}^{k-1}v'_{i,i+1})
\psi^{-1}(\sum_{i=2}^{k-1}v_{i,i+1}).
\end{align}
In particular ${}^{\delta_0}\psi_U$ restricts to a degenerate character of the subgroup $\diag(N_{\GL_k},I_k)$ of $(\GL_1U)^{\delta_0}$.
Let $\mathcal{L}\in\Hom(\delta_0)$. For a pure tensor $\xi\otimes\xi'$ in the space of $W_1(\tau)\otimes W_1(\tau')$ and
$v\in V_{(1,k-1)}$, by \eqref{eq:comp for delta0 uniqueness},
\begin{align*}
\mathcal{L}(W_1(\tau)(v)\xi\otimes \xi')=
\mathcal{L}(W_1(\tau)\otimes W_1(\tau')(v,I_k)\xi\otimes\xi')=
\mathcal{L}(\xi\otimes\xi').
\end{align*}
Thus $\mathcal{L}$ factors through the Jacquet module $J_{V_{(1,k-1)}}(W_1(\tau))$ of $W_1(\tau)$ along $V_{(1,k-1)}$. Moreover for $x\in \GL_1$,
\begin{align*}
\mathcal{L}(\pi^{-1}(x)|x|^{ks}W_1(\tau)\otimes W_1(\tau')(\diag(x,I_{k-1}),I_k)\xi\otimes\xi')=
\theta(x)\mathcal{L}(\xi\otimes\xi').
\end{align*}
Hence
\begin{align}\label{eq:uniqueness for k=1 eq 1}
\mathcal{L}(W_1(\tau)(\diag(x,I_{k-1}))\xi\otimes \xi')= \pi(x)|x|^{-ks}\theta(x)\mathcal{L}(\xi\otimes\xi').
\end{align}
Since $\mathcal{L}$ must factor through one of the (finitely many) composition factors in a Jordan--H\"{o}lder series of
$J_{V_{(1,k-1)}}(W_1(\tau))$, $W_1(\tau)(\diag(x,I_{k-1}))\xi=\beta(x)\xi$ for some quasi-character $\beta$ of $F^*$,
which belongs to a finite set of characters and is independent of $s$. We deduce
\begin{align*}
\mathcal{L}(\xi\otimes \xi')= \beta^{-1}(x)\pi(x)|x|^{-ks}\theta(x)\mathcal{L}(\xi\otimes\xi').
\end{align*}
Now if $\mathcal{L}$ is nonzero, it is nonzero on some $\xi\otimes \xi'$, which may depend on $s$, but then
\begin{align*}
|x|^{ks}=\beta^{-1}(x)\pi(x)\theta(x),\qquad\forall x\in F^*.
\end{align*}
This equality can hold for at most finitely many values of $q^{-s}$. Therefore $\mathcal{L}=0$ and
$\Hom(\delta_0)$ vanishes outside finitely many values of $q^{-s}$.

Assume $h=\delta_0\kappa$. In this case
$(\GL_1U)^{\delta_0\kappa}={}^{\delta_0\kappa}\GL_1\ltimes(N_{\GL_{k}}\times N_{\GL_{k}})$ and \eqref{eq:comp for delta0 uniqueness} becomes
\begin{align*}
{}^{\delta_0\kappa}(\pi^{-1}\otimes\psi_U)(\diag(I_{2k-1},x)\diag(v,v'))=\pi^{-1}(x)\psi^{-1}(\sum_{i=1}^{k-2}v'_{i,i+1})
\psi^{-1}(-v_{1,2}+\sum_{i=2}^{k-1}v_{i,i+1}).
\end{align*}
Again ${}^{\delta_0\kappa}\psi_U$ restricts to a degenerate character, now of $\diag(I_{k},N_{\GL_k})$. We can now argue as above: $\mathcal{L}$ factors through $J_{V_{(k-1,1)}}(W_1(\tau'))$ and instead of \eqref{eq:uniqueness for k=1 eq 1} we have
\begin{align}\label{eq:uniqueness for k=1 eq 1 kappa}
\mathcal{L}(W_1(\tau')(\diag(I_{k-1},x))\xi\otimes \xi')= \pi(x)|x|^{ks}\theta(x)\mathcal{L}(\xi\otimes\xi').
\end{align}
Thus $\Hom(\delta_0\kappa)$ vanishes outside finitely many values of $q^{-s}$.

Finally let $h=\delta_0\delta_1$ ($h=\delta$ in the notation of \S~\ref{Local factors for GL}).
Then $(\GL_1U)^h=N_{\GL_{k}}\times N_{\GL_{k}}$
and $\psi_{U}$ restricts to the non-degenerate character $\psi^{-1}(z)=\psi^{-1}(\sum_{i=1}^{k-1}z_{i,i+1})$ on each
$N_{\GL_{k}}$. Thus for $\mathcal{L}\in \Hom(h)$, a pure tensor $\xi\otimes\xi'$ in the space of $W_1(\tau)\otimes W_1(\tau')$ and $v,v'\in N_{\GL_k}$,
\begin{align*}
\mathcal{L}(W_1(\tau)\otimes W_1(\tau')(v,v')\xi\otimes\xi')=
\psi(v)\psi(v')\mathcal{L}(\xi\otimes\xi'),
\end{align*}
so that $\mathcal{L}$ is in particular a Whittaker functional on $W_1(\tau)\otimes W_1(\tau')\cong\tau\otimes\tau'$,
and since $\tau$ and $\tau'$ are irreducible generic, the functional $\mathcal{L}$ is unique up to scaling.
\end{proof}
\begin{remark}\label{remark:uniqueness k=1}
For $k=1$ the proof of Lemma~\ref{lemma:uniqueness} is much simpler. First, the spaces $\Hom(h)$ are a priori at most one-dimensional, because $\tau$ and $\tau'$ are quasi-characters of $F^*$. It is therefore enough to show $\Hom(h)=0$ for $h\in\{\delta_0,I_2\}$ (now $\delta_0\kappa=I_2$), outside finitely many values of $q^{-s}$. For $h=\delta_0$ this follows immediately from \eqref{eq:uniqueness for k=1 eq 1}, because now
$W_1(\tau)(x)=\tau(x)$ (i.e., $\beta=\tau$), and for $h=I_2$ we use \eqref{eq:uniqueness for k=1 eq 1 kappa}.
\end{remark}

\def\cprime{$'$} \def\cprime{$'$} \def\cprime{$'$}

\end{document}